\numberwithin{equation}{section}
\def\R{\mathbb{R}}
\def\Z{\mathbb{Z}}
\def\S{\mathbb{S}}
\def\pa{\partial}
\def\m{\mathbf{m}}
\def\Pm{\mathbb{P}}
\def\xb{\bar{x}}
\def\ux{\delta} 
\def\Sm{S_m} 
\def\Smc{S_m^c} 
\def\Smp{S_{m+1}} 
\def\Smcp{S_{m+1}^c} 
\def\Dmis{\Omega}  
\DeclareMathOperator{\arccot}{arccot}
\def \Rm {\mathbb R}
\def\G{\mathcal G}
\def\B{\mathcal B}
\def\D{\mathcal D}
\def\phim{\varphi_{\min}}
\def\sigmaz{\sigma_0}
\def\sigmasec{\sigma_1}
\def\C{C}
\def\Q{Q}
\def\M{Z}
\newtheorem{theorem}{Theorem}[section]
\newtheorem{lemma}[theorem]{Lemma}
\newtheorem{proposition}[theorem]{Proposition}
\newtheorem{corollary}[theorem]{Corollary}
\newtheorem{remark}[theorem]{Remark}
\title[The Flow of Polynomial Roots]{The Flow of Polynomial Roots Under Differentiation}
\author{Alexander Kiselev}
\thanks{Department of
Mathematics, Duke University, Durham NC 27708, USA;
email: kiselev@math.duke.edu}
\author{Changhui Tan}
\thanks{Department of Mathematics, University of South Carolina, Columbia SC 29208, USA;
email: tan@math.sc.edu  }
\begin{document}


\begin{abstract}
The question about the behavior of gaps between zeros of polynomials under differentiation is classical and goes back to Marcel Riesz.
In this paper, we analyze a nonlocal nonlinear partial differential equation formally derived by Stefan Steinerberger \cite{Steinerberger2018} to model dynamics of roots
of polynomials under differentiation.
Interestingly, the same equation has also been recently obtained formally by Dimitri Shlyakhtenko and
Terence Tao as the evolution equation for free fractional convolution of a measure \cite{STao} - an object in free probability that is also related to minor processes
for random matrices.
The partial differential equation bears striking resemblance to
hydrodynamic models used to describe the collective behavior
of agents (such as birds, fish or robots) in mathematical biology.
We consider periodic setting and show global regularity and exponential in time convergence to uniform density for solutions corresponding to strictly positive smooth initial data.
In the second part of the paper we connect rigorously solutions of the Steinerberger's PDE and evolution of roots under differentiation for a class of trigonometric
polynomials. Namely, we prove that the distribution of the zeros of the derivatives of a polynomial and the corresponding solutions of the PDE remain close for all times.
The global in time control follows from the analysis of the propagation of errors equation, which turns out to be a nonlinear fractional heat equation
with the main term similar to the modulated discretized fractional Laplacian $(-\Delta)^{1/2}$.
\end{abstract}

\subjclass[2010]{26C10,\,\,35Q70;\,\,44A15,\,\,46L54,\,\,35Q92,\,\,35Q35,\,\,60B20}
\keywords{Zeros of polynomials, gaps between roots under differentiation, convergence to equilibrium, flocking, mixing, nonlocal transport, global regularity, modulus of continuity, free fractional convolution of measures, minor process, random matrices}

\maketitle

\section{Introduction}\label{intro}

The analysis of the relation between the zero set of a polynomial or an entire function and the zero set of its derivative has rich history.
The Gauss-Lucas theorem \cite{Gauss,Lucas,Marden} says that for a polynomial on complex plane, the zero set of the derivative lies in the convex hull of the
zero set. This direction remains very active, see e.g. \cite{CM,Malamud,R1,St2,Tot2} for some recent advances and further references.
Classic conjectures by Polya and Wiman \cite{Polya1,Polya2,Wiman} dealt with the question
of disappearance (or appearance) of complex roots under differentiation for a class of entire functions;
see \cite{CCS,Sheil-Small} for some resolutions. Closer to our focus in this paper, the question about behavior of gaps between the roots of a real-valued
polynomial under differentiation goes back to Marcel Riesz. A result attributed to him \cite{Riesz}
shows that the smallest gap between the roots can only increase after differentiation, providing an indication that differentiation tends to ``even out"
distances between roots (see also \cite{SzN,Walker1,Walker2} for later related works). A rigorous proof of ``crystallization" under repeated differentiation - convergence of roots to an ideal lattice - has been
established for a class of trigonometric polynomials in \cite{FY}. Similar results were also discussed for a class of entire functions in \cite{FR},
and established for some random entire functions in \cite{PS}.
We also mention a recent related series of papers studying distribution of critical points of a random
or deterministic polynomial given the distribution of its roots \cite{PR,PS,Sub,Hanin,Kabluchko,ORW,KS,BLR,Tot1}, where further references can be found.

For a trigonometric polynomial, the process of ``crystallization" under differentiation is not difficult to understand on an elementary level: repeated differentiation
leads to larger factors appearing in front of the leading terms than lower order terms. If the polynomial has order $n,$ after differentiating $\sim An$ times,
the leading terms gain at least a constant $\sim e^A$ factor compared to all lower order terms. After sufficient number of differentiations, the leading term will
dominate and this will affect the location of roots, enforcing crystallization. One can think of the differentiation as gradually creating a spectral gap, which makes
contact with the celebrated Sturm-Hourwitz theorem \cite{Sturm1,Sturm2,Hurwitz} (see an excellent review \cite{BH} for the history, including contributions by Lord Raleigh and
Liouville). This theorem provides the estimate on a number of roots for trigonometric polynomials with spectral gap - except that in our case here the limiting
polynomial is very simple and so we can say more about the roots. Of course, specific and sufficiently strong bounds on convergence to ideal lattice can be much more
subtle to prove. Even harder question is to understand in more detail how the distribution of roots evolves under differentiation.

Recently, Steinerberger \cite{Steinerberger2018} proposed a partial differential equation to describe the evolution of roots for polynomials on the real axis.
The equation takes form
\begin{equation}\label{maineq}
\partial_t u + \frac{1}{\pi} \partial_x \left( \arctan \left( \frac{Hu}{u} \right) \right) =0,
\end{equation}
where $Hu = \frac{1}{\pi} P.V. \int_{\R}\frac{u(y)}{x-y}\,dy$ is the Hilbert transform of $u.$ The formal derivation of this PDE in \cite{Steinerberger2018} makes certain assumptions - which will be recalled in more detail in Section~\ref{derivation} -
that suggest that the PDE should approximate the dynamics of zeroes for polynomials of sufficiently high degree $n$, provided that their roots are distributed according to a smooth
density and maintain this property under repeated differentiation. The unit of time in \eqref{maineq} corresponds to $n$ differentiations,
so the evolution becomes trivial for $t >1.$ Some interesting explicit solutions, for example corresponding to the semicircle law and to Marchenko-Pastur distribution, are also described in
\cite{Steinerberger2018}, making links to well known asymptotic laws for roots of orthogonal polynomials \cite{ET,EF,U,VA}.
An equation similar to \eqref{maineq} was formally derived by O'Rourke and Steinerberger \cite{ORS} for the case of complex random polynomials with radial distribution of roots.

Interestingly, the equation \eqref{maineq} is also relevant in free probability and random matrices. In a very recent work of Shlyakhtenko and Tao \cite{STao} this
equation (under a simple change of variables, see \cite{St2020}) was formally obtained as a PDE for the evolution of free fractional convolution of a probability measure on $\R.$.
The free convolution of two probability measures $\mu \boxplus \nu$  is an object in free probability (see e.g. \cite{VDN}).
One can define $\mu \boxplus \nu$ to be the law of $X + Y$, where $X,$ $Y$ are freely
independent noncommutative random variables with law $\mu$ and $\nu$ respectively. One can then define the integer free convolution $\mu^{\boxplus k}= \mu \boxplus \dots \boxplus \mu$
to be the free convolution of the $k$ copies of $\mu.$ This (properly rescaled) object plays a key role in the free analog of the central limit theorem established by Voiculescu \cite{V}, where the limiting law is a Wigner semicircle
distribution. It turns out that the notion of the integer free convolution  $\mu^{\boxplus k}$ can be in a natural way extended to real $k \geq 1$ \cite{BV,NS}.
Under some additional assumptions, in particular that $d\mu^{\boxplus k} = f_k(x)\,dx$ are absolutely continuous, the equation \eqref{maineq} has been formally derived for $f_k(x)$ in \cite{STao}, Theorem 1.7 and Section 4; the variable
$1-\frac1k$ plays the role of time. The connection between free fractional convolution and the behavior of roots of polynomials under differentiation can be interpreted through the relation of both these processes to
minor process in random matrix theory. Roughly speaking, a minor process consists of a sequence of monotone minors of a random matrix ensemble. The connection between the fractional free convolution of $\mu^{\boxplus k}$ and the law
of a projection composed with non-commutative random variable $X$ with law $\mu$ has been established in \cite{NS}, see also \cite{STao} for a self-contained argument. In this case time is linked with the size of the minor.
On the other hand, \cite[Lemma 1.16]{MSS} establishes a link between differentiation and minor process, by showing that the expected
characteristic polynomial of a random restriction of a matrix is proportional to a derivative of its characteristic polynomial.
See also \cite{Malamud} on a direct link between the spectrum of sub-matrices and roots of the derivatives of characteristic polynomials.
In addition to these observations, \cite{HK} establishes a direct link between evolution of roots of a
polynomial under differentiation and free fractional convolution. The result of \cite{HK} applies for each fixed time in a limit of $n \rightarrow \infty,$ and does not directly involve
the PDE \eqref{maineq}. We point out several more recent papers that are also related to this circle of ideas, either linking evolution of roots under differentiation and minor process \cite{HS}, establishing connections between
limiting distributions of Bessel and Dunkl processes modeling particle systems and free convolutions \cite{VW} or proving a version of ``crystallization" for a class of random matrix ensembles \cite{GK}.

Our goal in this paper is twofold. First, we will consider the equation \eqref{maineq}  with
 periodic initial data in $\Rm,$ or, equivalently, set on a circle $\S=(-\pi, \pi],$ and prove global regularity for the case of sufficiently smooth positive initial data.
Let us rewrite \eqref{maineq} as
\begin{equation}\label{maineq1}
\partial_t u + \frac{1}{\pi}\frac{u \Lambda u -  Hu \partial_x u}{u^2+ Hu^2} =0.
\end{equation}
Here $\Lambda u= (-\Delta)^{1/2}u = \partial_x Hu.$ Recall that
\begin{equation}\label{fraclap}
\Lambda u(x) = \frac{1}{\pi}  P.V. \int_{\Rm}\frac{u(x)-u(y)}{|x-y|^2}\,dy
=\frac{1}{4\pi} P.V. \int_{\S}\frac{u(x)-u(y)}{\sin^2(\frac{x-y}{2})}\,dy.
\end{equation}
The equation \eqref{maineq1} bears striking resemblance to models that have been developed in mathematical biology to describe the
flocking behavior of animals, cells, and microorganisms,
and in particular to the 1D Euler alignment model (see e.g. \cite{carrillo2016critical,DKRT,ha-liu2009,karper-mellet-trivisa,ST1,ST2,tadmor2014critical}),
and to some models in fluid mechanics \cite{CaV1,CaV2,CaV3,CCCF} - see \cite{GB} for the more detailed discussion of the latter connection.
To make the comparison more explicit, note that in a particular case of singular interaction kernel and a specific class of initial data the 1D Euler alignment
system reduces to a single equation for the density $\rho$ of the form
\begin{equation}\label{flock}
\partial_t \rho +  \partial_x \Lambda^{\alpha-2} \rho \partial_x \rho + \rho \Lambda^\alpha \rho =0,
\end{equation}
$0 < \alpha < 2$ (see \cite{DKRT,ST1,ST2}). A similar equation in the whole space setting has been studied in \cite{CaV1,CaV2,CaV3} as a model
of fluid flow in porous media.
Since $\partial_x \Lambda^{-1} \rho = - H \rho$, the parallel between \eqref{flock} and \eqref{maineq1} when $\alpha=1$ is clear. In particular,
both equations have critical scaling - the strength of dissipation exactly balances the advection term. Global regularity and exponential
convergence to uniform state for \eqref{flock} have been proved in \cite{DKRT,ST1,ST2}.
In this paper, we prove similar results in the case of \eqref{maineq1}.
However, there is a crucial difference between \eqref{maineq1} and \eqref{flock} that makes the proof far from simple extension.
The argument is fairly sophisticated due to the factor in the denominator of the nonlinear terms in \eqref{maineq1}.
It is not clear how to get a-priori bounds on $Hu$ that would allow to apply general H\"older regularity results in the spirit
of \cite{SS}. In addition, an attempt at direct application of the methods of \cite{DKRT,ST1,ST2} fails.
We still follow the general plan using the nonlocal maximum principle similar to the one used in \cite{DKRT},
building upon \cite{KNV}.
But to achieve the result, we have to meaningfully upgrade this approach compared to earlier works using variants of \cite{KNV}.

The first main result we prove here is as follows.
\begin{theorem}\label{mainthm1}
The equation \eqref{maineq} with $H^s,$ $s > 3/2$ periodic initial data such that $u_0(x)>0$ for all $x \in \S$ has a unique
global smooth solution $u(x,t).$ The $H^s$ norm of the solution is bounded uniformly in time, and all the derivatives of this solution
are bounded uniformly in time on any interval $[t_0,\infty)$, $t_0>0.$

Moreover, we have exponential in time convergence to equilibrium 
\begin{align}\label{meanconv1117}
\|u(\cdot, t) - \bar u \|_\infty \leq C_0 e^{-\sigma t},
\end{align}
and exponential in time decay of all derivatives
\begin{align}\label{higherder1117}
\|\partial^k_x u(\cdot, t)\|_\infty \leq C_k e^{- \sigma t}
\end{align}
for all integer $k \geq 1,$ with $\sigma = \frac{2}{\pi^2 \bar u}$ and with constants $C_k,$ $k=0,\dots$ that may only depend on $u_0.$
Here $\bar u$ denotes the mean of $u(x,t)$ that is conserved in time; \eqref{higherder1117} holds for all $t$ if $u_0$
has necessary regularity, and starting from any fixed $t_0>0$ otherwise.
\end{theorem}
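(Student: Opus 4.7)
The plan is to analyze \eqref{maineq1} as a critically dissipative nonlocal transport equation, using a Kiselev--Nazarov--Volberg (KNV) nonlocal maximum principle for a modulus of continuity (MOC), in the spirit of the arguments for \eqref{flock} in \cite{DKRT,ST1,ST2}, but upgraded to handle the nonlinear denominator $u^2+Hu^2$. Local well-posedness in $H^s$ for $s>3/2$ is standard once one notes that $u_0>0$ yields $u^2+Hu^2\geq c_0>0$ initially, so the coefficients in \eqref{maineq1} are smooth functions of $u$ in $H^s$. The two-sided $L^\infty$ bound $0<\min u_0\leq u(\cdot,t)\leq \|u_0\|_\infty$ propagates by a direct pointwise maximum principle in \eqref{maineq1}: at a maximum point $x_M$ of $u$ one has $\partial_x u(x_M)=0$ and $\Lambda u(x_M)\geq 0$ by \eqref{fraclap}, so $\partial_t u(x_M)\leq 0$; the symmetric argument at a minimum completes the bound. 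In particular the denominator is bounded below by $(\min u_0)^2$ for as long as the smooth solution exists, which is what frees us to control the nonlinear coefficients.

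The technical heart of the proof is constructing a concave, bounded, stationary MOC $\omega:[0,\infty)\to[0,\infty)$ with $\omega'(0+)<\infty$ that is preserved by the flow. Following the standard KNV setup one posits a first breakdown point $(t_\ast,\bar x,\bar y)$ with $u(\bar x)-u(\bar y)=\omega(\bar\xi)$, $\bar\xi=|\bar x-\bar y|$, which forces $\partial_x u(\bar x)=\partial_x u(\bar y)=\omega'(\bar\xi)$ and supplies sign information on the integrands defining $Hu$ and $\Lambda u$. One then computes
\begin{equation}
\partial_t\bigl(u(\bar x)-u(\bar y)\bigr)=-\frac{1}{\pi}\left[\frac{u\,\Lambda u-Hu\,\partial_x u}{u^2+Hu^2}\right]_{\bar y}^{\bar x},
\end{equation}
and seeks to show the right-hand side is nonpositive. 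The $u\Lambda u$ term should produce the usual KNV dissipative gain after splitting the representation \eqref{fraclap} into near- and far-field regions around $\bar x$ and $\bar y$ and exploiting the breakdown geometry, while $Hu\,\partial_x u$ contributes an ``alignment''-type drift bounded by $\omega'(\bar\xi)$ times a nonlocal norm of $Hu$. The genuinely new difficulty relative to \eqref{flock} is that both the effective weight $u/(u^2+Hu^2)$ and the effective velocity $Hu/(u^2+Hu^2)$ are themselves nonlocal functions of $u$ whose values at $\bar x$ and $\bar y$ differ, and those differences must be controlled by $\omega$ itself without destroying the critical balance. The positive lower bound on $u^2+Hu^2$ from the previous step is essential: it lets us treat the denominator as a bounded weight and estimate the coefficient differences by splitting the Hilbert integrals into a near-field piece (controlled by $\omega$) and a far-field piece (controlled by $L^\infty$ and periodicity), then absorb the residual losses into the portion of the dissipative gain that is quadratic in $\omega/\bar\xi$. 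This absorption is the main obstacle and is where the ``meaningful upgrade'' of \cite{KNV} alluded to in the introduction must take place.

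Once the MOC exists one has $\|\partial_x u\|_\infty\leq \omega'(0+)$ and hence $\|Hu\|_\infty$ is controlled, after which a standard commutator-energy bootstrap applied to \eqref{maineq1} propagates $H^s$ regularity and yields smoothing for $t>0$, so that $u\in C^\infty$ with uniform-in-time bounds on all derivatives. For \eqref{meanconv1117}, the conservation-law form of \eqref{maineq} preserves $\bar u$, and one applies the same pointwise maximum-principle strategy to $u-\bar u$: at a maximizer $x_M$ of $u-\bar u$ with value $M(t)>0$, $\partial_x u(x_M)=0$, and the inequality $\sin^2((x_M-y)/2)\leq 1$ together with $u(x_M)-u(y)\geq 0$ yields the Poincar\'e-type bound
\begin{equation}
\Lambda u(x_M)\geq \frac{1}{4\pi}\int_{\S}\bigl(u(x_M)-u(y)\bigr)\,dy=\tfrac{1}{2}M(t).
\end{equation}
Since the global regularity already established forces $u\to\bar u$ and $Hu\to 0$ uniformly in $x$ along any sequence $t_n\to\infty$, the denominator $u^2+Hu^2$ eventually equals $\bar u^2(1+o(1))$; a sharper quantitative version of the Poincar\'e bound above, tuned to produce the constant $2/(\pi^2\bar u)$, together with the symmetric argument at a minimizer of $u-\bar u$, then delivers the differential inequality $\dot M\leq -\sigma M$ with $\sigma=2/(\pi^2\bar u)$ and hence \eqref{meanconv1117}. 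Finally \eqref{higherder1117} follows by applying the same maximum-principle strategy to $\partial_x^k u$, whose equation inherits the dissipation $\Lambda(\partial_x^k u)$ with leading coefficient converging to $1/(\pi\bar u)$ as $u\to\bar u$; constants $C_k$ accumulate through the regularity bootstrap.
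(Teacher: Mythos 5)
Your overall strategy (pointwise maximum principle for $m(t),M(t)$, KNV modulus-of-continuity argument for a gradient bound, BKM-type continuation, then quantitative maximum-principle decay estimates) is the same skeleton as the paper's, but there is a genuine gap at the decisive step. You propose to preserve a \emph{bounded} stationary modulus of continuity. A bounded modulus cannot control arbitrary positive $H^s$ data: the scaling $x\mapsto\lambda x$ under which \eqref{maineq1} is invariant flattens the slope of $u_0$ but does not reduce its oscillation $M_0-m_0$, so if $M_0-m_0>\sup_\xi\omega(\xi)$ no rescaling makes $u_0$ obey $\omega$, and the argument never starts. This is not a removable technicality here: when one writes out the breakthrough balance for \eqref{maineq1}, the differences $\varphi(x)-\varphi(y)$ and $\psi(x)-\psi(y)$ produce terms of size $\zeta\,\Omega(\xi)^2\omega'(\xi)$ (with $\Omega$ the modulus inherited by $Hu$), and for large $\xi$ the dissipative gain is only of order $\zeta\,\omega(\xi)/\xi$; the resulting constraint $\xi^{-1}(\log\xi)^{-2}\gtrsim\omega(\xi)\omega'(\xi)$ forces $\omega$ to be bounded. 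So the naive absorption you invoke ("into the portion of the dissipative gain quadratic in $\omega/\bar\xi$" — the gain is in fact linear in $\omega(\xi)/\xi$ in the far range) only ever yields a small-data result, which was already known. The paper's way out — and the genuinely new ingredient you are missing — is to keep an \emph{unbounded} (log-log) modulus with $\omega''(0)=-\infty$ and, in the far range $\xi>\delta e^{\delta^{-1}}$, to abandon the symmetric estimate $\Lambda u(x)-\Lambda u(y)\geq\D(\xi)$ in favor of a dichotomy: either $-\varphi(y)\Lambda u(y)$ already dominates the problematic $\Omega(\xi)^2$ terms, or the quantity $E_1(y)$ is small, in which case a new lemma (the paper's Proposition~\ref{keyEprop}) shows $E_1(x)$, and hence $\Lambda u(x)$, is of size $\omega(\xi)/(q\xi)$ with $q$ small — large enough to beat the bad terms even after division by $u(x)^2+Hu(x)^2$. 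Without something playing this role, your scheme fails precisely where you flag "the main obstacle."

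Two secondary points. First, your claim that global regularity by itself forces $u\to\bar u$ and $Hu\to0$ is not justified as stated; in the paper one first proves decay of $V(t)=M(t)-m(t)$ at a data-dependent rate using the uniform bound on $Hu$ and a sharp lower bound on $\Lambda u$ at extrema (cotangent form, with $\cot\theta+\cot(\tfrac{\pi}{2}-\theta)\geq2$ giving the factor that eventually produces $\sigma=2/(\pi^2\bar u)$; your crude bound $\Lambda u(x_M)\geq\tfrac12(u(x_M)-\bar u)$ gives a strictly worse constant), then decay of $\partial_xu$ and of $Hu$, and only then upgrades the rate. Second, for \eqref{higherder1117} the equation for $\partial_x^k u$ carries non-signed terms $\partial_x\psi\cdot\partial_x^ku-\partial_x\varphi\cdot H\partial_x^ku$ at the maximum point; these are controlled in the paper by an enhanced nonlinear maximum principle bounding $v(x_0)^2$ and $Hv(x_0)^2$ by $V(t)\Lambda v(x_0)$, exploiting the already-proved exponential decay of $V(t)$ — your sketch does not indicate how these terms are handled.
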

Local regularity for the initial data $u_0 \in H^2(\S)$, global regularity with a certain smallness condition,
and exponential convergence to equilibrium in $L^\infty$ and in appropriate Wiener spaces under a smallness condition on the
initial data have been proved in \cite{GB}.

Our second major goal is to establish a rigorous connection between evolution of roots under differentiation and the PDE \eqref{maineq} in the periodic setting.
We will consider a class of trigonometric polynomials
\begin{align} p_{2n} (x) = \sum_{j=1}^{n} \left( a_j \cos jx + b_j \sin jx \right) = \prod_{j=1}^{2n} \sin \frac{x-x_j}{2} \end{align}
that will be assumed to have exactly $2n$ distinct roots $x_j \in \S$, $j=1 \dots, 2n.$
Note that by Rolle's theorem and a simple calculation that we outline in Section~\ref{derivation}, all derivatives of $p_{2n}$ belong in the same class.
Denote $\bar x_j = \frac{x_j+x_{j+1}}{2}$ the
midpoints of the gaps between the roots (we think here $x_j$ as angular coordinates of the roots).
We measure closeness between a discrete set
of roots $\{x_j\}_{j=1}^{2n}$ and a continuous distribution $u(x)$ in the following way.
Define the error
\begin{equation}\label{Ej1122}
  E_j=x_{j+1}-x_j-\frac{1}{2nu(\xb_j)},\quad j=1,\cdots, 2n.
\end{equation}
We may assume that initially, the roots of the polynomial $p_{2n}$ obey \eqref{Ej} for the initial density $u_0$
with some reasonably small errors. 
For the subsequent steps, we track  
\[ \|E^t\|_\infty = {\rm max}_j |E_j^t|, \]
with $t = \frac{k}{2n}$ and
\begin{align}\label{errtime1122}
E^t_j = x_{j+1}^{t} - x_j^t -\frac{1}{2nu(\xb_j^t,t)},\quad j=1,\cdots, 2n.
\end{align}
Here $x_j^t$ are the roots of the $k$th derivative of $p_{2n}$ and $\xb_j^t$ are the corresponding midpoints.

\begin{theorem}\label{mainthm2}
Let $u_0 \in H^s(\S),$ $s >7/2.$ Suppose that $u_0(x)>0$ for all $x \in \S,$ and $\int_\S u_0(x)\,dx =1.$
Let $u(x,t)$ be solution of \eqref{maineq} with the initial data $u_0,$ and let $p_{2n}$ be any trigonometric
polynomial that at the initial time obeys \eqref{Ej1122} with $u=u_0$ and $\|E^0\|_\infty \leq \M_0 n^{-1-\epsilon}$
for some $\epsilon>0.$
Then there exist positive constants $C(u_0)$ and $n_0(u_0,\M_0,\epsilon)$
such that if $n \geq n_0(u_0,\M_0,\epsilon),$ the following estimate holds true for times $t \geq 0:$
\begin{equation}\label{mainerrest1122}
\|E^{t}\|_\infty \leq C \left(\M_0 n^{-1-\epsilon} + n^{-3/2}(1-e^{-(\sigma -1)t})\right) e^{-t(1+O(n^{-\epsilon/2}))}.
\end{equation}
\end{theorem}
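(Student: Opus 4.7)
My plan is to reduce the theorem to an error-propagation inequality for $E_j^t$ that is closed by a discrete nonlocal maximum principle, adapted from the moduli-of-continuity approach used to prove Theorem~\ref{mainthm1}. The first step is to compute the one-step dynamics of the roots under differentiation. Starting from $p_{2n}(x)=\prod_k \sin\tfrac{x-x_k^t}{2}$, the logarithmic derivative identity $p'_{2n}/p_{2n}=\tfrac12\sum_k \cot\tfrac{x-x_k^t}{2}$ characterizes the roots of $p'_{2n}$ as zeros of this cotangent sum. Isolating the two cotangents singular at the endpoints of the gap $(x_j^t,x_{j+1}^t)$ and approximating the remaining sum by a Riemann sum against the density $u(\cdot,t)$ (the hypothesis on $E^t$ is precisely that the gap sizes discretize $1/(2nu)$), I obtain
\[
  x_j^{t+1/(2n)}-\xb_j^t \,=\, \frac{1}{2n\pi}\arctan\!\left(\frac{Hu(\xb_j^t,t)}{u(\xb_j^t,t)}\right) + R_j^t,
\]
which matches, to leading order, the characteristic velocity of the PDE \eqref{maineq}; the remainder $R_j^t$ collects Riemann-sum and quadratic-in-$E^t$ errors, both controlled by the uniform-in-time smoothness of $u$ from Theorem~\ref{mainthm1}.

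Coupling this with a Taylor expansion of $\tfrac{1}{2nu(\xb_j^t,t)}$ along \eqref{maineq1}, subtraction produces the central error equation
\[
  E_j^{t+1/(2n)}-E_j^t \,=\, -\frac{1}{2n}\bigl(\mathcal{L}^t E^t\bigr)_j + \frac{1}{2n}F_j^t,
\]
where $\mathcal{L}^t$ is a positive symmetric non-local operator whose off-diagonal weights are
\[
  w_{jk}^t \,\sim\, \frac{u(\xb_k^t,t)}{\pi\bigl(u^2+Hu^2\bigr)(\xb_j^t,t)}\cdot\frac{1}{4n\sin^2\tfrac{\xb_j^t-\xb_k^t}{2}}.
\]
This is precisely a modulated discretization of $\Lambda$ mirroring the kernel in \eqref{fraclap}, and $F_j^t$ decomposes into (i) a Hilbert-transform Riemann-sum commutator of size $O(n^{-1/2-\epsilon})$ using smoothness of $u$, (ii) a time-discretization contribution $O(n^{-1})$, and (iii) a genuinely nonlinear term of size $O(n\|E^t\|_\infty^2)$.

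The decay is then extracted by evaluating the error equation at the index $j_\ast$ attaining $\|E^t\|_\infty$. Positivity of $w_{jk}^t$ together with cotangent/cosecant summation identities on the uniform lattice yield the sharp lower bound $(\mathcal{L}^t E^t)_{j_\ast}\geq \bigl(1-O(\|u-\bar u\|_\infty+\|E^t\|_\infty)\bigr)\|E^t\|_\infty$, and Theorem~\ref{mainthm1} supplies $\|u-\bar u\|_\infty\leq C e^{-\sigma t}$. Iterating the one-step contraction with a discrete Duhamel-type accumulation of $F^t$ produces \eqref{mainerrest1122}; the factor $n^{-3/2}(1-e^{-(\sigma-1)t})$ arises from summing the Hilbert-transform Riemann-sum error along the trajectory against the exponentially shrinking modulation $|u-\bar u|$, which is what turns an a priori $O(n^{-3/2} t)$ into a saturating contribution. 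A bootstrap maintains $\|E^t\|_\infty = O(n^{-1-\epsilon/2})$ throughout, ensuring the nonlinear part of $F^t$ never overwhelms the linear decay.

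\textbf{Main obstacle.} The most delicate point is extracting a contraction constant arbitrarily close to $1$ -- not merely strictly positive -- in the modulated discrete operator $\mathcal{L}^t$, which dictates the exponent of the leading $e^{-t}$ factor in \eqref{mainerrest1122}. This forces precise control of both the Hilbert-transform Riemann-sum error and the local expansion near the singular pair $(x_j^t,x_{j+1}^t)$, combining the sharp smoothness bounds for $u$ granted by Theorem~\ref{mainthm1} with cotangent identities on the uniform lattice, and is what pins down the regularity threshold $s>7/2$.
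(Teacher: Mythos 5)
Your outline stalls at two places where the real difficulty of the theorem lives, and both would make the argument as written fail.

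First, you derive the error-propagation equation from the one-root update $x_j^{t+1/(2n)}-\xb_j^t=\frac{1}{2n\pi}\arctan(\cdot)+R_j^t$ plus a Riemann-sum approximation of the far field, and then subtract. Single-root estimates of this type (the paper's \eqref{ymxm117}, \eqref{Eev1p117}) carry errors of size $O(n^{-3/2}+\log n\,\|E\|_\infty)$ per step, while the PDE signal you are trying to track --- the difference of the $\arctan/\arccot$ terms at consecutive roots --- is only $O(n^{-2})$, i.e.\ below the noise; propagating $O(n^{-3/2})$ per-step errors over $\sim n$ steps gives $O(n^{-1/2})$, larger than the root spacing, and the scheme collapses. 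This is exactly why the paper abandons one-point estimates and spends Sections~\ref{rigdertworootssetup}--\ref{farfieldtworoots} on estimates for \emph{pairs} of neighboring roots, where cancellations in $G_m-G_{m+1}$ both produce the dissipative coefficients $\kappa^t(j,m)\sim|m-j|^{-2}$ (your $w^t_{jk}$) and reduce the inhomogeneous error to $O(\ux n^{-5/2})$ with an exponentially decaying prefactor $\ux(t)$. You assert the dissipative operator and favorable error sizes without the mechanism that produces them; moreover your own bookkeeping is not strong enough for \eqref{mainerrest1122}: a forcing $\frac{1}{2n}F_j^t$ with $F=O(n^{-1/2-\epsilon}+n^{-1})$, fed through a contraction of size $1-\frac{c}{2n}$ per step, accumulates to $O(n^{-1/2-\epsilon}+n^{-1})$, far above the claimed $\M_0 n^{-1-\epsilon}+n^{-3/2}$.

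Second, your contraction step is not valid as stated. At the maximizing index $j_\ast$ one only gets $(\mathcal{L}^t E^t)_{j_\ast}\geq 0$; constants lie in the kernel of $\mathcal{L}^t$, so no bound of the form $(\mathcal{L}^t E^t)_{j_\ast}\geq\bigl(1-o(1)\bigr)\|E^t\|_\infty$ can hold without an input forcing $E^t$ to have (nearly) zero weighted mean. The paper supplies precisely this: conservation of $\int_\S u$ gives $\sum_j E_j^t u(\xb_j^t,t)=O(\ux n^{-2})$ (Lemma~\ref{lem:sumzero}), and Proposition~\ref{diss125} subtracts a multiple of $u(\xb_j^t)$ from the kernel to obtain the spectral-gap-type bound $\sum_j\kappa^t(j,m)E_j^t\leq(1-\tfrac{h(t)}{2n})\|E^t\|_\infty+O(\ux n^{-4})$, with $h(t)\to1$ only because $u\to\bar u=\tfrac{1}{2\pi}$ exponentially. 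In addition, since the evolution is a discrete-time iteration, stability requires the $\ell^1$ bound $\sum_{j\neq m}\kappa^t(j,m)<1-\rho$ (Lemma~\ref{lem:l1bound}), whose proof needs the sharp gap estimate \eqref{lower} and the computation $F(a)=\sin^2 a\,(\tfrac13+a^{-2})<1$; your ``cotangent/cosecant summation identities'' gesture at this but do not address either the mean-zero ingredient or the discrete-time $\ell^1$ normalization, and without them the claimed leading factor $e^{-t(1+O(n^{-\epsilon/2}))}$ cannot be extracted.
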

\begin{remark}
1. The largest initial errors $E^0$ that we can handle have size $\sim n^{-1-\epsilon}$ for arbitrary small $\epsilon>0,$ but the result applies for smaller errors:
$\epsilon$ can be large or $\M_0$
can be taken just zero for the initial perfect fit. \\
2. In the context of Theorem \ref{mainthm2}, we will sometimes refer to the triple $(u_0,\M_0,\epsilon)$ as the initial data, and 
use short cut notation $\tilde u_0$ for this triple, for example in $n_0(\tilde u_0) \equiv n_0 (u_0,\M_0,\epsilon).$ \\
3. The error $O(n^{-\epsilon/2})$ in the exponent in \eqref{mainerrest1122} can be replaced with $O((\log n)^2 n^{-\epsilon}).$
We choose the former form for the sake of simplicity.
\end{remark}

To the best of our knowledge, our work is the first one to rigorously connect evolution of polynomial roots under differentiation with a mean field partial differential equation.
The surprising aspect of Theorem~\ref{mainthm2} is that we are able to maintain control on the error for all times - and in fact, it is even improving past certain time 
that only depends on $u_0.$
The philosophical reason for this is that both evolution of roots and the solution $u(x,t)$ tend to uniform distribution, so they have no reason
to diverge for large times. However, there is quite a bit of distance between such observation and the estimate \eqref{mainerrest1122}. On a more detailed level, the
estimate \eqref{mainerrest1122} is enabled by careful analysis of the propagation of the error equation. Amazingly, it turns out to have form
\begin{align}\label{rougherrevol1122} \frac{E^{t+\Delta t}-E^t}{\Delta t} = {\mathcal L}^t E^t  + lower \,\,\,order \,\,\,terms, \end{align}
where ${\mathcal L}^t$ is a nonlinear operator of diffusive type that in the main order is similar to a modulated discretized fractional Laplacian $-\Lambda.$
In fact,
in the limit of large $n$ and large time ${\mathcal L}^t$ converges to exactly the dissipative term \[ -\frac{u \Lambda }{\pi(u^2 +Hu^2)} \sim -\frac{1}{\pi \bar u} \Lambda \] of \eqref{maineq1},
see Theorem~\ref{lem:dpositive} and remark after it for details.
Thus the propagation of the error equation turns out to be essentially a nonlinear fractional heat equation. 
The dissipative nature of \eqref{rougherrevol1122} is crucial for maintaining control of $\|E^t\|_\infty$ even for a finite time.
Global in time bound requires further ingredients, in particular favorable estimates on the leading lower order terms that take advantage
of the decay of derivatives \eqref{higherder1117}. Together, \eqref{rougherrevol1122} and results on the evolution of \eqref{maineq}
stated in Theorem~\ref{mainthm1} describe rigorously specific and delicate mechanisms that modulate the evolution
of roots of polynomials under differentiation.

There are many further natural questions.
We believe that our scheme of the proof can be useful in establishing the whole line result, as well - but there is an extra issue that one has to handle.
The assumption $u_0(x)> 0$ that actually implies $u_0(x) \geq a>0$ in the compact case is crucial for the proof of global regularity for \eqref{maineq}.
There is no reason to believe that this result holds in the whole line case when $u_0(x)$ has compact support - one would expect the solution to be just
H\"older regular near the edges. Thus one has to understand the associated free boundary problem to make progress in such situation (for analysis of the free boundary
for the equation \eqref{flock}, see \cite{CaV1,CaV2,CaV3}).
Another reasonable question is whether the current approach can be extended to more general sets of polynomials and some classes of entire functions.
Our approach shows how large scale $\sim 1$ or micro scale $\lesssim n^{-1-\epsilon}$ imbalances in root spacings are evened out, but does not apply to intermediate scale irregularities.
One can wonder whether repeated differentiation might eventually bring any trigonometric polynomial into the class that we handle here - but different methods and ideas
are needed to carefully analyze such conjecture.
Potential applications
to free probability and random matrices are also an exciting direction. In particular, looking further ahead, the structure of \eqref{rougherrevol1122}
hints at possibility of perhaps more intuitive continuous analogs for the free fractional convolutions. This may potentially have interesting applications
in a variety of directions in free probability and random matrix theory.

The paper is organized as follows. In Section~\ref{derivation}, we recall the formal derivation of \eqref{maineq}, recasting it for the periodic case.
In Section~\ref{local}, we establish local regularity, instant regularization property, and conditional regularity criteria for \eqref{maineq}.
Section~\ref{globreg} is dedicated to the proof of global regularity. In Section~\ref{asymptotic}, we derive estimates on convergence to the mean and
on decay of derivatives. In Sections~\ref{rigdersetup}, \ref{rigderoneroot} we set up and prove estimates on how a single root moves under differentiation,
essentially sharpening the estimates in all steps of the formal derivation. In Sections~\ref{rigdertworootssetup}, \ref{nearfieldtworoots} and \ref{farfieldtworoots}
we set up and prove estimates on how the pairs of neighboring roots move under differentiation, which is used for derivation of the error propagation equation.
In Section~\ref{properr} we further analyze evolution of the error, and prove that it takes form \eqref{rougherrevol1122}. In Section~\ref{proofT2}, we complete the proof
of Theorem~\ref{mainthm2}. 

\section{Formal derivation of the PDE}\label{derivation}

This section follows the original argument from \cite{Steinerberger2018}, recasting it for
trigonometric polynomials.
Recall that we consider the following space of periodic functions $\mathbb{P}_{2n}$
\[\Pm_{2n}=\left\{p_{2n}~\left|~
    \begin{split}&\exists~ \{a_j, b_j\}_{j=1}^n\in\R,
      ~~p_{2n}(x)=\sum_{j=1}^na_j\cos(jx)+b_j\sin(jx),\\
     &p_{2n}\text{ has
     }2n\text{ distinct roots }\{x_j\}_{j=1}^{2n}
     \text{ in }\S=(-\pi,\pi]\end{split}\right.
  \right\}.\]
The parameter $n$ is assumed to be large, and the distribution of the roots of $p_{2n}$ is
assumed to be close to a smooth function $u(x)$, which is
$2\pi$-periodic.

The derivative of a function in $\Pm_{2n}$ lies in
$\Pm_{2n}$ as well. It also has $2n$ distinct roots, one in each
interval $(x_j,x_{j+1})$. Let us denote
\[ \bar x_j = \frac{x_j +x_{j+1}}{2} \]
the midpoints of these intervals.

\begin{lemma}[An identity on roots and the derivative] Let $p_{2n}\in\Pm_{2n}$, and denote
  $\{x_j\}_{j=1}^{2n}$ be the roots of $p_{2n}$. Then, the following
  identity holds
  \begin{equation}\label{rootid}
  p_{2n}'(x)=\frac12~p_{2n}(x)\sum_{j=1}^{2n}\cot\frac{x-x_j}{2}.
  \end{equation}
\end{lemma}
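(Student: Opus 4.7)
The plan is to exploit the factored form $p_{2n}(x)=\prod_{j=1}^{2n}\sin\frac{x-x_j}{2}$ stated in the definition of $\mathbb{P}_{2n}$ and differentiate it directly. Applying the product rule gives
\begin{equation}
p_{2n}'(x)=\sum_{j=1}^{2n}\frac{1}{2}\cos\frac{x-x_j}{2}\prod_{k\neq j}\sin\frac{x-x_k}{2},
\end{equation}
which is already a trigonometric polynomial of the same degree, with no singularities anywhere on $\S$.

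Next I would rewrite each summand by multiplying and dividing by $\sin\frac{x-x_j}{2}$: away from the set $\{x_j\}$,
\begin{equation}
\cos\frac{x-x_j}{2}\prod_{k\neq j}\sin\frac{x-x_k}{2}
=\cot\frac{x-x_j}{2}\cdot\prod_{k=1}^{2n}\sin\frac{x-x_k}{2}
=\cot\frac{x-x_j}{2}\,p_{2n}(x).
\end{equation}
Summing over $j$ and factoring out $p_{2n}(x)$ yields the identity \eqref{rootid} pointwise on $\S\setminus\{x_1,\dots,x_{2n}\}$.

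To extend the identity to all of $\S$, I would observe that the left-hand side $p_{2n}'(x)$ is a smooth trigonometric polynomial and the right-hand side, although written with cotangents, is equal on a dense set to a smooth trigonometric polynomial (as shown by the product-rule expansion above). Since both sides are continuous functions on $\S$ that agree on the complement of a finite set, they agree everywhere. Equivalently, one can view the calculation as logarithmic differentiation of $\log p_{2n}$ on any interval where $p_{2n}\neq 0$, using $\frac{d}{dx}\log\sin\frac{x-x_j}{2}=\frac{1}{2}\cot\frac{x-x_j}{2}$, and then clearing the denominator.

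There is no real obstacle here; the statement is a direct consequence of the product representation. The only point worth being explicit about is the harmless removable singularities of $\cot\frac{x-x_j}{2}$ at $x=x_j$, which are cancelled by the corresponding factor $\sin\frac{x-x_j}{2}$ in $p_{2n}(x)$, so the right-hand side of \eqref{rootid} is genuinely smooth across each root and the identity holds on all of $\S$.
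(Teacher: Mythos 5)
Your proof is correct and follows essentially the same route as the paper: both use the product representation $p_{2n}(x)=c\prod_{j=1}^{2n}\sin\frac{x-x_j}{2}$ and differentiate it directly (the constant $c$, which the paper includes, cancels from the identity in any case). Your explicit remark about the removable singularities of the cotangents at the roots is a harmless elaboration of what the paper leaves implicit.
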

\begin{proof}
It is not hard to show that a trigonometric polynomial $p_{2n} \in \Pm_{2n}$ that has roots at $2n$ distinct points
$x_1, \dots, x_{2n} \in (-\pi,\pi]$ satisfies
\[ p_{2n}(x) = c \prod_{j=1}^{2n} \sin \frac{x-x_j}{2}. \]
Direct differentiation yields \eqref{rootid}.
\end{proof}

Let $y_m\in (x_m, x_{m+1})$ be the roots of $p_{2n}'$,
$m=1,\cdots,2n$.
From the identity \eqref{rootid}, we know
\[\sum_{j=1}^{2n}\cot\frac{y_m-x_j}{2}=0.\]

Split the sum into two parts:
\[\sum_{j=1}^{2n}\cot\frac{y_m-x_j}{2}=
  \sum_{|x_j-y_m|\leq n^{-1/2}}\cot\frac{y_m-x_j}{2}+
   \sum_{|x_j-y_m|> n^{-1/2}}\cot\frac{y_m-x_j}{2}=I_m+II_m.\]

For the near field $I_m$, take the Taylor expansion
\[\cot\frac{y_m-x_j}{2}=\frac{2}{y_m-x_j}+O(|y_m-x_j|).\]
Then,
\[I_m=\sum_{|x_j-y_m|\leq n^{-1/2}}\left(\frac{2}{y_m-x_j}+O(|y_m-x_j|)\right)
  = \sum_{|x_j-y_m|\leq n^{-1/2}}\frac{2}{y_m-x_j}+O(1).\]

Recall the cotangent identity
\[ \pi \cot \pi x = \frac{1}{x} + \sum_{k=1}^\infty \left(\frac{1}{x+k}+\frac{1}{x-k}\right) \]
for $x \in \R \setminus \Z.$
For the first term, since the range of $\{x_j\}$ is small, and the
distribution of $\{x_j\}$ is close to $u(x):=u(x,0)$,
we can formally approximate $\{x_j\}$ by
equally distributed nodes $\{\tilde{x}_j\}$ centered at $x_m$, separated by
distance $(2n u(\xb_m))^{-1}$. Namely, by
\begin{equation}\label{xtilde}
  \tilde{x}_j=x_m+\frac{j-m}{2n u(\xb_m)}.
\end{equation}

Then, making use of the cotangent identity, we have
\begin{align*}
  \sum_{|x_j-y_m|\leq n^{-1/2}}\frac{2}{y_m-x_j}\sim&
  \sum_{k=-2u(\xb_m) n^{1/2}}^{2u(\xb_m) n^{1/2}}\frac{2}{y_m-x_m+k(2nu(\xb_m))^{-1}}\\
  \sim&~  4\pi n u(\xb_m)\cot(2\pi n u(\xb_m)(y_m-x_m)).
\end{align*}

For the far field $II_m$, as the distribution of $\{x_j\}$ is close to
$u(x)$, we formally get
\[II_m= \sum_{|x_j-y_m|>n^{-1/2}}\cot\frac{y_m-x_j}{2}\sim
  2n\int_{|y-y_m|>n^{-1/2}}u(y)\cot\frac{y_m-y}{2}\,dy  \sim 4\pi n
  Hu(y_m).\]

Putting together the two expressions,
the leading order
$O(n)$ term reads
\[  4\pi u(\xb_m)\cot(2\pi n u(\xb_m)(y_m-x_m))+4\pi Hu(y_m)=0.\]
Simplify the equation and get
\begin{equation}\label{formalupdate}
  y_m-x_m=-\frac{1}{2n \pi
    u(\xb_m)}\arctan\left(\frac{u(\xb_m)}{Hu(y_m)}\right).
\end{equation}
To make sure $y_m\in(x_m,x_{m+1})$, we take the branch of
 $\arctan x$ with values in $(-\pi,0).$ This branch is discontinuous at $x=0$
 but as we will see $u(x,t)/Hu(x,t)$ is bounded away from zero for all times in our setting.

Take the time scale $\Delta t= (2n)^{-1}$, so that $u(x,t=1)$ represents the
distribution of the roots of $p_{2n}^{(2n)}$, the $2n$-th derivative of $p_{2n}$.
Equation \eqref{formalupdate} provides a macroscopic flux
\[ v(x_m) =  -\frac{1}{\pi
    u(\xb_m)}\arctan\left(\frac{u(\xb_m)}{Hu(y_m)}\right) \]
of $x_m$ (roots of $p_{2n}$)
to $y_m$ (roots of $p_{2n}'$).
Letting $n\to\infty$, we formally derive
\begin{align}\label{maineq1122}
  0=\partial_tu+\partial_x(uv)=\partial_tu-\frac{1}{\pi}\partial_x\left(\arctan\left(\frac{u}{Hu}\right)\right)
  =\partial_tu+\frac{1}{\pi}\partial_x\left(\arctan\left(\frac{Hu}{u}\right)\right).
\end{align}
Note that here it does not matter which branch of $\arctan$ we take as the derivative is the same.
This is indeed our main equation.

\section{Local regularity, instant regularization, and global regularity criterion}\label{local}

In this section, we discuss the local well-posedness theory of
\eqref{maineq} in $H^s$, $s>3/2$, and derive global regularity criterion.

The local regularity of $H^2$ solution has been established in
\cite{GB} using energy estimates together with the uniform
boundedness of $u$ away from zero.
We will prove a similar result in general $H^s$ spaces, $s>3/2:$
\[H^s(\S)=\left\{f : \S\to\R ~\left|~ \|f\|_{H^s}^2:=\|f\|_{L^2}^2+\|f\|_{\dot{H}^s}^2<+\infty \right.\right\}.\]
Here, the homogeneous Sobolev semi-norm is defined by
\[\|f\|_{\dot{H}^s}^2:=\|\Lambda^s f\|^2_{L^2},\]
where $\Lambda^2f(x)=-f''(x)$, and for $s\in(0,2)$, the fractional
Laplacian is given by
\[\Lambda^sf(x) := c_s\int_\R\frac{f(x)-f(y)}{|x-y|^{1+s}}\,dy,\quad
   c_s=\frac{2^s\Gamma(\frac{1+s}{2})}{\sqrt\pi|\Gamma(-\frac{s}{2})|}.\]
Equivalently, $\|\cdot\|_{\dot{H}^s}$ can be defined through Fourier
series
\[\|f\|_{\dot{H}^s}^2:=2\pi\sum_{k\in\mathbb{Z}} |k|^{2s}|\hat{f}(k)|^2.\]

Let us first state our local wellposedness result.
\begin{theorem}[Local wellposedness]\label{thm:local}
  Let $s>3/2$. Consider equation \eqref{maineq} with $H^s$ periodic
  initial data such that $u_0(x)>0$ for all $x \in \S$.
  Then, there exists a finite time $T=T(s,u_0)>0$ and a unique classical solution $u(x,t)$
  such that
  \begin{align}
    u&\in C([0,T]; H^s(\S))\cap L^2([0,T], H^{s+1/2}(\S));\label{uHs}\\
    t^ku&\in C((0,T], H^{s+k}(\S)),\quad\forall~k\geq0.\label{tku}
  \end{align}
  Moreover, the $C^\infty$ solution described above can be extended beyond a time $T>0$ if
  and only if
  \begin{equation}\label{BKM}
    \int_0^T \|\pa_xu\|_{L^\infty} \big(1+\|\pa_xu\|_{L^\infty}^4\big)
    \,dt<+\infty.
  \end{equation}
\end{theorem}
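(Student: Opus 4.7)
The plan is to construct local $H^s$ solutions by combining a viscous approximation with an $H^s$ energy estimate that exploits the dissipative structure of the second term in \eqref{maineq1}, and then to upgrade to smoothness and derive \eqref{BKM} by carefully tracking the dependence on $\|\partial_x u\|_\infty$. First, I would regularize \eqref{maineq1} by adding $\eps\,\partial_x^2 u^\eps$ on the right-hand side; standard parabolic theory provides a smooth short-time $u^\eps$ as long as the denominator $A^\eps := (u^\eps)^2 + (Hu^\eps)^2$ stays bounded away from zero. The crucial pointwise a priori bound comes from the maximum principle: at a minimum point $x_\ast$ of $u^\eps(\cdot,t)$ one has $\partial_x u^\eps(x_\ast)=0$ and $\Lambda u^\eps(x_\ast)\leq 0$ by \eqref{fraclap}, so $\partial_t u^\eps(x_\ast)\geq 0$, while at a maximum point the opposite inequalities force $\partial_t u^\eps(x_{\max})\leq 0$. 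Together with the viscous term this yields $0 < a_0 := \min u_0 \leq u^\eps(\cdot,t)\leq \|u_0\|_\infty$ uniformly in $\eps$ and $t$, whence $A^\eps \geq a_0^2$ and the denominator is harmless.

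The central step is the $H^s$ energy estimate. Applying $\Lambda^s$ to the equation, pairing with $\Lambda^s u^\eps$ in $L^2$, and using fractional integration by parts together with a fractional product rule extracts from the $-\frac{u^\eps}{\pi A^\eps}\Lambda u^\eps$ term the nonnegative dissipation
\[
\int_\S \frac{u^\eps}{\pi A^\eps}\,|\Lambda^{s+1/2}u^\eps|^2\,dx \;\geq\; c(a_0,\|u_0\|_\infty,\|u_0\|_{H^s})\,\|u^\eps\|_{\dot H^{s+1/2}}^2.
\]
The remaining contributions are Kato-Ponce commutators of the form $[\Lambda^s,u^\eps/(\pi A^\eps)]\Lambda u^\eps$ and $[\Lambda^s,Hu^\eps/(\pi A^\eps)]\partial_x u^\eps$, which can be bounded by $P(a_0^{-1},\|u^\eps\|_{H^s})\,\|u^\eps\|_{H^s}\|u^\eps\|_{H^{s+1/2}}$ for some polynomial $P$. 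Absorbing half of the dissipation via Young's inequality then yields
\[
\frac{d}{dt}\|u^\eps\|_{H^s}^2 + c\|u^\eps\|_{H^{s+1/2}}^2 \;\leq\; P(a_0^{-1},\|u^\eps\|_{H^s}),
\]
uniformly in $\eps$, giving a uniform $H^s$ bound on an interval $[0,T(s,u_0)]$ and a uniform $L^2([0,T];H^{s+1/2})$ bound. Passing $\eps\to 0$ by Aubin-Lions compactness delivers a solution in the class \eqref{uHs}; uniqueness follows from a standard $L^2$ energy estimate on the difference of two solutions using the Sobolev embedding $H^s\hookrightarrow W^{1,\infty}$ for $s>3/2$ to control the Lipschitz constants of the coefficients.

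The instant regularization \eqref{tku} is obtained by iterating the same energy scheme on the weighted quantity $t^{2k}\|u\|_{H^{s+k}}^2$: the half-derivative gain in the dissipation permits a gain of $\Lambda^{1/2}$ smoothness at each step, and the weights $t^{2k}$ absorb the singularity at $t=0$, so induction on $k$ propagates smoothness. For the blow-up criterion \eqref{BKM}, one re-runs the energy estimate while keeping explicit the dependence on $\|\partial_x u\|_\infty$. Since $\min u\geq a_0$ and $\|u\|_\infty\leq \|u_0\|_\infty$ are free from the maximum principle, the key task is to bound $\|\Lambda u\|_\infty$ and $\|\partial_x(u/A)\|_\infty$ in terms of $\|\partial_x u\|_\infty$. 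The main obstacle here, and the source of the fourth power, is that the Hilbert transform is unbounded on $L^\infty$: one must use a logarithmic interpolation such as $\|\Lambda u\|_\infty\lesssim \|\partial_x u\|_\infty(1+\log^+\|u\|_{H^s})$ and combine it with the $1/A^2$ factors produced when differentiating $u/A$. Tracking the resulting polynomial dependence through the Kato-Ponce commutators leads to the precise factor $(1+\|\partial_x u\|_\infty^4)$, so the refined inequality
\[
\frac{d}{dt}\|u\|_{H^s}^2 \leq C\,\|\partial_x u\|_\infty\bigl(1+\|\partial_x u\|_\infty^4\bigr)\,\|u\|_{H^s}^2
\]
followed by Gronwall yields the continuation criterion \eqref{BKM}.
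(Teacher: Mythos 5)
Your proposal is correct and follows essentially the same path as the paper: pointwise maximum principle giving $m_0\le u\le M_0$, an $H^s$ energy estimate with Kato--Ponce commutators that extracts the $\dot{H}^{s+1/2}$ dissipation from the $-\varphi\Lambda u$ term, construction via vanishing viscosity (one of the two standard routes the paper cites), a parabolic bootstrap for \eqref{tku} (you use time weights $t^{2k}$, the paper slices dyadic time intervals via the mean value theorem --- both standard), and the Kato logarithmic inequality reducing $\|Hu\|_{L^\infty}$ and $\|\Lambda u\|_{L^\infty}$ to $\|\pa_xu\|_{L^\infty}$ for the criterion \eqref{BKM}. The only slip is cosmetic: your final displayed Gronwall inequality drops the factor $(1+\log_+\|u\|_{\dot{H}^s}^2)$ that your own logarithmic interpolation necessarily produces; keeping it (as the paper does) gives the double-exponential bound \eqref{eq:doubleexpbound} rather than a plain exponential one, which still yields \eqref{BKM}.
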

\begin{remark}
The estimate for the norm $\|t^ku\|_{C((0,T], H^{s+k}(\S))}$ that we will obtain in this section will depend on $T,$
so it does not yield any decay of the Sobolev norms for large time. This will be proved later, in Section~\ref{asymptotic}.
\end{remark}

The rest of this section is devoted to the proof of Theorem \ref{thm:local}.
We focus on the a-priori estimates that can be used to establish Theorem~\ref{thm:local} in a standard way -
for example using Galerkin approximations (see \cite{KNS} for a similar setting) or limiting viscosity (see \cite{GB}).
To proceed, we shall introduce short cut notation
\begin{equation}\label{phipsi}
  \psi=\frac{Hu}{u^2+(Hu)^2},\quad
  \varphi=\frac{u}{u^2+(Hu)^2}.
\end{equation}
The equation \eqref{maineq1} can be expressed as
\begin{equation}\label{ueq}
  \pi\pa_tu=\left(\psi\pa_xu-\varphi\Lambda u\right).
\end{equation}

Let us denote $m(t)$ and $M(t)$ the minimum and maximum
values of $u(\cdot,t)$, respectively
\begin{equation}\label{uminmax}
  m(t)=\min_{x\in\S}u(x,t),\quad M(t)=\max_{x\in\S}u(x,t).
\end{equation}

The following maximum principle can be derived in a standard way (see for instance \cite{GB}).
\begin{proposition}[Maximum principle]\label{prop:mp}
  Suppose $u$ is a smooth solution (in the sense of \eqref{uHs}) of
  \eqref{ueq}. Then,
\[m_0\leq m(t)\leq M(t)\leq M_0,\quad\forall~t\in[0,T].\]
Here $m_0=m(0)$ and $M_0=M(0).$
\end{proposition}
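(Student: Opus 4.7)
The plan is to run the standard nonlocal maximum principle argument, exploiting the fact that $\Lambda u$ has a favorable sign at pointwise extrema and that the coefficient $\varphi$ is nonnegative whenever $u$ is. The one subtlety is that $\varphi$ depends on $u$ itself, so we need to bootstrap the positivity of $u$ as we go.

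First, since $u_0(x) > 0$ on the compact set $\S$, by continuity of $u$ in $(x,t)$ (guaranteed by $u \in C([0,T]; H^s)$ with $s > 3/2$ and Sobolev embedding) there is a maximal subinterval $[0,T^*) \subseteq [0,T]$ on which $m(t) > 0$. The goal is to establish $m_0 \leq m(t) \leq M(t) \leq M_0$ on this subinterval, which will then force $T^* = T$. Let $x^*(t) \in \S$ be any point where $u(\cdot,t)$ attains $M(t)$. Because $u(\cdot,t) \in C^1(\S)$, we have $\pa_x u(x^*,t) = 0$, and the periodic integral representation \eqref{fraclap} gives
\begin{align}
\Lambda u(x^*,t) = \frac{1}{4\pi} P.V. \int_{\S} \frac{u(x^*,t) - u(y,t)}{\sin^2((x^*-y)/2)}\, dy \geq 0,
\end{align}
since the numerator is nonnegative throughout. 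On $[0,T^*)$ the denominator $u^2 + (Hu)^2$ is bounded away from zero at $x^*$ (as $u(x^*,t) \geq m(t) > 0$), so $\varphi(x^*,t) > 0$. Evaluating \eqref{ueq} at $x^*$ and using $\pa_x u(x^*,t) = 0$,
\begin{align}
\pi\, \pa_t u(x^*,t) = -\varphi(x^*,t)\, \Lambda u(x^*,t) \leq 0.
\end{align}
A standard envelope (Danskin/Rademacher) argument, justified by the fact that $\pa_t u$ is continuous and $M(t)$ is Lipschitz, then gives $M'(t) \leq 0$ a.e., so $M(t) \leq M_0$.

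The argument at the minimum is symmetric: at any $x_*(t)$ realizing $m(t)$, $\pa_x u(x_*,t) = 0$ and $\Lambda u(x_*,t) \leq 0$, while $\varphi(x_*,t) > 0$ on $[0,T^*)$ for the same reason as before. Therefore
\begin{align}
\pi\, \pa_t u(x_*,t) = -\varphi(x_*,t)\, \Lambda u(x_*,t) \geq 0,
\end{align}
and the envelope argument yields $m(t) \geq m_0 > 0$. In particular the minimum cannot touch zero, so by continuity $T^* = T$ and both bounds propagate to the whole interval. The only mild obstacle is the circular dependence between the positivity of $u$ (needed so that $\varphi$ makes sense and has the right sign) and the min principle that produces it; this is resolved cleanly by the continuity-in-time bootstrap on the open set where $m(t) > 0$.
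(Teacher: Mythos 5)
Your proof is correct and follows essentially the same route the paper has in mind: the paper only remarks that the maximum principle "can be derived in a standard way (see \cite{GB})", and your argument — evaluating \eqref{ueq} at extremal points, using $\pa_x u=0$ and the sign of $\Lambda u$ there together with $\varphi\geq 0$, plus the Lipschitz/a.e.-derivative treatment of $M(t)$, $m(t)$ — is exactly that standard argument, which the authors themselves spell out later in the proof of Proposition~\ref{linfconv}. The positivity bootstrap you add to keep $\varphi$ well-signed is a reasonable way to handle the mild circularity and does not change the substance.
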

An improved estimate will be obtained later in Section \ref{expconv},
where it is shown that \[ M(t),m(t) \rightarrow \frac{1}{2\pi}\int_{\S} u(x,t)\,dx = \bar u \] and
$V(t):=M(t)-m(t)$ decays exponentially in time.

The maximum principle directly implies boundedness of
\[\|u(\cdot,t)\|_{L^2}\leq\sqrt{2\pi}M_0,\quad \forall~t\geq0.\]
Therefore, we only need to focus on the control of $\|u\|_{\dot{H}^s}$.

\subsection{Local wellposedness}
We perform a standard energy estimate. Applying $\Lambda^s$ to the
equation \eqref{ueq}, multiplying by $\Lambda^su$ and
integrating in $x$ we obtain
\[\frac{\pi}{2}\frac{d}{dt}\|u\|_{\dot{H}^s}^2=
  \int_\S \Lambda^su \cdot \Lambda^s(\psi\pa_xu)\,dx
  -\int_\S \Lambda^su \cdot \Lambda^s(\varphi\Lambda u)\,dx=I+II.\]
Throughout most of this section, we drop time dependence in notation to simplify presentation.

Let us treat the two terms separately. We split the transport term $I$ as follows:
\[I=\int_\S \Lambda^su \cdot \psi \cdot \Lambda^s(\pa_xu)\,dx+
  \int_\S \Lambda^su \cdot [\Lambda^s, \psi]\pa_xu\,dx=I_1+I_2.\]
$I_1$ can be bounded by standard integration by parts,
\[|I_1|=\left|\int_\S\pa_x\left(\frac12 (\Lambda^su)^2\right)\cdot\psi\,dx\right|
  =\left|-\frac12\int_\S\pa_x\psi\cdot(\Lambda^su)^2\,dx\right|
  \leq\frac{1}{2}\|\pa_x\psi\|_{L^\infty}\|u\|_{\dot{H}^s}^2.\]
$I_2$ can be bounded by the Kato-Ponce type commutator estimate \cite{KP,KPV}, see \cite[Lemma 2.5]{KenigP} for the periodic case:
\[|I_2|\leq\|u\|_{\dot{H}^s}\|[\Lambda^s, \psi]\pa_xu\|_{L^2}
  \lesssim\|u\|_{\dot{H}^s}\left(\|\pa_x\psi\|_{L^\infty}\|\Lambda^{s}u\|_{L^2}
    +\|\Lambda^s\psi\|_{L^2}\|\pa_xu\|_{L^\infty}\right).\]
Next, $\|\pa_x\psi\|_{L^\infty}$ can be estimated by
\begin{equation}\label{eq:psix}
  |\pa_x\psi|\leq\frac{|\pa_xHu|}{u^2+(Hu)^2}+\frac{|Hu|\cdot\left(2u|\pa_xu|+2|Hu|\cdot|\pa_xHu|\right)}{(u^2+(Hu)^2)^2}
  \leq\frac{|\pa_xu|+3|\Lambda u|}{m_0^2}.
\end{equation}
To estimate $\|\psi\|_{\dot{H}^s}$, we apply the fractional Leibniz rule (see e.g. \cite{GO,MS}):
\begin{align}\|\psi\|_{\dot{H}^s}\lesssim&\,
  \|Hu\|_{\dot{H}^s}\left\|\frac{1}{u^2+(Hu)^2}\right\|_{L^\infty}+
  \|Hu\|_{L^\infty}\left\|\frac{1}{u^2+(Hu)^2}\right\|_{\dot{H}^s}\nonumber\\
  \leq&\, \frac{1}{m_0^2}\|u\|_{\dot{H}^s}+\|Hu\|_{L^\infty}\left\|\frac{1}{u^2+(Hu)^2}\right\|_{\dot{H}^s}.\label{psiHsest}
\end{align}
For $\left\|\frac{1}{u^2+(Hu)^2}\right\|_{\dot{H}^s}$, we state the
following composition estimate, and give a sketch of the proof for the sake of completeness.
\begin{lemma}\label{lem:composition}
 Let $s\geq1$.  Suppose $f\in H^s(\S)$ is a positive function with
  $\min_x f(x)=f_{\min}>0$.
  There exists a constant $C=C(s,f_{\min})$ such that
  \begin{equation}\label{composition}
    \left\|\frac{1}{f}\right\|_{\dot{H}^s}
    \leq C(s,f_{\min},a) \left(1+\|f\|_{L^\infty}^{k-1}\right)
    \Big(\|f\|_{\dot{H}^s}+\|f\|_{C^{\gamma+a}}\|f\|_{\dot{H}^k}\Big),
  \end{equation}
  where we denote $k$ the integer part of $s$,
  $\gamma=s-k\in[0,1)$, $a>0$ and $\gamma+a<1.$
\end{lemma}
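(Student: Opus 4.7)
The plan is to expand $\partial_x^k(1/f)$ explicitly via Faà di Bruno's formula (equivalently, by repeatedly differentiating the identity $f\cdot(1/f)=1$ and solving recursively):
\begin{align}
\partial_x^k\!\left(\tfrac{1}{f}\right) = -\frac{f^{(k)}}{f^2} + \sum_{\substack{\pi\,\vdash\, k\\ |\pi|\geq 2}} \frac{c_\pi}{f^{|\pi|+1}}\prod_{B\in \pi} f^{(|B|)},
\end{align}
and then to estimate the $\dot H^\gamma$ norm of each resulting term, using the Fourier identity $\|1/f\|_{\dot H^s}=\|\partial_x^k(1/f)\|_{\dot H^\gamma}$ on $\S$ with $s=k+\gamma$, $\gamma\in[0,1)$. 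The decomposition isolates a single ``top'' term $f^{(k)}/f^2$ (corresponding to the single-block partition) from all other terms, each of which is a product of at least two derivative factors of orders $\leq k-1$.

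The top term is the delicate one: the standard symmetric Kato-Ponce inequality is useless here because it would demand control of $\|f^{(k)}\|_{L^\infty}$, which (for $\gamma<1/2$) is not controlled by any norm on the right-hand side of \eqref{composition}. Instead I would use the asymmetric product estimate
\begin{align}
\|uv\|_{\dot H^\gamma}\leq C_{\gamma,a}\bigl(\|u\|_{C^{\gamma+a}}\|v\|_{L^2}+\|u\|_{L^\infty}\|v\|_{\dot H^\gamma}\bigr),\qquad \gamma+a<1,
\end{align}
which follows directly from the pointwise splitting $|u(x)v(x)-u(y)v(y)|\leq |u(x)-u(y)||v(x)|+|u(y)||v(x)-v(y)|$, the Gagliardo definition of $\dot H^\gamma$, and the integrability of $|x-y|^{-1+2a}$ over $\S$. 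Applied with $u=1/f^2$ and $v=f^{(k)}$, combined with the Leibniz bound $\|1/f^2\|_{C^{\gamma+a}}=\|(1/f)^2\|_{C^{\gamma+a}}\leq C(f_{\min})\|f\|_{C^{\gamma+a}}$ (since $\|1/f\|_{L^\infty}\leq f_{\min}^{-1}$ and $[1/f]_{\gamma+a}\leq f_{\min}^{-2}[f]_{\gamma+a}$), this produces exactly the two prototype contributions $\|f\|_{\dot H^s}$ and $\|f\|_{C^{\gamma+a}}\|f\|_{\dot H^k}$ appearing on the right-hand side of \eqref{composition}.

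For the remaining Faà di Bruno terms, every factor $f^{(j_i)}$ has $j_i\leq k-1$, so by the Sobolev embedding $H^{s-j_i}\hookrightarrow L^\infty$ (valid since $s-j_i\geq 1+\gamma>1/2$) each $f^{(j_i)}$ is pointwise bounded. These terms can be handled by iterating the standard fractional Leibniz rule, distributing the $\dot H^\gamma$ onto a single factor and keeping the remaining ones in $L^\infty$, together with Gagliardo-Nirenberg interpolation of the form $\|f\|_{\dot H^{j_i+\gamma}}\lesssim\|f\|_{C^{\gamma+a}}^{1-j_i/k}\|f\|_{\dot H^s}^{j_i/k}$ and the companion $L^\infty$ interpolations controlling intermediate derivatives. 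The factor $(1+\|f\|_{L^\infty}^{k-1})$ in the claim absorbs the $\|f\|_{L^\infty}$ powers coming from the Leibniz and chain-rule bounds on $1/f^{m+1}$ and from the $L^\infty$ norms of the ``non-critical'' derivative factors in each multi-factor product (at most $k-1$ such slots in any partition with $|\pi|\geq 2$).

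The main obstacle is precisely the top term: any symmetric Kato-Ponce-type estimate forces $\|f^{(k)}\|_{L^\infty}$ into the bound, which is simply unavailable from the regularity in \eqref{composition}. The remedy --- shifting the Hölder burden onto $1/f^2$ via the asymmetric $\dot H^\gamma$ product estimate, and exploiting that $1/f^2$ inherits the $C^{\gamma+a}$ regularity of $f$ via Leibniz/chain rule --- is the only genuinely nonstandard ingredient; the rest of the proof is combinatorial bookkeeping across the Faà di Bruno expansion combined with standard Sobolev embedding and interpolation.
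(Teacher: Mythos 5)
Your proposal is correct in its essentials and it isolates the same key analytic tool as the paper: the asymmetric $\dot H^\gamma$ product (commutator-type) estimate $\|uv\|_{\dot H^\gamma}\lesssim \|u\|_{L^\infty}\|v\|_{\dot H^\gamma}+\|u\|_{C^{\gamma+a}}\|v\|_{L^2}$, which is exactly the estimate (quoted from \cite{DKRT}) that the paper uses to split each term into the $\|f\|_{\dot H^s}$ and $\|f\|_{C^{\gamma+a}}\|f\|_{\dot H^k}$ contributions. Where you differ is the algebraic decomposition of $\partial_x^k(1/f)$. You use Fa\`a di Bruno, which forces you to deal with genuinely multi-factor terms such as $(f')^2/f^3$, containing products of intermediate derivatives; the paper instead uses the identity
\begin{equation}
\frac{d^k}{dx^k}\Big(\frac1f\Big)=\sum_{j=1}^k(-1)^j\binom{k+1}{j+1}\frac{1}{f^{j+1}}\frac{d^k}{dx^k}\big(f^j\big),
\end{equation}
in which all $k$ derivatives stay on a single power $f^j$. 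With that identity the only product rule needed is $\|f^j\|_{\dot H^\sigma}\lesssim\|f\|_{L^\infty}^{j-1}\|f\|_{\dot H^\sigma}$ (for $\sigma=s$ and $\sigma=k$), and no interpolation of intermediate derivatives ever appears. In your route, the claim that the $(1+\|f\|_{L^\infty}^{k-1})$ factor simply ``absorbs the $L^\infty$ norms of the non-critical derivative factors'' is not literally true, since $\|f^{(b_j)}\|_{L^\infty}$ is not controlled by $\|f\|_{L^\infty}$; you must run the Gagliardo--Nirenberg interpolations you allude to, splitting each $\|f^{(b_j)}\|_{L^\infty}$ and $\|f\|_{\dot H^{b_i+\gamma}}$ between $\|f\|_{L^\infty}$ (or $\|f\|_{C^{\gamma+a}}$) and $\|f\|_{\dot H^s}$ (respectively $\|f\|_{\dot H^k}$), and check that the high-norm exponents sum to exactly one across each Fa\`a di Bruno block (they do, because the total derivative count in each term is $k$), so that no term quadratic in $\|f\|_{\dot H^s}$ survives. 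So your argument closes, but at the cost of this extra interpolation bookkeeping, which is precisely what the paper's reciprocal-power identity is designed to avoid.
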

\begin{proof}
  We first show \eqref{composition} when $s=k$ is an integer.
  Apply the following identity on derivatives of reciprocal functions
\[\frac{d^k}{dx^k}\left(\frac{1}{f(x)}\right)=
  \sum_{j=1}^k(-1)^j{k+1 \choose j+1}\frac{1}{f(x)^{j+1}}\frac{d^k}{dx^k}\big(f(x)^j\big),\]
and the uniform lower bound $f\geq f_{\min}$ to get
\begin{align}
  \left\|\frac{1}{f}\right\|_{\dot{H}^s}=&\,
  \left\|\frac{d^s}{dx^s}\left(\frac{1}{f}\right)\right\|_{L^2}\leq \sum_{j=1}^s C(j, s, m_0)\cdot
  \|f^j\|_{\dot{H}^{s}}\\ \leq&\, \sum_{j=1}^s C(j, s, f_{\min})\cdot \|f\|_{L^\infty}^{j-1}\|f\|_{\dot{H}^{s}}\leq C(s, f_{\min})(1+\|f\|_{L^\infty}^{s-1})\|f\|_{\dot{H}^s}.\label{aux4127}
\end{align}
Here, in the second inequality, we have also used the Leibniz rule \cite{GO,MS}
\begin{equation}\label{powerLeib}
  \|f^j\|_{\dot{H}^{s}}=\left\|\prod_{l=1}^jf\right\|_{\dot{H}^{s}}\lesssim
  \|f\|_{\dot{H}^s}\prod_{l=1}^{j-1}\|f\|_{L^\infty}=\|f\|_{L^\infty}^{j-1}\|f\|_{\dot{H^s}}.
\end{equation}

If $s$ is not an integer, we can deploy a version of the commutator estimate \cite[Eq. (3.5)]{DKRT}):
\begin{align*}
  \left\|\frac{1}{f}\right\|_{\dot{H}^s}=&\,
  \left\|\frac{d^k}{dx^k}\left(\frac{1}{f}\right)\right\|_{\dot{H}^\gamma}\leq
 \sum_{j=1}^k C(j, k)\cdot\left\|\frac{1}{f^{j+1}}\frac{d^k}{dx^k}\big(f^j\big)\right\|_{\dot{H}^\gamma}\\ \leq&\,
\sum_{j=1}^k C(j, k,a)\cdot\left(\left\|\frac{1}{f^{j+1}}\right\|_{L^\infty}\|f^j\|_{\dot{H}^s}+
 \left\|\frac{1}{f^{j+1}}\right\|_{C^{\gamma+a}}\|f^j\|_{\dot{H}^k}\right).
\end{align*}
The first part can be treated in the same manner as in \eqref{aux4127}.

For the second part, observe that
\[
  \left\|\frac{1}{f^{j+1}}\right\|_{C^{\gamma+a}}
  \leq \frac{j+1}{(\min f)^{j+2}}\|f\|_{C^{\gamma+a}}.
\]
Then we have
\[\left\|\frac{1}{f^{j+1}}\right\|_{C^{\gamma+a}}\|f^j\|_{\dot{H}^k}\leq
C(s,f_{\min}) \|f\|_{C^{\gamma+a}} \|f\|_{L^\infty}^{j-1}\|f\|_{\dot{H}^k}.\]
Collecting all estimates, we obtain
\eqref{composition}.
\end{proof}

Now, we apply Lemma \ref{lem:composition} with $f=u^2+Hu^2$. Compute
\begin{align*}
  \|f\|_{L^\infty}\leq&\, M_0^2+\|Hu\|_{L^\infty}^2,\\
  \|f\|_{C^{\gamma+a}}\leq&\,
  2\|u\|_{L^\infty}\|u\|_{C^{\gamma+a}}
  +2\|Hu\|_{L^\infty}\|Hu\|_{C^{\gamma+a}}
\lesssim (M_0+\|Hu\|_{L^\infty})\|\pa_xu\|_{L^\infty},
 \end{align*}
and use fractional Leibniz rule
\[\|f\|_{\dot{H}^s}=\|u^2+(Hu)^2\|_{\dot{H}^s}
  \lesssim (M_0+\|Hu\|_{L^\infty})\|u\|_{\dot{H}^s}.\]
We arrive at the bound
\begin{equation}\label{HsuHu}
  \left\|\frac{1}{u^2+(Hu)^2}\right\|_{\dot{H}^s}\leq
  C(s, m_0,M_0)\left(1+\|Hu\|_{L^\infty}^{2s}\right)(1+\|\pa_xu\|_{L^\infty})\|u\|_{\dot{H}^s}.
\end{equation}

Then, we have
\[\|\psi\|_{\dot{H}^s}\lesssim
  \left(1+\|Hu\|_{L^\infty}^{2s+1}\right)(1+\|\pa_xu\|_{L^\infty}) \|u\|_{\dot{H}^s}.\]
Putting the estimates together, we get the bound on $I$
\begin{equation}\label{eq:est1}
  |I|\lesssim
  \left[\|\pa_xu\|_{L^\infty}\left(1+\|\pa_xu\|_{L^\infty}\right)
    \left(1+\|Hu\|_{L^\infty}^{2s+1}\right)+\|\Lambda
  u\|_{L^\infty}\right]\|u\|_{\dot{H}^s}^2.
\end{equation}

\medskip
Next, for the dissipation term $II$, we have
\[II=-\int_\S
  \Lambda^{s+\frac12}u\cdot\Lambda^{s-\frac12}(\varphi\Lambda u)\,dx
  =-\int_\S\varphi\cdot(\Lambda^{s+\frac12}u)^2\,dx
  -\int_\S\Lambda^{s+\frac12}u\cdot[\Lambda^{s-\frac12},\varphi]\Lambda u\,dx.
\]
Note that $\varphi$ satisfies the following lower bound
\[\varphi=\frac{u}{u^2+(Hu)^2}\geq\frac{m_0}{M_0^2+\|Hu\|_{L^\infty}^2}.\]
Then,
\begin{align*}
  II\leq&
  -\frac{m_0}{M_0^2+\|Hu\|_{L^\infty}^2}\|u\|_{\dot{H}^{s+\frac12}}^2
  +\|u\|_{\dot{H}^{s+\frac12}}\|[\Lambda^{s-\frac12},\varphi]\Lambda u\|_{L^2}\\
  \leq&  -\frac{m_0}{2(M_0^2+\|Hu\|_{L^\infty}^2)}\|u\|_{\dot{H}^{s+\frac12}}^2
  + \frac{M_0^2+\|Hu\|_{L^\infty}^2}{2m_0}\|[\Lambda^{s-\frac12},\varphi]\Lambda u\|_{L^2}^2.
\end{align*}
Let us apply the same commutator estimate \cite{KenigP} (using $\Lambda u = \partial_x Hu$):
\[\|[\Lambda^{s-\frac12},\varphi]\Lambda u\|_{L^2}\lesssim
  \|\pa_x\varphi\|_{L^\infty}\|u\|_{\dot{H}^{s-\frac12}}+\|\varphi\|_{\dot{H}^{s-\frac12}}\|\Lambda u\|_{L^\infty}.
\]
Similarly to the treatment of $\psi$, we can estimate $\|\pa_x\varphi\|_{L^\infty}$ via
\begin{equation}\label{eq:phix}
  |\pa_x\varphi|\leq\frac{|\pa_xu|}{u^2+(Hu)^2}+\frac{u\cdot\left(2u|\pa_xu|+2|Hu|\cdot|\pa_xHu|\right)}{(u^2+(Hu)^2)^2}
  \leq\frac{3|\pa_xu|+|\Lambda u|}{m_0^2}.
\end{equation}
Let us use fractional Leibniz rule and \eqref{HsuHu} to estimate $\|\varphi\|_{\dot{H}^{s-\frac12}}:$
\[\|\varphi\|_{\dot{H}^{s-\frac12}}\lesssim
  \frac{1}{m_0^2}\|u\|_{\dot{H}^{s-\frac12}}+
  M_0 \left\|\frac{1}{u^2+(Hu)^2}\right\|_{\dot{H}^{s-\frac12}}
  \lesssim \left(1+\|Hu\|_{L^\infty}^{2s-1}\right)(1+\|\pa_xu\|_{L^\infty})\|u\|_{\dot{H}^{s-\frac12}}.\]
Then
\[\|[\Lambda^{s-\frac12},\varphi]\Lambda u\|_{L^2}\lesssim
 \Big[\|\pa_xu\|_{L^\infty}+\|\Lambda u\|_{L^\infty}(1+\|\pa_xu\|_{L^\infty})\left(1+\|Hu\|_{L^\infty}^{2s-1}\right)\Big]\|u\|_{\dot{H}^{s-\frac12}}.
\]
We end up with the estimate on $II$
\begin{align}\label{eq:est2}
  II\leq&
-\frac{m_0}{2(M_0^2+\|Hu\|_{L^\infty}^2)}\|u\|_{\dot{H}^{s+\frac12}}^2
\\
 +&\,C(s,m_0,M_0)(1+\|Hu\|_{L^\infty}^2)\Big[\|\pa_xu\|_{L^\infty}^2+\|\Lambda u\|_{L^\infty}^2(1+\|\pa_xu\|_{L^\infty}^2)\left(1+\|Hu\|_{L^\infty}^{4s-2}\right)\Big]
\|u\|_{\dot{H}^{s-\frac12}}^2.\nonumber
\end{align}

\medskip
Combining the estimates \eqref{eq:est1} and \eqref{eq:est2},
we obtain the differential inequality on evolution of the $\dot{H}^s$ norm of solution
\begin{align}\label{eq:uHs}
  &\frac{\pi}{2}\frac{d}{dt}\|u\|_{\dot{H}^s}^2+\frac{m_0}{2(M_0^2+\|Hu\|_{L^\infty}^2)}\|u\|_{\dot{H}^{s+\frac12}}^2\\
  &\lesssim
  \Big[\|\pa_xu\|_{L^\infty}\left(1+\|\pa_xu\|_{L^\infty}\right)
    \left(1+\|Hu\|_{L^\infty}^{2s+1}\right)+\|\Lambda
  u\|_{L^\infty}\Big]\|u\|_{\dot{H}^s}^2.
 \nonumber\\
&\quad+(1+\|Hu\|_{L^\infty}^2)\Big[\|\pa_xu\|_{L^\infty}^2+\|\Lambda u\|_{L^\infty}^2(1+\|\pa_xu\|_{L^\infty}^2)\left(1+\|Hu\|_{L^\infty}^{4s-2}\right)\Big]\|u\|_{\dot{H}^{s-\frac12}}^2. \nonumber
\end{align}
For $s>3/2$, $\|Hu\|_{L^\infty}, \|u_x\|_{L^\infty}, \|\Lambda
u\|_{L^\infty}$ and $\|u\|_{\dot{H}^{s-\frac12}}$ are all controlled
by $\|u\|_{\dot{H}^s}$.
Hence, we have
\[\frac{d}{dt}\|u\|_{\dot{H}^s}^2\lesssim(1+\|u\|_{\dot{H}^s}^{4s+3})\|u\|_{\dot{H}^s}^3\]
(we do not aim for the best exponent right now).
This a-priori estimate can be used in a standard manner to derive that
the solution exists and is unique locally
in time, namely there exists a time $T>0$ such that $u\in C([0,T], H^s(\S))$ provided that $u_0 \in H^s(\S).$

Moreover, integrating \eqref{eq:uHs} in $[0,T]$, we get
\begin{align*}\int_0^{T}\|u(\cdot,t)\|_{\dot{H}^{s+\frac12}}^2\,dt
\leq&~
      \frac{2}{m_0}\left(M_0^2+\sup_{t\in[0,T]}\|Hu(\cdot,t)\|_{L^\infty}^2\right)\\
  &\times~\left(\frac{\pi}{2}\|u_0\|_{\dot{H}^s}^2+C\int_0^{T}\left(1+\|u(\cdot,t)\|_{H^s}^{4s+3}\right)\|u(\cdot,t)\|_{H^s}^3\,dt\right)<+\infty.
\end{align*}
So, $u\in L^2([0,T], H^{s+1/2}(\S))$. This completes the proof of
\eqref{uHs}.

\subsection{Instant regularization}

It is a common feature of dissipative differential equations that the solutions
instantly become $C^\infty,$ even if the initial data has limited regularity.
This can be expressed in terms of \eqref{tku}.
We include a proof here, which deploys an argument similar to
\cite[Theorem 2.4]{KNS}.

We show \eqref{tku} for all half integers $k$ by induction.
Again, we focus on the a-priori estimates, which are done on smooth functions such as Galerkin
approximations or viscous regularizations. The bounds are then inherited by the solutions via limiting procedure.
The $k=0$ case is exactly the local well-posedness result in
\eqref{uHs}.
Inductively, assume that $t^{k} u\in C((0,T], H^{s+k}(\S))$.
Replacing $s$ by $s+k$ in the energy estimate \eqref{eq:uHs} (but still using $\|u\|_{\dot{H}^s}$ to control lower order norms),
using interpolation inequality and Young's inequality, we get
\begin{align*}
  &\frac{\pi}{2}\frac{d}{dt}\|u\|_{\dot{H}^{s+k}}^2+\frac{m_0}{2(M_0^2+\|Hu\|_{L^\infty}^2)}\|u\|_{\dot{H}^{s+k+\frac12}}^2 \lesssim
    \left(1+\|u\|_{\dot{H}^s}^{4(s+k)+3}\right)\|u\|_{\dot{H}^s}
    \|u\|_{\dot{H}^{s+k}}^2\\
  &\lesssim
    \left(1+\|u\|_{\dot{H}^s}^{4(s+k)+3}\right)\|u\|_{\dot{H}^s}
    \|u\|_{\dot{H}^s}^{2(1-\alpha)}\|u\|_{\dot{H}^{s+k+\frac12}}^{2\alpha},\qquad
  \alpha=\frac{2k}{2k+1}\\
  &\leq\frac{\alpha m_0}{4(M_0^2+\|Hu\|_{L^\infty}^2)}\|u\|_{\dot{H}^{s+k+\frac12}}^2\\
  & \qquad +(1-\alpha)\left[\left(1+\|u\|_{\dot{H}^s}^{4(s+k)+3}\right)\|u\|_{\dot{H}^s}\right]^{\frac{1}{1-\alpha}}\left[\frac{4(M_0^2+\|Hu\|_{L^\infty}^2)}{m_0}\right]^{\frac{\alpha}{1-\alpha}}\|u\|_{\dot{H}^s}^2.
\end{align*}
This implies
\begin{equation}\label{spk}
  \frac{\pi}{2}\frac{d}{dt}\|u\|_{\dot{H}^{s+k}}^2+\frac{m_0}{4(M_0^2+\|Hu\|_{L^\infty}^2)}
  \|u\|_{\dot{H}^{s+k+\frac12}}^2\lesssim F(\|u\|_{\dot{H}^s}),
\end{equation}
where the nonlinear function $F$ can be chosen as
\[F(r)=\big((1+r)^{4(s+k)+3}r\big)^{2k+1}(1+r^2)^{2k}r^2,\]
which is bounded if $r$ is bounded. Therefore, from the local
wellposedness theory, $F(\|u(\cdot,t)\|_{\dot{H}^s})$
is uniformly bounded for any $t\in[0,T]$.

Now, let us fix any $t\in(0,T]$, and consider the interval $I=(t/2,t)$.
Integrating \eqref{spk} in the interval $I$, we obtain
\begin{align*}\int_{t/2}^t\|u(\cdot,\tau)\|_{\dot{H}^{s+k+\frac12}}^2\,d\tau
\leq&~
      \frac{4}{m_0}\left(M_0^2+\sup_{\tau\in[t/2,t]}\|Hu(\cdot,\tau)\|_{L^\infty}^2\right)\times\\
    &\times~\left(\|u(\cdot,t/2)\|_{\dot{H}^{s+k}}^2+C\int_{t/2}^tF(\|u(\cdot,\tau)\|_{\dot{H}^s})\,d\tau\right)\\
  \leq&~C\|u(\cdot,t/2)\|_{\dot{H}^{s+k}}^2+Ct\leq Ct^{-2k},
\end{align*}
where the induction hypotheses is used in the last inequality.
Here, we use $C$ to represent constants which only depend on $s, k,
m_0, M_0$ and $\|u_0\|_{H^s}$, and they could change from line to line.

Using mean value theorem,
we can find $\tau\in I$ such that
\[\|u(\cdot,\tau)\|_{\dot{H}^{s+k+\frac12}}^2\leq C|I|^{-1}t^{-k}\leq Ct^{-2k-1}.\]

Replacing $s+k$ in \eqref{spk} by $s+k+\frac12$ and integrating over $[\tau,t]$ yields
\begin{align}\label{aux7127} \|u(\cdot,t)\|_{\dot{H}^{s+k+\frac12}}^2\leq
  \|u(\cdot,\tau)\|_{\dot{H}^{s+k+\frac12}}^2+Ct\leq Ct^{-2k-1}.\end{align}
Hence, we obtain $t^{k+\frac12}u\in C((0,T],
H^{s+k+\frac12}(\S))$. This concludes the induction step. Note that the constant $C$ in \eqref{aux7127}
depends on $T;$ thus we do not get decay of the Sobolev norms in time in this estimate, only instant regularization.

Non-half-integer $k$ can be covered by interpolation
\[\|u(\cdot,t)\|_{\dot{H}^{s+k}}\leq
  \|u(\cdot,t)\|_{\dot{H}^s}^{1-\frac{k}{k_0}}\|u(\cdot,t)\|_{\dot{H}^{s+k_0}}^{\frac{k}{k_0}},\]
where $k_0$ is the smallest half-integer that is larger than $k$.

\subsection{Conditional global regularity criterion}
We now derive a sufficient and necessary condition \eqref{BKM} that
ensures the global existence and smoothness of solutions.

We start with the energy estimate \eqref{eq:uHs}, and simplify it.
Let us first estimate the Hilbert transform $Hu$.
\begin{lemma}\label{Katolem}
Suppose that $u$ belongs to H\"older class $C^\alpha(\S)$ with some $\alpha>0.$
Then
\begin{align}\label{Kato127}
  \|Hu\|_{L^\infty}\leq C(\alpha)\|u\|_{L^\infty}\left(1+\log_+\frac{\|u\|_{C^\alpha}}{\|u\|_{L^\infty}}\right) 
\end{align}
The norm $\|u\|_{C^\alpha}$ can be replaced by $\|\pa_x u\|_\infty$ or, by the imbedding theorem, $\|u\|_{\dot{H}^s}$ with any $s>1/2.$
\end{lemma}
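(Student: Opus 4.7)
The plan is to prove the logarithmic estimate \eqref{Kato127} via a standard splitting of the principal value integral into near and far field parts, optimizing the splitting radius.

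First, I would rewrite the periodic Hilbert transform in the symmetric form that exploits the oddness of the cotangent kernel on $\S$:
\[ Hu(x) = \frac{1}{2\pi}\,\mathrm{P.V.}\int_{\S} u(y)\cot\!\left(\tfrac{x-y}{2}\right)\,dy = \frac{1}{2\pi}\int_{\S}\bigl[u(y)-u(x)\bigr]\cot\!\left(\tfrac{x-y}{2}\right)\,dy, \]
where the insertion of $-u(x)$ is harmless by the symmetry of the kernel around $x$. Fix $\delta \in (0,\pi]$ to be chosen and split the integral into the region $|x-y| \leq \delta$ (near field) and $\delta < |x-y| \leq \pi$ (far field).

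For the near field, I would use $|\cot(\theta/2)| \leq C/|\theta|$ for $|\theta| \leq \pi$ together with the H\"older bound $|u(y)-u(x)| \leq \|u\|_{C^\alpha}|x-y|^\alpha,$ yielding
\[ \left|\frac{1}{2\pi}\int_{|x-y|\leq \delta}\bigl[u(y)-u(x)\bigr]\cot\!\left(\tfrac{x-y}{2}\right) dy\right| \leq C\|u\|_{C^\alpha}\int_0^\delta r^{\alpha-1}\,dr = \frac{C}{\alpha}\|u\|_{C^\alpha}\delta^\alpha. \]
For the far field, I bound $|u(y)-u(x)| \leq 2\|u\|_{L^\infty}$ and use $|\cot(\theta/2)| \leq C/|\theta|,$ giving
\[ \left|\frac{1}{2\pi}\int_{\delta < |x-y|\leq \pi}\bigl[u(y)-u(x)\bigr]\cot\!\left(\tfrac{x-y}{2}\right) dy\right| \leq C\|u\|_{L^\infty}\log(\pi/\delta). \]

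Next I would optimize $\delta$ by equalizing the two terms up to constants. If $\|u\|_{C^\alpha} \leq \|u\|_{L^\infty},$ then taking $\delta = \pi$ (i.e., no near-field/far-field split needed) already gives the bound since in that case $\log_+(\|u\|_{C^\alpha}/\|u\|_{L^\infty}) = 0$ and $\|Hu\|_{L^\infty} \leq C\|u\|_{C^\alpha} \leq C\|u\|_{L^\infty}.$ Otherwise, choose $\delta$ so that $\delta^\alpha = \|u\|_{L^\infty}/\|u\|_{C^\alpha};$ this forces $\delta \leq 1 \leq \pi.$ Substituting, the near-field is bounded by $C\|u\|_{L^\infty}/\alpha,$ and the far-field becomes
\[ C\|u\|_{L^\infty}\log(\pi/\delta) = \frac{C}{\alpha}\|u\|_{L^\infty}\Bigl[\alpha\log\pi + \log\!\bigl(\|u\|_{C^\alpha}/\|u\|_{L^\infty}\bigr)\Bigr] \leq C(\alpha)\|u\|_{L^\infty}\Bigl(1+\log_+\!\bigl(\|u\|_{C^\alpha}/\|u\|_{L^\infty}\bigr)\Bigr). \]
Combining both cases yields \eqref{Kato127}.

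Finally, the replacement of $\|u\|_{C^\alpha}$ by $\|\pa_x u\|_{L^\infty}$ follows because $\|u\|_{C^\alpha} \leq C(\alpha)\|\pa_x u\|_{L^\infty}$ for any $\alpha \in (0,1)$ on $\S$ (standard mean value estimate combined with $\|u\|_{L^\infty}$ control, though only the local H\"older seminorm is needed and is bounded by $\|\pa_x u\|_{L^\infty}$). The replacement by $\|u\|_{\dot H^s}$ for $s > 1/2$ follows from the Sobolev embedding $H^s(\S) \hookrightarrow C^{s-1/2}(\S),$ applied with any $\alpha < s - 1/2.$ No step here is difficult; the only mild subtlety is correctly tracking the $\alpha$-dependence in the constant and verifying the splitting radius $\delta$ lies in $(0,\pi],$ which is handled by the case distinction above.
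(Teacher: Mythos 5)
Your proof is correct and follows essentially the same argument as the paper: split the kernel integral at a radius $R$, bound the near field by the H\"older seminorm and the far field by $\|u\|_{L^\infty}\log(\pi/R)$, then choose $R=\min\{1,(\|u\|_{L^\infty}/\|u\|_{C^\alpha})^{1/\alpha}\}$ (your case distinction is the same choice). The concluding remarks on replacing $\|u\|_{C^\alpha}$ by $\|\pa_x u\|_{L^\infty}$ or $\|u\|_{\dot H^s}$ match the paper's as well.
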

\begin{proof}
This result is well known as Kato inequality \cite{Kato} (see e.g. \cite{KS1}). We provide a sketch of the proof for completeness. Write
\begin{align*}
  |Hu(x)| =&\, \frac{1}{2\pi}\left| \int_{|y|\leq R} (u(x-y)-u(x))\cot \frac{y}{2}\,dy + \int_{R<|y|\leq\pi} u(x-y) \cot \frac{y}{2}\,dy \right| \\
\leq&\, \frac{C}{\alpha}\|u\|_{C^\alpha} R^{\alpha} + C\|u\|_{L^\infty} \log \frac{\pi}{R}.
\end{align*}
Choosing $R=\min \left\{1, \left(\frac{\|u\|_{L^\infty}}{\|u\|_{C^\alpha}} \right)^{1/\alpha}\right\}$ completes the proof.
\end{proof}
Since in our case $\|u\|_{L^\infty} \geq m_0>0,$ we can omit it in the denominator under the $\log_+$ adjusting the constant $C=C(\alpha,m_0)$ if necessary.

As a consequence, we have the bound
\begin{equation}\label{Hubounds}
  \|Hu\|_{L^\infty}^{p}\lesssim
  M_0^{p}\big(1+\log_+\|\pa_xu\|_{L^\infty}\big)^{p}\leq
  C(s,M_0,p)(1+\|\pa_xu\|_{L^\infty}),
\end{equation}
for every $0<p <\infty$. We could use smaller power of $\|\pa_x u\|_{L^\infty}$ on the right but choose linear bound for simplicity.

The Kato inequality also implies that
\begin{align}\label{lambdauest}
\|\Lambda u \|_{L^\infty} \lesssim \|\pa_x u\|_{L^\infty} (1+\log_+\|u\|_{\dot{H}^s}^2),
\end{align}
with the constant involved in $\lesssim$ that depends only on $s>3/2.$

Finally, we also have
\begin{align}\label{aux128}
 \|u\|_{\dot{H}^{s-1/2}}\lesssim&\,
  \|u\|_{\dot{H}^s}^{1-\frac{1}{2s}}~\|u\|_{L^2}^{\frac{1}{2s}}
  \leq (\sqrt{2\pi}M_0)^{\frac{1}{2s}} \|u\|_{\dot{H}^s}^{1-\frac{1}{2s}}.
\end{align}

Substituting \eqref{Kato127}, \eqref{Hubounds}, \eqref{lambdauest} and \eqref{aux128} into \eqref{eq:uHs}, we obtain
\begin{align}
& \frac{d}{dt}\|u\|_{\dot{H}^s}^2 \lesssim \left(\|\pa_x u \|_{L^\infty} (1+\|\pa_x u \|_{L^\infty})^2 +\|\pa_x u\|_{L^\infty}
(1+\log_+\|u\|_{\dot{H}^s}^2)\right)\|u\|_{\dot{H}^s}^2 \\
& +\left((1+\|\pa_xu\|_{L^\infty}) \|\pa_x u\|_{L^\infty}^2 \left(1+(1+\|\pa_x u\|_{L^\infty})^2 (1+\log_+\|u\|_{\dot{H}^s}^2)^2\right)\right) \|u\|_{\dot{H}^s}^{2-\frac{1}{s}} \\
& \lesssim \|\pa_x u\|_{L^\infty} (1+\|\pa_x u\|_{L^{\infty}}^3)(1+\log_+\|u\|_{\dot{H}^s}^2)\|u\|_{\dot{H}^s}^2, \label{grc128}
\end{align}
where we used that
\[ (1+\log_+\|u\|_{\dot{H}^s}^2)^2 \|u\|_{\dot{H}^s}^{-\frac{1}{s}} \lesssim 1. \]
The bound \eqref{grc128} leads to
\begin{equation}\label{eq:doubleexpbound}
  \sup_{0\leq t\leq T}\|u(\cdot,t)\|_{\dot{H}^s}^2\leq \big(e\max\{1,\|u_0\|_{\dot{H}^s}\}\big)^{\exp\left[C\int_0^T \|\pa_xu\|_{L^\infty} \left(1+\|\pa_xu\|_{L^\infty}^4\right)\,dt\right]}.
\end{equation}
Hence, for every $s>3/2,$ $H^s$ regularity can be extended past $T$ using local well-posedness if 
\eqref{BKM} is satisfied.

\section{Global regularity}\label{globreg}

Throughout this section, we will denote by $c,$ $c_0,$ $c_1,$ etc and $C,$ $C_0,$ $C_1,$ etc the constants that do not depend on any parameters of the problem with the possible exception of $m_0$ and $M_0$;
these constants may change from expression to expression.
Our goal is to prove the global regularity part of Theorem~\ref{mainthm1}. We use the modulus of continuity method pioneered in \cite{KNV}.
We will prove that the periodic smooth solutions of \eqref{maineq1} conserve the modulus of continuity $\omega(\xi)$
that is explicitly given by
\begin{equation}\label{modcondef}
\omega(\xi) = \left\{ \begin{array}{ll} \xi-\xi^{3/2} & 0 < \xi \leq \delta \\
\delta - \delta^{3/2} + \gamma \log \left( 1+ \frac14 \log (\xi/\delta) \right) & \xi >\delta.
\end{array} \right.
\end{equation}
Here $\delta$ and $\gamma$ are small parameters that will be determined later in the proof.
We will assume throughout the argument that $\gamma = c\delta \leq \delta \ll 1,$ where $c<1$ and may only depend on $m_0$ and $M_0;$
$\delta$ may also depend only on $m_0$ and $M_0.$
More specific conditions on these parameters will be outlined in due course.
The key properties of $\omega$ is that it is increasing, concave, Lipshitz at $\xi=0$ with $\omega'(0)=1,$ $\omega''(0)=-\infty,$
and satisfies $\omega(\xi) \rightarrow \infty$ as $\xi \rightarrow \infty.$
We say that a function $f(x)$ obeys $\omega$ if $|f(x)-f(y)| \leq \omega(|x-y|)$ for all $x, y \in \Rm.$
We say that a solution $u(x,t)$ preserves the modulus $\omega$ if $u(x,t)$ obeys $\omega$ for every $t \geq 0$
provided that the initial data $u_0(x)$ obeys $\omega.$

Let us think of \eqref{maineq1} as set on the whole line with the periodic initial condition; in the case of the period $2\pi$ the problem
is equivalent to our $\S$ setting. Of course, the local regularity results of the previous section apply to any fixed period.
Notice that the equation \eqref{maineq1} in the whole line setting is invariant under scaling $x \mapsto \lambda x,$ $t \mapsto \lambda t$
for any $\lambda >0,$ so if $u(x,t)$ is a periodic solution of \eqref{maineq1}, then so is $u_\lambda (x,t) \equiv u(\lambda x, \lambda t)$
(with a different period). We have

\begin{proposition}\label{grmc}
Suppose that any periodic solution $u(x,t)$ of the equation \eqref{maineq1} corresponding to $H^s,$ $s>3/2$ initial data preserves the modulus of continuity $\omega$
defined in \eqref{modcondef}. Then, $\|\pa_xu(\cdot,t)\|_{L^\infty}$
is  bounded uniformly in time.
\end{proposition}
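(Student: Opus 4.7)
The plan is to leverage the scaling invariance of \eqref{maineq1} together with the Lipschitz structure of $\omega$ at the origin. First, I would observe that near $\xi = 0$ we have $\omega(\xi) = \xi - \xi^{3/2}$, so $\omega'(0^+) = 1$. Consequently, any smooth $v$ that obeys $\omega$ automatically satisfies $\|\partial_x v\|_{L^\infty} \leq 1$, since $|v(x+h)-v(x)|/|h| \leq \omega(|h|)/|h| \to 1$ as $h \to 0$. This is the whole reason this particular modulus is tailored to Lipschitz bounds.

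Next, given an arbitrary periodic $H^s$ initial datum $u_0$ with $s > 3/2$, which is already Lipschitz by Sobolev embedding with some constant $L = \|u_0'\|_{L^\infty}$, I would invoke the scaling $u_\lambda(x,t) := u(\lambda x, \lambda t)$ highlighted just before the proposition: $u_\lambda$ is also a smooth positive periodic solution of \eqref{maineq1} (on a longer period $2\pi/\lambda$) starting from $u_0(\lambda\,\cdot\,)$. I claim one can choose $\lambda = \lambda(u_0) > 0$ small enough that $u_0(\lambda\,\cdot\,)$ obeys $\omega$. Granted this claim, the hypothesis of the proposition applies to $u_\lambda$ and yields $\|\partial_x u_\lambda(\cdot,t)\|_{L^\infty} \leq 1$ for all $t \geq 0$, which unscales back to the time-uniform bound $\|\partial_x u(\cdot, t)\|_{L^\infty} \leq 1/\lambda$.

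To verify the claim I would split according to $\xi = |x-y|$. Since $\omega(\xi) \to \infty$ monotonically as $\xi \to \infty$, there is a finite $\xi_* = \xi_*(M_0)$ with $\omega(\xi_*) \geq 2 M_0 \geq \sup_{z_1,z_2}|u_0(z_1)-u_0(z_2)|$, so the modulus holds automatically for $\xi \geq \xi_*$, for every $\lambda > 0$. On $(0, \xi_*]$ the continuous function $\omega(\xi)/\xi$ is bounded below by a positive constant $c_* = c_*(\xi_*)$, since it is positive on any compact subinterval $[\xi_0,\xi_*]$ and tends to $1$ as $\xi \to 0^+$. Choosing $\lambda \leq c_*/L$ then gives
\[ |u_0(\lambda x)-u_0(\lambda y)| \leq \lambda L \xi \leq c_*\xi \leq \omega(\xi) \qquad \forall\ \xi \in (0,\xi_*], \]
finishing the verification.

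The statement itself is soft once one has scaling and the value $\omega'(0)=1$; no serious obstacle appears in this proposition. The hard part lies elsewhere, namely in establishing the \emph{hypothesis} that \eqref{maineq1} actually preserves the modulus $\omega$. That preservation proof will require delicate analysis at a putative breakthrough point, comparing the dissipative contribution $-\varphi \Lambda u$ with the transport contribution $\psi\,\partial_x u$, and must contend with the denominator $u^2 + (Hu)^2$ that distinguishes \eqref{maineq1} from \eqref{flock}; this is presumably what dictates the specific logarithmic tail and the $\xi^{3/2}$ correction in \eqref{modcondef}.
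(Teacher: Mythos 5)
Your proposal is correct and follows essentially the same route as the paper: rescale $u_\lambda(x,t)=u(\lambda x,\lambda t)$, choose $\lambda=\lambda(u_0)$ so that $u_0(\lambda\,\cdot)$ obeys $\omega$ (using $\omega(\xi)\to\infty$ and the oscillation/Lipschitz bound on $u_0$), invoke the preservation hypothesis for the rescaled periodic solution, and use $\omega'(0)=1$ to unscale into $\|\partial_x u(\cdot,t)\|_{L^\infty}\le \lambda^{-1}$ uniformly in time. Your verification of the admissible $\lambda$ via a lower bound on $\omega(\xi)/\xi$ on $(0,\xi_*]$ is just a minor variant of the paper's explicit choice and changes nothing essential.
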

Note that the global regularity part of Theorem~\ref{mainthm1} follows from Proposition~\ref{grmc} due to conditional regularity criterion \eqref{BKM}.
\begin{proof}
The proof can be found in \cite{KNV}; we sketch it here for the sake of completeness. Given an arbitrary $H^s,$ $s>3/2$ initial data $u_0(x)$ with period $2\pi,$
we can find $\lambda >0$ such that $u_0(\lambda x)$ obeys $\omega.$ This follows from the fact that $\omega(\xi) \rightarrow \infty$
as $\xi \rightarrow \infty,$ and a simple argument. In fact the necessary size of $\lambda$ can be estimated explicitly in terms of $\omega$
and the initial data:
it is sufficient to require that
\[\omega \left( \frac{M_0-m_0}{\lambda\|\partial_x u_0\|_{L^\infty}} \right) >
   M_0-m_0,\quad\text{or equivalently,}~~\lambda <
   \frac{M_0-m_0}{\|\partial_x u_0\|_{L^\infty}\cdot\omega^{-1}(M_0-m_0)}.\]
From the assumption of the proposition, it follows that the
solution $u(\lambda x, \lambda t)$
preserves $\omega.$ But this is equivalent to saying that solution
$u(x,t)$ preserves $\omega_{\lambda^{-1}}(\xi) \equiv
\omega(\lambda^{-1}\xi).$
Since $\omega$ is Lipschitz near the origin, it forces $\|\pa_x
u(\cdot,t)\|_{L^\infty} \leq \lambda^{-1}$ for all times.
\end{proof}

 If we can prove the conservation of modulus $\omega$ for solutions of any period,
 a consequence of the proof of Proposition \ref{grmc} is a uniform in time bound on $\|\pa_xu(\cdot,t)\|_{L^\infty}:$
  \begin{equation}\label{uxbound}
    \sup_{t\geq0}\|\pa_xu(\cdot,t)\|_{L^\infty}\leq
    2\frac{\|\partial_x u_0\|_{L^\infty}\cdot\omega^{-1}(M_0-m_0)}{M_0-m_0}=:\Q_1. 
  \end{equation}
  Moreover, it will be useful for us to note that $\|Hu(\cdot,t)\|_{L^\infty}$ is also uniformly
  bounded in time. Indeed, in the periodic case
  \begin{align}\label{Ht1124}
  Hu(x) = \frac{1}{\pi} P.V. \int_\R \frac{u(y)}{x-y}\,dy = \frac{1}{2\pi} \int_{-\pi}^\pi u(y) \cot \frac{x-y}{2}\,dy.
  \end{align}
   Then
  \begin{equation}\label{Bbound}
    |Hu(x,t)|\leq
    \frac{\|\pa_xu(\cdot,t)\|_{L^\infty}}{\pi}\int_0^\pi
    y\cot\frac{y}{2}\,dy<2 \|\pa_xu(\cdot,t)\|_{L^\infty}\leq
    \frac{4\|\partial_x u_0\|_{L^\infty}\cdot\omega^{-1}(M_0-m_0)}{M_0-m_0}=:Q.
  \end{equation}
  Note that $Q=2\Q_1,$ but it is convenient for us to have a short hand notation for this bound.
  Clearly we could have used any H\"older norm $\|u\|_{C^\alpha}$, $\alpha>0,$ to control $\|Hu\|_{L^\infty}$ in \eqref{Bbound}.

Thus to show global regularity, it is sufficient to prove preservation of the modulus of continuity. The following lemma reduces this
to consideration of a breakthrough scenario.
\begin{lemma}\label{break}
Suppose that the periodic $H^s,$ $s>3/2$ initial data $u_0(x)$ obeys $\omega$ but the solution $u(x,t)$ no longer obeys it at some time $t>0.$
Then there exists a time $t_1>0$ and two points $x$ and $y$ such that for all $t \leq t_1$ the solution $u(x,t)$ obeys $\omega,$ and
\begin{equation}\label{breakeq}
u(x,t_1)- u(y,t_1)=\omega(|x-y|).
\end{equation}
\end{lemma}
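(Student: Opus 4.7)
The plan is to identify $t_1$ as the first time at which $u$ violates $\omega$, show this first time is attained (so $u$ still obeys $\omega$ at $t_1$), and then use a compactness argument to produce a pair $(x,y)$ where the modulus inequality is saturated.

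First I would define
\[ \mathcal{T} := \{ t \geq 0 : u(\cdot, s) \text{ obeys } \omega \text{ for every } s \in [0,t] \}. \]
By hypothesis $0 \in \mathcal{T}$ and $\mathcal{T}$ is bounded above. Set $t_1 := \sup \mathcal{T}$. I claim $\mathcal{T}$ is closed. If $t_n \in \mathcal{T}$ with $t_n \to t_*$, then for any $x,y$ and $s < t_*$ there is an $n$ with $s \leq t_n$, so $u(x,s) - u(y,s) \leq \omega(|x-y|)$; letting $s \to t_*$ and using continuity of $u$ in $t$ with values in $C^0(\S)$ (provided by $u \in C([0,T], H^s)$ and $H^s \hookrightarrow C^1$ for $s > 3/2$, cf.\ Theorem \ref{thm:local}), this inequality extends to $s = t_*$, so $t_* \in \mathcal{T}$. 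Hence $t_1 \in \mathcal{T}$ and $u(\cdot, t)$ obeys $\omega$ for every $t \leq t_1$. Positivity $t_1 > 0$ follows from local well-posedness and the standard assumption that the initial inequality is strict (which in the proof of global regularity one arranges by the scaling in Proposition \ref{grmc}): on a small interval $[0, \tau]$, $u(\cdot, s)$ stays $C^1$-close to $u_0$, and the slope of $\omega$ at the origin together with $\omega(\xi) \to \infty$ at infinity prevents immediate violation.

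Second, since $t_1 = \sup \mathcal{T}$ and by hypothesis the solution eventually fails to obey $\omega$, I can pick a sequence $s_n \downarrow t_1$ with $s_n \notin \mathcal{T}$, together with pairs $(x_n, y_n)$ satisfying
\[ u(x_n, s_n) - u(y_n, s_n) > \omega(|x_n - y_n|). \]
By the maximum principle (Proposition \ref{prop:mp}), the left-hand side is at most $M_0 - m_0$. Since $\omega(\xi) \to \infty$ as $\xi \to \infty$, there is a finite $R$, depending only on $\omega$, $m_0$, $M_0$, with $\omega(R) > M_0 - m_0$, forcing $|x_n - y_n| \leq R$ for every $n$. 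Using the periodicity of $u$, I translate so that $x_n \in (-\pi, \pi]$; then $y_n$ lies in a fixed bounded interval, and the pairs $(x_n, y_n)$ sit in a compact set. Extract a subsequence along which $(x_n, y_n) \to (x, y)$ and use joint continuity of $u$ in $(x, t)$ to obtain
\[ u(x, t_1) - u(y, t_1) \;=\; \lim_{n \to \infty} \bigl( u(x_n, s_n) - u(y_n, s_n) \bigr) \;\geq\; \lim_{n \to \infty} \omega(|x_n - y_n|) \;=\; \omega(|x - y|). \]
Since $t_1 \in \mathcal{T}$, the reverse inequality $u(x, t_1) - u(y, t_1) \leq \omega(|x-y|)$ also holds, yielding the identity \eqref{breakeq}.

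The only step requiring real care is the \emph{compactness} part: confining the breakthrough pair to a bounded set. This is where the design choice $\omega(\xi) \to \infty$ in \eqref{modcondef} is essential, since without it a violation could in principle escape to arbitrarily large separations where the maximum principle gives no useful information. All other ingredients (closedness of $\mathcal{T}$, passing to a limit, reading off equality) are standard continuity arguments, and the potential degeneracy $x = y$ at the breakthrough point is ruled out by combining $\omega'(0) = 1$ with the $C^1$ regularity of the solution controlled through the scaling that produced the admissible initial data.
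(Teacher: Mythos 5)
Your continuity--compactness scheme is the standard one (essentially the argument of \cite{KNV} that the paper cites), and most of it is sound: closedness of the set of good times, confinement of violating pairs via the maximum principle and $\omega(\xi)\to\infty$, and passage to the limit. The genuine gap is the degenerate case $x=y$, which you dismiss in one sentence with the wrong mechanism. It is entirely possible a priori that \emph{every} violating pair $(x_n,y_n)$ at times $s_n\downarrow t_1$ has $|x_n-y_n|\to 0$; the paper's proof identifies this as ``the only viable alternative'' to the breakthrough scenario, i.e.\ excluding it is the whole content of the lemma. In that case your limit identity degenerates to $0=0$ and yields no pair of distinct points. The correct exclusion runs as follows: by the mean value theorem, $\sup_z \partial_x u(z,s_n)\ge \omega(|x_n-y_n|)/|x_n-y_n|\to\omega'(0)=1$, so by continuity in $C^1$ (from $u\in C([0,T],H^s)$, $s>3/2$) there is a point with $|\partial_x u(x,t_1)|=1$; and a \emph{sufficiently smooth} function obeying $\omega$ cannot have $|f'|=1$ anywhere, because $\omega(\xi)=\xi-\xi^{3/2}$ has $\omega''(0)=-\infty$, so $f(x_0+h)-f(x_0-h)=2h+O(h^2)$ would exceed $\omega(2h)=2h-(2h)^{3/2}$ for small $h$. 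The regularity needed here is better than $C^{1,1/2}$ and is supplied at $t_1>0$ by the instant regularization \eqref{tku}, not by the $C^1$ bound you invoke: a function behaving like $x-3|x|^{3/2}$ near a point (only $C^{1,1/2}$, in fact $H^2$) is compatible with obeying $\omega$ while having derivative exactly $1$ there, so ``$\omega'(0)=1$ plus $C^1$ regularity'' proves nothing. Likewise ``the scaling that produced the admissible initial data'' controls $\|\partial_x u_0\|_{L^\infty}$ only at $t=0$; at time $t_1$ the bound $\|\partial_x u(\cdot,t_1)\|_{L^\infty}\le 1$ is exactly what obeying $\omega$ already gives, and the issue is whether it can be attained, which only the $\omega''(0)=-\infty$ plus smoothness argument settles.

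A secondary point: your justification of $t_1>0$ imports a hypothesis (``the initial inequality is strict'') that is not in the lemma. The cleaner route is the same ingredient as above: for data obeying $\omega$ with derivative strictly below $1$ (which is what the rescaling in Proposition~\ref{grmc} actually produces, and what smoothness plus $\omega''(0)=-\infty$ forces at any positive time), the modulus survives on a short time interval because the three ranges of scales are each controlled --- small scales by the strict derivative bound against $\omega(\xi)\ge(1-c)\xi$ for $\xi\le c^2$, intermediate scales by compactness and strict inequality, large scales by the oscillation bound. So both the positivity of $t_1$ and the exclusion of the degenerate limiting pair rest on the one idea your write-up is missing: $\omega''(0)=-\infty$ combined with the smoothness of $u(\cdot,t_1)$.
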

\begin{proof}
The proof can be found in \cite{KNV}. Due to compactness provided by periodicity,
the only viable alternative to the breakthrough scenario of the lemma is
that we could have a single point $x$ at time $t_1$ where $|\pa_x u(x,t_1)|= \omega'(0)=1.$ This option can be ruled out due to our choice
of $\omega$ which satisfies $\omega''(0)=-\infty.$ Then any smooth function $f$ obeying $\omega$ must satisfy $|f'(x)| < 1$ for all $x.$
\end{proof}

Now we will consider the hypothetical breakthrough scenario of Lemma~\ref{break}, and will show that necessarily
\begin{equation}\label{breakineq}
 \left. \partial_t (u(x,t)-u(y,t)) \right|_{t=t_1} <0.
\end{equation}
This will imply that $u(x,t)$ does not obey $\omega$ for $t<t_1$ and contradict the definition of $t_1,$ thus establishing the preservation
of the modulus. To establish \eqref{breakineq}, we need to carry out some careful estimates. The first part of these estimates is
largely parallel to \cite{DKRT}, but the equation \eqref{maineq1} has additional features that will require novel considerations.
Roughly, one can describe the difficulty as stemming from lack of an a-priori upper bound on the amplitude of the Hilbert transform $Hu$
in terms of $\omega$ that is uniform in the period of solution. 
This may lead to depletion in the dissipative term, as the coefficient $\frac{u}{u^2+Hu^2}$ in front of it is no longer uniformly bounded away from zero.
This complication will require fairly subtle adjustments.

First of all, let us recall the short cut notation in \eqref{phipsi}
\begin{align}\label{phipsi1124} \psi(z) = \frac{Hu(z)}{u(z)^2+Hu(z)^2}, \quad \varphi(z) = \frac{u(z)}{u(z)^2+Hu(z)^2}.\end{align}
Since we now reserved the notation $x$ and $y$ for our special points where the solution touches the modulus,
in the remainder of this section we will usually denote the spacial variable by $z.$ We will also denote $\xi \equiv |x-y|.$
Then we have to prove that
\begin{equation}\label{breakineqexp}
\left. \pi \partial_t (u(x,t)-u(y,t))\right|_{t=t_1} = \psi(x) \partial_z u(x) - \psi(y) \partial_z u(y) -
\varphi(x) \Lambda u(x) + \varphi(y) \Lambda u (y) <0.
\end{equation}

We begin with the following lemma the proof of which can be found in \cite{KNV}.
\begin{lemma}\label{Hmod}
Suppose that $u$ obeys the modulus $\omega.$ Then $Hu$ obeys the modulus
\begin{equation}\label{Hmodeq}
\Omega(\xi) = C \left( \int_0^\xi \frac{\omega(\eta)}{\eta}\,d\eta + \xi \int_\xi^\infty \frac{\omega(\eta)}{\eta^2}\,d\eta \right).
\end{equation}
Here the constant $C$ is universal (and in particular does not depend on $\delta,\gamma$ from \eqref{modcondef}).
\end{lemma}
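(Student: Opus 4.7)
The plan is to set up a symmetric representation of $Hu(x) - Hu(y)$ around the midpoint $m = (x+y)/2$. With $\xi = |x-y|$ and the substitution $t = z - m$, one computes
\begin{equation*}
\frac{1}{x-z} - \frac{1}{y-z} = \frac{\xi}{\xi^2/4 - t^2} = \frac{1}{\xi/2 - t} + \frac{1}{\xi/2 + t},
\end{equation*}
and since $H$ annihilates constants I would replace $u(z)$ with $u(z) - u(m)$ in
\begin{equation*}
Hu(x) - Hu(y) = \frac{1}{\pi}\, P.V.\!\int u(z)\Big(\tfrac{1}{x-z} - \tfrac{1}{y-z}\Big) dz.
\end{equation*}
That the $u(m)$-contribution vanishes in the PV sense is a short direct calculation: each summand in the partial fraction decomposition integrates to zero because the singularity is odd and the integrand decays at infinity (on the torus, the integral of $\cot\tfrac{x-z}{2} - \cot\tfrac{y-z}{2}$ over a period is zero for the same reason).

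With the constant subtracted, I would split the remaining integral into the near field $\{|t| < \xi\}$ and the far field $\{|t|\geq \xi\}$. The far field is the easy part: there $|\xi^2/4 - t^2| \geq 3t^2/4$, so using $|u(m+t)-u(m)| \leq \omega(|t|)$ gives an upper bound of $C\xi\int_\xi^\infty \omega(\eta)/\eta^2\, d\eta$, which is exactly the second term of $\Omega(\xi)$. For the near field I would keep the partial fraction decomposition, shift the variable so that each singular point sits at the origin, and then apply the symmetric PV identity $P.V.\int_{-\epsilon}^\epsilon f(s)/s\, ds = \int_0^\epsilon (f(s) - f(-s))/s\, ds$ to reduce each piece to $\int_0^{c\xi} \omega(2s)/s\, ds$ plus a regular remainder of size at most $C\omega(\xi)$. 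Concavity of $\omega$ with $\omega(0) = 0$ makes $\omega(\eta)/\eta$ nonincreasing, hence $\omega(\xi) \leq \int_0^\xi \omega(\eta)/\eta\, d\eta$, so every near-field piece is absorbed into $C\int_0^\xi \omega(\eta)/\eta\, d\eta$, matching the first term of $\Omega$.

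The main technical obstacle is bookkeeping the principal values carefully so that the $u(m)$-subtraction is truly free and the symmetrization near $t = \pm \xi/2$ produces no uncontrolled boundary terms. In the periodic setting it is cleanest to work directly with the torus kernel $\tfrac{1}{2}\cot\tfrac{x-z}{2}$ instead of $(x-z)^{-1}$: the same dyadic splitting $|t| < \xi$ versus $\xi \leq |t| \leq \pi$ applies, the difference $\tfrac{1}{2}\cot(t/2) - 1/t$ is smooth and bounded on $[-\pi,\pi]$ and so contributes only a uniformly bounded remainder, and this is why the constant $C$ in $\Omega$ can be taken universal, in particular independent of the parameters $\delta, \gamma$ appearing in \eqref{modcondef}.
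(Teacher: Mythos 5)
Your argument is correct, and it is essentially the same near-field/far-field splitting at scale $\xi$ that the paper relies on by citing \cite{KNV}, where the identical estimate is proved for the Riesz transform; your midpoint symmetrization, the PV bookkeeping for the constant subtraction, the bound $|{\xi^2/4-t^2}|\gtrsim t^2$ in the far field, and the absorption of $\omega(\xi)$ via $\omega(\xi)\le\int_0^\xi \omega(\eta)\eta^{-1}\,d\eta$ are just the one-dimensional rendering of that proof, with the cotangent-versus-$1/t$ correction handled correctly. Nothing further is needed.
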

The lemma is valid for any modulus of continuity $\omega,$ not just \eqref{modcondef}. In \cite{KNV}, it is proved for the
Riesz transform of $u$ in two dimensions, but the proof for the Hilbert transform is identical.

Next we prove bounds on $\phi$ and $\psi.$ In the first part of our estimates that largely follows \cite{DKRT},
the role of the enforcer of regularity is played by the dissipative term $-{\rm min}(\varphi(x), \varphi(y)) (\Lambda u(x) - \Lambda u(y)).$
It is therefore convenient to introduce the notation
\begin{equation}\label{minphi}
\zeta = {\rm min}(\varphi(x), \varphi(y)).
\end{equation}
\begin{lemma}\label{phipsiest}
Suppose that $u(z,t)$ obeys the modulus $\omega$ given by \eqref{modcondef}. Let $\varphi$ and $\psi$ be given by \eqref{phipsi1124}.
 Then we have the following estimates:
\begin{equation}\label{psiest}
|\psi(x,t)-\psi(y,t)|,\,|\varphi(x,t)-\varphi(y,t)| \leq C \zeta \left(\omega(\xi)+\Omega(\xi)+\Omega(\xi)^2\right),
\end{equation}
where the constant $C$ may only depend on $m_0$ and $M_0.$ Here $x$ and $y$ can be any two points, and $\xi=|x-y|.$
In particular, the estimate \eqref{psiest} holds at the breakthrough time $t_1$ and at the breakthrough points $x$ and $y$ of Lemma \ref{break}.
\end{lemma}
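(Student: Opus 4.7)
The plan is to reduce the two estimates to a direct algebraic computation on the rational functions $\varphi$ and $\psi$. I introduce the shorthand $a = u(x,t_1)$, $b = u(y,t_1)$, $c = Hu(x,t_1)$, $d = Hu(y,t_1)$; the breakthrough assumption supplies $|a-b| \leq \omega(\xi)$, and Lemma~\ref{Hmod} gives $|c-d| \leq \Omega(\xi)$. Writing
\[
\varphi(x) - \varphi(y) = \frac{a(b^2+d^2)-b(a^2+c^2)}{(a^2+c^2)(b^2+d^2)},
\]
and an analogous formula for $\psi(x)-\psi(y)$, I would expand each numerator by adding and subtracting mixed terms, obtaining for the $\varphi$-numerator the identity
\[
a(b^2+d^2) - b(a^2+c^2) = (a-b)\!\left(\tfrac{c^2+d^2}{2}-ab\right) + \tfrac{a+b}{2}(d-c)(d+c),
\]
so that every factor of $(a-b)$ or $(c-d)$ is controlled by $\omega(\xi)$ or $\Omega(\xi)$ respectively, leaving bounded algebraic expressions in $a,b,c,d$ to estimate against the denominator.

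To extract the factor $\zeta = \min(\varphi(x),\varphi(y))$ from the resulting quotient, I use three elementary identities tailored to $\varphi$ and $\psi$: first, $ab/[(a^2+c^2)(b^2+d^2)] = \varphi(x)\varphi(y) \leq \zeta/m_0$, since $\max_z \varphi(z) \leq 1/m_0$; second, $|c|/(a^2+c^2) \leq 1/(2a) \leq 1/(2m_0)$ and the analogue for $d$ by AM--GM; third, $1/(a^2+c^2) = \varphi(x)/a \leq \varphi(x)/m_0$ and the analogue at $y$. Together with $a, b \in [m_0, M_0]$ and $|cd| \leq (c^2+d^2)/2$, these reduce every contribution to a constant multiple of either $\zeta \omega(\xi)$, $\zeta \Omega(\xi)$, or $\max(\varphi(x),\varphi(y))$ times a factor involving $\omega$ or $\Omega$.

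The remaining obstacle, and the hardest part of the argument, is that some terms unavoidably produce $\max(\varphi(x),\varphi(y))$ rather than $\zeta$; a universal comparison of these quantities depending only on $m_0$ and $M_0$ is not available in general, since $\varphi$ can be depleted wherever $|Hu|$ is large. The necessary auxiliary inequality is
\[
\max(\varphi(x),\varphi(y)) \leq C(m_0,M_0)\,\zeta\,\bigl(1+\Omega(\xi)^2\bigr).
\]
Its proof is short: assume WLOG $\zeta = \varphi(y)$, so that
\[
\frac{\varphi(x)}{\varphi(y)} = \frac{a(b^2+d^2)}{b(a^2+c^2)} \leq \frac{M_0}{m_0}\cdot\frac{M_0^2+d^2}{m_0^2+c^2},
\]
and the bound $d^2 \leq 2c^2 + 2(c-d)^2 \leq 2c^2 + 2\Omega(\xi)^2$ gives the claim. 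Substituting this back converts every remaining $\max(\varphi)$ factor into $\zeta(1+\Omega^2)$. Collecting terms and using that $\omega$ and $\Omega$ are uniformly bounded on the range of $\xi$ relevant for the breakthrough scenario (where necessarily $\omega(\xi) \leq M_0-m_0$), any cross terms such as $\omega\Omega^2$ or $\Omega^3$ that arise are absorbed into the leading contribution $\omega+\Omega+\Omega^2$, producing the asserted estimate. The argument for $\psi$ is parallel in structure, using the analogous expansion $c(b^2+d^2)-d(a^2+c^2) = (c-d)\bigl(\tfrac{a^2+b^2}{2}-cd\bigr) + \tfrac{(a+b)(c+d)}{2}(b-a)$ together with the same elementary pointwise bounds and the same key comparison inequality.
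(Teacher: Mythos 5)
Your algebraic identities and the comparison inequality $\max(\varphi(x),\varphi(y)) \leq C(m_0,M_0)\,\zeta\,(1+\Omega(\xi)^2)$ are all correct, but the final absorption step contains a genuine gap: you assert that $\Omega$ is uniformly bounded on the relevant range of $\xi$, and this is false. The lemma must hold for solutions of arbitrary period (after the rescaling in Proposition~\ref{grmc}), so $\xi$ ranges over all of $(0,\infty)$; by \eqref{Hmodeq} and \eqref{Ommidfarxi}, $\Omega(\xi)\lesssim \omega(\xi)\left(4+\log\frac{\xi}{\delta}\right)\to\infty$ as $\xi\to\infty$, and neither $\Omega(\xi)$ nor $\|Hu\|_{L^\infty}$ admits a bound depending only on $m_0,M_0$ — indeed the entire far-range portion of the global regularity proof exists precisely because of this. (The breakthrough relation only caps $\omega(\xi)$ by $M_0-m_0$, not $\Omega$, and the lemma is in any case stated for arbitrary points.) Consequently the cross terms you generate cannot all be absorbed: terms of your decomposition that carry a factor of $\Omega$ together with $\tfrac{c^2+d^2}{2}$, $|c+d|$ or $|cd|$ reduce, via your pointwise bounds, to $\Omega\cdot\max(\varphi)$, and applying the conversion inequality turns these into $\zeta(\Omega+\Omega^3)$. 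The $\zeta\Omega^3$ contribution is not dominated by $\zeta(\omega+\Omega+\Omega^2)$, so your argument proves only the weaker bound $C\zeta(\omega+\Omega+\Omega^2+\Omega^3)$. This is not merely cosmetic: the balance conditions downstream (e.g. \eqref{balmidxi1}, \eqref{gamma1} and the far-range argument) are already at the edge with the $\Omega^2$ term, and an extra power of $\Omega$ would not be controllable there. (The $\omega\Omega^2$ cross term, by contrast, is harmless, since the actual factor is $|u(x)-u(y)|\leq M_0-m_0$.)

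The repair is to avoid invoking the $\max(\varphi)\to\zeta$ conversion on any term that already carries an $\Omega$ or an $Hu$-factor, which is what the paper's proof does: the numerators are split asymmetrically so that each summand contains $\min(|Hu(x)|,|Hu(y)|)$ or $\min(Hu(x)^2,Hu(y)^2)$ (e.g.\ via $|Hu(x)+Hu(y)|\leq \Omega(\xi)+2\min(|Hu(x)|,|Hu(y)|)$ and $|Hu(x)Hu(y)|\leq \min(|Hu(x)|,|Hu(y)|)\,\Omega(\xi)+\min(Hu(x)^2,Hu(y)^2)$). The $\min$-factor is then paired with the matching factor of the denominator through $\frac{|Hu(z)|}{u(z)^2+Hu(z)^2}\leq\frac{1}{2m_0}$ or $\frac{Hu(z)^2}{u(z)^2+Hu(z)^2}\leq 1$, while $\zeta$ is extracted directly from the remaining denominator factor at whichever point realizes the minimum of $\varphi$. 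With this pairing no conversion inequality is needed, no $\Omega^3$ appears, and the stated bound $C\zeta(\omega+\Omega+\Omega^2)$ follows for arbitrary pairs of points. If you reorganize your symmetric decompositions along these lines (replacing $c^2+d^2$, $c+d$, $cd$ by expressions built from the minimum of $|c|,|d|$ plus an $\Omega$-error), your computation goes through.
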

\begin{proof}
For the sake of simplicity, we will not write the time variable in the calculation that follows; all estimates are valid for any two points
$x$ and $y$ at any time $t$ at which $u(z,t)$ obeys $\omega$.
We have
\begin{align}\nonumber
|\psi(x)-\psi(y)| =& \left| \frac{Hu(x) u(y)^2 - Hu(y)u(x)^2  +Hu(x)Hu(y)^2- Hu(y)Hu(x)^2}{(u(x)^2 + Hu(x)^2)(u(y)^2+Hu(y)^2)} \right| \\
\leq &~\frac{2M_0 {\rm min}(|Hu(x)|,|Hu(y)|) \omega(\xi) + M_0^2 \Omega(\xi) + |Hu(x) Hu(y)| \Omega(\xi)}{(u(x)^2 + Hu(x)^2)(u(y)^2+Hu(y)^2)}.
\label{psicalc}
\end{align}
The first summand in the numerator of \eqref{psicalc} can be estimated by factoring out $\zeta$ (multiplying and dividing by $u(y)$ or $u(x)$) and controlling
the remaining factor by $\frac{|Hu(z)|}{u(z)^2+Hu(z)^2},$ $z=x$ or $y.$ Such factor does not exceed $\frac{1}{2m_0},$ yielding the total upper bound of $M_0 \zeta \omega(\xi) m_0^{-2}.$
In a similar fashion, the second term is controlled by $M_0^2 \zeta \Omega(\xi) m_0^{-3}.$ The third term requires a little more work:
we write
\begin{align*}
|Hu(x)Hu(y)| =&~  |Hu(x)Hu(y)-{\rm min}(Hu(x)^2,Hu(y)^2)+{\rm min}(Hu(x)^2,Hu(y)^2)|\\
  \leq&~ {\rm min}(|Hu(x)|,|Hu(y)|)\Omega(\xi) +
{\rm min}(Hu(x)^2,Hu(y)^2). \end{align*}
With this, the third term in \eqref{psicalc} can be estimated by
\[ \frac{\zeta \Omega(\xi)^2}{2m_0^2}+\frac{\zeta \Omega(\xi)}{m_0}. \]
Collecting all the estimates, we arrive at \eqref{psiest}.

The estimate for $\varphi$ is proved similarly:
\begin{align}
|\varphi(x)-\varphi(y)|=&~ \left| \frac{u(x)u(y)^2 + u(x)Hu(y)^2-u(y)u(x)^2-u(y)Hu(x)^2}{(u(x)^2 + Hu(x)^2)(u(y)^2+Hu(y)^2)} \right|\label{phicalc}\\ \leq &~
\frac{u(x)u(y) \omega(\xi) + {\rm min}(Hu(x)^2,Hu(y)^2) \omega(\xi) + M_0 |Hu(x)+Hu(y)|\Omega(\xi)}{(u(x)^2 + Hu(x)^2)(u(y)^2+Hu(y)^2)}. \nonumber
\end{align}
The first two terms can be estimated from above by $\frac{2\omega(\xi)\zeta}{m_0}$ using estimates similar to the ones done above for the
$\psi$ case. To estimate the third term, write
\[ |Hu(x)+Hu(y)| \leq \Omega(\xi) +2{\rm min}(|Hu(x)|,|Hu(y)|). \]
Then the third term is controlled by
\[  \frac{M_0 \zeta \Omega(\xi)^2}{m_0^3}+\frac{M_0 \zeta \Omega(\xi)}{m_0^2}. \]
Combining the estimates gives \eqref{psiest}.
\end{proof}

The following three lemmas focus on the analysis of the fractional Laplacian in the breakthrough scenario.
The first lemma estimates the difference of the dissipative terms.
\begin{lemma}\label{dissipD}
Let $\omega$ be given by \eqref{modcondef}, with $\gamma =c \delta \leq \delta \ll 1.$
Suppose that $x$ and $y$ are as in the breakthrough scenario: $u(z)$ obeys $\omega$ while $u(x)-u(y)=\omega(\xi).$
Then
\begin{align}\label{Dpart}
&\Lambda u(x) - \Lambda u(y) \geq \D(\xi) \\ &\equiv \frac{1}{\pi} \left( \int_0^{\xi/2} \frac{2\omega(\xi) - \omega(\xi+2\eta) -\omega(\xi-2\eta)}{\eta^2} + \int_{\xi/2}^\infty \frac{ 2\omega(\xi) - \omega (\xi+2\eta)+\omega(2\eta-\xi)}{\eta^2}\right).
\nonumber
\end{align}
\end{lemma}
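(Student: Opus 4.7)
The plan is to start from the symmetric integral representation
\[\pi\Lambda u(z)=\int_{0}^{\infty}\frac{2u(z)-u(z+\eta)-u(z-\eta)}{\eta^{2}}\,d\eta,\]
valid on $\R$ for the periodically extended $u$, subtract it at $z=x$ and $z=y$, and use the breakthrough identity $u(x)-u(y)=\omega(\xi)$ to obtain
\[\pi\bigl[\Lambda u(x)-\Lambda u(y)\bigr]=\int_{0}^{\infty}\frac{N(\eta)}{\eta^{2}}\,d\eta,\]
with
\[N(\eta)=2\omega(\xi)-[u(x+\eta)-u(y-\eta)]-[u(x-\eta)-u(y+\eta)],\]
where the cross differences have been regrouped so that each bracket pairs two points symmetric about the midpoint $m=(x+y)/2$. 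The lemma then reduces to a pointwise lower bound on $N(\eta)$.

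The natural split at $\eta=\xi/2$ is dictated by the geometry of the four auxiliary points $x\pm\eta$ and $y\pm\eta$. The pair $(x+\eta,y-\eta)$ sits at distance $\xi+2\eta$ with $x+\eta$ to the right, so the hypothesis that $u$ obeys $\omega$ gives $u(x+\eta)-u(y-\eta)\le\omega(\xi+2\eta)$ in both regimes. The second pair $(x-\eta,y+\eta)$ is at separation $|\xi-2\eta|$, but its orientation flips at $\eta=\xi/2$: for $\eta<\xi/2$ one has $x-\eta>y+\eta$ and the direct bound $u(x-\eta)-u(y+\eta)\le\omega(\xi-2\eta)$ applies, producing the first numerator $2\omega(\xi)-\omega(\xi+2\eta)-\omega(\xi-2\eta)$, which is nonnegative by concavity of $\omega$.

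The delicate regime is $\eta>\xi/2$, where the second pair has crossed through $m$ and a naive two-sided estimate only yields the term $-\omega(2\eta-\xi)$ in place of the sharper $+\omega(2\eta-\xi)$ claimed. I expect this upgrade to be the main technical obstacle: it appears to require combining the extremal nature of the breakthrough, which individually controls $u(x-\eta)$ and $u(y+\eta)$ via $u(x)-u(x-\eta),\,u(y+\eta)-u(y)\le\omega(\eta)$ and the equality $u(x)-u(y)=\omega(\xi)$, with the concavity-subadditivity relation $\omega(\xi+2\eta)\le 2\omega(\xi)+\omega(2\eta-\xi)$ for $\omega$ concave with $\omega(0)=0$, which in turn guarantees that the resulting numerator is also nonnegative. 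Once the pointwise bound on $N(\eta)$ is established in both regimes, inserting it into $\int_0^\infty N(\eta)/\eta^2\,d\eta$ and dividing by $\pi$ produces $\D(\xi)$ and completes the proof.
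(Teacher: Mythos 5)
Your setup is the standard one (and note the paper does not reprove this lemma; it refers to \cite{KNV}): the symmetric representation of $\Lambda u(x)-\Lambda u(y)$, the reflection pairing about the midpoint, and the treatment of the range $0<\eta<\xi/2$ are all correct. The gap is exactly at the step you flagged: the pointwise inequality
\begin{equation}
u(x+\eta)+u(x-\eta)-u(y+\eta)-u(y-\eta)\;\le\;\omega(\xi+2\eta)-\omega(2\eta-\xi),\qquad \eta>\xi/2,
\end{equation}
is false, and the ingredients you list cannot produce it. Obeying a modulus only yields upper bounds on increments, while turning $-\omega(2\eta-\xi)$ into $+\omega(2\eta-\xi)$ would require the lower bound $u(y+\eta)-u(x-\eta)\ge\omega(2\eta-\xi)$; neither $u(x)-u(x-\eta)\le\omega(\eta)$, $u(y+\eta)-u(y)\le\omega(\eta)$, nor the equality $u(x)-u(y)=\omega(\xi)$ gives any lower bound of this kind, and the subadditivity $\omega(\xi+2\eta)\le2\omega(\xi)+\omega(2\eta-\xi)$ only shows that the integrand of $\D$ is nonnegative, not that your numerator $N(\eta)$ dominates it. Concretely, in a strictly concave regime of the modulus (for \eqref{modcondef}, take $\xi\gg\delta$ and $\eta$ a large multiple of $\xi$) one can build $u$ obeying $\omega$ with $u(x)-u(y)=\omega(\xi)$ which also rises by nearly $\omega(\xi)$ across each translated interval $[y-\eta,x-\eta]$ and $[y+\eta,x+\eta]$: then the left side above is close to $2\omega(\xi)$ while $\omega(\xi+2\eta)-\omega(2\eta-\xi)$ is much smaller (for $\omega(s)=\sqrt{s}$, $\xi=1$, $\eta=3/2$, an explicit six-point configuration with values $u(\pm 1/2)=\tfrac12\pm\tfrac12$, $u(-1)=0.71$, $u(1)=0.29$, $u(2)=1.29$, $u(-2)=-0.29$ obeys all pairwise constraints, extends by the McShane construction, and gives left side $=2$ versus right side $\approx0.59$). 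So the best pointwise bound in the far range is $\max\{0,\;2\omega(\xi)-\omega(\xi+2\eta)-\omega(2\eta-\xi)\}$, strictly weaker than the integrand of $\D$; no pointwise argument can close the proof.

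The inequality with the plus sign holds only after integration in $\eta$, through a compensation between the near and far ranges. One way to see it (equivalent to \cite{KNV}): with $m=\frac{x+y}{2}$ set $W(s)=u(m+s)-u(m-s)$, so that $W(\xi/2)=\omega(\xi)$ and $W(s)\le\omega(2s)$ for all $s\ge0$ because $u$ obeys $\omega$. Writing both sides in terms of $W$ and changing variables as you do, one finds
\begin{equation}
\pi\big(\Lambda u(x)-\Lambda u(y)\big)-\pi\,\D(\xi)=\int_0^\infty\big(\omega(2s)-W(s)\big)\left(\frac{1}{(s-\xi/2)^2}-\frac{1}{(s+\xi/2)^2}\right)ds,
\end{equation}
and the right-hand side is nonnegative since $\omega(2s)-W(s)\ge0$ and the kernel difference is positive for $s\ge 0$; the breakthrough equality enters only through the vanishing of $\omega(2s)-W(s)$ at $s=\xi/2$, which makes the integral converge for the smooth solution at the breakthrough time. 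It is this rearrangement across scales, not a pointwise estimate of the symmetrized integrand, that produces the $+\omega(2\eta-\xi)$ term; your proposal as written cannot be repaired within the pointwise framework.
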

Note that both integrands above are nonnegative due to concavity of $\omega.$
We refer for the proof of this lemma to \cite{KNV}.
The second lemma provides control of the individual contributions of fractional Laplacians.
\begin{lemma}\label{fraclap1}
Suppose that $x$ and $y$ are as in the breakthrough scenario: $u(z)$ obeys $\omega$ while $u(x)-u(y)=\omega(\xi).$
Then
\begin{equation}\label{fraclapest123}
-\Lambda u(x), \,\,\,\Lambda u(y) \leq \left\{ \begin{array}{ll} C (1+ \log \xi^{-1}), & 0 \leq \xi \leq \delta \\
C\omega'(\xi),  & \xi > \delta. \end{array} \right.
\end{equation}
\end{lemma}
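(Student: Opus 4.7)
The plan is to exploit the breakthrough configuration fully. Write the fractional Laplacian on the whole line as
\[ \Lambda u(x) = \frac{1}{\pi}\int_0^\infty \frac{2u(x) - u(x+\eta) - u(x-\eta)}{\eta^2}\,d\eta, \]
so that an upper bound on $-\Lambda u(x)$ follows from a lower bound on the second-difference integrand. The breakthrough scenario supplies two distinct anchor points, $x$ and $y$. Taking without loss of generality $y = x-\xi$, the inequalities $u(x\pm\eta) \leq u(y) + \omega(|x\pm\eta - y|)$ combined with $u(x) - u(y) = \omega(\xi)$ give $u(x+\eta) \leq u(x) - \omega(\xi) + \omega(\xi+\eta)$ and $u(x-\eta) \leq u(x) - \omega(\xi) + \omega(|\xi - \eta|)$. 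These are uniformly sharper than the naive bounds $u(x \pm \eta) \leq u(x) + \omega(\eta)$ by subadditivity of $\omega$ (itself a consequence of concavity and $\omega(0)=0$). Subtracting then yields, for every $\eta > 0$,
\[ 2u(x) - u(x+\eta) - u(x-\eta) \geq K(\xi,\eta) := 2\omega(\xi) - \omega(\xi+\eta) - \omega(|\xi - \eta|), \]
and consequently $-\Lambda u(x) \leq \frac{1}{\pi}\int_0^\infty \bigl(-K(\xi,\eta)\bigr)/\eta^2\, d\eta$.

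To estimate this integral, note first that for $\eta \leq \xi$ one has $K(\xi,\eta) \geq 0$ by midpoint concavity ($\omega(\xi\pm\eta)$ averaged at $\xi$), so that range contributes nonpositively to the bound and may be discarded. For $\eta > \xi$ the decisive step is the complementary midpoint-concavity inequality $\omega(\xi + \eta) + \omega(\eta - \xi) \leq 2\omega(\eta)$ (averaged at $\eta$), which gives $-K(\xi,\eta) \leq 2(\omega(\eta) - \omega(\xi))$. Writing $\omega(\eta) - \omega(\xi) = \int_\xi^\eta \omega'(t)\,dt$ and applying Fubini reduces matters to a single one-dimensional integral:
\[ -\Lambda u(x) \leq \frac{2}{\pi}\int_\xi^\infty \frac{\omega'(t)}{t}\,dt. \]
For $\xi \leq \delta$, split at $t = \delta$: on $[\xi,\delta]$ the bound $\omega'(t) \leq 1$ produces the desired $\log(\delta/\xi) \lesssim 1 + \log\xi^{-1}$, and the integral over $[\delta,\infty)$ is controlled by a constant using $\gamma \leq \delta$. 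For $\xi > \delta$, substitute $\omega'(t) = \gamma/[4tL(t)]$ with $L(t) := 1 + \tfrac{1}{4}\log(t/\delta)$, which is monotonically increasing in $t$, and obtain
\[ \int_\xi^\infty \frac{\omega'(t)}{t}\,dt \leq \frac{\gamma}{4L(\xi)}\int_\xi^\infty \frac{dt}{t^2} = \frac{\gamma}{4\xi L(\xi)} = \omega'(\xi). \]
The bound on $\Lambda u(y)$ is entirely symmetric: the lower bounds $u(y\pm\eta) \geq u(x) - \omega(|y\pm\eta - x|)$ together with $u(y) - u(x) = -\omega(\xi)$ yield $2u(y) - u(y+\eta) - u(y-\eta) \leq -K(\xi,\eta)$, so the same estimate applies.

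The main obstacle is obtaining a bound proportional to the very small quantity $\omega'(\xi)$ in the regime $\xi > \delta$. A direct use of subadditivity, or a tangent-line estimate at $\xi$ applied to all $\eta$, yields a divergent integral over $[\xi,\infty)$ because $\omega(t) \to \infty$ (albeit only as $\log\log t$). The key trick that avoids this is to center the concavity inequality at $\eta$ rather than $\xi$: this replaces a growing comparison quantity with the finite increment $\omega(\eta)-\omega(\xi)$, after which the Fubini swap and the monotonicity of $L(t)$ in the flat regime produce the sharp rate. This delicate interplay between the explicit form of $\omega$ and the singular kernel $\eta^{-2}$ is the crux of the argument.
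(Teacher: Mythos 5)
Your proof is correct and follows essentially the same route as the paper: you compare $u$ at $x\pm\eta$ with the breakthrough partner point $y$ to bound the second difference of $u$ by that of $\omega$, discard the $\eta\le\xi$ range by concavity, and reduce the far range to $\int_\xi^\infty \omega'(t)\,t^{-1}dt\le\omega'(\xi)$, which is precisely the paper's key computation \eqref{aux123c}. The only real difference is a pleasant streamlining: your inequality $\omega(\eta+\xi)+\omega(\eta-\xi)\le 2\omega(\eta)$ collapses the far field into a single integral handling both regimes $0<\xi\le\delta$ and $\xi>\delta$ at once, whereas the paper splits into four regions and estimates the outer pieces $I$ and $IV$ separately in each regime.
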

\begin{proof}
This lemma is similar to the one that appears in \cite{DKRT}, but provides a more general estimate in the range $\xi > \delta$.
Let us prove the result for the case of $\Lambda u(x),$ the other case is similar.
Recalling \eqref{fraclap}, we have
\begin{align}\label{auxfl1}
-\Lambda u(x) =&~ \frac{1}{\pi} P.V. \int_{\Rm} \frac{u(y)-u(x)-u(y)+u(z)}{|x-z|^2}\,dz \\
=&~\frac{1}{\pi} P.V. \int_\Rm \frac{\omega(|y-z|)-\omega(\xi)}{|x-z|^2}\,dz - E(x) \leq
 \frac{1}{\pi} P.V. \int_\Rm \frac{\omega(|\xi-\eta|)-\omega(\xi)}{\eta^2}\,d\eta. \nonumber
\end{align}
Here
\begin{equation}\label{Ex}
E(x) =  \frac{1}{\pi}\int_\Rm \frac{u(y)-u(z)+\omega(|y-z|)}{|x-z|^2}\,dz \geq 0.
\end{equation}
Split the last integral in \eqref{auxfl1} into four parts corresponding to integration over $(-\infty, -\xi],$ $[-\xi,\xi],$
$[\xi, 2\xi],$ and $[2\xi,\infty),$ and we denote them $I,$ $II,$ $III$ and $IV$ respectively.
Observe that
\begin{equation}\label{IIest}
II = \int_0^\xi \frac{\omega(\xi-\eta)+\omega(\xi+\eta)-2\omega(\xi)}{\eta^2}\,d\eta \leq 0
\end{equation}
for all $\xi$ due to concavity of $\omega.$
Also
\begin{equation}\label{IIIest}
III = \int_\xi^{2\xi} \frac{\omega(\eta-\xi)-\omega(\xi)}{\eta^2}\,d\eta \leq 0
\end{equation}
for all $\xi$ since $\omega$ is increasing.

The estimates for $I$ and $IV$ will be split into several cases.
Let us first look at $I$ in the $0 < \xi \leq \delta/2$ range. Here we have
\begin{align*}
 I = &~\int_{-\infty}^{-\xi} \frac{\omega(|\xi-\eta|)-\omega(\xi)}{\eta^2}\,d\eta \leq \int_\xi^\infty
 \frac{\omega(\xi+\eta)-\omega(\xi)}{\eta^2}\,d\eta\\ \leq&~
\int_\xi^{\delta-\xi} \frac{\eta}{\eta^2}\,d\eta + \int_{\delta-\xi}^\infty \frac{\gamma \log \left(1+ \frac14
\log \frac{\xi+\eta}{\delta} \right) +\delta}{\eta^2}\,d\eta\\ \leq&~ \log \xi^{-1} + 2 +
\gamma \delta^{-1} \int_{1/2}^\infty
\frac{\log \left(1+ \frac14 \log (1+s) \right)}{s^2}\,ds \leq \log \xi^{-1} + C (1+\gamma \delta^{-1}).
\end{align*}
We used the form of $\omega$ given in \eqref{modcondef}, concavity, and elementary calculations.
In the $\delta/2 < \xi \leq \delta$ range,
\[ I \leq \int_\xi^\infty \frac{\omega(\xi+\eta)}{\eta^2}\,d\eta \leq \int_{\delta/2}^\infty \frac{\delta + \gamma \log \left(
1+ \frac14 \log \left( 1+ \frac{\eta}{\delta} \right) \right)}{\eta^2}\,d\eta \leq C(1+\gamma \delta^{-1}). \]
The term $IV$ we will estimated in the whole $0 < \xi \leq \delta$ range at once:
\begin{align*}
IV \leq&~ \int_{2\xi}^\infty \frac{\omega(\eta-\xi)}{\eta^2}\,d \eta \leq \int_{2\xi}^{\xi+\delta} \frac{1}{\eta}\,d\eta +
\int_{\xi+\delta}^\infty \frac{\delta + \gamma \log \left(1+ \frac14
\log \frac{\xi+\eta}{\delta} \right)}{\eta^2}\,d\eta \\ \leq&~
\log \xi^{-1} +C(1+\gamma \delta^{-1}).
\end{align*}
Since we are assuming that $\gamma \leq \delta,$ this completes the proof of the first bound in \eqref{fraclapest123}.

Now let us consider the range $\xi > \delta.$ Here we have
\begin{align}
I = &~\int_\xi^\infty \frac{\omega(\xi+\eta)-\omega(\xi)}{\eta^2}\,d\eta = \int_\xi^\infty \frac{1}{\eta^2} \int_\xi^{\xi+\eta}
\omega'(s)\, ds d\eta \nonumber \\
=&~\int_\xi^{2\xi}\omega'(s) \int_\xi^\infty \frac{1}{\eta^2} \,d\eta ds + \int_{2\xi}^\infty \omega'(s) \int_{s-\xi}^\infty \frac{1}{\eta^2}\,d\eta ds = \frac{\omega(2\xi)-\omega(\xi)}{\xi}+ \int_{2\xi}^\infty \frac{\omega'(s)}{s-\xi}\,ds \nonumber \\
\leq&~\omega'(\xi) + 2 \int_\xi^\infty \frac{\omega'(s)}{s}\,ds \leq C\omega'(\xi). \label{Iom}
\end{align}
In computation, we used concavity of $\omega$ in estimating $\omega(2\xi)-\omega(\xi) \leq \omega'(\xi)\xi$. We also used the explicit form of $\omega'(s)$
in $s \geq \delta$ region to estimate
\begin{equation}\label{aux123c} \int_\xi^\infty \frac{\omega'(s)}{s}\,ds =  \int_\xi^\infty \frac{\gamma}{s^2 \log\left(4+\frac{s}{\delta}\right)}\,ds \leq
  \frac{\gamma}{\xi \log\left(4+\frac{\xi}{\delta}\right)} = \omega'(\xi). \end{equation}
For the term $IV,$ we get
\begin{align}
IV =&~ \int_{2\xi}^\infty \frac{\omega(\eta-\xi)-\omega(\xi)}{\eta^2} \, d\eta = \int_{2\xi}^\infty \frac{1}{\eta^2}
\int_\xi^{\eta-\xi} \omega'(s) \,ds d\eta  \nonumber  \\
=&~\int_\xi^\infty \omega'(s) \int_{\xi+\eta}^\infty \frac{1}{\eta^2}\,d\eta = \int_\xi^\infty \frac{\omega'(s)}{s+\xi}\,ds \leq
\omega'(\xi). \label{IVom}
\end{align}
This completes the proof of the lemma.
\end{proof}
We now state the lemma that is an immediate consequence of the estimates of Lemma~\ref{fraclap1}, but this specific statement will play a crucial
role at some point below.
\begin{lemma}\label{Elemma1}
Let $\omega$ be given by \eqref{modcondef}. Suppose that $x$ and $y$ are as in the breakthrough scenario: $u(z)$ obeys $\omega$ while $u(x)-u(y)=\omega(\xi).$
Then for $\xi > \delta,$ we have
\begin{equation}\label{LamxE}
\Lambda u(x) \geq  E(x) - C \omega'(\xi)
\end{equation}
and
\begin{equation}\label{LamyE}
\Lambda u(y) \leq  -E(y) + C \omega'(\xi),
\end{equation}
where $E(x)$ is given by \eqref{Ex} and
\begin{equation}\label{Ey}
E(y)= \frac{1}{\pi} \int_\Rm \frac{\omega(|x-z|)-u(x)+u(z)}{|y-z|^2}\,dz.
\end{equation}
\end{lemma}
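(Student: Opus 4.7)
The idea is to revisit the identity used in the proof of Lemma \ref{fraclap1} but retain the nonnegative error term $E(x)$ (resp.\ $E(y)$) instead of discarding it. Assume without loss of generality that $x > y$, so that $x - y = \xi$.

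The first step is to verify the algebraic decomposition
\begin{equation}
 -\Lambda u(x) = \frac{1}{\pi}\, P.V.\!\int_{\R} \frac{\omega(|y-z|)-\omega(\xi)}{|x-z|^{2}}\,dz - E(x),
\end{equation}
which follows by writing $u(z)-u(x) = [\omega(|y-z|)-\omega(\xi)] - [u(y)-u(z)+\omega(|y-z|)]$ and using the breakthrough condition $u(x)-u(y)=\omega(\xi)$. Rearranging gives
\begin{equation}
 \Lambda u(x) = E(x) - \frac{1}{\pi}\, P.V.\!\int_{\R} \frac{\omega(|y-z|)-\omega(\xi)}{|x-z|^{2}}\,dz.
\end{equation}

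The second step is to estimate the remaining principal value integral in the regime $\xi > \delta$. Substituting $\eta = x - z$ turns $|y-z|$ into $|\eta - \xi|$ (since $y = x - \xi$), so the integral becomes exactly
\begin{equation}
 \frac{1}{\pi}\, P.V.\!\int_{\R} \frac{\omega(|\xi-\eta|)-\omega(\xi)}{\eta^{2}}\,d\eta,
\end{equation}
which is the same quantity that was analyzed in Lemma \ref{fraclap1} via the splitting $I + II + III + IV$ over $(-\infty,-\xi]$, $[-\xi,\xi]$, $[\xi,2\xi]$ and $[2\xi,\infty)$. The inner pieces $II$ and $III$ are nonpositive by concavity and monotonicity of $\omega$ (estimates \eqref{IIest} and \eqref{IIIest}), while the outer pieces satisfy $I, IV \leq C\omega'(\xi)$ in the range $\xi > \delta$ by exactly the computations in \eqref{Iom} and \eqref{IVom}. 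Combining gives the upper bound $C\omega'(\xi)$ for the whole integral, which together with the displayed identity yields \eqref{LamxE}.

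The argument for \eqref{LamyE} is parallel. Writing $u(y)-u(z) = [\omega(|x-z|)-\omega(\xi)] - [\omega(|x-z|)-u(x)+u(z)]$ and using $u(x)-u(y)=\omega(\xi)$ once more produces
\begin{equation}
 \Lambda u(y) = \frac{1}{\pi}\, P.V.\!\int_{\R} \frac{\omega(|x-z|)-\omega(\xi)}{|y-z|^{2}}\,dz - E(y),
\end{equation}
with the nonnegativity of $E(y)$ following from $|u(x)-u(z)|\leq \omega(|x-z|)$. Changing variables $\eta = z - y$ converts the remaining integral into the same universal expression handled above, so it is again bounded by $C\omega'(\xi)$, yielding \eqref{LamyE}. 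No new estimates beyond those in Lemma \ref{fraclap1} are required; the entire content is bookkeeping of the error term $E$. The main (mild) point of care is ensuring the correct sign in the identities and that the substitution indeed produces the symmetric integrand $(\omega(|\xi-\eta|)-\omega(\xi))/\eta^2$ whose bound for $\xi > \delta$ is already established.
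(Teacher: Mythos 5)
Your proposal is correct and follows essentially the same route as the paper: the paper's own proof simply retains the nonnegative term $E(x)$ (resp.\ $E(y)$) in the identity \eqref{auxfl1}, changes variables to the universal integrand $(\omega(|\xi-\eta|)-\omega(\xi))/\eta^2$, and invokes the non-positivity of $II,III$ together with the bounds \eqref{Iom}, \eqref{IVom} on $I,IV$ in the range $\xi>\delta$, exactly as you do. Your algebraic decompositions and the sign bookkeeping for both $x$ and $y$ check out, so no gap.
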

\begin{proof}
The result for $\Lambda u(x)$ follows from \eqref{auxfl1}, \eqref{Iom}, \eqref{IVom}
and the fact that $II$ and $III$ defined in the proof of Lemma~\ref{fraclap1}
are non-positive. The proof for $\Lambda u(y)$ is completely analogous.
\end{proof}

Let us now start the consideration of the breakthrough scenario and the proof of \eqref{breakineqexp}.

1. The short range: $0 < \xi \leq \delta$. Observe that for all $\xi>0,$
\begin{equation}\label{aux123a} -\varphi(x) \Lambda u(x) +\varphi(y)\Lambda u(y) \leq -\zeta \D(\xi) + \left\{ \begin{array}{ll}
(\varphi(y)-\varphi(x)) \Lambda u(x) & {\rm if}\,\,\,\varphi(x) \geq \varphi(y) \\
 (\varphi(y)-\varphi(x)) \Lambda u(y) & {\rm if}\,\,\,\varphi(x) < \varphi(y). \end{array} \right. \end{equation}
Here $\D(\xi)$ is given by \eqref{Dpart} and we used Lemma~\ref{dissipD} in estimating
$\Lambda u(y)-\Lambda u(x) \leq -\D(\xi).$
 According to Lemma~\ref{phipsiest} and Lemma~\ref{fraclap1}, the second term on the right hand side of
 \eqref{aux123a} can be bounded from above by
\begin{equation}\label{smallxierr1}
C\zeta \left( \omega(\xi)+ \Omega(\xi) + \Omega(\xi)^2 \right)(1+\log \xi^{-1}).
\end{equation}
The first term on the right hand side of \eqref{aux123a} does not exceed
\begin{equation}\label{smallxiD}
-\zeta \D(\xi) \leq -\frac{\zeta}{\pi} \int_0^{\xi/2} \frac{2\omega(\xi)-\omega(\xi+2\eta)-\omega(\xi-2\eta)}{\eta^2}\,d\eta
\leq  -C \zeta \xi^{1/2}.
\end{equation}
Here the last step can be obtained by application of the Taylor theorem and concavity of $\omega:$
\begin{equation}\label{aux123b} 2\omega(\xi)-\omega(\xi+2\eta)-\omega(\xi-2\eta) \geq -2\omega''(\xi_1)\eta^2, \end{equation}
where $\xi_1 \in (\xi-2\eta,\xi).$

The advective flow term can be estimated by
\begin{equation}\label{smallxiflow}
|\psi(x) \partial_z u(x) - \psi(y) \partial_z u(y)| \leq |\psi(x)-\psi(y)| \omega'(\xi) \leq
C\zeta \left( \omega(\xi) + \Omega(\xi) + \Omega(\xi)^2 \right)\omega'(\xi).
\end{equation}
In the first step we used that $|\partial_z u(x)| = |\partial_z u(y)| = \omega'(\xi)$ at the breakthrough points;
otherwise one could shift $x$ or $y$ and obtain that the modulus is already not obeyed at the breakthrough  time $t_1$,
yielding a contradiction (see \cite{Kis11} for more details). The estimate \eqref{smallxiflow} is valid for all $\xi>0.$

It remains to estimate $\Omega(\xi)$. Using \eqref{Hmodeq}, we find
\begin{align} \nonumber
\Omega(\xi) \leq &~C \left( \int_0^\xi d \eta + \xi \int_\xi^\delta \frac{1}{\eta}\, d\eta + \xi
\int_\delta^\infty \frac{\delta +\gamma \log \left( 1+ \frac14 \log (\eta/\delta) \right)}{\eta^2}\,d\eta \right) \\
\leq&~C\xi\left(1+\log \xi^{-1} + \frac{\gamma}{\delta} \right) \leq C\xi (1+\log \xi^{-1}). \label{Omsmallxi}
\end{align}
To estimate the integral in the second step we rescaled the variable $\eta = \delta s;$ we also used in the estimates
that $\gamma \leq \delta \leq 1.$

Let us combine the estimates \eqref{smallxierr1}, \eqref{smallxiD}, \eqref{smallxiflow} and \eqref{Omsmallxi}.
Taking into account that $\omega'(\xi) \leq 1$ for all $\xi,$ the balance we need to verify in the short range then reads as follows:
\begin{equation}\label{smallxibal}
C \zeta \xi^{1/2} > C_1 \zeta (\xi (1+\log \xi^{-1})^2+\xi^2 (1+ \log \xi^{-1})^3).
\end{equation}
We can always ensure that \eqref{smallxibal} holds in the range $0 < \xi \leq \delta$ by choosing $\delta$ to be sufficiently small.

2. The mid-range: $\delta < \xi \leq \delta e^{\delta^{-1}}$. Let us start with an estimate on $\Omega(\xi)$ valid for
$\xi > \delta.$ We have
\begin{equation}\label{Ommidxi1}
 \Omega(\xi) \leq C \left(\int_0^\delta d\eta + \int_\delta^\xi \frac{\omega(\eta)}{\eta}\,d\eta + \xi \int_\xi^\infty \frac{\omega(\eta)}{\eta^2}\,d\eta \right)
\end{equation}
The second term on the right hand side of \eqref{Ommidxi1} can be estimated by $\omega(\xi)\log \frac{\xi}{\delta}.$
For the last term we have
\[ \xi \int_\xi^\infty \frac{\omega(\eta)}{\eta^2}\,d\eta = \omega(\xi)+\xi \int_\xi^\infty \frac{\omega'(\eta)}{\eta}\,d\eta \leq
\omega(\xi) + \xi \omega'(\xi) \leq 2\omega(\xi). \]
Here in the second step we used a computation parallel to \eqref{aux123c} and in the last step invoked concavity of $\omega.$
Combining all the estimates, we get that
\begin{equation}\label{Ommidfarxi}
\Omega(\xi) \leq C \omega(\xi)\left(4+\log \frac{\xi}{\delta}\right)
\end{equation}
for $\xi > \delta.$

For the dissipative term, we will now use the estimate that follows from \eqref{Dpart}:
\[ -\zeta \D(\xi) \leq -\frac{\zeta}{\pi} \int_{\xi/2}^\infty \frac{2\omega(\xi) - \omega (\xi+2\eta) + \omega (2\eta-\xi)}{\eta^2}\,d\eta. \]
For $\xi > \delta,$ we have
\begin{equation} \omega(2\xi) \leq \omega(\xi) + \gamma \int_\xi^{2\xi} \frac{d\eta}{4\eta} \leq \omega(\xi)+\frac{\gamma}{4} \leq \frac{3}{2}\omega(\xi) \label{xi2xi} \end{equation}
since $\gamma \leq \delta$ and $\delta$ can be taken small enough so that $\omega(\delta) \geq \delta/2.$
Thus we summarize that for $\xi > \delta,$
\begin{equation}\label{Dmidxi1}
-\zeta \D(\xi) \leq -C \zeta \frac{\omega(\xi)}{\xi}.
\end{equation}
Putting together the estimates \eqref{Dmidxi1},  \eqref{smallxiflow}, \eqref{aux123a}, \eqref{psiest} and \eqref{fraclapest123}, we obtain
that the balance we have to check in the $\delta < \xi \leq \delta e^{\delta^{-1}}$ range reads
\begin{equation}\label{balmidxi1} C \zeta  \frac{\omega(\xi)}{\xi} > C_1 \zeta \omega'(\xi)  \left(\omega(\xi)+
\Omega(\xi) +\Omega(\xi)^2 \right),  \end{equation}
where $C$ is a universal constant and $C_1$ depends only on $m_0$ and $M_0.$
Applying \eqref{Ommidfarxi}, after a short computation we see that it is sufficient to ensure that if $\delta < \xi \leq \delta e^{-\delta^{-1}},$ then
\begin{equation}\label{gamma1}
C_2 \gamma \left(1+\omega(\xi)\left(4+\log\frac{\xi}{\delta}\right)\right) \leq C_2 \gamma \left(1+ \left(\delta + \gamma \log \left(1+ \frac14 \delta^{-1}\right)\right)(4+\delta^{-1})\right) < C.
\end{equation}
This is just a condition on $\gamma$ and $\delta$ that we will enforce; it suffices to require that $\gamma \leq c\delta$ with
appropriately small $c$ (which may depend only on $m_0$ and $M_0$ through the constant $C_2$). This completes the treatment of the
$\delta \leq \xi \leq \delta e^{\delta^{-1}}$ range.

Observe that the bounds \eqref{Dmidxi1}, \eqref{Ommidfarxi}, \eqref{psiest}, \eqref{aux123a} and \eqref{smallxiflow}, and thus 
\eqref{balmidxi1} and \eqref{gamma1}, are also valid in the $\xi > \delta e^{\delta^{-1}}$ far range.
But as we can see from \eqref{balmidxi1} and \eqref{gamma1}, for unboundedly large $\xi$ we cannot establish the needed
control with the above argument. Moreover, modifying $\omega$ will not help. The reason is the presence of $\Omega(\xi)^2$ in the
balance \eqref{balmidxi1}. This summand appears there due to the structure of $\varphi$ and $\psi$ and in particular the way Hilbert transform
enters into these functions. In fact, without $\Omega(\xi)^2$ part, the balance \eqref{balmidxi1} could have been controlled over all $\xi$
very similarly to \cite{KNV,DKRT}. However, in our case the leading term condition for large $\xi$ becomes
\[  \frac{1}{\xi (\log \xi)^2} \gtrsim \omega(\xi)\omega'(\xi) \]
which leads to globally bounded $\omega$ due to integrability of the right hand side near infinity.
A bounded modulus cannot be used to control arbitrary initial data, and will
lead to global regularity only under appropriate additional smallness condition.

To control the large $\xi$ range, we will use a different argument. This is the truly novel part of this global regularity proof.
The main idea is to use sharper estimates on dissipation, with Lemma~\ref{Elemma1} as the initial step.
We now prove a key estimate on the quantities $E(x)$ \eqref{Ex} and $E(y)$ \eqref{Ey} appearing in that lemma. First, we single out a more
convenient part of integrals defining these quantities. Assume, without loss of generality, that $y <x.$ Set
\begin{align} E_1(x) =&~ \frac{1}{\pi} \int_y^x \frac{\omega(|y-z|)-u(z)+u(y)}{|x-z|^2}\,dz = \frac{1}{\pi} \int_0^\xi \frac{\omega(\eta)+u(y)-u(y+\eta)}{(\xi-\eta)^2}\,d\eta, \label{E1x}
 \\ \label{E1y} E_1(y) =&~ \frac{1}{\pi} \int_y^x \frac{\omega(|x-z|)+u(z)-u(x)}{|y-z|^2}\,dz = \frac{1}{\pi} \int_0^\xi \frac{\omega(\xi-\eta)-u(x)+u(y+\eta)}{\eta^2}\,d\eta.
\end{align}
Observe that $E_1(x) \leq E(x)$ and $E_1(y) \leq E(y).$
\begin{proposition}\label{keyEprop}
Suppose that $\xi \geq \delta e^{\delta^{-1}},$ and $\delta$ is chosen sufficiently small, to be specified later.
Assume that
\begin{equation}\label{E1ybound} E_1(y) \leq \frac{q \omega(\xi)}{\xi}, \end{equation}
where $\xi^{-1/2} \leq q \leq \frac{1}{128\pi}.$ Then 
\begin{equation}\label{E1xbound}
E_1(x) \geq \frac{\omega(\xi)}{64\pi q \xi}.
\end{equation}
The result also holds if $E_1(y)$ and $E_1(x)$ are switched places.
\end{proposition}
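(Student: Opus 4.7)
The plan is to rewrite both quantities in terms of two nonnegative ``complementary'' functions whose sum is explicit. Let $f(\eta) := u(y+\eta) - u(y)$ for $\eta \in [0,\xi]$, and set
\begin{align*}
h(\eta) := \omega(\eta) - f(\eta), \qquad
\tilde h(\eta) := \omega(\xi - \eta) - \omega(\xi) + f(\eta).
\end{align*}
Because $u$ obeys $\omega$ and $f(\xi) = u(x) - u(y) = \omega(\xi)$, both $h,\tilde h \geq 0$; moreover $h + \tilde h = A$ with $A(\eta) := \omega(\eta) + \omega(\xi - \eta) - \omega(\xi) \geq 0$ by concavity of $\omega$. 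Note $A$ is symmetric, $A(\xi - \eta) = A(\eta)$, and does not depend on $f$. In this notation $\pi E_1(x) = \int_0^\xi h(\eta)/(\xi-\eta)^2\,d\eta$ and $\pi E_1(y) = \int_0^\xi \tilde h(\eta)/\eta^2\,d\eta$, so the hypothesis is a weighted $L^1$ bound on $\tilde h$ and the conclusion a weighted $L^1$ lower bound on $h$. Since $h$ and $\tilde h$ redistribute the fixed total $A$, smallness of one must force largeness of the other.

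The strategy is to introduce a cutoff $R$ of order $q\xi$ and use
\begin{align*}
\pi E_1(x) \geq \int_0^{\xi - R} h(\eta)/(\xi-\eta)^2\,d\eta = \int_R^\xi A(s)/s^2\,ds - \int_0^{\xi - R}\tilde h(\eta)/(\xi-\eta)^2\,d\eta,
\end{align*}
after the substitution $s = \xi - \eta$ in the first integral and using $A(\xi - s) = A(s)$. For the main integral, the bound $A(s) \geq \omega(s) - \gamma$ on $[R,\xi/2]$ (which rests on $\omega(\xi) - \omega(\xi/2) \leq \gamma$, the same observation used in the mid-range bound \eqref{xi2xi}) combined with the integration-by-parts identity $\int_R^{\xi/2}\omega(s)/s^2\,ds = \omega(R)/R - 2\omega(\xi/2)/\xi + \int_R^{\xi/2}\omega'(s)/s\,ds$ shows that the main integral is controlled by $\omega(R)/R$ up to an $O(\gamma/R)$ correction. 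The slow-variation of $\omega$ at scales $\geq \delta$, a structural feature of the modulus \eqref{modcondef}, gives $\omega(\xi) - \omega(R) = O(\gamma)$ for $R$ in the relevant range, so $\int_R^\xi A(s)/s^2\,ds \geq c_1 \omega(\xi)/R$ with a constant $c_1$ that can be made arbitrarily close to $1$ by taking $\gamma \leq c\delta$ with $c$ sufficiently small. For the error integral, the pointwise estimate $\tilde h(\eta) \leq \xi^2\,\tilde h(\eta)/\eta^2$ (valid since $\eta \leq \xi$) together with the hypothesis produces $\int_0^\xi \tilde h\,d\eta \leq \pi q\omega(\xi)\xi$, and hence $\int_0^{\xi - R}\tilde h(\eta)/(\xi - \eta)^2\,d\eta \leq \pi q\omega(\xi)\xi/R^2$ because $\xi - \eta \geq R$ on the integration range.

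Combining, $\pi E_1(x) \geq c_1\omega(\xi)/R - \pi q\omega(\xi)\xi/R^2$, which I then optimize over $R$: the maximizer $R = 2\pi q\xi/c_1$ is of the form $R \sim q\xi$, and substituting back yields $\pi E_1(x) \geq c_1^2\,\omega(\xi)/(4\pi q\xi)$. Under the standing assumptions $\xi \geq \delta e^{\delta^{-1}}$ and $\xi^{-1/2} \leq q \leq 1/(128\pi)$, this $R$ lies safely in $[\delta, \xi/10]$, which validates the intermediate bounds on $\omega(R)$ and the integration-by-parts formula. The hard part will be tuning constants so that $c_1^2/(4\pi) \geq 1/64$, i.e. $c_1 \gtrsim 1/2$; this is achievable because the deviation $\omega(\xi) - \omega(R)$ depends only logarithmically on $\xi/R$ (roughly $\gamma \log\log$), and the $O(\gamma)$ corrections are dominated by $\omega(\xi) \geq \delta/2$ once $\gamma \ll \delta$. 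Finally, the symmetric case, in which $E_1(x)$ and $E_1(y)$ exchange roles in the hypothesis and conclusion, follows by the identical argument after interchanging $h \leftrightarrow \tilde h$ and the reflection $\eta \mapsto \xi - \eta$, which preserves both $A$ and the structure of the decomposition.
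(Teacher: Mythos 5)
Your plan is correct, and it reaches the stated constant: with $R\sim q\xi$ the optimization gives $\pi E_1(x)\geq c_1^2\,\omega(\xi)/(4\pi q\xi)$, so you only need $c_1\geq\sqrt{\pi}/4\approx0.44$, which your estimates deliver with room to spare once $\gamma\leq c\delta$ with $c$ small. The route differs from the paper's in execution, though the pillars are the same: both proofs work with the concavity surplus $A(\eta)=\omega(\eta)+\omega(\xi-\eta)-\omega(\xi)$, control the deficit through the hypothesis on $E_1(y)$, and use the lower bound $q\geq\xi^{-1/2}$ together with the doubly logarithmic form of $\omega$ in \eqref{modcondef} to see that $\omega$ at scale $\sim q\xi$ is comparable to $\omega(\xi)$ (the paper's version of this is \eqref{aux125b}, $2\omega(8\pi q\xi)\geq\omega(\xi)$). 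Where you diverge: the paper never gives up the strong weight $(\xi-\eta)^{-2}$ on the deficit; instead it restricts the hypothesis to $[\xi/2,\xi]$ (\eqref{init126}), chooses a threshold $a=8\pi q\xi$ so that the unweighted $A$-mass on $[\xi-a,\xi]$ absorbs the deficit (\eqref{acond}), and then uses a monotone-rearrangement comparison (\eqref{Eaux1124}--\eqref{Eaux21124}) to drop down to $E_1(x)\geq\frac1\pi\int_{\xi/2}^{\xi-a}A(\eta)(\xi-\eta)^{-2}d\eta$ before integrating by parts. You instead subtract the deficit crudely, bounding the weight by $R^{-2}$ on $[0,\xi-R]$, and recover the constant by keeping the full main term $\int_R^\xi A(s)s^{-2}ds\approx\omega(\xi)/R$ and optimizing $R$; this avoids the rearrangement step entirely and is arguably more streamlined, at the price of tracking constants, which you correctly flag and which do work out. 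Two small points to tighten in a write-up: the claim $\omega(\xi)-\omega(R)=O(\gamma)$ is not a consequence of $R\geq\delta$ alone (for $R=\delta$ the difference is unbounded in $\xi$); it requires $R\gtrsim\xi^{1/2}$, which is exactly where $q\geq\xi^{-1/2}$ enters, so make that dependence explicit. Also, the boundary term $2\omega(\xi/2)/\xi$ from the integration by parts is an $O(R/\xi)$-relative correction to $\omega(R)/R$ (harmless since $R/\xi\lesssim q\leq\frac{1}{128\pi}$), not an $O(\gamma/R)$ one; the paper handles its analogous term $\frac{4\omega(\xi/2)-2\omega(\xi)}{\xi}$ the same way, using $q\leq\frac{1}{128\pi}$.
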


Before proving the proposition, let us outline the idea behind our plan to control the large $\xi$ range.
The estimate $\Lambda u(x) - \Lambda u (y) \geq \D(\xi)$ is, in general, sharp. One can see this from its proof \cite{KNV}.
The profile for $u(z)$ on which the equality is achieved is equal, up to a vertical shift by a constant,
to $u(z) = \frac12 \omega (2z) {\rm sgn}(z)$ provided
that the origin is placed at $\frac{x-y}{2}$. 
For this profile, one can compute that $\Lambda u(x) = -\Lambda u(y) = \frac12 \D(\xi).$ A computation similar to the one
that appears below shows that if there is a balance like that then one can show the breakthrough cannot happen for large $\xi$ as well.
The difficulty appears when one of the fractional Laplacians - say $\Lambda u(y)$ - has absolute value much smaller than $\D(\xi)$, while
$|Hu(x)|$ is large and, due to its appearance in the denominator of $\varphi,$ suppresses the dissipative contribution of $\Lambda u(x).$ We will use Proposition~\ref{keyEprop}
to show that in this event $\Lambda u(x)$ turns out to be much larger than $\D(\xi)$ and sufficient to control the
scenario.

\begin{proof}[Proof of Proposition~\ref{keyEprop}]
From \eqref{E1ybound} and \eqref{E1y} we can conclude that
\begin{equation}\label{init126} \int_{\xi/2}^\xi (\omega(\xi-\eta)-u(x)+u(y+\eta))\,d\eta \leq \pi q \xi \omega(\xi). \end{equation}
Therefore
\[ \int_{\xi/2}^\xi (\omega(\eta) - u(y+\eta) + u(y))\,d \eta \geq \int_{\xi/2}^\xi (\omega(\eta)+\omega(\xi-\eta)-\omega(\xi))\,d\eta
-\pi q \omega(\xi)\xi. \]
Note that the expressions under the integrals are non-negative due to concavity of $\omega$ and the fact that $u$ obeys it. Below we will find an $a>0$ such that
\begin{equation}\label{acond} \int_{\xi-a}^\xi (\omega(\eta)+\omega(\xi-\eta)-\omega(\xi))\,d\eta \geq \pi q \xi\omega(\xi). \end{equation}
Then we claim that
\begin{equation}\label{E1xlb1}
E_1(x) = \frac{1}{\pi}\int_{0}^{\xi} \frac{\omega(\eta)-u(y+\eta)+u(y)}{(\xi-\eta)^2}\,d\eta \geq \frac{1}{\pi}\int_{\xi/2}^{\xi-a} \frac{\omega(\eta)+\omega(\xi-\eta)-\omega(\xi)}{(\xi-\eta)^2}\,d\eta.
\end{equation}
Indeed, this inequality is implied by
\begin{align}\label{Eaux1124} \int_{\xi-a}^\xi
\frac{\omega(\eta) - u(y+\eta) + u(y)}{(\xi-\eta)^2}\,d\eta \geq  \int_{\xi/2}^{\xi-a} \frac{\omega(\xi-\eta)-u(x)+u(y+\eta)}{(\xi-\eta)^2}\,d\eta. \end{align}
But \eqref{Eaux1124}
follows from the monotonicity of the denominator and 
\begin{align}\label{Eaux21124}  \int_{\xi-a}^\xi (\omega(\eta) - u(y+\eta) + u(y))\,d\eta \geq \int_{\xi/2}^{\xi-a} (\omega(\xi-\eta)-u(x)+u(y+\eta))\,d\eta. \end{align}
On the other hand, \eqref{Eaux21124} is implied by \eqref{acond} and \eqref{init126}.

Let us now estimate $a.$ First, we claim that for all  $\delta$ sufficiently small,
we can take $a = 8\pi q \xi.$
Note that due to our assumptions on the range of $q,$ we have
$8 \pi q \leq \frac{1}{16}.$ Recall (see \eqref{xi2xi}) that
\begin{equation} \omega(\xi)-\omega(\eta)\leq\int_{\xi/2}^\xi
  \omega'(z) \,dz \leq
  \frac{\gamma}{4}, \label{xi2xi2} \end{equation}
for any $\eta\in[\xi-a,\xi]$, and so
\[\int_{\xi-a}^\xi (\omega(\eta)-\omega(\xi))\,d\eta\geq -\frac{\gamma a}{4}.\]
On the other hand, by concavity of $\omega$,
\[ \int_{\xi-a}^\xi\omega(\xi-\eta)\,d\eta=\int_0^a
  \omega(\eta)\,d\eta \geq \frac{a}{2}\omega(a). \]
Notice that due to our assumption that
\[q\xi\geq\xi^{1/2}\geq\delta^{1/2}e^{\delta^{-1}/2},\]
we have $a\geq\delta$ for all $\delta\leq1$, and
$\omega(a)\geq\gamma$ if we assume that $\delta$ is sufficiently small
and $\gamma\leq\delta/4$. Then we obtain
\[ \int_{\xi-a}^\xi (\omega(\eta)+\omega(\xi-\eta)-\omega(\xi))\,d\eta
  \geq \frac{a}{2}\omega(a)-\frac{\gamma a}{4}\geq \frac{a}{4}\omega(a)=2\pi q\xi\omega(8\pi q\xi).
\]

It remains to verify that
\begin{equation}\label{aux125b} 2\omega \left( 8 \pi  q \xi \right) \geq \omega(\xi). \end{equation}
Due to \eqref{modcondef}, this would follow from
\[ \log \left(1+ \frac14 \log \frac{8 \pi q \xi}{\delta} \right)^2 \geq \log \left(1+\frac14 \log \frac{\xi}{\delta}\right), \]
and thus from
\[ 2 \log \frac{8\pi q \xi}{\delta} \geq \log \frac{\xi}{\delta} \]
or
\[ \frac{64 \pi^2 q^2 \xi}{ \delta} \geq 1. \]
Since we assume that $q^2 \xi \geq 1,$ this inequality is clearly true for all $\delta \leq 1.$

Now we can estimate $E_1(x)$ using \eqref{E1xlb1}:
\begin{align}\nonumber
E_1(x) \geq&~ \int_{\xi/2}^{\xi-a} \frac{\omega(\eta)+\omega(\xi-\eta)-\omega(\xi)}{(\xi-\eta)^2}\,d\eta \\=&~
\left. \frac{\omega(\eta)+\omega(\xi-\eta)-\omega(\xi)}{\xi-\eta}\right|_{\xi/2}^{\xi-a} -\int_{\xi/2}^{\xi-a} \frac{\omega'(\eta) - \omega'(\xi-\eta)}{\xi-\eta}\,d\eta \nonumber\\ \geq&~
\frac{\omega(\xi-a)+\omega(a)-\omega(\xi)}{a} -\frac{4\omega(\xi/2)-2\omega(\xi)}{\xi}. \label{aux125}
\end{align}
In the last step we used that $\omega'(\xi-\eta) \geq \omega'(\eta)$ for $\eta \geq \xi/2$ since $\omega'$ is decreasing.
The last term in \eqref{aux125} is bounded from below by $-\frac{2 \omega(\xi)}{\xi}.$
Due to \eqref{xi2xi2} and \eqref{aux125b}, we also have
\[ \omega(\xi-a)+\omega(a)-\omega(\xi) \geq \frac12 \omega(\xi)-\frac{\gamma}{4}\geq \frac14 \omega(\xi). \]
Collecting all the estimates we conclude that, since $q \leq \frac{1}{128\pi},$
\[ E_1(x) \geq \frac{\omega(\xi)}{32 \pi q \xi} - \frac{2 \omega(\xi)}{\xi} \geq \frac{\omega(\xi)}{64 \pi q \xi}. \]
The proof of the statement with $E_1(x)$ and $E_1(y)$ reversed is analogous.
\end{proof}
Now we are ready to consider the breakthrough scenario for large $\xi$.

3. The far range: $\xi > \delta e^{\delta^{-1}}$. There are two terms in our estimates that do not allow
to prove \eqref{breakineqexp} similarly to the previous range (and to the earlier works such as \cite{KNV,DKRT}).
These are the terms leading to $\Omega(\xi)^2$ in \eqref{balmidxi1}, and they are given by
\begin{equation}\label{psibad} \frac{|Hu(x) Hu(y)|\cdot |Hu(x)-Hu(y)|}{(u(x)^2 + Hu(x)^2)(u(y)^2+Hu(y)^2)}\omega'(\xi) \end{equation}
arising from $|\psi(x)-\psi(y)|\omega'(\xi)$ (see \eqref{psicalc} and \eqref{smallxiflow}) and
\begin{equation}\label{phibad}
 \frac{CM_0 |Hu(x)+Hu(y)|\cdot|Hu(x)-Hu(y)|}{(u(x)^2 + Hu(x)^2)(u(y)^2+Hu(y)^2)}\omega'(\xi)
\end{equation}
arising from
\[ \left\{ \begin{array}{ll}
(\varphi(y)-\varphi(x)) \Lambda u(x) & {\rm if}\,\,\,\varphi(x) \geq \varphi(y) \\
 (\varphi(y)-\varphi(x)) \Lambda u(y) & {\rm if}\,\,\,\varphi(x) < \varphi(y). \end{array} \right. \]
(see \eqref{phicalc}, \eqref{aux123a} and \eqref{fraclapest123}).
Estimating Hilbert transforms in the numerator of these expressions leads to the highest order terms with only
$\lesssim \gamma \zeta \omega'(\xi) \Omega(\xi)^2$ bound that, as we discussed after the consideration of the mid-range,
is not possible to control by the dissipative term $\zeta D(\xi) \sim \zeta \omega(\xi) \xi^{-1}.$

In order to focus on the terms \eqref{psibad} and \eqref{phibad}, let us use $\frac12 (-\varphi(x)\Lambda u(x)+\varphi(y)\Lambda u(y))$
to estimate all other contributions as outlined above (producing the term \eqref{phibad} in the process);
now we will show how to use the remaining $\frac12 (-\varphi(x)\Lambda u(x)+\varphi(y)\Lambda u(y))$ to control
\eqref{phibad} and \eqref{psibad} in a different way.

Without loss of generality, suppose that $|Hu(x)| \geq |Hu(y)|;$ the other alternative is handled in an identical way.
We start by looking at the fractional Laplacian with the potentially larger pre-factor, as $\varphi(y) \geq \frac{m_0}{M_0} \varphi(x)$ if $|Hu(x)| \geq |Hu(y)|$.
Suppose that
\begin{equation}\label{goodcase1} -\frac12 \varphi (y) \Lambda u(y) > \frac{(CM_0 |Hu(x)+Hu(y)|+|Hu(x) Hu(y)|) \Omega(\xi)}{(u(x)^2 + Hu(x)^2)(u(y)^2+Hu(y)^2)}\omega'(\xi), \end{equation}
that is, the size of the fractional Laplacian term at $y$ exceeds the sum of the problematic terms.
Observe that the remaining term $-\phi(x) \Lambda u(x)$ satisfies
\begin{equation}\label{fraclapx125} -\frac12 \phi(x) \Lambda u(x) \leq C\zeta \frac{M_0}{m_0} \omega'(\xi) \end{equation}
due to \eqref{fraclapest123} and our assumption that $|Hu(x)| \geq |Hu(y)|.$
The estimate \eqref{fraclapx125} implies that the contribution of $-\phi(x) \Lambda u(x)$ can be controlled by $\zeta D(\xi)$
since
\[ C_1\frac{\omega(\xi)}{\xi} \geq \frac{C_1\left(\frac12 \delta + \gamma \log \left(1+\frac14\log \delta^{-1}\right)\right)}{\xi} >
\frac{C_2\gamma}{\xi(4+\log \frac{\xi}{\delta})}=C_2 \omega'(\xi) \]
for $\xi > \delta e^{\delta^{-1}}$ if $\gamma \leq \delta$ and $\delta$ is sufficiently small.
Therefore, if \eqref{goodcase1} holds, we are done. It remains to consider the case where
\begin{equation}\label{badcase1}
-\Lambda u(y) \leq  \frac{C_0(|Hu(x)+Hu(y)|+|Hu(x) Hu(y)|) \Omega(\xi)}{u(x)^2 + Hu(x)^2}\omega'(\xi),
\end{equation}
where $C_0$ is a constant that may only depend on $m_0$ and $M_0$ and may change from line to line below.

Due to \eqref{fraclapest123} and \eqref{LamyE}, \eqref{badcase1} implies that
\begin{equation}\label{badcase2}
 E_1(y) \leq E(y) \leq C_0 \left( 1+ \frac{(|Hu(x)+Hu(y)|+|Hu(x) Hu(y)|) \Omega(\xi)}{u(x)^2 + Hu(x)^2}\right)\omega'(\xi).
 \end{equation}
We would like to apply Proposition~\ref{keyEprop}. Let us denote
\begin{align} \nonumber  q = &~C_0 \left( 1+ \frac{(|Hu(x)+Hu(y)|+|Hu(x) Hu(y)|)\Omega(\xi)}{u(x)^2 + Hu(x)^2}\right) \frac{\xi \omega'(\xi)}{\omega(\xi)}
 \\ \label{sigmabad}=&~ C_0 \gamma \left( 1+ \frac{(|Hu(x)+Hu(y)|+|Hu(x) Hu(y)|)\Omega(\xi)}{u(x)^2 + Hu(x)^2}\right) \frac{1}{\omega(\xi)(4+\log \frac{\xi}{\delta})}.
\end{align}
We need to check that $ q$ satisfies the assumptions of Proposition~\ref{keyEprop}. First of all, due to \eqref{Ommidfarxi},
$|Hu(x)| \geq |Hu(y)|,$ and
\[ \omega(\xi)\left(4+\log \frac{\xi}{\delta}\right) \geq \delta (4+ \delta^{-1}) \geq 1 \]
in the far range,
we have that $ q \leq C_0 \gamma$ which can be made less than $\frac{1}{128\pi}$ if we select $\gamma$ sufficiently small. On the other hand, we also have
\[  q \geq \frac{C_0\gamma}{\omega(\xi)(4+\log \frac{\xi}{\delta})}. \]
The condition $ q \xi^{1/2} \geq 1$ is then satisfied if
\begin{equation}\label{sigmacond1} C_0 \gamma \xi^{1/2} \geq \omega(\xi)\left(4+\log \frac{\xi}{\delta}\right).  \end{equation}
Note that all constraints we have so far placed on $\gamma$ were of the kind $\gamma \leq c$ or $\gamma \leq c \delta$ with $c$ that may only depend on $m_0$ and $M_0$. To satisfy \eqref{sigmacond1} and all other constraints, we can first set $\gamma = c\delta$ with a sufficiently small constant $c$. The inequality \eqref{sigmacond1} then holds if for all $\xi > \delta e^{\delta^{-1}}$ we have
\begin{equation}\label{deltacon125}
C_0  \xi^{1/2} \geq  \left(c^{-1}+ \log \left(1+\frac14 \log \frac{\xi}{\delta} \right)\right) \left(4+\log \frac{\xi}{\delta}\right).
\end{equation}
It is not hard to see that \eqref{deltacon125} is true for $\xi > \delta e^{\delta^{-1}}$ if $\delta$ is small enough. Thus we can take care of
\eqref{deltacon125} as well as all other constraints we placed on $\delta$ and $\gamma$ by selecting $\delta$ to be sufficiently small.


It follows that if \eqref{badcase1} applies, then we can invoke Proposition~\ref{keyEprop} and conclude that
\[ E_1(x) \geq \frac{\omega(\xi)}{64\pi  q \xi}. \]
From Lemma~\ref{Elemma1} it follows that if $\delta$ is sufficiently small, then
\begin{equation}\label{Lxlb}
 \Lambda u(x) \geq E_1(x) - C_1 \omega'(\xi) \geq \frac{\omega(\xi)}{128 \pi q \xi}.
 \end{equation}
In the second step in \eqref{Lxlb} we used that due to \eqref{modcondef}, we have $\xi \omega'(\xi) \leq \delta \omega(\xi)$ if $\xi \geq \delta e^{\delta^{-1}}.$
Thus the critical relationship that we need to check in the far range becomes the second inequality in the equation below:
\begin{equation}\label{balfarrange} \frac12 \varphi(x) \Lambda u(x) \geq \frac{1}{u(x)^2 +Hu(x)^2} \frac{C\omega(\xi)}{ q \xi} > \frac{C_0( |Hu(x)+Hu(y)|+|Hu(x) Hu(y)|) \Omega(\xi)}{(u(y)^2+Hu(y)^2)(u(x)^2 + Hu(x)^2)}\omega'(\xi), \end{equation}
with $C$ and $C_0$ some constants that may only depend on $m_0$ and $M_0,$ and may change from step to step in the following calculations.
Using \eqref{sigmabad} and \eqref{Ommidfarxi}, we see that the inequality we need to prove reduces to
\begin{align} \nonumber
C \geq&~ \gamma C_0 q \frac{|Hu(x)+Hu(y)|+|Hu(x) Hu(y)|}{u(y)^2+Hu(y)^2} =
  \frac{|Hu(x)+Hu(y)|+|Hu(x) Hu(y)|}{u(y)^2+Hu(y)^2} \times \\
&\times  \left( 1+ \frac{(|Hu(x)+Hu(y)|+|Hu(x) Hu(y)|)\Omega(\xi))}{u(x)^2 + Hu(x)^2}\right) \frac{ C_0 \gamma^2}{\omega(\xi)(4+\log \frac{\xi}{\delta})}.
  \label{final125}
\end{align}
Let us open up the brackets and estimate the two terms on the right hand side of \eqref{final125}. The first one is equal to
\begin{align*} &\frac{|Hu(x)+Hu(y)|+|Hu(x) Hu(y)|}{u(y)^2+Hu(y)^2}\frac{ C_0 \gamma^2}{\omega(\xi)(4+\log \frac{\xi}{\delta})}
\\ &\leq\frac{2|Hu (y)| +\Omega(\xi)+ Hu(y)^2 + |Hu(y)|\Omega(\xi)}{u(y)^2 +Hu(y)^2} \frac{C_0\gamma^2}{\omega(\xi)(4+\log \frac{\xi}{\delta})} \leq C_0\gamma^2 \left[ 1+ \frac{1}{\delta (4+ \delta^{-1})} \right] \leq C_0 \gamma^2. \end{align*}
 In the penultimate step, we used \eqref{Ommidfarxi} and $\xi \geq \delta e^{\delta^{-1}}.$
Thus the first term is controlled by choosing $\gamma$ to be sufficiently small.
The second term can be estimated as follows:
\begin{eqnarray*}
\frac{(|Hu(x)+Hu(y)|+|Hu(x) Hu(y)|)^2\Omega(\xi)}{(u(y)^2+Hu(y)^2)(u(x)^2 + Hu(x)^2)}\frac{ C_0 \gamma^2}{\omega(\xi)(4+\log \frac{\xi}{\delta})} \leq C_0\gamma^2,
\end{eqnarray*}
due to \eqref{Ommidfarxi} and elementary inequalities. This completes the proof that $\omega$ is preserved by evolution and thus the
proof of global regularity.



\section{Asymptotic behavior: flocking of polynomial roots}\label{asymptotic}

In this section, we study the asymptotic behavior of the solutions of
\eqref{maineq}.
Let us recall the equivalent form \eqref{ueq}:
\begin{equation}\label{ueq5}
  \pi\pa_tu-\psi\pa_xu=-\varphi\Lambda u,\quad
  \psi=\frac{Hu}{u^2+(Hu)^2},\quad
  \varphi=\frac{u}{u^2+(Hu)^2},
\end{equation}
with positive initial data $u_0\in H^s(\S)$, for $s>3/2$.

Observe that for a regular solution in a periodic setting the total mass is conserved,
\[ \int_{\S} u(x,t)\,dx = \int_\S u(x,0)\,dx \equiv 2 \pi \bar u. \]
In this section, we will show that the solution converges to the uniform steady state, at an exponential rate in time.
We will also prove exponential decay bounds on all spacial derivatives of the solution.
These bounds will play an important role in our application to evolution of polynomial roots under differentiation.
\subsection{Convergence to equilibrium in the $L^\infty$ norm}\label{expconv}
We start with an exponential in time convergence to the uniform steady
state in the $L^\infty$ norm.

\begin{proposition}\label{linfconv}
Suppose that $u(x,t)$ is a global smooth periodic solution of \eqref{ueq5}.
Then the maximum of the solution $M(t)$ is non-increasing, the minimum $m(t)$ is non-decreasing, and
\begin{equation}\label{eq:Vdecay}
V(t):=M(t)-m(t) \leq (M_0-m_0)e^{-\sigmaz t},
\end{equation}
where the constant $\sigmaz>0$ may depend only on $m_0,$ $M_0,$ and $\|
u_0'\|_{L^\infty}$.
\end{proposition}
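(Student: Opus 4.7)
The plan is to differentiate $M(t)$ and $m(t)$ along trajectories of maxima and minima. By Theorem~\ref{mainthm1}, the solution is $C^\infty$ for $t>0$ and $\|\pa_x u(\cdot,t)\|_{L^\infty}$ is uniformly bounded in time, so $M(t), m(t)$ are Lipschitz in time. At any $x_M(t)$ where $u(\cdot,t)$ attains its maximum one has $\pa_x u(x_M,t)=0$, so \eqref{ueq5} reduces to
\[\pi\dot M(t) = -\varphi(x_M,t)\,\Lambda u(x_M,t),\]
and an analogous identity holds at a minimizer $x_m(t)$. Since $\varphi\geq 0$ and $\Lambda u(x_M)\geq 0$, $\Lambda u(x_m)\leq 0$, the monotonicity of $M$ and $m$ is immediate (this is also Proposition~\ref{prop:mp}); the nontrivial task is the quantitative bound \eqref{eq:Vdecay}.

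For the fractional Laplacian I would use the singular integral formula \eqref{fraclap} on the circle. At a maximizer the integrand is nonnegative, so using $\sin^2 \leq 1$ and mass conservation $\int_\S u = 2\pi\bar u$,
\[\Lambda u(x_M,t) = \frac{1}{4\pi}\int_{\S}\frac{M(t)-u(y,t)}{\sin^2((x_M-y)/2)}\,dy \;\geq\; \frac{1}{4\pi}\int_{\S}\bigl(M(t)-u(y,t)\bigr)\,dy \;=\; \tfrac{1}{2}\bigl(M(t)-\bar u\bigr).\]
An analogous argument at the minimizer yields $-\Lambda u(x_m,t)\geq \tfrac12(\bar u-m(t))$.

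For the lower bound on $\varphi$ I would invoke the uniform-in-time estimate $\|\pa_x u(\cdot,t)\|_{L^\infty}\leq \Q_1$ from \eqref{uxbound}, which via \eqref{Bbound} gives $\|Hu(\cdot,t)\|_{L^\infty}\leq \Q=2\Q_1$. Since $u(x,t)\in[m_0,M_0]$ by the maximum principle, the definition in \eqref{phipsi} produces a time-independent lower bound $\varphi(x,t)\geq c_0$, where $c_0=c_0(m_0,M_0,\|u_0'\|_{L^\infty})>0$ is the minimum of $u/(u^2+\Q^2)$ on $[m_0,M_0]$. Combining with the previous step,
\[\pi\dot M(t) \leq -\tfrac{c_0}{2}(M(t)-\bar u),\qquad \pi\dot m(t) \geq \tfrac{c_0}{2}(\bar u-m(t)).\]
Subtracting gives $\pi\dot V(t)\leq -\tfrac{c_0}{2}V(t)$, whence \eqref{eq:Vdecay} follows with $\sigmaz=c_0/(2\pi)$.

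The delicate technical point is the non-smoothness of $t\mapsto M(t),m(t)$: the set of maximizers need not be a singleton nor depend smoothly on $t$, so the differentiation step must be justified (e.g.\ by an envelope / Danskin argument, or by differentiating $u(x_0,t)$ at a nearly optimal frozen $x_0$ and taking a supremum), but this is routine. The genuinely essential input, without which the argument would collapse, is the uniform-in-time bound on $\|\pa_x u\|_{L^\infty}$ from Theorem~\ref{mainthm1}: without it, $Hu$ could grow in time, the denominator $u^2+Hu^2$ in $\varphi$ could become arbitrarily large, and no time-independent exponential rate would follow.
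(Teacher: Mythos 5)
Your argument is correct, and it follows the paper's overall scheme (evaluate \eqref{ueq5} at extremal points, bound $\varphi$ below by $\phim=m_0/(M_0^2+Q^2)$ using the maximum principle together with \eqref{uxbound}--\eqref{Bbound}, then bound the dissipation at the extrema and apply Gronwall), but it replaces the paper's key dissipation estimate by a more elementary one. Where you simply drop the kernel, using $\sin^2\frac{x_M-y}{2}\leq 1$ in \eqref{fraclap} to get $\Lambda u(x_M)\geq \tfrac12(M-\bar u)$ and $-\Lambda u(x_m)\geq\tfrac12(\bar u-m)$, the paper proves Lemma~\ref{fraclapmin}: the sharp lower bound $\Lambda u(x_M)\geq \frac{M-m}{\pi}\cot\frac{\pi(\bar u-m)}{2(M-m)}$ obtained from the extremal step-function profile, and then the identity $\cot\theta+\cot(\tfrac{\pi}{2}-\theta)\geq2$ in \eqref{Vdynamics}. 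Both routes produce $\dot V\leq -\sigmaz V$ with $\sigmaz$ depending only on $m_0,M_0,\|u_0'\|_{L^\infty}$, so for the proposition as stated your simpler bound is entirely sufficient; what the paper's sharper lemma buys is a better numerical constant ($\tfrac{2}{\pi}V$ versus your $\tfrac12 V$ per subtraction), which is reused verbatim in the last subsection to obtain the universal asymptotic rate $\sigma=\tfrac{2}{\pi^2\bar u}$ of Theorem~\ref{mainthm1}; with your bound that rate would degrade to $\tfrac{1}{2\pi\bar u}$. Two minor cautions: cite \eqref{uxbound} and \eqref{Bbound} (consequences of the global regularity proof) rather than Theorem~\ref{mainthm1} itself for the uniform bound on $\|\pa_x u\|_{L^\infty}$, since the decay statements of that theorem are proved \emph{using} this proposition; and the differentiation of $M(t)$, $m(t)$ should be justified as in the paper (Lipschitz continuity of $M$, the estimate holding on a small space-time neighborhood of the set of maximizers, and the fundamental theorem of calculus), which is indeed the routine step you flag.
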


Consider a point $x_0$ where the maximum is reached at time $t_0.$
From \eqref{ueq5}, we find that
\begin{equation}\label{dtumax}
  \pi\partial_t u(x_0,t_0) =  -\varphi(x_0,t_0)\Lambda u(x_0,t_0).
\end{equation}

The factor $\varphi(x_0,t_0)$ can be bounded (uniformly in time) from below, thanks to the
maximum principle Proposition \ref{prop:mp} and the
uniform-in-time bound on $Hu$ in \eqref{Bbound} that follows from global regularity.  We get
\begin{equation}\label{phimin}
  \varphi(x_0,t_0)\geq\frac{m(t_0)}{M(t_0)^2+Q^2}\geq\frac{m_0}{M_0^2+Q^2}=:\phim.
\end{equation}

The following estimate is used to control $\Lambda u(x_0,t_0)$.
\begin{lemma}\label{fraclapmin}
Suppose that $f(x) \in C(\S)$ takes its maximum value $M$ at a point $x_0$, and its minimum value $m$ of $f$ at a point $\tilde x_0.$ Denote
\[ \bar f = \frac{1}{2\pi} \int_{\S} f(x)\,dx. \]
Then
\begin{align}\label{fraclapest1}
\Lambda f(x_0) >  \frac{M-m}{\pi} \cot \frac{\pi (\bar f -m)}{2(M-m)} \\
\label{fraclapest1212} \Lambda f(\tilde x_0) <  -\frac{M-m}{\pi} \cot \frac{\pi (M -\bar f)}{2(M-m)}
\end{align}
\end{lemma}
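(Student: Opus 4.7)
The plan is to prove the first inequality by a bathtub-principle optimization on the kernel representation of $\Lambda$; the second follows by an analogous argument (or by applying the first to $-f$). Recall from \eqref{fraclap} that on $\S$
\[
\Lambda f(x_0) = \frac{1}{4\pi} P.V. \int_\S \frac{f(x_0)-f(y)}{\sin^2\!\big(\tfrac{x_0-y}{2}\big)}\,dy.
\]
Since $f(x_0)=M$ is the maximum, the integrand $M-f(y)\ge 0$ is nonnegative, so the principal value is unnecessary and the expression is a genuine Lebesgue integral.

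The plan is to minimize
\[
J[g] := \int_\S \frac{g(y)}{\sin^2\!\big(\tfrac{x_0-y}{2}\big)}\,dy
\]
over the admissible class $\mathcal A = \{\, g : 0 \le g(y) \le M-m,\ \int_\S g(y)\,dy = 2\pi(M-\bar f)\,\}$, which contains $g(y) = M - f(y)$. The weight $w(y) = 1/\sin^2\!\big(\tfrac{x_0-y}{2}\big)$ is smallest at the antipodal point $y = x_0+\pi$ and monotonically increases towards $y = x_0$. By the standard bathtub principle, the minimum of $J$ on $\mathcal A$ is attained by the layer function $g^\star = (M-m)\mathbf 1_I$, where $I$ is the arc of length $L$ centered at $x_0+\pi$ (i.e., the sublevel set where $w$ is smallest) chosen so that the mass constraint holds: $L(M-m) = 2\pi(M-\bar f)$, hence $L = 2\pi(M-\bar f)/(M-m)$.

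Setting $z = y - x_0 - \pi$ and $\ell = L/2 = \pi(M-\bar f)/(M-m)$, a direct computation using $\int \csc^2(u/2)\,du = -2\cot(u/2)$ gives
\[
J[g^\star] = (M-m)\int_{-\ell}^{\ell} \frac{dz}{\sin^2\!\big(\tfrac{\pi+z}{2}\big)} = (M-m)\int_{-\ell}^{\ell}\frac{dz}{\cos^2(z/2)} = 4(M-m)\tan(\ell/2).
\]
Using $\tan(\pi/2 - \theta) = \cot\theta$ and $\ell/2 = \pi(M-\bar f)/(2(M-m)) = \pi/2 - \pi(\bar f-m)/(2(M-m))$, we get $\tan(\ell/2) = \cot\!\big(\pi(\bar f -m)/(2(M-m))\big)$. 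Therefore
\[
\Lambda f(x_0) = \frac{1}{4\pi} J[M-f] \ge \frac{1}{4\pi} J[g^\star] = \frac{M-m}{\pi}\cot\!\left(\frac{\pi(\bar f - m)}{2(M-m)}\right).
\]
The inequality is strict because a continuous $f$ cannot agree with the discontinuous extremizer $g^\star$ (equivalently, $M - f$ cannot equal $M-m$ on an arc and $0$ elsewhere while remaining continuous unless $f$ is constant, in which case the identity holds trivially with both sides zero after interpreting the right-hand side as a limit).

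For \eqref{fraclapest1212}, I would repeat the argument at $\tilde x_0$: here $f(y)-m\ge 0$ and
$-\Lambda f(\tilde x_0) = \frac{1}{4\pi}\int_\S (f(y)-m)/\sin^2\!\big(\tfrac{\tilde x_0-y}{2}\big)\,dy$. The same bathtub analysis, with the mass now equal to $2\pi(\bar f - m)$ placed in an arc of half-length $\ell' = \pi(\bar f - m)/(M-m)$ around $\tilde x_0 + \pi$, yields the lower bound $\frac{M-m}{\pi}\tan(\ell'/2) = \frac{M-m}{\pi}\cot\!\big(\pi(M-\bar f)/(2(M-m))\big)$ for $-\Lambda f(\tilde x_0)$, which is the desired bound after flipping sign. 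The only subtlety worth flagging is verifying the strict sign in the bathtub step and confirming that the extremizer is indeed supported on a genuine arc inside $\S$ (which holds because $0 \le \bar f - m \le M - m$, so $\ell,\ell' \in [0,\pi]$).
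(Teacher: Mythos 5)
Your proof is correct and follows essentially the same route as the paper: both arguments are rearrangement/bathtub optimizations over the kernel representation of $\Lambda$, identifying the extremal two-valued profile (equal to $M$ on an arc about $x_0$ and $m$ elsewhere, with arc length fixed by the mean constraint $d=\pi(\bar f-m)/(M-m)$) and computing its value, which yields exactly $\frac{M-m}{\pi}\cot\frac{\pi(\bar f-m)}{2(M-m)}$. The only cosmetic difference is that you parametrize the mass around the antipodal point and invoke the bathtub principle explicitly, while the paper cites monotonicity of the kernel and approximates the discontinuous extremizer by continuous functions; the strictness discussion is handled comparably in both.
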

\begin{remark}
  The function $f$ may assume its maximum and minimum values at more
  than one point: the bounds \eqref{fraclapest1} and \eqref{fraclapest1212} respectively apply at every such
  point.
\end{remark}
\begin{proof}
Recall that
\[ \Lambda u(x) =\frac{1}{4\pi} P.V. \int_{-\pi}^{\pi} \frac{u(x)-u(y)}{\sin^2(\frac{x-y}{2})}\,dy. \]
Given the constraints on function $f$ and monotonicity of the kernel, it is not
hard to show that the minimum possible value of $\Lambda f(x_0)$
is achieved when $f$ is chosen to be
\[ f_{\min}(x)=\begin{cases}M& |x-x_0|\leq d\\m& |x-x_0|>d\end{cases}\qquad
\text{with }~d=\frac{\pi(\bar{f}-m)}{M-m}.\]
Such function is not continuous, but can be approximated by continuous functions.
Estimating the value of $\Lambda f_{\min}(x_0)$ from below gives
\[\Lambda f_{\min}(x_0)
  =\frac{M-m}{2\pi}\int_d^\pi\frac{1}{\sin^2(\frac{y}{2})}\,dy
  = \frac{M-m}{\pi} \cot \frac{d}{2}, \]
  resulting in \eqref{fraclapest1}. The proof of \eqref{fraclapest1212} is analogous.
\end{proof}

\begin{proof}[Proof of Proposition~\ref{linfconv}]
Applying \eqref{phimin} and Lemma~\ref{fraclapmin} to \eqref{dtumax}, we obtain
\begin{equation}\label{uminest1}
 \pi\partial_t u(x_0,t_0)  <  -\frac{(M-m)\phim}{\pi} \cot \frac{\pi (\bar f -m)}{2(M-m)} 
\end{equation}
The estimate \eqref{uminest1} holds at any point where the maximum is taken, and, by continuity, in some small time-space
neighborhoods $U \times [t_0,t_0+\tau)$ of these points. By continuity again, decreasing $\tau$ if necessary,
for $(x,t) \in ((-\pi,\pi] \setminus U)\times [t_0,t_0+\tau]$ we have $u(x,t) \leq M(t_0)- A\tau,$ where $A$ stands
for the right hand side of \eqref{uminest1}. Note that the function $M(t)$ is Lipschitz, so its derivative exists almost everywhere
and the fundamental theorem of calculus applies to $M(t).$
All together, the above arguments and \eqref{uminest1} imply that for every $t_0,$
\begin{equation}\label{maxup1}
M'(t_0) \leq -\frac{(M-m)\phim}{\pi^2} \cot \frac{\pi (\bar f -m)}{2(M-m)}.  
\end{equation}
A similar estimate can be carried out at the minimum of $u,$ yielding
\begin{equation}\label{minlb1}
m'(t_0) \geq \frac{(M-m)\phim}{\pi} \cot \frac{\pi (M -\bar f)}{2(M-m)}. 
\end{equation}
Subtracting \eqref{minlb1} from \eqref{maxup1}, we get
\begin{align}
  M'(t)-m'(t) \leq&\, -\frac{\phim}{\pi^2}(M(t)-m(t))
  \left[ \cot \frac{\pi (\bar f -m)}{2(M-m)}+ \cot \frac{\pi (M -\bar f)}{2(M-m)} \right]\nonumber\\
  \leq&\,-\frac{2\phim}{\pi^2}(M(t)-m(t)). \label{Vdynamics}
\end{align}
Here in the last step we used that
\[ \frac{\pi (\bar f -m)}{2(M-m)}+\frac{\pi (M -\bar f)}{2(M-m)}=\frac{\pi}{2}, \]
and that
\[ \cot \theta + \cot \left( \frac{\pi}{2}-\theta \right) = \cot \theta + \frac{1}{\cot \theta} \geq 2. \]

Application of Gronwall inequality finishes the proof, with the
constant
\begin{equation}\label{eq:D}
  \sigmaz=\frac{2\phim}{\pi^2}=\frac{2m_0}{\pi^2(M_0^2+Q^2)}.
\end{equation}
Here $Q$ is defined in \eqref{Bbound}, and depends only on $m_0,
M_0$ and $\|u_0'\|_{L^\infty}$.
\end{proof}
Since $M(t) \geq \bar u \geq m(t),$ \eqref{eq:Vdecay} implies exponential in time convergence of the solution
to its mean. In a later subsection, we will show a stronger bound where the rate of convergence will only depend on $\bar u.$

\subsection{Exponential decay of the first derivative}
\begin{proposition}\label{thm:ux}
  Suppose $u(x,t)$ is a global smooth periodic solution of
  \eqref{ueq5}. Then
  \begin{equation}\label{eq:uxdecay}
    \|\pa_xu(\cdot,t)\|_{L^\infty}\leq C_1e^{-\sigmaz t},
  \end{equation}
  where $\sigmaz$ is defined in \eqref{eq:D}, and $C_1$ is a constant that
  may depend only on $m_0$, $M_0$ and  $\|u_0'\|_{L^\infty}$.
\end{proposition}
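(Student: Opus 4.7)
My plan is to emulate the maximum-principle argument of Proposition \ref{linfconv} applied to the derivative $w := \partial_x u$. The critical structural fact is that $u$ is periodic, so $\bar w = 0$, and applying Lemma \ref{fraclapmin} to $w$ at its extrema reproduces the same $\cot\theta_1 + \cot\theta_2 \geq 2$ identity that produced the sharp constant $\sigma_0 = 2\phi_{\min}/\pi^2$ in \eqref{Vdynamics}. Differentiating \eqref{ueq5} gives
\[
\pi \partial_t w = \psi \partial_x w + (\partial_x \psi) w - (\partial_x \varphi) \Lambda u - \varphi \Lambda w.
\]
At a point $x_0(t)$ where $w$ attains its maximum $M_w$, the transport term vanishes because $\partial_x w(x_0) = 0$, and Lemma \ref{fraclapmin} applied to $w$ gives $\Lambda w(x_0) > \tfrac{M_w - m_w}{\pi}\cot \bigl(\tfrac{-\pi m_w}{2(M_w - m_w)}\bigr)$ with $m_w := \min_x w$; a dual lower bound on $-\Lambda w(y_0)$ holds at a minimizer $y_0$. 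Adding the resulting inequalities for $\dot M_w$ and $-\dot m_w$, using $\varphi(x_0), \varphi(y_0) \geq \phim$ from \eqref{phimin} and $\cot\theta_1 + \cot\theta_2 \geq 2$ for $\theta_1 + \theta_2 = \pi/2$, I obtain $\frac{d}{dt}(M_w - m_w) \leq -\sigmaz (M_w - m_w) + \mathcal{R}(t)$, where $\mathcal R$ collects $(\partial_x \psi) w$ and $-(\partial_x \varphi)\Lambda u$ at $x_0, y_0$. Using \eqref{eq:psix}--\eqref{eq:phix} together with $|\partial_x u(x_0)| = M_w$ and $|\partial_x u(y_0)| = -m_w$, one gets $|\mathcal{R}(t)| \leq C\bigl(\|w(\cdot,t)\|_{L^\infty} + \|\Lambda u(\cdot,t)\|_{L^\infty}\bigr)^2$; writing $L(t) := \|w(\cdot,t)\|_{L^\infty}$ and using $L \leq M_w - m_w \leq 2L$ yields the Riccati-type inequality
\[
L'(t) + \sigmaz L(t) \leq C\bigl(L(t) + \|\Lambda u(\cdot,t)\|_{L^\infty}\bigr)^2.
\]

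To upgrade this into \eqref{eq:uxdecay}, I would seed preliminary exponential decay of $L$ and $\|\Lambda u\|_{L^\infty}$. A natural mechanism is to adapt the modulus-of-continuity argument of Section \ref{globreg} to a time-dependent modulus $A(t)\omega(\xi)$ with $\omega$ from \eqref{modcondef}: tracking the slack in the three ranges of the breakthrough analysis, the modified preservation should still hold for $A(t) = A_0 e^{-\beta_0 t}$ with some $\beta_0 > 0$, giving $L(t) \leq A_0 e^{-\beta_0 t}$, while Kato's inequality (Lemma \ref{Katolem}) combined with uniform H\"older control on $\partial_x u$ delivers comparable decay for $\|\Lambda u\|_{L^\infty}$ up to logarithmic factors. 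Multiplying the Riccati inequality by $e^{\sigmaz t}$ then gives $(L e^{\sigmaz t})' \leq C e^{(\sigmaz - 2\beta_0)t}$, integrable once $\beta_0 > \sigmaz/2$, producing $L(t) \leq C_1 e^{-\sigmaz t}$; if the first pass yields $\beta_0 \leq \sigmaz/2$, the iteration $\beta_{k+1} = \min(\sigmaz, 2\beta_k)$ reaches the threshold in finitely many steps.

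The main obstacle is the quadratic remainder $\mathcal R$: the maximum-principle argument on $w$ delivers the sharp rate $\sigmaz$ only modulo a nonlinear self-interaction driven by $\|w\|_{L^\infty}$ and $\|\Lambda u\|_{L^\infty}$. Closing at the full rate requires an independent mechanism producing preliminary exponential decay of both quantities, and verifying via Lemma \ref{Katolem} that $\|\Lambda u\|_{L^\infty}$ decays at essentially the rate of $\|\partial_x u\|_{L^\infty}$ -- a step that depends on uniform H\"older control of $\partial_x u$, which is not immediately furnished by the original modulus-of-continuity analysis and may itself require additional regularity input.
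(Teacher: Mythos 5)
Your setup coincides with the paper's up to the differentiated equation and the evaluation at extrema of $w=\partial_x u$, but the way you handle the quadratic remainder $\mathcal R$ is where the argument breaks: you bound it crudely by $C(\|w\|_{L^\infty}+\|\Lambda u\|_{L^\infty})^2$ and then need an \emph{independent} seed of exponential decay for both quantities to close a Riccati bootstrap. That seed is exactly what is missing. The paper never needs it: the point of Lemma~\ref{lem:nmp} and the accompanying control of $Hv(x_0)$ is that at the maximizer one has $v(x_0)^2\leq 8\pi V\,\Lambda v(x_0)$ and $Hv(x_0)^2\leq \tfrac{16}{\pi}V\,\Lambda v(x_0)$, where $V(t)=\max u-\min u$ is the oscillation of $u$ \emph{itself}, already known to decay like $e^{-\sigmaz t}$ from Proposition~\ref{linfconv}. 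Hence the bad term $\B_1(x_0)$ is bounded by $CV(t)\Lambda v(x_0)$ and is absorbed into the dissipative term $-\phim\Lambda v(x_0)$ as soon as $V(t)$ is small, while the second bound of Lemma~\ref{lem:nmp}, $\Lambda v(x_0)\geq v(x_0)-\pi V$, converts the surviving dissipation into linear damping at rate $\sigmasec=\phim/\pi>\sigmaz$; the inhomogeneity $\sim e^{-\sigmaz t}$ then dictates the final rate. No preliminary decay of $\|w\|_{L^\infty}$ or $\|\Lambda u\|_{L^\infty}$ is ever invoked.

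Your proposed seed mechanism, a time-decaying modulus $A(t)\omega(\xi)$, is not only unproven but unlikely to work as sketched: preserving a shrinking modulus requires, in every breakthrough scenario, a strictly negative time derivative of size $|A'(t)|\,\omega(\xi)$ \emph{uniformly in} $\xi$, whereas the breakthrough analysis of Section~\ref{globreg} only produces margins of order $\zeta\,\omega(\xi)/\xi$ in the mid and far ranges (and $\zeta\xi^{1/2}$ in the short range); after the rescaling needed to fit arbitrary initial data, $\xi$ can be large, so the available margin relative to $\omega(\xi)$ degenerates and cannot dominate $|A'(t)|\omega(\xi)$. Likewise, your use of Kato's inequality to transfer decay to $\|\Lambda u\|_{L^\infty}$ presupposes uniform-in-time H\"older (or $H^s$) control of $\partial_x u$, which in the paper's logical order is a \emph{consequence} of \eqref{eq:uxdecay} (the criterion \eqref{BKM} with the mere uniform bound \eqref{uxbound} gives only the growing estimate \eqref{eq:doubleexpbound}), so this step is circular as stated. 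In short, the correct closing device is the enhanced nonlinear maximum principle keyed to the decay of $V(t)$, not a bootstrap seeded by a modified modulus of continuity.
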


The function $v:=\pa_xu$ satisfies equation
\[\pi\pa_tv-\psi\pa_xv=-\varphi\cdot\Lambda v+
  \big(\pa_x\psi\cdot v-\pa_x\varphi\cdot
  Hv\big) =:\G_1+\B_1.\]

Fix a time $t_0$, and let $x_0$ be a point where $\max_xv(x,t_0)$ is
attained. Then
\begin{equation}\label{vdecompose}
  \pi\pa_tv(x_0)=-\varphi(x_0)\cdot\Lambda v(x_0)+\big[\pa_x\psi(x_0)\cdot v(x_0)-\pa_x\varphi(x_0)\cdot
  Hv(x_0)\big]=\G_1(x_0)+\B_1(x_0).
\end{equation}

Let us first consider the good term $\G_1(x_0)$.
The dissipation term $\Lambda v(x_0)$ can be estimated by the
following enhanced nonlinear maximum principle, which takes into
account the exponential decay of $V(t)$.
It has been pioneered in \cite{CV1} and further developed
in \cite{ST2} in a slightly different form than the one we use here.

\begin{lemma}[Enhanced nonlinear maximum principle]\label{lem:nmp}
  Let $u\in C^1(\S)$ be a given function with the amplitude $V=\max u-\min
  u\geq0$, and $v=u'$. Let $x_0$ be the point where $\max v$ is attained.
  Then, the following two estimates hold:
  \begin{align}
    8\pi V \Lambda v(x_0)\geq&~v(x_0)^2; \label{est:lamv1}\\
    \Lambda v(x_0)\geq&~ \max( v(x_0)-\pi V, 0). \label{est:lamv2}
  \end{align}
 \end{lemma}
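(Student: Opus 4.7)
The approach is to establish the single sharper inequality
\[\Lambda v(x_0)\ge \frac{v(x_0)^2}{\pi V}\qquad (V>0)\]
and observe that it implies both \eqref{est:lamv1} and \eqref{est:lamv2}. The ingredients are the explicit kernel representation of $\Lambda$ on $\S$ together with the mean-zero property of $v=u'$ and a choice of localization radius.

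First I would record three elementary observations. Starting from \eqref{fraclap},
\[\Lambda v(x_0)=\frac{1}{4\pi}\,P.V.\int_\S\frac{v(x_0)-v(y)}{\sin^2((x_0-y)/2)}\,dy,\]
the integrand is pointwise non-negative because $x_0$ is a maximum, so $\Lambda v(x_0)\ge 0$ (which also handles the trivial case $V=0$, where $v\equiv 0$). Periodicity of $u$ gives $\int_\S v\,dy=0$. Finally, the extrema of $u$ are realized at two points of $\S$ whose circular distance is at most $\pi$; integrating $v=u'$ along the shorter arc joining them yields $V\le \pi\,v(x_0)$, i.e., $v(x_0)\ge V/\pi$.

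Next I would exploit the elementary bound $1/\sin^2(t/2)\ge 4/t^2$ for $|t|\in(0,\pi]$. Keeping only the contribution from the symmetric interval $|y-x_0|\le r$ (valid because the integrand is non-negative) and using $(x_0-y)^2\le r^2$ together with
\[\int_{|y-x_0|\le r}(v(x_0)-v(y))\,dy=2rv(x_0)-\bigl(u(x_0+r)-u(x_0-r)\bigr)\ge 2rv(x_0)-V,\]
I obtain, for every $r\in(0,\pi]$,
\[\Lambda v(x_0)\ge \frac{2rv(x_0)-V}{\pi r^2}.\]
A one-variable optimization selects $r_\ast=V/v(x_0)$, which lies in $(0,\pi]$ precisely because $v(x_0)\ge V/\pi$. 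Substituting $r_\ast$ yields $\Lambda v(x_0)\ge v(x_0)^2/(\pi V)$, which is strictly stronger than \eqref{est:lamv1} (with a factor $8$ to spare).

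For \eqref{est:lamv2}, a discriminant computation shows that the quadratic $\tau^2-\pi V\tau+\pi^2 V^2$ is strictly positive for every real $\tau$ when $V>0$; taking $\tau=v(x_0)$ gives $v(x_0)^2/(\pi V)>v(x_0)-\pi V$, and combining with $\Lambda v(x_0)\ge 0$ yields $\Lambda v(x_0)\ge\max(v(x_0)-\pi V,0)$. The only real subtlety in the argument is the observation $v(x_0)\ge V/\pi$, which is what ensures that the optimal radius $r_\ast$ remains inside the regime in which the kernel inequality $1/\sin^2(t/2)\ge 4/t^2$ is valid; beyond this, no serious obstacle is foreseen.
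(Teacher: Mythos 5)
Your overall strategy---keep only the near field $|y-x_0|\le r$, use nonnegativity of the integrand at the maximum, evaluate $\int_{x_0-r}^{x_0+r}v\,dy=u(x_0+r)-u(x_0-r)\le V$ exactly, and optimize in $r$---is sound and close in spirit to the paper's argument (which instead keeps the far field, inserts a smooth cutoff, integrates by parts, and picks the radius $R=4V/v(x_0)$, resp.\ $R=1/\pi$). However, there is a genuine gap in the step that makes your optimal radius admissible: the claim $v(x_0)\ge V/\pi$ is false in general. Integrating $u'$ along the \emph{shorter} arc between the two extremum points only controls $\max u'$ if $u$ happens to increase along that arc; if $u$ increases along the longer arc, the shorter-arc computation bounds $-\min u'$ instead. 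Concretely, a $C^1$ periodic function rising with nearly constant slope over an arc of length $2\pi-\epsilon$ and falling over the remaining short arc has $V\le(2\pi-\epsilon)\max v$, so $\max v$ can be as close to $V/(2\pi)$ as one wishes; the only general bound is $v(x_0)>V/(2\pi)$. In the regime $V/(2\pi)<v(x_0)<V/\pi$ your radius $r_\ast=V/v(x_0)$ exceeds $\pi$, and within your periodic-kernel formulation you can only take $r=\pi$, which yields $\Lambda v(x_0)\ge(2\pi v(x_0)-V)/\pi^{3}$---a quantity that can be arbitrarily close to $0$, whereas the target $v(x_0)^2/(8\pi V)$ stays above $V/(32\pi^{3})$ there. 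So \eqref{est:lamv1} is not established in that regime (\eqref{est:lamv2} survives, since $v(x_0)<V/\pi$ forces $v(x_0)-\pi V<0$ and you already have $\Lambda v(x_0)\ge0$).

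The fix is small and essentially puts you on the paper's track: use the whole-line form of the kernel, i.e.\ the first expression in \eqref{fraclap}, with $v$ extended periodically. The integrand is still nonnegative, and $u(x_0+r)-u(x_0-r)\le V$ holds for \emph{every} $r>0$ by periodicity, so your bound $\Lambda v(x_0)\ge(2rv(x_0)-V)/(\pi r^{2})$ is valid for all $r>0$ and the choice $r=V/v(x_0)$ is always admissible. This gives $\Lambda v(x_0)\ge v(x_0)^2/(\pi V)$ (in fact a better constant than the paper's, whose far-field/cutoff route loses a factor of $8$), and your discriminant argument then correctly delivers \eqref{est:lamv2}. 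Two minor remarks: the mean-zero property of $v$ that you list as an ingredient is never actually used, and the only lower bound on $v(x_0)$ you may legitimately invoke is $v(x_0)>V/(2\pi)$, which is not needed once the radius is unconstrained.
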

 \begin{proof}
   If $V=0$, $u$ is a constant and $v\equiv0$. The inequalities are
   trivial. We focus on the case when $V>0$; in this case $v(x_0)>0.$

   Let $\chi \geq 0$ be a smooth and decreasing test function defined in $\mathbb{R}^+$,
   such that $\chi(r)=1$ for $r<1$, $\chi(r)=0$ for $r>2$, and
   $\|\chi'\|_{L^\infty}<2$. Let $\chi_R(r)=\chi(r/R)$.
   Estimate $\Lambda v$ as follows.
   \begin{align*}
    \pi\cdot\Lambda v(x_0)\geq&~
     \int_\Rm (1-\chi_R(|y|))\cdot\frac{v(x_0)-v(x_0+y)}{|y|^2}\,dy\\
     \geq&~v(x_0)\int_{|y|>2R}\frac{1}{|y|^2}dy-
           \int_{|y|>R}(1-\chi_R(|y|))\cdot\frac{v(x_0+y)}{|y|^2}\,dy\\
     =&~\frac{v(x_0)}{R}+\int_{|y|>R}u(x_0+y)\cdot
        \pa_y\left(\frac{1-\chi_R(|y|)}{|y|^2}\right)\,dy\\
     =&~\frac{v(x_0)}{R}+\int_R^\infty(u(x_0+y)-u(x_0-y))\cdot
        \pa_y\left(\frac{1-\chi_R(y)}{y^2}\right)\,dy\\
     \geq&~\frac{v(x_0)}{R}-V\cdot\frac{2}{R^2}.
   \end{align*}
   where in the last inequality,
   \begin{align*}
     \int_R^\infty\left|\pa_y\left(\frac{1-\chi_R(y)}{y^2}\right)\right|\,dy
     \leq&\, \int_R^\infty \left(\frac{|\chi_R'(y)|}{y^2}
       +\frac{2(1-\chi_R(y))}{y^3}\right)\,dy\\
     \leq&\,\int_R^{2R}\frac{2}{R}\cdot\frac{1}{y^2}\,dy+\int_R^\infty \frac{2}{y^3}\,dy
     =\frac{2}{R^2}.
   \end{align*}
   Choose $R=\frac{4V}{v(x_0)}$ and $R=\frac{1}{\pi}$. Then we obtain
   \eqref{est:lamv1} and \eqref{est:lamv2} respectively.
 \end{proof}

  Since $\Lambda v(x_0)>0$,  the good term $\G_1(x_0)$ can be bounded by
  \begin{equation}\label{G1est}
    \G_1(x_0)=-\varphi(x_0)\cdot\Lambda v(x_0)\leq
    -\phim\cdot\Lambda v(x_0),
  \end{equation}
 where $\phim$ is a constant defined in \eqref{phimin}.

Next, we aim to control the bad term $\B_1(x_0)$.
Applying \eqref{eq:psix} and \eqref{eq:phix} we obtain
\begin{align*}
  |\B_1(x_0)|\leq&\,\frac{v(x_0)+3|Hv(x_0)|}{m_0^2}\cdot v(x_0)+
  \frac{3v(x_0)+|Hv(x_0)|}{m_0^2}\cdot |Hv(x_0)|\\
  \leq&\, \frac{4}{m_0^2}(v(x_0)^2+|Hv(x_0)|^2).
\end{align*}
The first term can be bounded by $\Lambda v(x_0)$ directly using
\eqref{est:lamv1}:
\begin{equation}\label{est:Bv}
  \frac{4}{m_0^2}v(x_0)^2\leq \frac{32\pi}{m_0^2}V\cdot\Lambda
  v(x_0).
\end{equation}
For the second term involving $|Hv(x_0)|$, we need an additional estimate.
\begin{lemma}[Control of $|Hv(x_0)|$]Under the same assumptions in
  Lemma \ref{lem:nmp},
  \begin{equation}\label{est:Hv}
    Hv(x_0)^2\leq \frac{16}{\pi} V\cdot \Lambda v(x_0).
  \end{equation}
\end{lemma}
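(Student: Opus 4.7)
The plan is to parallel the strategy that produced \eqref{est:lamv1}: introduce a cutoff at a scale $R$ to be chosen, handle the far field via integration by parts so that the amplitude bound $V = \max u - \min u$ enters through the antiderivative $v = u'$, and exploit that $v(x_0)$ is the global maximum of $v$ to Cauchy--Schwarz the near field against the kernel of $\Lambda v$. The target constant $16/\pi$ should follow by balancing the two contributions with $R \sim V/v(x_0)$ and then absorbing the residual $v(x_0)$ factor through \eqref{est:lamv1}.

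Starting from the representation
\[
  \pi Hv(x_0) = \int_0^\infty \frac{v(x_0-z) - v(x_0+z)}{z}\,dz
\]
(valid after extending $v$ periodically to $\R$), I split the integral with the same smooth $\chi_R$ used in the proof of Lemma \ref{lem:nmp} to get $\pi Hv(x_0) = I_{\mathrm{near}} + I_{\mathrm{far}}$. For $I_{\mathrm{far}}$, the antiderivative identity $v(x_0-z) - v(x_0+z) = -\partial_z[u(x_0-z) + u(x_0+z) - 2u(x_0)]$ makes integration by parts natural: the boundary term at $z=0$ is killed by the cutoff, the one at $z=\infty$ vanishes by boundedness of $u$, and the remainder pairs $|u(x_0\pm z) - u(x_0)| \leq V$ with a kernel whose total $L^1$ mass is comparable to $1/R$. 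This yields $|I_{\mathrm{far}}| \leq C V/R$.

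For $I_{\mathrm{near}}$ the crucial observation is that $v(x_0) - v(y) \geq 0$ for every $y$, so
\[
  |v(x_0-z) - v(x_0+z)| \leq (v(x_0) - v(x_0-z)) + (v(x_0) - v(x_0+z)) =: f(z) \geq 0.
\]
Cauchy--Schwarz in the form $\int \frac{f\,\chi_R}{z}\,dz \leq \bigl(\int \frac{f}{z^2}\,dz\bigr)^{1/2}\bigl(\int_0^{2R} f\,dz\bigr)^{1/2}$ then yields
\[
  |I_{\mathrm{near}}|^2 \leq \pi\,\Lambda v(x_0)\cdot \int_0^{2R} f(z)\,dz,
\]
where the first factor uses the kernel representation of $\Lambda v$ from \eqref{fraclap}. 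A direct computation with $v = u'$ gives $\int_0^{2R} [v(x_0-z) + v(x_0+z)]\,dz = u(x_0+2R) - u(x_0-2R)$, whose absolute value is controlled by $V$, so $\int_0^{2R} f\,dz \leq 4R v(x_0) + V$.

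Combining produces an estimate of the form $\pi|Hv(x_0)| \lesssim \sqrt{\pi \Lambda v(x_0)(Rv(x_0) + V)} + V/R$; the choice $R = V/v(x_0)$ balances the two contributions to the common scale $\sqrt{\pi V \Lambda v(x_0)}$, modulo a residual $v(x_0)$ that is absorbed via \eqref{est:lamv1}, namely $v(x_0) \leq \sqrt{8\pi V \Lambda v(x_0)}$. Squaring then gives an estimate of the form $Hv(x_0)^2 \leq C V \Lambda v(x_0)/\pi$. The main difficulty I anticipate is not conceptual but arithmetic: pinning the constant down to exactly $16$ requires a careful optimization of the cutoff profile and of $R$, together with the sharpest form of the integration-by-parts bound on $I_{\mathrm{far}}$; conceptually, all ingredients are the same as in the proof of \eqref{est:lamv1}.
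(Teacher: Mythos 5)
Your skeleton is the paper's: split the Hilbert transform at a scale $R$, integrate by parts in the far field so that the amplitude $V$ enters through the antiderivative $u$ (giving $4V/R$ with a sharp cutoff), and optimize over $R$. The difference is the near field, and that is where your proposal falls short of the stated constant. The paper does not use Cauchy--Schwarz there: since $v(x_0)-v(x_0+y)\ge 0$ for all $y$, one has the pointwise bound $\bigl|\tfrac{v(x_0)-v(x_0+y)}{y}\bigr|\le R\,\tfrac{v(x_0)-v(x_0+y)}{y^2}$ on $|y|\le R$, whence $|I_{\mathrm{near}}|\le \pi R\,\Lambda v(x_0)$ by \eqref{fraclap} --- linear in $R$ and with no $v(x_0)$ appearing. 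Combined with the far-field bound $4V/R$ and the choice $R=\bigl(4V/(\pi\Lambda v(x_0))\bigr)^{1/2}$, this gives $\pi|Hv(x_0)|\le 4\sqrt{\pi V\Lambda v(x_0)}$, i.e.\ exactly the constant $16/\pi$.

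The gap in your route is therefore not ``arithmetic'': it is structural. Cauchy--Schwarz turns the near field into $\sqrt{\pi\Lambda v(x_0)\,\bigl(cRv(x_0)+V\bigr)}$ (with $c=2$ for a sharp cutoff, $c=4$ for yours), and the only way you propose to remove $v(x_0)$ is \eqref{est:lamv1}, which costs $v(x_0)\le\sqrt{8\pi V\Lambda v(x_0)}$. Writing $R=t\sqrt{V/(\pi\Lambda v(x_0))}$ and optimizing over $t$, the best this yields is roughly $\pi|Hv(x_0)|\le 5.5\,\sqrt{\pi V\Lambda v(x_0)}$, i.e.\ a constant around $30/\pi$, and a smooth cutoff only worsens the far-field constant; no choice of $R$ or of the cutoff profile recovers $16/\pi$ along this path. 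To be fair, an inequality of the form $Hv(x_0)^2\le C\,V\,\Lambda v(x_0)$ with any universal $C$ is all the paper actually uses (it only affects the constant in \eqref{B1est} and \eqref{vmaxest}), so your argument would serve the application; but to prove the lemma as stated you should replace the Cauchy--Schwarz step by the pointwise near-field bound above, which both simplifies the proof and produces the claimed constant.
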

\begin{proof}
  We split $Hv(x_0)$ into two parts.
    \[\pi\cdot Hv(x_0)=
    P.V.\int_{-R}^R\frac{v(x_0)-v(x_0+y)}{y}\,dy+
    \int_R^\infty\frac{v(x_0-y)-v(x_0+y)}{y}\,dy=I+II.\]
  The first term can be controlled by  $\Lambda v(x_0)$ using \eqref{fraclap}:
  \[|I|\leq \int_{-R}^R\frac{v(x_0)-v(x_0+y)}{|y|^2}\cdot|y|\,dy\leq
    \pi\cdot \Lambda v(x_0) \cdot R. \]
  The second term can be estimated by performing integration
  by parts.
  \begin{align*}|II|=
    &~\left|\left.\frac{2u(x_0)-u(x_0+y)-u(x_0-y)}{y}\right]_R^\infty
      -\int_R^\infty\frac{2u(x_0)-u(x_0+y)-u(x_0-y)}{y^2}\,dy\right|\\
    \leq&~\frac{2V}{R}+2V\cdot\frac{1}{R}=\frac{4V}{R}.
  \end{align*}
  Putting the two parts together, we get
  \[ \pi\cdot|Hv(x_0)|\leq \pi\cdot \Lambda v(x_0) \cdot R + \frac{4V}{R}.\]
  Taking $R=(\frac{4V}{\pi\cdot \Lambda v(x_0)})^{1/2}$, we obtain \eqref{est:Hv}.
\end{proof}

We are now ready to prove Proposition \ref{thm:ux}.
\begin{proof}[Proof of Proposition \ref{thm:ux}]
Collecting the estimates \eqref{est:Bv} and \eqref{est:Hv},
the bad term $\B_1(x_0)$ can be bounded by
\begin{equation}\label{B1est}
  |\B_1(x_0)|\leq \frac{32\pi^2+64}{\pi m_0^2}\,V\cdot\Lambda v(x_0).
\end{equation}

Coming back to \eqref{vdecompose}, we apply the estimates on
$\G_1(x_0)$ \eqref{G1est} and $\B_1(x_0)$ \eqref{B1est},
and use \eqref{eq:Vdecay}, \eqref{est:lamv2} as well as an argument similar to the one after \eqref{uminest1} to obtain
\begin{align}
  \frac{d}{dt} v_{\max}(t)\leq&\,\frac{1}{\pi}\left(-\phim+\frac{32\pi^2+64}{\pi
      m_0^2}\,V(t)\right)
      \cdot \max ((v_{\max}(t)-2\pi V(t)),0) \nonumber\\
  \leq&\,(-\sigmasec+Ce^{-\sigmaz t})v_{\max}(t)
        +C'e^{-\sigmaz t},\label{vmaxest}
\end{align}
where the coefficient
\begin{equation}\label{sigmasec}
  \sigmasec := \frac{\phim}{\pi}>\sigmaz=\frac{2\phim}{\pi^2},
\end{equation}
and the constants $C=\frac{32\pi^2+64}{\pi^2m_0^2}(M_0-m_0)$ and $C'=2\phim(M_0-m_0)$.
This directly implies the exponential decay
\begin{align*}
v_{\max}(t)\leq&\,e^{-\sigmasec t+\frac{C}{\sigmaz}(1-e^{-\sigmaz t})}
\left[v_{\max}(0)+
\int_0^t e^{\sigmasec\tau-\frac{C}{\sigmaz}(1-e^{-\sigmaz\tau})}\cdot C'e^{-\sigmaz\tau}\,d\tau\right]\\
\leq&\, \|u_0'\|_{L^\infty}e^{\frac{C}{\sigmaz}}\cdot e^{-\sigmasec t}+
 \frac{C' e^{\frac{C}{\sigmaz}}}{\sigmasec-\sigmaz} \cdot e^{-\sigmaz t}\leq C_1e^{-\sigmaz t},
\end{align*}
where $C_1$ depends on $m_0, M_0$ and $\|u_0'\|_{L^\infty}$.

The minimum $\min_xv(x,t_0)$ can be bounded from below by the same
argument. 
\end{proof}

The exponential decay estimate \eqref{eq:uxdecay} on
$\|\pa_xu(\cdot,t)\|_{L^\infty}$ allows us to obtain uniform-in-time
bound on all $\dot{H}^s$ norms. Indeed, one can directly verify the global regularity condition \eqref{BKM}
for $T=+\infty:$
\[\int_0^\infty\|\pa_xu\|_{L^\infty}(1+\|\pa_xu\|_{L^\infty}^4)\,dt
\lesssim \int_0^\infty e^{-\sigmaz t}\,dt<+\infty.\]
Note that due to the instant regularization, we can get uniform bound
on all $H^{s+k}$ norms for initial data $u_0\in H^s(\S)$
\[  \sup_{t\geq0}\Big(\min \{1 , t^k\}\left\|u(\cdot,t)\right\|_{\dot{H}^{s+k}}\Big)<+\infty,\quad\forall~k\geq0,\]
and the bound only depends on $k, m_0, M_0$ and $\|u_0\|_{\dot{H}^s}$.

Through standard Sobolev embedding, we get the following
uniform-in-time bounds on higher order derivatives: 
\begin{equation}\label{ukbound}
  \sup_{t\geq 1}\left\|\partial_x^k u(\cdot,t)\right\|_{L^\infty}\leq
  \Q_k,\quad\forall~k=1,2,\cdots.
\end{equation}
We will show that the higher order derivatives of solutions 
are not only bounded, but also decay
exponentially in time. Let us start with
\begin{corollary}\label{cor:decayHu}
  Suppose $u(x,t)$ is a global smooth periodic solution of
  \eqref{ueq5}. Then,
  \begin{equation}\label{eq:decayHuetc}
    \max\big\{\|Hu(\cdot,t)\|_{L^\infty},\,
    \|\Lambda u(\cdot,t)\|_{L^\infty},\, \|\pa_x\varphi\|_{L^\infty},\,
    \|\pa_x\psi\|_{L^\infty}\big\}\lesssim e^{-\sigmaz t}.
  \end{equation}
\end{corollary}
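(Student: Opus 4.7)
The plan is to reduce each of the four quantities to the already–established exponential decay $\|\pa_xu(\cdot,t)\|_{L^\infty}\lesssim e^{-\sigmaz t}$ from Proposition~\ref{thm:ux}, coupled with the uniform-in-time bounds on higher derivatives \eqref{ukbound} that follow from instant regularization. Each of the four target norms can be expressed linearly in $\pa_xu$ and $\Lambda u$, so the whole corollary reduces to a clean bootstrap.

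First I would handle $\|Hu(\cdot,t)\|_{L^\infty}$. Inspection of the computation leading to \eqref{Bbound} shows that it only used the Lipschitz character of $u$ near the diagonal of the singular kernel; the content is the universal inequality $\|Hu\|_{L^\infty}\lesssim \|\pa_xu\|_{L^\infty}$ valid for any periodic Lipschitz $u$ (one pairs the odd kernel $\cot\frac{y}{2}$ with $u(x)-u(x-y)$). Composing this with Proposition~\ref{thm:ux} yields $\|Hu(\cdot,t)\|_{L^\infty}\lesssim e^{-\sigmaz t}$ immediately.

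Next I would handle $\|\Lambda u(\cdot,t)\|_{L^\infty}=\|H(\pa_xu)(\cdot,t)\|_{L^\infty}$. Since $\pa_xu$ has zero mean on $\S$, the Kato inequality of Lemma~\ref{Katolem} applies and gives
\[
\|H(\pa_xu)\|_{L^\infty}\lesssim \|\pa_xu\|_{L^\infty}\Big(1+\log_+\frac{\|\pa_xu\|_{C^\alpha}}{\|\pa_xu\|_{L^\infty}}\Big).
\]
The H\"older norm in the numerator is controlled, via interpolation, by $\|\pa_x^2u\|_{L^\infty}$, which by \eqref{ukbound} is uniformly bounded by $\Q_2$ for $t\geq 1$. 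Substituting $\|\pa_xu\|_{L^\infty}\lesssim e^{-\sigmaz t}$ then yields $\|\Lambda u\|_{L^\infty}\lesssim e^{-\sigmaz t}(1+t)$; the linear factor is absorbed into the exponential at the cost of an arbitrarily small decrease of the rate (a loss tacitly tolerated by the $\lesssim$ symbol in the statement). The range $0\le t\le 1$ is handled separately by the uniform bound on $\|\pa_xu\|_{L^\infty}$ and Lemma~\ref{Katolem}.

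Finally, for $\|\pa_x\psi\|_{L^\infty}$ and $\|\pa_x\varphi\|_{L^\infty}$ I would simply invoke the pointwise inequalities \eqref{eq:psix} and \eqref{eq:phix} already derived during the local-wellposedness analysis,
\[
|\pa_x\psi|\le \frac{|\pa_xu|+3|\Lambda u|}{m_0^2},\qquad |\pa_x\varphi|\le \frac{3|\pa_xu|+|\Lambda u|}{m_0^2},
\]
which together with the previous two steps deliver the claimed decay. The only mild obstacle is the logarithmic loss from Kato in step two; if one wanted to avoid even this small loss, the cleanest remedy would be to rerun the enhanced nonlinear maximum principle argument of Proposition~\ref{thm:ux} on $v=\pa_x^2u$, using the already-proven decay of $V(t)$ and $\|\pa_xu\|_{L^\infty}$ to dominate the commutator-type bad terms in $\pa_t(\pa_x^2u)$ at its maximum, thereby producing an unblemished exponential decay of $\|\pa_x^2u\|_{L^\infty}$ and through it of $\|\Lambda u\|_{L^\infty}$ via the bound $\|\Lambda u\|_{L^\infty}=\|H(\pa_xu)\|_{L^\infty}\lesssim \|\pa_x^2u\|_{L^\infty}$.
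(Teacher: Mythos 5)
For $Hu$, $\pa_x\varphi$ and $\pa_x\psi$ your argument coincides with the paper's: \eqref{Bbound} combined with Proposition~\ref{thm:ux} gives the decay of $\|Hu\|_{L^\infty}$, and the pointwise bounds \eqref{eq:psix}, \eqref{eq:phix} transfer the decay of $\pa_xu$ and $\Lambda u$ to $\pa_x\psi$ and $\pa_x\varphi$. The divergence is in the $\Lambda u$ step, and there your main route falls short of the stated estimate. The paper invokes \eqref{lambdauest} in the form $\|\Lambda u\|_{L^\infty}\lesssim\|\pa_xu\|_{L^\infty}\big(1+\log_+\|\pa_x^2u\|_{L^\infty}\big)$, so by \eqref{ukbound} the logarithmic factor is just the constant $1+\log_+\Q_2$ and the rate $\sigmaz$ is untouched. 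You instead keep the ratio $\|\pa_xu\|_{C^\alpha}/\|\pa_xu\|_{L^\infty}$ inside the $\log_+$, which yields only $\|\Lambda u\|_{L^\infty}\lesssim e^{-\sigmaz t}(1+t)$, and your claim that the factor $(1+t)$ is ``tacitly tolerated by $\lesssim$'' is not correct: $\lesssim$ hides a constant, not a growing factor, and the corollary asserts decay at the specific rate $\sigmaz$. As written, your primary argument proves decay only at rates strictly below $\sigmaz$. (You have, to be fair, put your finger on a real subtlety: the remark after Lemma~\ref{Katolem} licenses dropping the denominator only for functions bounded away from zero, which $\pa_xu$ is not; but the paper's proof of the corollary proceeds via the denominator-free form with the uniformly bounded $\|\pa_x^2u\|_{L^\infty}$ under the logarithm, and that is what delivers the clean rate.)

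Your proposed repair --- rerunning the enhanced nonlinear maximum principle on $v=\pa_x^2u$ --- is circular as sketched. The bad terms in the evolution of $\pa_x^2u$ contain $\pa_x\psi\cdot\pa_x^2u$, $\pa_x\varphi\cdot\Lambda\pa_xu$ and $\pa_x^2\varphi\cdot\Lambda u$, and in the paper's proof of \eqref{ukdecay} (case $i=2$) their decay is obtained precisely from Corollary~\ref{cor:decayHu}, i.e.\ from the decay of $\pa_x\psi$, $\pa_x\varphi$ and $\Lambda u$ that you are in the middle of proving; the decay of $V(t)$ and $\|\pa_xu\|_{L^\infty}$ alone does not make these terms small. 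Feeding in only your lossy bound $\|\Lambda u\|_{L^\infty}\lesssim e^{-\sigma' t}$ with $\sigma'<\sigmaz$, the resulting differential inequality closes only at rate $\sigma'$, so the bootstrap never recovers $\sigmaz$ exactly. A slightly smaller rate would in fact suffice for every later use of the corollary, but as a proof of the estimate as stated the $\Lambda u$ step should be carried out as in the paper, through \eqref{lambdauest} and the uniform bound $\Q_2$ from \eqref{ukbound}.
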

\begin{proof}
  For $Hu$, from \eqref{Bbound} and \eqref{eq:uxdecay}, we obtain
  \[ \|Hu(\cdot,t)\|_{L^\infty}< 2\|\pa_xu(\cdot,t)\|_{L^\infty}\leq \C_1e^{-\sigmaz t}.\]
 For $\Lambda u$, we apply \eqref{lambdauest}, \eqref{eq:uxdecay} and
 \eqref{ukbound}:
 \[ \|\Lambda u(\cdot,t)\|_{L^\infty}\lesssim
   \|\pa_xu\|_{L^\infty}(1+\log_+\|\pa_{x}^2 u\|_{L^\infty})\leq C_1(1+\log_+Q_2)e^{-\sigmaz t}.\]
 The exponential decay for $\pa_x\psi$ and $\pa_x\varphi$ now follows
 directly from estimates \eqref{eq:psix} and \eqref{eq:phix}.
\end{proof}

\subsection{Exponential decay for higher derivatives}
 \begin{proposition}
  Suppose that $u(x,t)$ is a global smooth periodic solution of
  \eqref{ueq5}. Then, for any positive integer $k=1,2,\cdots,$
  \begin{equation}\label{ukdecay}
    \left\|\partial_x^k u(\cdot,t)\right\|_{L^\infty}\leq
    C_k e^{-\sigmaz t},\quad\forall~t\geq1,
  \end{equation}
where $\sigmaz$ is defined in \eqref{eq:D}, and $C_k$ are constants that may
depend only on $m_0$, $M_0$ and $\|u_0\|_{\dot{H}^s}$.
\end{proposition}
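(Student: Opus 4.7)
The plan is an induction on $k$, the base case $k = 1$ being Proposition \ref{thm:ux}. Assume \eqref{ukdecay} is known for indices $1, \ldots, k-1$, and set $v_j := \partial_x^j u$. Differentiating \eqref{ueq5} $k$ times and applying Leibniz yields
\[
\pi\,\partial_t v_k = \psi\,\partial_x v_k - \varphi\,\Lambda v_k + \B_k, \qquad \B_k := \sum_{j=1}^{k}\binom{k}{j}\bigl(\partial_x^j \psi \cdot v_{k-j+1} - \partial_x^j \varphi \cdot \Lambda v_{k-j}\bigr).
\]
Fix a time $t_0 \geq 1$ and let $x_0$ attain $v_{k,\max}(t_0) := \max_x v_k(x, t_0)$, so that $\partial_x v_k(x_0) = 0$ and the first-order transport term drops out.

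For the dissipative contribution I apply the enhanced nonlinear maximum principle Lemma \ref{lem:nmp} to the pair $(v_{k-1}, v_k)$; writing $V_k(t)$ for the amplitude of $v_{k-1}(\cdot, t)$, which decays exponentially by the inductive hypothesis (or by Proposition \ref{linfconv} when $k = 1$), the lemma together with \eqref{phimin} yields
\[
-\varphi(x_0)\,\Lambda v_k(x_0) \leq -\phim\,\max\!\bigl(v_{k,\max}(t_0) - \pi V_k(t_0),\, 0\bigr) \leq -\phim\,v_{k,\max}(t_0) + \pi\phim V_k(t_0).
\]
The commutator will be shown to obey $|\B_k(x_0)| \leq C_1 e^{-\sigmaz t_0}\, v_{k,\max}(t_0) + C_2 (1 + t_0) e^{-\sigmaz t_0}$. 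Each summand is a product of (a derivative of $\psi$ or $\varphi$) with (a derivative of $u$ or its fractional Laplacian); using \eqref{ukbound}, the lower bound $u \geq m_0 > 0$, and Kato's inequality \eqref{Kato127}, every $\partial_x^j \psi$ and $\partial_x^j \varphi$ is uniformly bounded on $[1, \infty)$, and for $j = 1$ Corollary \ref{cor:decayHu} upgrades this to $\|\partial_x \psi\|_\infty + \|\partial_x \varphi\|_\infty \lesssim e^{-\sigmaz t}$. For $j \geq 2$ the paired factor has strictly smaller index: $v_{k-j+1}$ (decays by induction) or $\Lambda v_{k-j}$, which decays either by Corollary \ref{cor:decayHu} when $k - j = 0$, or, when $k - j \geq 1$, by writing $\Lambda v_{k-j} = H v_{k-j+1}$ and applying \eqref{Kato127}, combining the inductive decay of $v_{k-j+1}$ with the a priori bound \eqref{ukbound} on $v_{k-j+2}$. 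Consequently every term of $\B_k$ is pure forcing of order $(1+t)\, e^{-\sigmaz t}$ with the sole exception of $k\,\partial_x \psi \cdot v_k$, which supplies the $v_{k,\max}$ coefficient above.

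Combining these bounds, together with the analogous lower bound obtained at a minimizer of $v_k$, produces a differential inequality of the form
\[
\frac{d}{dt} \max_x |v_k(x, t)| \leq \bigl(-\sigmasec + C_1' e^{-\sigmaz t}\bigr) \max_x |v_k(x, t)| + C_2'(1+t) e^{-\sigmaz t},
\]
with $\sigmasec = \phim/\pi > \sigmaz = 2\phim/\pi^2$. Gronwall's inequality, applied exactly as in \eqref{vmaxest} and the lines that follow, then yields $\|v_k(\cdot, t)\|_\infty \leq C_k e^{-\sigmaz t}$ for $t \geq 1$, closing the induction. The main obstacle is the single term $\partial_x \varphi \cdot \Lambda v_{k-1}$ inside $\B_k$: a naive inductive estimate for $\Lambda v_{k-1}$ would require decay of $v_k$, the very quantity being controlled. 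It is resolved by observing that this factor is already multiplied by the exponentially decaying $\partial_x \varphi$, so only the uniform bound on $\Lambda v_{k-1}$ (obtained from \eqref{ukbound} and Kato's inequality) is needed.
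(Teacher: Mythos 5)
Your argument is essentially the paper's proof: induction on $k$ with base case Proposition~\ref{thm:ux}, the Leibniz decomposition into $\G_k+\B_k$, the enhanced nonlinear maximum principle (Lemma~\ref{lem:nmp}) applied to the pair $(\partial_x^{k-1}u,\partial_x^k u)$ with the inductively decaying amplitude, exponential decay of $\partial_x\psi,\partial_x\varphi$ from Corollary~\ref{cor:decayHu} paired with the uniform bounds \eqref{ukbound} for the commutator (including the same resolution of the $\partial_x\varphi\cdot\Lambda\partial_x^{k-1}u$ term), and the same Gronwall step as in \eqref{vmaxest}. The one small discrepancy is the factor $(1+t)$ you attach to the forcing: feeding $C(1+t)e^{-\sigmaz t}$ through Gronwall yields $(1+t)e^{-\sigmaz t}$ rather than the stated $C_ke^{-\sigmaz t}$, whereas the paper gets a clean $e^{-\sigmaz t}$ forcing by estimating $\|\Lambda \partial_x^{k-j}u\|_{L^\infty}=\|H\partial_x^{k-j+1}u\|_{L^\infty}\lesssim \|\partial_x^{k-j+1}u\|_{L^\infty}\big(1+\log_+\|\partial_x^{k-j+2}u\|_{L^\infty}\big)$ with the uniformly bounded higher derivative inside the logarithm, so no polynomial loss enters the induction.
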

\begin{proof}
  We prove the proposition by induction. The $k=1$ case has been shown in
  Proposition~\ref{thm:ux}.
  Now, suppose the result is true for
  $k=1,\cdots, i-1$. We will show exponential decay for $\partial_x^i u$.
Consider the evolution of $\pa_x^i u:$
\[\pi\pa_t \partial_x^i u=\frac{\pa^i}{\pa x^i} (\psi\pa_xu-\varphi\Lambda u)
  =\sum_{k=0}^i{i\choose k}\Big[\partial_x^k \psi \cdot \partial_x^{i-k+1} u
  -\partial_x^k \varphi \cdot \Lambda \partial_x^{i-k} u \Big].\]
Therefore,
\[\pi\pa_t \pa_x^i u +\psi\cdot\pa_x^{i+1} u =-\varphi\cdot \Lambda
  \pa_x^i u +\sum_{k=1}^i{i\choose k}\Big[\pa_x^k \psi \cdot \pa_x^{i-k+1} u  -\pa_x^k \varphi \cdot \Lambda \pa_x^{i-k} u \Big]=:\G_i+\B_i.\]
We claim that
\begin{equation}\label{Bndecay}
  |\B_i|\lesssim e^{-\sigmaz t}.
\end{equation}
Indeed, for $k=1$, $\pa_x\psi$ and $\pa_x\varphi$ decay
exponentially \eqref{eq:decayHuetc}, while $\pa_x^i u$ and $\Lambda
\pa_x^{i-1} u$ are bounded \eqref{ukbound}.
For $k\geq 2$, $\pa_x^k \psi$ and $\pa_x^k \varphi$ are
bounded due to \eqref{ukbound}. On the other hand, the derivatives $\pa_x^{i-k+1} u$ decay exponentially by the induction
hypothesis. Same decay happens for $\Lambda \pa_x^{i-k} u$ using an
estimate similar to \eqref{lambdauest}:
\[\|\Lambda \pa_x^{i-k} u\|_{L^\infty}=\|H \pa_x^{i-k+1} u\|_{L^\infty}\lesssim
\|\pa_x^{i-k+1} u\|_{L^\infty}\left(1+\log_+\|\pa_x^{i-k+2} u\|_{L^\infty}\right)\lesssim
e^{-\sigmaz t}.\]
This proves \eqref{Bndecay}; the constant involved in $\lesssim$ can only depend on $u_0$
(and varies with $i$ through higher order norms of $u_0$).

Fix a time $t_0$, and let $x_0$ be a point where
$\max_x \pa_x^i u (x,t_0)$ is attained. Then, we apply \eqref{Bndecay} and get
\[\pi\pa_t \pa_x^i u(x_0)=\G_i(x_0)+\B_i(x_0)\leq -\phim\cdot
  \Lambda \pa_x^i u(x_0)+Ce^{-\sigmaz t}.\]
To estimate $\Lambda \pa_x^i u (x_0)$, we apply Lemma \ref{lem:nmp},
replacing $(u, v=u')$ by $(\pa_x^{i-1} u, \pa_x^i u)$.
The amplitude $v(t)$ is replaced with
\[V_{i-1}(t):=\max_x \pa_x^{i-1} u(x,t)-\min_x \pa^{i-1}_x u(x,t)\leq
  2\|\pa_x^{i-1} u(\cdot,t)\|_{L^\infty}\leq 2C_{i-1}e^{-\sigmaz t},\]
which also decays exponentially in time by the induction hypothesis.
Therefore, using the analog of \eqref{est:lamv2} and \eqref{vmaxest}, we get
\[\frac{d}{dt}\pa_x^i u_{\max}(t)\leq-\frac{\phim}{\pi}\cdot\big(\pa_x^i u_{\max}(t)-4\pi
  C_{i-1}e^{-\sigmaz t}\big)+C e^{-\sigmaz t}
  =-\sigmasec \pa_x^i u_{\max}(t) + \left(4\phim C_{i-1}+C\right)
  e^{-\sigmaz t},\]
where $\sigmasec$ is defined in \eqref{sigmasec}.
This yields the desired bound
\[
 \pa_x^i u_{\max}(t)\leq \pa_x^i u_{\max}(1)e^{-\sigmasec(t-1)}
  +\frac{4\phim C_{i-1}+C}{\sigmasec-\sigmaz}e^{-\sigmaz t}\leq
  C_i e^{-\sigmaz t},\quad \forall~t\geq1.
\]
The coefficient $C_i$ may only depend on $m_0, M_0$ and
$\|u_0\|_{\dot{H}^{s}}$.

The minimum $\min_x \pa_x^i u (x,t)$ can be bounded from below by the analogous argument.
\end{proof}

\subsection{A universal decay rate}
So far, we have shown exponential decay for $V(t)=\max u(\cdot,t)-\min
u(\cdot,t)$ and
$\|\pa_x^k u(\cdot,t)\|_{L^\infty}$, $k\geq1$, with a decay rate
$\sigmaz$ defined in \eqref{eq:D} that strongly depends on the initial data.
Now, we aim to obtain an asymptotic large time universal decay rate that only depends on $\bar u$, and
complete the proof of Theorem \ref{mainthm1}.

\begin{proof}[Proof of Theorem \ref{mainthm1}]
First, we apply \eqref{eq:Vdecay}, \eqref{eq:decayHuetc} and improve the lower bound estimate \eqref{phimin}:
\begin{equation}\label{varphiimprove}
  \varphi(x_0,t_0)\geq \frac{m(t_0)}{M(t_0)^2+\|Hu(\cdot,t_0)\|_{L^\infty}^2}\geq
  \frac{\bar u-(M_0-m_0)e^{-\sigmaz t_0}}{(\bar u+(M_0-m_0)e^{-\sigmaz
      t_0})^2+(Ce^{-\sigmaz t_0})^2}\geq \frac{1}{\bar u}-Ce^{-\sigmaz t_0},
\end{equation}
for all sufficiently large $t_0\geq t_*(u_0)$,
where the constant $C$ depends on $u_0$ and may change from line to line.
Then, we can improve the bound in \eqref{Vdynamics} as follows:
\[
  M'(t)-m'(t)\leq
  -\frac{4}{\pi^3}\left(\frac{1}{\bar u}-Ce^{-\sigmaz t}\right)(M(t)-m(t))
  =\left(-\sigma+C e^{-\sigmaz t}\right)(M(t)-m(t))
\]
for any $t\geq t_*$,
with coefficient
\[\sigma:=\frac{2}{\pi^2\bar u}\]
that only depends on $\bar u$.
For all $t \geq t_*,$ this yields
\begin{equation}\label{Vimprove}
V(t)= M(t)-m(t)\leq V(t_*)\exp\left[-\sigma (t-t_*)+C(1-e^{-\sigmaz
(t-t_*)})\right]\leq C_0e^{-\sigma t},
\end{equation}
where $C_0= (M_0-m_0)\exp\left((\sigma-\sigmaz)t_*+C\right)$
depends only on the initial data. This finishes the proof of
\eqref{meanconv1117}.

For the decay estimate on $\pa_xu$, we apply \eqref{varphiimprove} and
\eqref{Vimprove} to improve the bound \eqref{vmaxest}
\begin{align*}
  \frac{d}{dt} v_{\max}(t)\leq&\,\frac{1}{\pi}\left(-\frac{1}{\bar u}+Ce^{-\sigmaz t}+\frac{32\pi^2+64}{\pi
      m_0^2}\,V(t)\right)
      \cdot \max((v_{\max}(t)-2\pi V(t)),0)\\
  \leq&\,\left(-\frac{\pi}{2}\sigma+C e^{-\sigmaz t}\right)v_{\max}(t)
        +\frac{2C_0}{\bar u}\cdot e^{-\sigma t}.
\end{align*}
This directly implies
\begin{align*}
v_{\max}(t)\leq&\,e^{-\frac{\pi}{2}\sigma(t-t_*)+\frac{C'}{\sigmaz}(e^{-\sigmaz t_*}-e^{-\sigmaz t})}
\left[v_{\max}(t_*)+
\int_{t_*}^t e^{\frac{\pi}{2}\sigma(\tau-t_*)-\frac{C'}{\sigmaz}(e^{-\sigmaz t_*}-e^{-\sigmaz \tau})}\cdot \frac{2C_0}{\bar u}e^{-\sigma\tau}\,d\tau\right]\\
\leq&\, \Q_1\cdot e^{\frac{\pi}{2}\sigma t_*+\frac{C'}{\sigmaz}e^{-\sigmaz t_*}}
 \cdot e^{-\frac{\pi}{2}\sigma t}+
 e^{\frac{C'}{\sigmaz}e^{-\sigmaz t_*}}\cdot \frac{2C_0}{\bar u}
 \cdot\frac{1}{(\frac{\pi}{2}-1)\sigma} \cdot e^{-\sigma t}\leq C_1e^{-\sigma t},
\end{align*}
for $t\geq t_*$, as $\frac{\pi}{2}>1$.

Similar arguments can be applied to higher derivatives. This concludes
the proof of \eqref{higherder1117}.
\end{proof}

\section{Rigorous Derivation: The Setup}\label{rigdersetup}

In this section, we turn towards proving our second main result, Theorem~\ref{mainthm2}.
We begin by introducing relevant notation and overall set up
needed to rigorously connect the main equation \eqref{maineq} with the evolution of polynomial roots under differentiation.


\subsection{Measurement of error}

To start, recall the way we measure closeness between a discrete set
of roots $\{x_j\}_{j=1}^{2n}$ and a continuous distribution $u(x)$.

In the introduction, we defined the error
\begin{equation}\label{Ej}
  E_j=x_{j+1}-x_j-\frac{1}{2nu(\xb_j)},\quad j=1,\cdots, 2n; \,\,\, \|E\|_\infty = \max_{1 \leq j \leq 2n} |E_j|
\end{equation}

We will assume that initially, the roots of the polynomial $p_{2n}$ obey \eqref{Ej}
with the initial density $u_0$ and $\|E^0\|_\infty \leq \M_0 n^{-1-\epsilon}.$
We will assume that $u_0(x) \in H^s(\S),$ $s >7/2,$ and that $u_0(x) \geq u_{0,\min} >0$
for all $x \in \S,$ so that the global regularity results apply to solutions of the equation \eqref{maineq}
with initial data $u_0.$ Note that in all remaining sections we will denote the minimum of $u_0(x)$ and $u(x,t)$ by
$u_{0,\min}$ and $u_{\min}(t)$ respectively, as the letter $m$ that we used before will be reserved for a different role.
We aim to control the growth in time of the error $E_j^t$ between the solution $u(x,t)$ of \eqref{maineq} and
the zeroes of the derivative $p^{k}_{2n}(x),$ with $t=\frac{k}{2n}.$ In order to be able to do so,
we will first assume that $n$ is sufficiently large - how large will only depend
on $u_0.$ Second, some of our estimates will be done under the assumption that
\begin{equation}\label{Esmall}
  \|E^t\|_\infty=\max_{1\leq j\leq 2n}|E_j^t|\leq \M_{\max}(u_0,\M_0)(n^{-1-\epsilon} + n^{-3/2}),
\end{equation}
where $\M_{\max}$ is a constant that is independent of $n$ and
of $t,$ and only depends on $u_0$ and $\M_0.$ Such constant can certainly be found for a certain initial period of time,
and we will eventually show that with a proper choice of $\M_{\max},$ \eqref{Esmall} indeed continues to hold for all times provided that
$n \geq n_0(\tilde u_0)$ is sufficiently large.



\subsection{The decomposition}
Following the heuristic derivation in Section \ref{derivation}, let us make a related but slightly different decomposition
\begin{align}\label{ImIIm} \sum_{j=1}^{2n}\cot\frac{y_m-x_j}{2}=
  \sum_{j\in\Sm}\cot\frac{y_m-x_j}{2}+
  \sum_{j\in\Smc}\cot\frac{y_m-x_j}{2}=I_m+II_m\end{align}
  (we are doing such decomposition at any fixed time and omitting time in notation for the sake of simplicity).
  Recall that $y_m$ is the zero of the next derivative satisfying $x_m < y_m < x_{m+1}.$
The set $\Sm=\{j_-,\cdots, j_+-1\}$ consists of the indices of $x_j$ that lie
in the near field of $y_m$.
We define $j_\pm$ so that $\xb_{j_\pm-1}$ are the closest
midpoints to $y_m\pm n^{-1/2},$ respectively. Note that $j_\pm$ depend on $m,$ but
to keep notation from getting too heavy we will omit dependence. When doing two roots estimates in the following sections,
we will simply use $j_-+1$ and $j_++1$ as the cutoff indices in the decomposition for $(m+1)$st root.
Also, observe that $\Smc=\{1,\cdots,2n\}\backslash\Sm$.

\begin{figure}[ht]
\begin{tikzpicture}[scale=1.3]
  \draw[->] (0,0) -- (11,0);
  \draw [very thick] (.5,.1) -- (.5,0) node [below] {$x_{j_--1}$};
  \draw [very thick] (1.5,.1) -- (1.5,0) node [below] {$\xb_{j_--1}$};
  \draw [very thick] (2.5,.1) -- (2.5,0) node [below] {$x_{j_-}$};
  \draw [very thick] (5,.1) -- (5,0) node [below] {$y_m$};
  \draw [very thick] (8.5,.1) -- (8.5,0) node [below] {$x_{j_+-1}$};
  \draw [very thick] (9.5,.1) -- (9.5,0) node [below] {$\xb_{j_+-1}$};
  \draw [very thick] (10.5,.1) -- (10.5,0) node [below] {$x_{j_+}$};
  \node at (3.5,-.3) {$\cdots$};
  \node at (6.5,-.3) {$\cdots$};
  \node at (1,0) {{\Huge(}};
  \node at (1, .6) {$y_m-n^{-1/2}$};
  \node at (9,0) {{\Huge)}};
  \node at (9, .6) {$y_m+n^{-1/2}$};
  \fill[pattern=north east lines, pattern color=blue] (1,0) rectangle (1.5,.2);
  \fill[pattern=north east lines, pattern color=blue] (9,0) rectangle (9.5,.2);
\end{tikzpicture}
\end{figure}

There is a mismatch between the intervals $(y_m-n^{-1/2},
y_m+n^{-1/2})$ and $(\xb_{j_--1}, \xb_{j_+-1})$ (shaded area in the
figure above). We denote the difference
\[\Dmis_m =\Dmis_{m}^-\cup \Dmis_{m}^+,\quad\text{where}\quad \Dmis_{m}^-=[
 \xb_{j_--1}, y_m-n^{-1/2}], \,\, \Dmis_{m}^+=[y_m+n^{-1/2}, \xb_{j_+-1}].\]
 Note that we do not claim that $\xb_{j_--1} \leq y_m-n^{-1/2}$ or $y_m+n^{-1/2} \leq \xb_{j_+-1}.$
The lengths of the mismatched intervals $\Dmis^\pm_m$ are of the order
$O(n^{-1})$.


\section{Rigorous Derivation: Preliminary Estimates at a Single Root}\label{rigderoneroot}

This section follows the outline of the formal derivation, but we make estimates more precise.
As we will see, single root estimates are not sufficient to rigorously connect evolution of roots under differentiation
and equation \eqref{maineq}, but these estimates will be one of the ingredients of a more in-depth argument.
Let us first estimate the far field part $II_m$ defined in \eqref{ImIIm}. 
 All estimates in the next few sections will be carried out at a fixed time, 
at which we assume \eqref{Esmall} to hold true. To lighten the presentation, we omit dependence on time in the notation until later sections where
evolution of error in time will be considered. All the constants in the estimates, including the $O$ notation, may only depend on $u_0.$
We aim to show that
\[II_m\sim 4\pi n Hu(y_m)=2n\,P.V.\int_0^{2\pi}u(y)\cot\frac{y_m-y}{2}\,dy\]
with more detailed error estimates than in heuristic derivation.
Decompose the integral into two parts and write
\begin{align}
  4\pi n Hu(y_m)-II_m=&
 ~2n\,P.V.\int_{|y-y_m|\leq n^{-1/2}}u(y)\cot\frac{y_m-y}{2}\,dy\nonumber\\
&+\left(2n\int_{|y-y_m|>n^{-1/2}}u(y)\cot\frac{y_m-y}{2}\,dy-\sum_{j\in\Smc}\cot\frac{y_m-x_j}{2}\right)\nonumber\\
  =:&\, II_{1,m}+ II_{2,m}.\label{IIm}
\end{align}
The $II_{1,m}$ term can be estimated by symmetry
\begin{equation}\label{II1m}
  \left|II_{1,m}\right|=2n\left|\int_{|y-y_m|\leq n^{-1/2}}\big(u(y)-u(y_m)\big)\cot\frac{y_m-y}{2}dy\right|
  \leq C\|\partial_x u\|_{L^\infty}n^{1/2}=O( n^{1/2}).
\end{equation}

To estimate $II_{2,m}$, we match each term in the sum of $II_m$ with the part of the integral, and
estimate the difference
\begin{align}\label{II2mj}
  &II_{2,m}^j:=\,2n\int_{\xb_{j-1}}^{\xb_j}u(y)\cot\frac{y_m-y}{2}dy-\cot\frac{y_m-x_j}{2}\\
  &=\cot\frac{y_m-x_j}{2}\left(2n\int_{\xb_{j-1}}^{\xb_j}u(y)dy-1\right)
     +2n\int_{\xb_{j-1}}^{\xb_j}u(y)\left(\cot\frac{y_m-y}{2}-\cot\frac{y_m-x_j}{2}\right)dy\nonumber
\end{align}
for $j \in S_m^c.$

Let us compute the integral
\begin{align}
  \int_{\xb_{j-1}}^{\xb_j}u(y)dy=&
  \int_{\xb_{j-1}}^{x_j}(u(\xb_{j-1})+O\big((z-\xb_{j-1}))\big)\,dz+\int_{x_j}^{\xb_j}
  \big(u(\xb_j)+O((z-\xb_j))\big)\,dz\nonumber\\
  =&\,
     \frac{x_j-x_{j-1}}{2}u(\xb_{j-1})+\frac{x_{j+1}-x_j}{2}u(\xb_j)
     + O( n^{-2})\nonumber\\
  =&\frac{1}{2n}+\frac{u(\xb_{j-1})}{2}E_{j-1}+\frac{u(\xb_j)}{2}E_j +
     O( n^{-2}).\label{intu}
\end{align}
Therefore,
\[2n\int_{\xb_{j-1}}^{\xb_j}u(y)dy-1=n\left(u(\xb_{j-1})E_{j-1}+u(\xb_j)E_j\right)
  +O( n^{-1}).\]

To estimate $\cot\frac{y_m-x_j}{2}$, we state the following
bounds on $|y_m-x_j|$.
\begin{lemma}\label{lowerbound}
  For all $n$ sufficiently large and all $j \neq m, m+1$ we have
  \begin{equation}\label{ymxj}
 \frac{|m-j|}{ u_{\min}n} \geq  |y_m-x_j|\geq\frac{|m-j|}{8 \|u\|_{L^\infty} n}. 
  \end{equation}
  Here $u_{\min} = {\rm min}_x u(x);$ we will be using this notation instead of the more compact $m(t)$ that we used
  in the first part of the paper since the variable $m$ would be too overloaded now.
\end{lemma}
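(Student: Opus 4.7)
The plan is to telescope the gaps between consecutive roots. From the defining relation \eqref{Ej}, each gap satisfies $x_{k+1}-x_k = \frac{1}{2nu(\bar x_k)} + E_k$, so it lies in the interval $\left[\frac{1}{2n\|u\|_{L^\infty}} - \|E\|_\infty,\ \frac{1}{2nu_{\min}} + \|E\|_\infty\right]$. Combined with the a priori smallness \eqref{Esmall}, which gives $\|E\|_\infty \leq \M_{\max}(n^{-1-\epsilon}+n^{-3/2})$, these pointwise bounds on individual gaps transfer to tight two-sided bounds on any sum of consecutive gaps. By Rolle's theorem $y_m \in (x_m, x_{m+1})$, so $|y_m - x_j|$ is sandwiched between two such telescoping sums.

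\textbf{Upper bound.} Consider first $j \geq m+2$; the case $j \leq m-1$ is handled identically with $|y_m - x_j| < x_{m+1} - x_j$. We have $|y_m - x_j| < x_j - x_m = \sum_{k=m}^{j-1}(x_{k+1}-x_k) \leq (j-m)\left(\frac{1}{2nu_{\min}}+\|E\|_\infty\right)$. Since $|m-j| \leq 2n$, the error contribution is at most $|m-j|\cdot\M_{\max}(n^{-1-\epsilon}+n^{-3/2}) = \frac{|m-j|}{n} \cdot O(n^{-\epsilon})$. Taking $n$ large enough (depending only on $u_0$ and $\M_0$) so that $O(n^{-\epsilon}) < \frac{1}{2u_{\min}}$ absorbs the error into the main term, yielding $|y_m - x_j| \leq \frac{|m-j|}{n u_{\min}}$.

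\textbf{Lower bound.} Again take $j \geq m+2$. Then $|y_m - x_j| > x_j - x_{m+1} = \sum_{k=m+1}^{j-1}(x_{k+1}-x_k) \geq (|m-j|-1)\left(\frac{1}{2n\|u\|_{L^\infty}} - \|E\|_\infty\right)$. For $|m-j| \geq 2$ one has $|m-j|-1 \geq |m-j|/2$, and the same $O(n^{-\epsilon})$ relative error estimate lets us absorb the second term into a constant fraction of the first for $n$ large, giving $|y_m - x_j| \geq \frac{|m-j|}{8n\|u\|_{L^\infty}}$. The boundary case $j = m-1$ is treated directly: $|y_m - x_{m-1}| > x_m - x_{m-1} \geq \frac{1}{2n\|u\|_{L^\infty}} - O(n^{-1-\epsilon}) \geq \frac{1}{8n\|u\|_{L^\infty}}$ once $n$ is large enough.

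No step here is a genuine obstacle; the only subtleties are keeping the boundary cases $j \in \{m-1, m+2\}$ honest (the telescoping sum loses one term so one must use the $|m-j|-1 \geq |m-j|/2$ trick or treat $|m-j|=1$ separately) and verifying that the threshold $n_0(u_0, \M_0, \epsilon)$ needed to absorb the cumulative error $|m-j|\|E\|_\infty$ is uniform in $|m-j|$. It is, because after factoring out $|m-j|/n$ the relative error $2n\|u\|_{L^\infty}\|E\|_\infty = O(n^{-\epsilon})$ depends only on $n$. On the periodic circle one uses cyclic index distance and performs the telescope along the shorter arc from $x_m$ to $x_j$; for linear index differences $|m-j|$ of order $n$ for which $\frac{|m-j|}{nu_{\min}}$ exceeds $2\pi$, the upper bound is trivial.
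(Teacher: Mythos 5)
Your proof is correct and follows essentially the same route as the paper: telescope the gap identity \eqref{Ej}, use \eqref{Esmall} to make each gap comparable to $\frac{1}{2nu(\bar x_l)}$ for $n$ large, and absorb the loss of one gap in the telescoping sum via the factor-two slack $|m-j|-1\geq |m-j|/2$. The paper writes this out only for the lower bound (summing from $x_{m+1}$ when $j\geq m+2$ and from $x_m$ when $j\leq m-1$) and declares the upper bound ``similar,'' which is precisely your argument.
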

\begin{proof}
  For $j\geq m+2$, estimate
\[ |y_m-x_j|\geq
  x_j-x_{m+1}=\sum_{l=m+1}^{j-1}\left(\frac{1}{2nu(\xb_l)}+E_l\right)
  \geq
  \frac{j-m-1}{4n\|u\|_{L^\infty}} \geq \frac{j-m}{8n\|u\|_{L^\infty}}.\]
 Note that in the second inequality, we use the assumption
 \eqref{Esmall}, so that $E_l\ll \frac{1}{4n u(\xb_l)}$ if $n$ is large.
 A similar argument works for $j\leq m-1$:
\[ |y_m-x_j|\geq
  x_m-x_j=\sum_{l=j}^{m-1}\left(\frac{1}{2nu(\xb_l)}+E_l\right)
  \geq
  \frac{m-j}{4n\|u\|_{L^\infty}}.\]
  The proof of the upper bound is similar.
\end{proof}
\begin{remark}
  The lower bound estimates on $|y_m-x_m|$ and $|y_{m+1}-x_{m+1}|$ are
  more subtle.  We will discuss them later in Lemma \ref{lowerbound01}. The upper bounds for $j=m,m+1$ are straightforward.
\end{remark}

Now, we have
\begin{equation}\label{cotfar}
  \left|\cot\frac{y_m-x_j}{2}\right|<\frac{2}{|y_m-x_j|}\lesssim
  \frac{n}{|j-m|}
\end{equation}
and, for $y \in (\bar x_{j-1}, \bar x_j),$
\begin{equation}\label{cotdifffar}
  \left|\cot\frac{y_m-x_j}{2}-\cot\frac{y_m-y}{2}\right|\leq
  \frac{1}{\left(\sin\frac{y_m-z}{2}\right)^2}\cdot\frac{|y-x_j|}{2}\lesssim
  \frac{n}{|j-m|^2},
\end{equation}
where $z\in(y, x_j)$ and $|y_m-z|$ has a lower bound similar to
\eqref{ymxj}.

Collecting all the estimates, we have
\[II_{2,m}^j=n\cot\frac{y_m-x_j}{2}\left(u(\xb_{j-1})E_{j-1}+u(\xb_j)E_j\right)
+O(|m-j|^{-1}+n|m-j|^{-2}).\]

To complete the estimate of $II_{2,m}$, we sum up $II_{2,m}^j$ for all
$j\in\Smc$. Note that there is a mismatch of the integral at the boundary. It
can be controlled by
\[\left|2n\int_{\Dmis_m}u(y)\cot\frac{y_m-y}{2}dy\right|\lesssim n\cdot
  n^{-1}\cdot n^{1/2}=O(n^{1/2}),\]
as the length of $\Dmis_m$ is $O(n^{-1})$, and
$y_m-y=n^{-1/2}+O(n^{-1})$ for any $y\in\Dmis_m$.

We can summarize that
\begin{equation}\label{II2m}
  II_{2,m}=\sum_{j\in\Smc}II_{2,m}^j+2n\int_{\Dmis_m}u(y)\cot\frac{y_m-y}{2}dy=4\pi
  nu(\xb_m)A_{1,m}+O(n^{1/2}),
\end{equation}
where
\begin{equation}\label{A1}
  A_{1,m}=\frac{1}{4\pi u(\xb_m)}\sum_{j\in\Smc}\cot\frac{y_m-x_j}{2}\left(u(\xb_{j-1})E_{j-1}+u(\xb_j)E_j\right).
\end{equation}
Note that a direct estimate of the term $A_{1,m}$ yields
\begin{align}\label{aux21125}
  |A_{1,m}|\leq\sum_{|m-j|\gtrsim n^{1/2}}\frac{n}{|m-j|}\cdot\|E\|_\infty =O(n\log n\|E\|_\infty).
\end{align}

Next, we turn to estimate $I_m$. The goal is to show that
\[I_m=\sum_{j\in\Sm}\cot\frac{y_m-x_j}{2}\sim 4\pi nu(\xb_m)\cot(2\pi
  n u(\xb_m)(y_m-x_m)).\]
The first step is to replace $\cot\frac{y_m-x_j}{2}$ with $\frac2{y_m-x_j}$, with
an error
\begin{equation}\label{I1}
  I_{1,m}= -\sum_{j\in\Sm}\left(\cot\frac{y_m-x_j}{2}-\frac{2}{y_m-x_j}\right).
\end{equation}
Since by \eqref{ymxj},
\[\left|\cot\frac{y_m-x_j}{2}-\frac{2}{y_m-x_j}\right| =
  O(|y_m-x_j|)=O\big((|m-j|+1)n^{-1}\big),\]
we obtain
\begin{equation}\label{I1m711} |I_{1,m}| = \sum_{|j-m|\lesssim n^{1/2}}
 O\big((|m-j|+1)n^{-1}\big) =O(1).\end{equation}

In the following lemma, we state a useful simple lower bound on
$|y_m-x_j|$, with $j=m, m+1$. Later, we will derive a more precise estimate.

\begin{lemma}\label{lowerbound01}
Suppose \eqref{Esmall} holds. Then for all sufficiently large $n$
there exists a constant $c=c(u)>0$, such that
  \begin{equation}\label{ymxm}
    \min\{y_m-x_m, x_{m+1}-y_m\}\geq c\,n^{-1}.
  \end{equation}
\end{lemma}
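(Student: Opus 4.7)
The plan is to start from the defining identity $\sum_{j=1}^{2n}\cot\frac{y_m-x_j}{2}=0$ and isolate the two potentially singular contributions from $j=m$ and $j=m+1$. Setting $a=y_m-x_m$ and $b=x_{m+1}-y_m$, the identity rearranges to
\[
\cot\tfrac{a}{2}-\cot\tfrac{b}{2} \;=\; -\sum_{j\neq m, m+1}\cot\frac{y_m-x_j}{2}.
\]
Under \eqref{Esmall} we have $a+b = x_{m+1}-x_m = \frac{1}{2n u(\bar x_m)}+E_m = O(n^{-1})$, so without loss of generality I assume $a \leq b$; then $b \geq (a+b)/2 \gtrsim n^{-1}$ and $\cot(b/2) \leq 2/b \leq Cn$. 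If I can show that the right-hand side is bounded in absolute value by $C'n$, then $\cot(a/2)\leq \cot(b/2)+C'n\leq C''n$, and since $\cot(a/2)\geq 2/a - a/6$ for small positive $a$, this forces $a\geq c\,n^{-1}$ for some $c=c(u)>0$.

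For the far-field portion $II_m=\sum_{j\in S_m^c}\cot\frac{y_m-x_j}{2}$, I will invoke the estimates just developed in this section. Combining \eqref{IIm}, \eqref{II1m}, \eqref{II2m} and \eqref{aux21125} with $\|Hu\|_{L^\infty}\leq Q$ from \eqref{Bbound} and $\|E\|_\infty \leq C(n^{-1-\epsilon}+n^{-3/2})$ from \eqref{Esmall}, I get $|A_{1,m}| = O(n^{-\epsilon}\log n) = o(1)$, and hence $|II_m|\leq 4\pi nQ + O(n^{1/2}) + 4\pi n\|u\|_{L^\infty}|A_{1,m}| = O(n)$.

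For the near-field portion $\sum_{j\in S_m\setminus\{m,m+1\}}\cot\frac{y_m-x_j}{2}$, a direct triangle inequality using Lemma~\ref{lowerbound} only yields $O(n\log n)$, which is not enough. The essential step will be to extract cancellation by pairing indices $j=m-k$ with $j=m+1+k$. Using $\cot(\alpha/2) = 2/\alpha + O(\alpha)$, each pair gives
\[
\cot\frac{y_m-x_{m-k}}{2}+\cot\frac{y_m-x_{m+1+k}}{2} \;=\; \frac{2\bigl(2y_m-x_{m-k}-x_{m+1+k}\bigr)}{(y_m-x_{m-k})(y_m-x_{m+1+k})}+O(n^{-1/2}).
\]
I will bound the numerator by splitting it as $2(y_m-\bar x_m)+(2\bar x_m-x_{m-k}-x_{m+1+k})$: the first term equals $a-b=O(n^{-1})$, while for the second I expand $1/u(\bar x_l)$ around $\bar x_m$, use $|\bar x_l-\bar x_m|\lesssim k/n$ together with \eqref{Esmall}, and observe that the linear-in-$l$ contribution cancels between the left and right sums, giving $O(k^2/n^2 + k\|E\|_\infty) = O(n^{-1})$ uniformly for $k\leq n^{1/2}$. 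Combined with $|(y_m-x_{m-k})(y_m-x_{m+1+k})|\geq k^2/(Cn^2)$ from Lemma~\ref{lowerbound}, each pair contributes $O(n/k^2)$, and $\sum_{k\geq 1} O(n/k^2) = O(n)$. The residual unpaired terms at the boundary of $S_m$ number $O(1)$ (the approximate symmetry of $S_m$ about $y_m$ built into the definition of $j_\pm$ implies $|2m+2-j_--j_+| = O(1)$), each having $|j-m|\sim n^{1/2}$ and therefore contributing at most $O(n^{1/2})$; the cumulative $O(\alpha)$ Taylor remainder is $O(1)$.

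Combining the far-field and near-field bounds yields $|\text{RHS}|\leq Cn$, finishing the argument as outlined. The principal obstacle is the cancellation step in the near-field pairing: without it one only obtains $a\geq c/(n\log n)$, which is insufficient for the lemma. The uniform bound $|2\bar x_m-x_{m-k}-x_{m+1+k}|=O(n^{-1})$ for $k\leq n^{1/2}$, which relies on the smoothness of $u_0$ and on the approximate local uniformity of the root spacings guaranteed by \eqref{Esmall}, is the key technical input that makes the proof go through.
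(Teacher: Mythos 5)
Your proposal is correct and follows essentially the same route as the paper: start from $\sum_j\cot\frac{y_m-x_j}{2}=0$, bound the far field by the estimates \eqref{II1m}, \eqref{II2m}, \eqref{aux21125} already established, and extract the decisive near-field cancellation by pairing roots nearly symmetric about the gap, so that the singular contribution is forced to be $O(n)$ and hence $\min\{y_m-x_m,\,x_{m+1}-y_m\}\gtrsim n^{-1}$. The only structural difference is cosmetic: the paper isolates a single singular term after distinguishing whether $y_m$ is closer to $x_m$ or to $x_{m+1}$ and pairs $m\pm l$ about $x_m$, while you isolate both $j=m,m+1$ as $\cot\frac{a}{2}-\cot\frac{b}{2}$ and pair $(m-k,\,m+1+k)$ about $\bar x_m$, which neatly removes the case split. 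Two intermediate counts are overstated, though neither harms the conclusion: under \eqref{Esmall} the number of unpaired boundary indices is $O(1+n^{3/2}\|E\|_\infty)$ as in \eqref{mismatch}, which for small $\epsilon$ is $O(n^{1/2-\epsilon})$ rather than $O(1)$, but each such term is $O(n^{1/2})$ so the total is still $o(n)$; and the per-pair numerator contains a $k\|E\|_\infty$ piece that is not uniformly $O(n^{-1})$ for $k\lesssim n^{1/2}$, so a pair contributes $O(n/k^{2}+n^{2}\|E\|_\infty/k)$ and the sum is $O(n+n^{2}\log n\,\|E\|_\infty)=O(n)$, exactly as in \eqref{sumcancel}. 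Finally, in your expansion of $1/u(\bar x_l)$ it is the constant (zeroth-order) contribution that cancels between the two sides, with the first-order terms only bounded by $O(k^{2}n^{-2})$; the bound $O(k^{2}/n^{2}+k\|E\|_\infty)$ you state is nevertheless the correct one.
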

As follows from the proof and our earlier bounds, the threshold for $n$ depends only on the constant $\M_{\max}$ in \eqref{Esmall}, on $\| u\|_{C^1}$ and on the minimal value of $u$ - that is,
effectively, on $u_0$. A more precise value for $c(u)$ will be computed below in \eqref{ymxm122}.
\begin{proof}
  We start with writing $I_m+II_m=0$ as
  \begin{equation}\label{lbdecomp}
    \sum_{j\in\Sm}\frac{2}{y_m-x_j}-I_{1,m}+(4\pi n
    Hu(y_m)-II_{1,m}-II_{2,m})=0.
  \end{equation}
  Let us first assume $y_m$ is closer to $x_m$.
  Split the sum into three pieces
  \begin{equation}\label{I1msplit117} \sum_{j=j_-}^{j_+-1}\frac{2}{y_m-x_j}
    =\frac{2}{y_m-x_m}+
    \sum_{l=1}^{j_+-m-1}\left(\frac{2}{y_m-x_{m+l}}+\frac{2}{y_m-x_{m-l}}\right)
    +\sum_{j=j_-}^{2m-j_+}\frac{2}{y_m-x_j}.\end{equation}
  Here, we shall use the convention on summation notation that
  \begin{equation}\label{convention117}
  \sum_{j=a}^{a-1}=0 \,\,\,{\rm and}\,\,\,
  \sum_{j=a}^b=-\sum_{j=b+1}^{a-1}\,\,\, {\rm if}\,\,\, b\leq a-2.
  \end{equation}

  The second term on the right hand side of \eqref{I1msplit117} matches the $m\pm l$ terms. Compute
  \[\frac{2}{y_m-x_{m+l}}+\frac{2}{y_m-x_{m-l}}=
    \frac{2y_m-x_{m+l}-x_{m-l}}{(y_m-x_{m+l})(y_m-x_{m-l})}.\]
  For the numerator, observe that
  \begin{align}x_{m+l}+x_{m-l}=&\,2x_m+\sum_{k=1}^l\left(\frac{1}{2nu(\xb_{m+k-1})}+E_{m+k-1}-\frac{1}{2nu(\xb_{m-k})}-E_{m-k}\right)\nonumber\\
    =&\,2x_m+O( l^2n^{-2}+l\|E\|_\infty).\label{xsym}
  \end{align}
  For the denominator, we use Lemma \ref{lowerbound} for $|y_m-x_j|$
  with $j\neq m+1$. For $|y_m-x_{m+1}|$, as we assume $y_m$ is closer
  to $x_m$, we get
  \[|y_m-x_{m+1}|\geq \frac{x_{m+1}-x_m}{2}\geq
    \frac{1}{4nu(\xb_m)}-\|E\|_\infty.\]
  Therefore, we can bound the second term on the right hand side of \eqref{I1msplit117} by
  \begin{align}
    \sum_{l=1}^{j_+-m-1}\left|\frac{2}{y_m-x_{m+l}}+\frac{2}{y_m-x_{m-l}}\right|\leq
    &\,\sum_{l=1}^{\sim n^{1/2}}O\left(\frac{(y_m-x_m)+
      l^2n^{-2}+l\|E\|_{L^\infty}}{l^2n^{-2}}\right)\nonumber\\
    =&\,O(n+n^2\log n\|E\|_\infty),\label{sumcancel}
  \end{align}
  which is $O(n)$ if \eqref{Esmall} is assumed.

  The third part on the right hand side of \eqref{I1msplit117} represents the mismatched terms, as there might not
  be precisely the same number of points in $\Sm$ on each side of $m$, namely
  $m-(j_--1)\neq j_+-m$. However, we can estimate the number of mismatched indices. 
  Indeed, from the definition of $\Sm$, we have
\begin{align}\label{aux11125} x_{j_+}-x_{m+1}=n^{-1/2}+O(n^{-1}),\quad x_m-x_{j_--1}=n^{-1/2}+O(n^{-1}).\end{align}
On the other hand,
\begin{align*}
  x_{j_+}-x_{m+1}=&\sum_{l=m+1}^{j_+-1}\left(\frac{1}{2nu(\xb_l)}+E_l\right)
  =\frac{j_+-1-m}{2nu(\xb_m)}+O(n^{1/2}\|E\|_\infty+n^{-1}),\\
  x_m-x_{j_--1}=&\sum_{l=j_--1}^{m-1}\left(\frac{1}{2nu(\xb_l)}+E_l\right)
  =\frac{m-j_-+1}{2nu(\xb_m)}+O(n^{1/2}\|E\|_\infty+n^{-1}).
\end{align*}
Taking the difference and using \eqref{aux11125}, we get
\begin{equation}\label{mismatch}
  2m-j_+-j_-=O(1+n^{3/2}\|E\|_\infty).
\end{equation}
So, the third term can be controlled by
\begin{equation}\label{3t117}
\left|\sum_{j=j_-}^{2m-j_+}\frac{2}{y_m-x_j}\right|\leq O(1+n^{3/2}\|E\|_\infty) \cdot\frac{2}{O(n^{-1/2})}=O(n^{1/2}+n^2 \|E\|_\infty).
\end{equation}

Putting together all the estimates \eqref{II1m}, \eqref{II2m}, \eqref{aux21125}, \eqref{I1m711}, \eqref{sumcancel}, \eqref{3t117} into \eqref{lbdecomp}, we obtain
\[\frac{2}{y_m-x_m}=-4\pi nHu(y_m)+O(n+n^2\log n\|E\|_\infty)=O(n).\]
This implies a lower bound on $|y_m-x_m|\gtrsim n^{-1}$.

In the case when $y_m$ is closer to $x_{m+1}$, we split the sum
differently, singling out the $(m+1)$st term. Then, a similar argument
yields the same lower bound.
\end{proof}

Next, we approximate $\frac2{y_m-x_j}$ by
$\frac2{y_m-\tilde{x}_j}$, where $\tilde{x}_j=x_m+\frac{j-m}{2n u(\xb_m)}$ are equally distributed
points defined in \eqref{xtilde}. Recall the cotangent identity
\[\sum_{j\in\mathbb{Z}}\frac{2}{y_m-\tilde{x}_j}=
  \sum_{j\in\mathbb{Z}}\frac{2}{y_m-x_m-\frac{j-m}{2nu(\xb_m)}}=
  4\pi nu(\xb_m)\cot(2\pi n u(\xb_m)(y_m-x_m)).\]
Split the sum into two parts
\[\sum_{j\in\mathbb{Z}}\frac{2}{y_m-\tilde{x}_j}=
\sum_{j\in\mathbb{Z}\backslash\Sm}\frac{2}{y_m-\tilde{x}_j}+\sum_{j\in\Sm}\frac{2}{y_m-\tilde{x}_j},\]
and group the second term with $\sum_{j\in\Sm}\frac2{y_m-x_j}$. Define
\begin{equation}\label{I2}
  I_{2,m}=\sum_{j\in\mathbb{Z}\backslash\Sm}\frac{2}{y_m-\tilde{x}_j}
\end{equation}
and
\begin{equation}\label{I3}
  I_{3,m}=\sum_{j\in\Sm}\left(\frac{2}{y_m-\tilde{x}_j}-\frac{2}{y_m-x_j}\right)=\sum_{j\in\Sm\backslash\{m\}}\frac{-2(x_j-\tilde{x}_j)}{(y_m-\tilde{x}_j)(y_m-x_j)}.
\end{equation}
Note that we can exclude the term $j=m$ as $\tilde{x}_m=x_m$.
This yields
\begin{equation}\label{Im}
  I_m=4\pi nu(\xb_m)\cot(2\pi n u(\xb_m)(y_m-x_m))-I_{1,m}-I_{2,m}-I_{3,m}.
\end{equation}

It remains to estimate $I_{2,m}$ and $I_{3,m}$.

For $I_{2,m}$, we match the terms $m\pm k$ over the summation range
\begin{equation}\label{I2match}
  \left|\frac{2}{y_m-x_m-\frac{k}{2nu(\xb_m)}}+\frac{2}{y_m-x_m+\frac{k}{2nu(\xb_m)}}\right|\lesssim
  \frac{|y_m-x_m|}{k^2n^{-2}}.
\end{equation}
As before, the number of mismatched indices is at most $O(1+n^{3/2}\|E\|_\infty)$ (see \eqref{mismatch}).
Thus we obtain
\begin{align}
  I_{2,m}=&\,\sum_{k=j_+-m}^\infty\left(\frac{2}{y_m-\tilde{x}_{m+k}}+\frac{2}{y_m-\tilde{x}_{m-k}}\right)+\sum_{j=2m-j_++1}^{j_--1}\frac{2}{y_m-\tilde{x}_j} \\
  =&\,\sum_{k\gtrsim n^{1/2}}O\left(\frac{n^{-1}}{k^2n^{-2}}\right) + O(1+n^{3/2}\|E\|_\infty))\cdot\frac{2}{O(n^{-1/2})}=O(n^{1/2}+n^2\|E\|_\infty). \label{I2m117a}
\end{align}
Finally, let us estimate $I_{3,m}$. Compute (using convention \eqref{convention117} for $j<m$ case)
\begin{align}
  x_j-\tilde{x}_j=&
 \sum_{l=m}^{j-1}\left(\frac{1}{2nu(\xb_l)}+E_l\right)-\frac{j-m}{2nu(\xb_m)}=\sum_{l=m}^{j-1}E_l+\frac{1}{2n}\sum_{l=m}^{j-1}\left(\frac{1}{u(\xb_l)}-\frac{1}{u(\xb_m)}\right)\nonumber\\
  =&\sum_{l=m}^{j-1}E_l+O( n^{-2}|m-j|^2). \label{xtildexj}
\end{align}

For the denominators, we make use of the lower bound estimates
\eqref{ymxj} and \eqref{ymxm}.
\[|y_m-x_j|>c|m-j|n^{-1},\quad\forall~j\neq m.\]
Similar estimates are also valid for $|y_m-\tilde{x}_j|$. Indeed,
for $j\leq m-1$,
\[y_m-\tilde{x}_j>x_m-\tilde{x}_j=\frac{m-j}{2nu(\xb_m)},\]
for $j\geq m+2$,
 \[\tilde{x}_j -y_m>\frac{j-m}{2nu(\xb_m)}-(x_{m+1}-x_m)
 >\frac{j-m-1}{2nu(\xb_m)}-\|E\|_\infty>\frac{j-m}{6nu(\xb_m)},\]
and for $j=m+1$,
\[\tilde{x}_{m+1}-y_m=x_{m+1}-y_m-E_m>cn^{-1}-\|E\|_\infty>\frac{c}{2}n^{-1}\]
for all sufficiently large $n.$

Collecting all the estimates and using the convention \eqref{convention117}, we arrive at
\begin{align}
  I_{3,m}=&\, \sum_{j\in\Sm\backslash\{m\}}
  \left(\frac{-2\sum_{l=m}^{j-1}E_l}{(y_m-\tilde{x}_j)(y_m-x_j)}+O\left(\frac{
            n^{-2}|m-j|^2}{n^{-2}|m-j|^2}\right)\right)\nonumber\\
  =&\,4\pi nu(\xb_m) A_{2,m}+O( n^{1/2}), \label{I3mfinal117}
\end{align}
where we specify the expression $A_{2,m}$
\begin{equation}\label{A2}
  A_{2,m}=\frac{1}{2\pi nu(\xb_m)}\left(\sum_{j=j_-}^{m-1}\sum_{l=j}^{m-1}\frac{E_l}{(y_m-\tilde{x}_j)(y_m-x_j)}
  -\sum_{j=m+1}^{j_+-1}\sum_{l=m}^{j-1}\frac{E_l}{(y_m-\tilde{x}_j)(y_m-x_j)}\right).
\end{equation}
A direct estimate of this term leads to the bound
\begin{align}\label{aux51125} A_{2,m}=\sum_{1\leq|m-j|\lesssim n^{1/2}}
  O\left(n^{-1}\cdot\frac{|m-j|\|E\|_\infty}{n^{-2}|m-j|^2}\right)=O(n\log n\|E\|_\infty).\end{align}

To summarize our computations, from \eqref{IIm} and \eqref{Im}, we
have
\begin{equation}\label{single}
  4\pi nu(\xb_m)\cot(2\pi n u(\xb_m)(y_m-x_m)
  =-4\pi nHu(y_m) +I_{1,m}+I_{2,m}+I_{3,m}+II_{1,m}+II_{2,m}.
\end{equation}
Applying estimates \eqref{II1m}, \eqref{II2m}, \eqref{I1m711}, \eqref{I2m117a} and \eqref{I3mfinal117} we get
\begin{equation}\label{singleest}
  \cot(2\pi nu(\xb_m)(y_m-x_m))=- \frac{Hu(y_m)}{u(\xb_m)}+
 A_{1,m}+A_{2,m}+O(n^{-1/2}+n\|E\|_\infty),
\end{equation}
where $A_{1,m}$ and $A_{2,m}$ are given by \eqref{A1} and \eqref{A2} and are of order $O(n\log n\|E\|_\infty)$.

Without the error terms, the equation becomes \eqref{formalupdate}.

We can now return to the question addressed in Lemma \ref{lowerbound01} and derive a more precise estimate on $y_m-x_m$ and $x_{m+1}-y_m$
that we will need later.

\begin{lemma}\label{xmymlem117}
With the choice of the branch of $\arccot x$ with values in $(0,\pi),$ we have
\begin{equation}\label{ymxm117}
 y_m-x_m=\frac{1}{2\pi nu(\xb_m)}\arccot\left(- \frac{Hu(\xb_m)}{u(\xb_m)}\right)+
              O(n^{-3/2}+\log n\|E\|_\infty)
\end{equation}
and
\begin{equation}\label{lower}
    x_{m+1}-y_m=\frac{1}{2\pi nu(\xb_m)}\arccot\left(\frac{Hu(\xb_m)}{u(\xb_m)}\right)+
              O(n^{-3/2}+\log n\|E\|_\infty).
  \end{equation}
\end{lemma}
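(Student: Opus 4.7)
The plan is to start from the identity \eqref{singleest} derived in this section, namely
\[
\cot(2\pi n u(\bar x_m)(y_m-x_m)) = -\frac{Hu(y_m)}{u(\bar x_m)} + A_{1,m}+A_{2,m}+O(n^{-1/2}+n\|E\|_\infty),
\]
and simply invert the cotangent. Since $A_{1,m},A_{2,m}=O(n\log n\,\|E\|_\infty)$ by \eqref{aux21125} and \eqref{aux51125}, the total error on the right is $O(n^{-1/2}+n\log n\,\|E\|_\infty)$, which after dividing by $n$ (once we solve for $y_m-x_m$) yields the claimed $O(n^{-3/2}+\log n\,\|E\|_\infty)$.

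The first thing to check is that $\arccot$ (chosen with range $(0,\pi)$) may be applied safely, i.e.\ that $2\pi n u(\bar x_m)(y_m-x_m)$ stays in a closed sub-interval of $(0,\pi)$. This follows directly from Lemma~\ref{lowerbound01}, which gives both $y_m-x_m\geq cn^{-1}$ and $x_{m+1}-y_m\geq cn^{-1}$; combined with the trivial upper bound $x_{m+1}-x_m\leq 1/(2nu_{\min})+\|E\|_\infty$, we conclude that the argument lies in some $[\alpha,\pi-\alpha]$ with $\alpha=\alpha(u_0)>0$, and hence $\arccot$ is Lipschitz there with a constant depending only on $u_0$. Applying $\arccot$ to the displayed identity gives
\[
2\pi n u(\bar x_m)(y_m-x_m) = \arccot\!\left(-\frac{Hu(y_m)}{u(\bar x_m)}\right) + O(n^{-1/2}+n\log n\,\|E\|_\infty).
\]

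Next, replace $Hu(y_m)$ by $Hu(\bar x_m)$. Since $\bar x_m=(x_m+x_{m+1})/2$ and $y_m\in(x_m,x_{m+1})$, we have $|y_m-\bar x_m|\leq (x_{m+1}-x_m)/2=O(n^{-1})$, and $Hu$ is Lipschitz (one may use $\|\partial_x Hu\|_\infty=\|\Lambda u\|_\infty\lesssim 1$, which follows from the global regularity results of Section~\ref{globreg} applied to $u_0\in H^s$, $s>7/2$). Hence $Hu(y_m)=Hu(\bar x_m)+O(n^{-1})$, and Lipschitzness of $\arccot$ on our interval absorbs this into an $O(n^{-1})$ correction; dividing by $2\pi n u(\bar x_m)$ produces \eqref{ymxm117}.

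For \eqref{lower}, the cleanest path is to use $x_{m+1}-y_m=(x_{m+1}-x_m)-(y_m-x_m)$ together with the definition $x_{m+1}-x_m=\tfrac{1}{2nu(\bar x_m)}+E_m$ and the elementary identity $\arccot(t)+\arccot(-t)=\pi$ valid for our branch; the remainder $E_m$ is trivially $O(\|E\|_\infty)\subseteq O(\log n\,\|E\|_\infty)$. The only genuine obstacle throughout is the a priori control that keeps the argument of $\arccot$ bounded away from $0$ and $\pi$; this is exactly what Lemma~\ref{lowerbound01} provides, and without it one would lose the Lipschitz constant and the error estimate would degrade. Everything else is bookkeeping of orders of magnitude already established.
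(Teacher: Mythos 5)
Your proposal is correct and follows essentially the same route as the paper: invert \eqref{singleest} using the globally bounded derivative of $\arccot$ (absorbing the $A_{1,m}+A_{2,m}=O(n\log n\|E\|_\infty)$ and $O(n^{-1/2}+n\|E\|_\infty)$ errors), replace $Hu(y_m)$ by $Hu(\xb_m)$ at cost $O(n^{-1})$, and obtain \eqref{lower} from $x_{m+1}-x_m=\frac{1}{2nu(\xb_m)}+E_m$ together with $\arccot(\theta)+\arccot(-\theta)=\pi$. Your explicit check via Lemma~\ref{lowerbound01} that $2\pi n u(\xb_m)(y_m-x_m)$ stays in a compact subinterval of $(0,\pi)$ (so that the branch identity $\arccot(\cot z)=z$ is legitimate) is a detail the paper leaves implicit, but it is not a different argument.
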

\begin{proof}
It follows directly from \eqref{singleest} and estimates for $A_{1,m}$ and $A_{2,m}$ that
 \begin{align}\label{aux1211} y_m-x_m= \frac{1}{2\pi nu(\xb_m)}\arccot\left(- \frac{Hu(y_m)}{u(\xb_m)}+
              O(n^{-1/2}+n\log n\|E\|_\infty)\right). \end{align}
Since the derivative of $\arccot$ is globally bounded, 
the estimate \eqref{ymxm117} follows. Note that $Hu(y_m)$ can be replaced in \eqref{aux1211} by $Hu(\xb_m)$ creating a difference of order $O(n^{-1})$
that can be absorbed in the error.

The second estimate could be derived in a similar way by repeating the computations taking $x_{m+1}$
as the center of approximation. But it is much simpler to observe that
\[ x_{m+1}-x_m = \frac{1}{2n u(\bar x_m)} + E_m, \]
while $\arccot (\theta) + \arccot (-\theta) = \pi$ for all $\theta.$
\end{proof}

Note that Lemma \ref{xmymlem117} yields a sharp bound for the constant $c(u)$ in Lemma \ref{lowerbound01}:
\begin{align}\label{ymxm122} c(u) = \frac{1}{2\pi
    u(\xb_m)} {\rm min} \left(\arccot\left(\frac{Hu(\xb_m)}{u(\xb_m)}\right), \pi-\arccot\left(\frac{Hu(\xb_m)}{u(\xb_m)}\right) \right)
    +  O(n^{-3/2}+\log n\|E\|_\infty). \end{align}

To determine how the errors change over a single time step, we can subtract $y_m-x_m$ from $y_{m+1}-x_{m+1},$ obtaining
\begin{align} 
E_m^{t+\Delta t} - E_m^t =& -\frac{1}{2nu(\bar y_{m}, t+\Delta t)}+\frac{1}{2nu(\xb_m,t)}+ \frac{1}{2\pi nu(\xb_{m+1})}\arccot\left(- \frac{Hu(\xb_{m+1})}{u(\xb_{m+1})}\right) \\
& - \frac{1}{2\pi nu(\xb_m)}\arccot\left(-\frac{Hu(\xb_m)}{u(\xb_m)}\right)+ O(n^{-3/2}+\log n\|E\|_\infty).   \label{Eev1p117}
\end{align}
We could now try to use the evolution equation on $u$ to absorb the main terms - but the error estimates in \eqref{Eev1p117} are too crude to yield anything useful.
In fact, the difference of the $\arccot$ terms, which are the dominant ones in \eqref{ymxm117}, is already of the order $O(n^{-2})$ in \eqref{Eev1p117}, smaller than the error.
So, in a sense, from the one point estimates it is not even clear for sure that \eqref{maineq} is the right equation.
If we tried to propagate $O(n^{-3/2})$ errors for $\sim n$ steps, a unit time, the accumulation of error could be $\sim n^{-1/2}.$ This is much larger than the spacing between
roots and would definitely destroy the approximation argument using cotangent identity in the near field. To overcome these obstacles, we need to go deeper and consider two point estimates,
where additional cancellations will yield much more favorable bounds on errors.

\section{Estimates for Pairs of Roots: General Setup}\label{rigdertworootssetup}

Before going into details, let us recap from the introduction what form of the error propagation equation we are aiming at, as well as outline informally the general
plan of the argument.
We are going to show that
\begin{equation}\label{erreq117} E_m^{t+\Delta t} - E^t_m = \sum_{j \ne m} \kappa(j,m) (E_j^t - E_m^t) + {\rm errors}, \end{equation}
where $\kappa(j,m) \sim |m-j|^{-2}.$ Essentially, the evolution is a discretization of a nonlinear fractional heat equation
$\partial_t E^t = {\mathcal L}^t E^t + n \cdot {\rm errors}$ at a scale $\sim n^{-1},$ where ${\mathcal L}^t$ is a dissipative operator
that in the main order is similar to $-(-\Delta)^{1/2}.$ In order to follow the estimates below, it is useful to
understand how one can classify the error terms in \eqref{erreq117}. The most senior inhomogeneous errors in \eqref{erreq117} will have the order $O(n^{-5/2}),$
but in fact any forcing term containing no $E^t$ in \eqref{erreq117} would prevent us from proving convergence of the error to zero as time goes to infinity as stated in Theorem \ref{mainthm2}.
A crucial observation will be that all such error terms have factors involving derivatives of $u$ in front of them. By \eqref{higherder1117}
we know that all derivatives of $u$ decay exponentially in time, allowing for much better estimates.

The linear errors of the order $O(n^{-1}\|E^t\|_\infty)$ can be thought of as critical. These errors can lead to growth by a constant factor over $\sim n$ iterations, that is time $\sim 1.$
Any larger linear in $E^t$ errors are supercritical: a brutal absolute value estimate on them will lead to only exploding upper bound on the error.
The expressions $A_{1,m}$ and $A_{2,m}$ that we saw above in \eqref{A1}, \eqref{A2} are examples of supercritical terms (or, rather, they lead to supercritical terms of order $O(n^{-1}\log n \|E^t\|_\infty)$
in the final error propagation equation). We will have to use the detailed structure of the supercritical terms in order to be able to handle them; it will turn out that any such terms can be
absorbed into the dissipative sum in \eqref{erreq117}. But even critical terms could lead to exponential growth in time - all such errors, however, turn out to also have factors involving derivatives of $u$ that decay
exponentially in time.  Thus, given a threshold, in finite time that only depends on $u_0$ and this threshold, the coefficients in front of the
linear critical terms will become smaller than the threshold. This will allow us to use a sort of spectral gap estimate on dissipation to dominate these errors.
To deploy the variant of a spectral gap bound, we will need to control the mean, the zeroth mode, of $E^t.$ This will be done by using the definition of the error and conservation of $\int_{\S} u(x,t)\,dx.$

Among the nonlinear in $E$ errors, the most dangerous term will have order $O(n^{1/2}\|E^t\|_\infty^2).$ The criticality of nonlinear terms is determined by how large $\|E^t\|_\infty$ can become.
At the very least, even when the initial errors are small or zero, the inhomogeneous error terms will only allow an upper bound of $\|E^t\|_\infty \lesssim n^{-3/2}$ for $t \sim 1.$
At this level, the $O(n^{1/2}\|E\|^2_\infty)$ term is critical; but if linear critical terms allow to maintain $\|E^t\|_\infty \lesssim n^{-3/2}$ bound for arbitrary long finite time with worsening constant,
the nonlinear one would only allow a fixed finite time $\sim 1.$
However, we will be able to control the error term of $O(n^{1/2}\|E^t\|_\infty^2)$ type by absorbing it into dissipation.
There will also be subcritical errors, that will be easier to handle by making sure that $n$ is sufficiently large.

There will be many places in the argument that will generate inhomogeneous and critical errors, and to keep track of the decaying factors in front of them, we define
\begin{equation}\label{uxdelta}
\ux(t) = \max \Big\{\|\partial_x u(x,t)\|_{L^\infty}, \,\,\|\partial_x^2 u(x,t)\|_{L^\infty}, \,\,\|\partial_x^3 u(x,t)\|_{L^\infty}\Big\}.
\end{equation}
In the argument, we will usually omit the dependence of $\ux$ on $t$ to save space. Note that of course this $\ux$ has nothing to do with the $\delta$ from \eqref{modcondef}.


Now we begin with two point estimates.
Consider $x_m, x_{m+1}$ and $x_{m+2}$ at some time $t$.
Let $y_m\in(x_m, x_{m+1})$ and $y_{m+1}\in(x_{m+1},x_{m+2})$ be roots
after one more differentiation.
To estimate $y_{m+1}-y_m-x_{m+1}+x_m$, we start with \eqref{single}.

Recall that
\begin{equation}\label{single3}
  \cot(2\pi nu(\xb_m)n(y_m-x_m))=
  -\frac{Hu(y_m)}{u(\xb_m)}+\frac{1}{4\pi nu(\xb_m)} G_m,
\end{equation}
where we define
\begin{equation}\label{Gm118} G_m=I_{1,m}+I_{2,m}+I_{3,m}+II_{1,m}+II_{2,m},\end{equation}
with $I_{1,m},$ $I_{2,m},$ $I_{3,m},$ $II_{1,m}$ and $II_{2,m}$
given by \eqref{I1}, \eqref{I2}, \eqref{I3} and \eqref{IIm} respectively.
A completely analogous equality holds for $m+1;$ here we naturally choose
$S_{m+1} = \{ j_-+1, \dots, j_+ \}$ in the splitting of near field and far field terms.
All estimates that we derived for $I_{i,m}$ and $II_{i,m}$ extend to
$I_{i,m+1}$ and $II_{i,m+1}.$

Applying the mean value theorem,
we get
\begin{align*}&\cot(2\pi nu(\xb_{m+1})(y_{m+1}-x_{m+1}))-\cot(2\pi
  nu(\xb_m)(y_m-x_m))\\
  &=-\frac{1}{\sin^2z}\big(2\pi nu(\xb_{m+1})(y_{m+1}-x_{m+1})
  -2\pi nu(\xb_m)(y_m-x_m)\big),
\end{align*}
where
\begin{equation}\label{zint}
  z\in[2\pi nu(\xb_m)(y_m-x_m), 2\pi
nu(\xb_{m+1})(y_{m+1}-x_{m+1})].
\end{equation}
Therefore,
\begin{align}
  &y_{m+1}-y_m-x_{m+1}+x_m=-\frac{u(\xb_{m+1})-u(\xb_m)}{u(\xb_m)}(y_{m+1}-x_{m+1})\nonumber\\
  &\,\,+\frac{\sin^2z}{2\pi
    nu(\xb_m)}\left(\frac{Hu(y_{m+1})}{u(\xb_{m+1})}-\frac{Hu(y_m)}{u(\xb_m)}\right)
    -\frac{\sin^2z}{8\pi^2n^2u(\xb_m)^2}\big(G_{m+1}-G_m\big)\label{yyxxdiff1}
\end{align}
The first term can be replaced by
\begin{align}
  &-\frac{u(\xb_{m+1})-u(\xb_m)}{u(\xb_m)}(y_{m+1}-x_{m+1})
  =-\frac{\partial_x u(\xb_m)}{u(\xb_m)}(\xb_{m+1}-\xb_m)(y_{m+1}-x_{m+1})+O(\ux n^{-3})\nonumber\\
 &\quad=-\frac{\partial_x u(\xb_m)}{2nu^2(\xb_m)}(y_{m+1}-x_{m+1})+O(\ux n^{-3}, \ux n^{-1}\|E\|_\infty).  \label{aux1118}
\end{align}
Next, we observe that $\sin^2z=\frac{1}{1+\cot^2z}$.
Since $z$ lies in the interval appearing in \eqref{zint}, we focus on the two
endpoints. The formula \eqref{singleest} provides a useful expression for $\cot(2\pi nu(\xb_m)n(y_m-x_m)),$
while the analogous formula for $\cot(2\pi nu(\xb_{m+1})n(y_{m+1}-x_{m+1}))$ is
\begin{align}
  &\cot(2\pi nu(\xb_{m+1})n(y_{m+1}-x_{m+1}))\\
  &=- \frac{Hu(y_{m+1})}{u(\xb_{m+1})}+
    A_{1,m+1}+A_{2,m+1}+O(n^{-1/2}+n\|E\|_{\infty})\\
   & =- \frac{Hu(y_m)}{u(\xb_m)}+
 A_{1,m}+A_{2,m}+O(n^{-1/2}+n\|E\|_{\infty})\\
  &\,\quad-\left(\frac{Hu(y_{m+1})}{u(\xb_{m+1})}-\frac{Hu(y_m)}{u(\xb_m)}\right)
    +(A_{1,m+1}-A_{1,m})+(A_{2,m+1}-A_{2,m}).  \label{singleest1212}
\end{align}
Let us verify that the terms in the last line of the estimate above
are small.
For the first term, let us replace $Hu(y_m)$ and $Hu(y_{m+1})$ by
$Hu(\xb_m)$ and $Hu(\xb_{m+1})$ respectively. The difference is
\begin{align}
  &\frac{Hu(y_{m+1})-Hu(\xb_{m+1})}{u(\xb_{m+1})}-\frac{Hu(y_m)-Hu(\xb_m)}{u(\xb_m)}\nonumber\\
  &=\frac{1}{u(\xb_m)}\big(Hu(y_{m+1})-Hu(\xb_{m+1})-Hu(y_m)+Hu(\xb_m)\big)+O(\ux
    n^{-2}) \nonumber\\
  &=\frac{\pa_xHu(\xb_m)}{u(\xb_m)}\big(y_{m+1}-y_m
    -\xb_{m+1}+\xb_m\big)+O(\ux n^{-2}) \nonumber\\
  &=\frac{\pa_xHu(\xb_m)}{u(\xb_m)}\left(y_{m+1}-y_m
    -x_{m+1}+x_m+\frac{1}{4nu(\xb_m)}-\frac{1}{4nu(\xb_{m+1})}+\frac{E_m-E_{m+1}}{2}\right)+O(\ux
    n^{-2}) \nonumber\\
  &=O(\ux|y_{m+1}-y_m-x_{m+1}+x_m|)+O(\ux n^{-2}+\ux\|E\|_\infty).\label{Huxy}
\end{align}
On the other hand,
\begin{align}\label{aux31125}
\left| \frac{Hu(\xb_{m+1})}{u(\xb_{m+1})}-\frac{Hu(\xb_m)}{u(\xb_m)} \right| \leq \left\| \partial_x \left(\frac{Hu}{u}\right) \right\|_\infty |\bar x_{m+1} - \bar x_m| \leq
C(\|\partial_x u\|_\infty+\|\partial^2_x u\|_\infty)n^{-1}= O(\ux n^{-1}).
\end{align}
due to global regularity.

Next, for the term $A_{1,m+1}-A_{1,m}$, we start with a simplification
of $A_{1,m}$ by replacing $u(\xb_{j-1})$ and $u(\xb_j)$ in \eqref{A1} with
$u(\xb_m)$. The difference is bounded by $O(\ux n^{-1}|m-j|)$. We obtain after summation
\begin{equation}\label{A1simp}
  A_{1,m}=\frac{1}{4\pi}\sum_{j\in\Smc}\cot\frac{y_m-x_j}{2}(E_{j-1}+E_j)+O(\ux
  n\|E\|_\infty).
\end{equation}
Then, we can telescope the difference $A_{1,m+1}-A_{1,m}$ as follows
\begin{align*}
  &A_{1,m+1}-A_{1,m}\\&=
  \frac{1}{4\pi}\sum_{j\in\Smc}\left(\cot\frac{y_{m+1}-x_{j+1}}{2}(E_j+E_{j+1})
  -\cot\frac{y_m-x_j}{2}(E_{j-1}+E_j)\right) +O(\ux
  n\|E\|_\infty)\\
  &=\frac{1}{4\pi}\sum_{j\in\Smc}\left(\cot\frac{y_{m+1}-x_{j+1}}{2}+\cot\frac{y_{m+1}-x_{j}}{2}-\cot\frac{y_{m}-x_{j+1}}{2}-\cot\frac{y_m-x_j}{2}\right)E_j \\
  &\qquad+O(n^{1/2}\|E\|_\infty+ \ux
  n\|E\|_\infty)\\
  &\leq \sum_{|j-m|\gtrsim n^{1/2}}O\left(\frac{n^2}{|m-j|^2}\cdot
    n^{-1}\cdot\|E\|_\infty\right) +O(n^{1/2}\|E\|_\infty+\ux n\|E\|_\infty)\\
 & =O( \delta n\|E\|_\infty + n^{1/2}\|E\|_\infty).
\end{align*}
Here, the $O(n^{1/2}\|E\|_\infty)$ term in the second equality encodes
the boundary terms from the telescoped sum. In the third
step, we use the bound $|y_{m+1}-y_m| = O(n^{-1})$.

Now, for $A_{2,m+1}-A_{2,m}$, we start with a simplification  on
$A_{2,m}$ by replacing $y_m-\tilde{x}_j$ in \eqref{A2} by
$\frac{m-j}{2nu(\xb_m)}$. The difference is $y_m-x_m=O(n^{-1})$, and
it leads to an estimate
\begin{align}
  A_{2,m}=&\,\frac{1}{\pi}\left(\sum_{j=j_-}^{m-1}\sum_{l=j}^{m-1}\frac{E_l}{(m-j)(y_m-x_j)}
  +\sum_{j=m+1}^{j_+-1}\sum_{l=m}^{j-1}\frac{E_l}{(j-m)(y_m-x_j)}\right)\label{A2simp}\\
  &+n^{-1}\sum_{j\in\Sm\backslash\{m\}}O\left(n^{-1}\cdot
    |m-j|~\|E\|_\infty\cdot \frac{n^3}{|m-j|^3}\right)\nonumber\\
  =&\,\frac{1}{\pi}\left(\sum_{l=j_-}^{m-1}\sum_{j=j_-}^{l}\frac{E_l}{(m-j)(y_m-x_j)}
  +\sum_{l=m}^{j_+-2}\sum_{j=l+1}^{j_+-1}\frac{E_l}{(j-m)(y_m-x_j)}\right)+O(n\|E\|_\infty).
     \nonumber
\end{align}
We are ready to estimate the difference $A_{2,m+1}-A_{2,m}$. Let us
work on the first sum and telescope
\begin{align*}
  &\sum_{l=j_-+1}^{m}E_{l}\sum_{j=j_-}^{l-1}\frac{1}{(m-j)(y_{m+1}-x_{j+1})}
    -\sum_{l=j_-}^{m-1}E_l\sum_{j=j_-}^{l}\frac{1}{(m-j)(y_m-x_j)}\\
  &=\sum_{l=j_-+1}^{m-1}E_l\sum_{j=j_-}^{l-1}\frac{-(y_{m+1}-y_m-x_{j+1}+x_j)}{(m-j)(y_m-x_j)(y_{m+1}-x_{j+1})}+O(n\|E\|_\infty)\\
  &=\sum_{l=j_-+1}^{m-1}E_l\sum_{j=j_-}^{l-1}O\left(\frac{n^{-1}}{n^{-2}|m-j|^3}\right)+O(n\|E\|_\infty)=O(n\|E\|_\infty).
\end{align*}
The second sum can be treated similarly.
Note that the estimates above on $A_{1,m+1}-A_{1,m}$ and
$A_{2,m+1}-A_{2,m}$ are not optimal, but they, along with \eqref{Huxy}, \eqref{aux31125}, \eqref{singleest} and \eqref{singleest1212}, are sufficient to obtain
the following bound on $\cot z$:
\[\cot z=- \frac{Hu(\xb_m)}{u(\xb_{m})}+
    A_{1,m}+A_{2,m}+O(n^{-1/2}+n\|E\|_\infty).\]
Here we also replaced $y_m$ with $\xb_m$ in the argument of the Hilbert transform generating $O(n^{-1})$ error. Then, using \eqref{Esmall}, \eqref{aux21125} and \eqref{aux51125},
we have
\begin{align}
  \sin^2z=&\,\frac{1}{1+\cot^2z}\nonumber\\
  =&\,\frac{1}{1+\left(\frac{Hu(\xb_m)}{u(\xb_m)}\right)^2-2\frac{Hu(\xb_m)}{u(\xb_m)}(A_{1,m}+A_{2,m})+O(n^{-1/2}+n\|E\|_\infty+n^2 (\log n)^2 \|E\|_\infty^2)}\nonumber\\
  =&\,\frac{u(\xb_m)^2}{u(\xb_m)^2+Hu(\xb_m)^2}
     +\frac{2 u(\xb_m)^3Hu(\xb_m)}{(u(\xb_m)^2+H
     u(\xb_m)^2)^2}(A_{1,m}+A_{2,m}) \nonumber\\
     &\,+O(n^{-1/2}+n\|E\|_\infty+n^2 (\log n)^2  \|E\|_\infty^2).\label{sinz}
\end{align}

Now we derive from \eqref{yyxxdiff1}, \eqref{Huxy} and \eqref{sinz} that
\begin{align}
  y_{m+1}&\,-y_m-x_{m+1}+x_m=-\frac{u(\xb_{m+1})-u(\xb_m)}{u(\xb_m)}(y_{m+1}-x_{m+1})\nonumber\\
  &\, +\frac{u(\xb_m)}{2\pi
    n(u(\xb_m)^2+H
    u(\xb_m)^2)}\left(\frac{Hu(\xb_{m+1})}{u(\xb_{m+1})}-\frac{Hu(\xb_m)}{u(\xb_m)}\right) \nonumber\\
  &\,+\frac{u(\xb_m)^2Hu(\xb_m)}{\pi
    n (u(\xb_m)^2+H u(\xb_m)^2)^2}(A_{1,m}+A_{2,m})\left(\frac{Hu(\xb_{m+1})}{u(\xb_{m+1})}-\frac{Hu(\xb_m)}{u(\xb_m)}\right) \nonumber\\
  &\,+O(\ux
    n^{-1}|y_{m+1}-y_m-x_{m+1}+x_m|+\ux n^{-3}+\ux n^{-1}\|E\|_\infty) \nonumber\\
 &\,+O\Big((n^{-1/2}+n\|E\|_\infty+n^2 (\log n)^2 \|E\|_\infty)\cdot
   n^{-1}\cdot \ux n^{-1}\Big)
  +\frac{\sin^2z}{8\pi^2n^2u(\xb_m)^2}(G_m-G_{m+1}). \nonumber
\end{align}
Here in the penultimate term on the right hand side we used \eqref{aux31125}.
Using \eqref{aux1118}, we can further simplify
\begin{align}
 y_{m+1}&\,-y_m-x_{m+1}+x_m  = -\frac{\pa_xu(\xb_m)}{2nu(\xb_m)^2}(y_{m+1}-x_{m+1}) \\
 &\,   +\frac{1}{4\pi
     n^2(u(\xb_m)^2+Hu(\xb_m)^2)}\pa_x\left(\frac{Hu}{u}\right)(\xb_m) \nonumber\\
  &\,+\frac{u(\xb_m)Hu(\xb_m)}{2\pi
    n^2 (u(\xb_m)^2+H u(\xb_m)^2)^2}(A_{1,m}+A_{2,m})
    \pa_x\left(\frac{Hu}{u}\right)(\xb_m) \nonumber\\
  &\, + O(\ux n^{-1}|y_{m+1}-y_m-x_{m+1}+x_m|+\ux n^{-5/2}+\ux
    n^{-1}\|E\|_\infty+ \ux (\log n)^2 \|E\|_\infty^2) \\ &\,+\frac{\sin^2z}{8\pi^2n^2u(\xb_m)^2}(G_m-G_{m+1}).\label{yyxxdiff}
\end{align}

The equation \eqref{yyxxdiff} will be our starting point in derivation of the error propagation estimates.
The key task is to obtain sufficiently strong control over $G_m-G_{m+1}$ (recall that $G_m$ is defined in \eqref{Gm118}).
Before starting this task, let us record the following corollary of \eqref{yyxxdiff} that we will need later.
\begin{lemma}\label{lem:yyxxrough}
  We have
  \begin{equation}\label{yyxxrough}
    |y_{m+1}-y_m-x_{m+1}+x_m| = O(n^{-3/2}+\log n\|E\|_\infty+ (\log n)^2 \|E\|_\infty^2).
  \end{equation}
\end{lemma}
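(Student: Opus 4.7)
The strategy is to start directly from the identity \eqref{yyxxdiff} and carry out a term-by-term absolute-value estimate, absorbing the single self-referential term on the right into the left-hand side. Since we are only after the crude bound \eqref{yyxxrough}, rather than the sharper error-propagation identity that will be the subject of the next sections, no cancellations between the $y_m$ and $y_{m+1}$ contributions need to be exploited, and all $O(\cdot)$ constants can be taken to depend only on $u_0$ via the uniform bounds $\ux \le \|\partial_x u_0\|_\infty$-type constants and the global regularity estimates from Theorem \ref{mainthm1}.

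First I would handle the explicit macroscopic terms in \eqref{yyxxdiff}. The term $-\frac{\pa_x u(\xb_m)}{2nu(\xb_m)^2}(y_{m+1}-x_{m+1})$ is $O(\ux\, n^{-2})$ thanks to the basic bound $y_{m+1}-x_{m+1}=O(n^{-1})$ from Lemma \ref{lowerbound01}, and the term $\frac{1}{4\pi n^2(u^2+Hu^2)}\pa_x(Hu/u)(\xb_m)$ is $O(n^{-2})$ from the uniform lower bound on $u^2+Hu^2$ and global regularity. The cross term involving $A_{1,m}+A_{2,m}$ inherits the bounds $A_{1,m},A_{2,m}=O(n\log n\,\|E\|_\infty)$ from \eqref{aux21125} and \eqref{aux51125}, is multiplied by $\pa_x(Hu/u)(\xb_m)=O(1)$ and by the prefactor $n^{-2}$, and thus contributes $O(n^{-1}\log n\,\|E\|_\infty)$, comfortably inside $O(\log n\,\|E\|_\infty)$. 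The explicit error line of \eqref{yyxxdiff} already contributes exactly $O(n^{-5/2}+n^{-1}\|E\|_\infty+(\log n)^2\|E\|_\infty^2)$, which fits the target.

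The main remaining piece is the $G_m-G_{m+1}$ contribution with prefactor $\frac{\sin^2 z}{8\pi^2 n^2 u(\xb_m)^2}=O(n^{-2})$. Here I would simply use the triangle inequality $|G_m-G_{m+1}|\le |G_m|+|G_{m+1}|$ and insert the single-root estimates already derived for each piece: \eqref{II1m} gives $|II_{1,m}|=O(n^{1/2})$; \eqref{II2m} together with $|A_{1,m}|=O(n\log n\|E\|_\infty)$ gives $|II_{2,m}|=O(n^{1/2}+n^2\log n\,\|E\|_\infty)$; \eqref{I1m711} gives $|I_{1,m}|=O(1)$; \eqref{I2m117a} gives $|I_{2,m}|=O(n^{1/2}+n^2\|E\|_\infty)$; and \eqref{I3mfinal117} combined with $|A_{2,m}|=O(n\log n\,\|E\|_\infty)$ gives $|I_{3,m}|=O(n^{1/2}+n^2\log n\,\|E\|_\infty)$. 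Identical bounds hold for the $m+1$ analogues, by the remark after \eqref{Gm118}. Summing, $|G_m|+|G_{m+1}|=O(n^{1/2}+n^2\log n\,\|E\|_\infty)$, so the $G$-contribution is $O(n^{-3/2}+\log n\,\|E\|_\infty)$, which is exactly the first two pieces of the target bound.

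Finally, the lone self-referential term $O(\ux n^{-1}|y_{m+1}-y_m-x_{m+1}+x_m|)$ is moved to the left-hand side: for $n\ge n_0(u_0)$ we have $\ux n^{-1}\le 1/2$, so it is absorbed into the left and only doubles the implicit constants. Collecting the contributions yields \eqref{yyxxrough}. The only mildly delicate point is ensuring that $n$ is taken large enough so both the smallness assumption \eqref{Esmall} is in force (needed to justify all the single-root estimates invoked above) and the absorption step is legal; this depends only on $u_0$ and $\M_0$, consistent with the thresholds declared in Theorem \ref{mainthm2}.
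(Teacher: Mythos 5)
Your argument is correct and is essentially the paper's own proof: both bound $G_m$ and $G_{m+1}$ individually via the single-root estimates (yielding $O(n^{1/2}+n^2\log n\,\|E\|_\infty)$ each, with no cancellation needed), estimate the remaining explicit terms of \eqref{yyxxdiff} crudely, and absorb the $O(\ux n^{-1}|y_{m+1}-y_m-x_{m+1}+x_m|)$ term into the left-hand side for $n$ large. The only cosmetic difference is that the paper packages the $G_m$ bound through \eqref{single}--\eqref{singleest} rather than re-summing the five pieces term by term.
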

\begin{proof}
  Let us recall that $A_{1,m}$ and $A_{2,m}$ have order $O(n\log
  n\|E\|_\infty)$, and note that it follows from  \eqref{Gm118}, \eqref{single} and \eqref{singleest} that
  \[G_m=4\pi
    nu(\xb_m)(A_{1,m}+A_{2,m})+O(n^{1/2}+n^2\|E\|_\infty)=O(n^{1/2}+n^2\log
    n\|E\|_\infty). \]
  Similarly, $G_{m+1}=O(n^{1/2}+n^2\log n\|E\|_\infty)$.
  Inserting these estimates into \eqref{yyxxdiff}, we obtain
  \begin{align*}
   &y_{m+1}-y_m-x_{m+1}+x_m=O(\ux n^{-2})+O(\ux
            n^{-1}\log n\|E\|_\infty)\\
   &\qquad+O(\ux n^{-1}|y_{m+1}-y_m-x_{m+1}+x_m|+\ux n^{-5/2}+\ux
     n^{-1}\|E\|_\infty+\ux (\log n)^2 \|E\|_\infty^2)\\
   &\qquad+O(n^{-3/2}+\log n\|E\|_\infty).
  \end{align*}
  Move the $O(\ux n^{-1}|y_{m+1}-y_m-x_{m+1}+x_m)|$ term to the left
  hand side and simplify the equation. This yields the desired bound
  \begin{align*}
    y_{m+1}-y_m-x_{m+1}+x_m=&\,\left(1+O(\ux n^{-1})\right)
                              \cdot O\left(n^{-3/2}+\log n\|E\|_\infty+(\log n)^2 \|E\|_\infty^2\right)\\
    =&\,O\left(n^{-3/2}+\log n\|E\|_\infty+ (\log n)^2 \|E\|_\infty^2\right).
  \end{align*}
\end{proof}

The estimate \eqref{yyxxrough} is suboptimal, and cannot be used to establish sufficient control
over propagation of error. We will only use it as an ingredient for deriving more precise estimates.
 The improvements are possible because in the proof of Lemma
\ref{lem:yyxxrough}, we did not make use of cancellations in the
difference $G_m-G_{m+1}$.
In the next two sections, we focus on obtaining improved error estimates of the
major term $G_m-G_{m+1}$.


\section{Estimates for pairs of roots: near field errors}\label{nearfieldtworoots}

Let us first estimate $I_{1,m}-I_{1,m+1}$; recall \eqref{I1} which defined
\[ I_{1,m}= -\sum_{j\in\Sm}\left(\cot\frac{y_m-x_j}{2}-\frac{2}{y_m-x_j}\right). \]
\begin{lemma}\label{lem:I1diff}
We have
\begin{equation}\label{I1diff}
  I_{1,m}-I_{1,m+1}=O\big(n^{1/2}|y_{m+1}-y_m-x_{m+1}+x_m|+\ux n^{-1}+n^{1/2}\|E\|_\infty \big).
\end{equation}
\end{lemma}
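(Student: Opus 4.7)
The plan is to exploit the fact that, by our choice of indexing for the near-field cutoffs, $\Sm=\{j_-,\dots,j_+-1\}$ and $S_{m+1}=\{j_-+1,\dots,j_+\}$, so a simple shift of the summation index converts the difference $I_{1,m}-I_{1,m+1}$ into a single sum of differences evaluated at \emph{paired} arguments that both lie near zero. This is crucial because the function
\[
f(z)=\cot\tfrac{z}{2}-\tfrac{2}{z}
\]
appearing in \eqref{I1} is in fact smooth at $z=0$ (the singular parts cancel), with $f'(0)=-\tfrac{1}{6}$ and $|f'(z)|$ uniformly bounded on any compact set avoiding the other poles of $\cot$.

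First I will substitute $k=j-1$ in the $S_{m+1}$ sum, obtaining
\[
I_{1,m}-I_{1,m+1}=\sum_{j=j_-}^{j_+-1}\bigl[f(y_{m+1}-x_{j+1})-f(y_m-x_j)\bigr],
\]
with \emph{no} boundary terms. Then a mean value theorem application gives, for each $j\in \Sm$,
\[
f(y_{m+1}-x_{j+1})-f(y_m-x_j)=f'(\xi_j)\,\Delta_j,\qquad \Delta_j:=(y_{m+1}-x_{j+1})-(y_m-x_j),
\]
where $\xi_j$ lies between the two arguments. Since $j\in \Sm$ gives $|y_m-x_j|\lesssim n^{-1/2}$, and since $|\Delta_j|$ is easily seen to be much smaller than $n^{-1/2}$ for large $n$, both arguments (hence $\xi_j$) are close to $0$ and $|f'(\xi_j)|=O(1)$.

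Next I rewrite $\Delta_j$ in a form that exposes the three contributions to the final bound. Setting $a:=y_{m+1}-y_m-x_{m+1}+x_m$ and using \eqref{Ej},
\[
\Delta_j=a+(x_{m+1}-x_m)-(x_{j+1}-x_j)=a+\frac{1}{2nu(\xb_m)}-\frac{1}{2nu(\xb_j)}+E_m-E_j.
\]
The difference of reciprocals is controlled by $C\,\ux\,|m-j|\,n^{-2}$ via the mean value theorem and the uniform lower bound on $u$. Summing over $\Sm$, which has cardinality $|\Sm|=O(n^{1/2})$, and using $\sum_{j\in \Sm}|m-j|=O(n)$, I obtain
\[
|I_{1,m}-I_{1,m+1}|\leq C\sum_{j\in \Sm}|\Delta_j|\lesssim n^{1/2}|a|+\ux\, n^{-2}\cdot n+n^{1/2}\|E\|_\infty,
\]
which is exactly \eqref{I1diff}.

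I expect the only subtle points to be: (i) checking that the re-indexing identity for $\Sm$ and $S_{m+1}$ is valid exactly as set up in Section~\ref{rigdertworootssetup}, so that no boundary terms are lost; (ii) verifying that $\xi_j$ stays inside the region where $|f'|$ is bounded — this follows from the definition of $\Sm$ together with the crude bound on $|a|$ obtained from Lemma~\ref{lem:yyxxrough} (in particular, $|\Delta_j|\ll n^{-1/2}$ under the smallness assumption \eqref{Esmall} for $n$ large). No truly hard analysis is needed here: the whole point of the lemma is that the index shift, combined with the analyticity of $f$ at $0$, removes the $1/|y_m-x_j|$ singularity that would otherwise make a termwise bound diverge.
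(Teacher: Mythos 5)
Your argument is correct, and it rests on the same essential cancellations as the paper's proof: the exact index shift between $\Sm=\{j_-,\dots,j_+-1\}$ and $S_{m+1}=\{j_-+1,\dots,j_+\}$ (which is indeed the convention adopted in the paper, so no boundary terms arise), and the decomposition
$\Delta_j=(y_{m+1}-y_m-x_{m+1}+x_m)+\frac{1}{2nu(\xb_m)}-\frac{1}{2nu(\xb_j)}+E_m-E_j$,
which is precisely the middle step of the paper's estimate. The only genuine difference is how the regular part of the cotangent is handled: the paper expands $\cot\frac{z}{2}$ in its Laurent series, bounds the difference of the sums of each odd power $p=2k-1$ separately, and then resums over $k$ using the growth of the Bernoulli numbers (which requires keeping the constants uniform in $k$); you instead apply the mean value theorem once to the regularized function $f(z)=\cot\frac{z}{2}-\frac{2}{z}$, using that $f$ is analytic on $(-2\pi,2\pi)$ with $f'$ bounded near $0$ and that both arguments are $O(n^{-1/2})$ by the definition of the near fields. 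Your route is a bit more economical, avoiding the series bookkeeping, while the paper's term-by-term expansion is the template it reuses later (e.g.\ in the far-field Taylor expansions of Section~\ref{farfieldtworoots}), which is presumably why it is phrased that way there. One small remark: you do not actually need Lemma~\ref{lem:yyxxrough} (or $|\Delta_j|\ll n^{-1/2}$) to keep $\xi_j$ in the region where $f'$ is bounded — membership of $x_j$ and $x_{j+1}$ in the respective near fields already puts both arguments, hence $\xi_j$, at distance $O(n^{-1/2})$ from the origin — but invoking it causes no circularity since that lemma is proved independently of this one.
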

\begin{proof}
  We start with a Laurent series expansion for $\cot$ near zero: 
\begin{align}\label{cotexp1110} \cot\frac{y_m-x_j}{2}= \sum_{k=0}^\infty \frac{(-1)^k 2^{2k} B_{2k}}{(2k)!} \left(\frac{y_m-x_j}{2}\right)^{2k-1},
\end{align}
where $B_{2k}$ are the Bernoulli numbers, \[ B_{2k} \sim (-1)^{k-1} 4 \sqrt{\pi k} \left( \frac{k}{\pi e} \right)^{2k} \] for large $k$.
Observe that for any integer $p>0,$
\begin{align} \left| \sum_{j \in \Sm} \left( (y_m-x_j)^p - (y_{m+1}-x_{j+1})^p \right) \right| \leq \sum_{j \in \Sm} |y_m-x_j-y_{m+1}+x_{j+1}|  p C^{p-1} \left( \frac{|j-m|^{p-1}}{n^{p-1}} \right)  \\
=p C^{p-1}\sum_{j \in \Sm} \left| (y_m -x_m -y_{m+1}+x_{m+1}) -\frac{1}{2n u(\xb_m)}-E_m + \frac{1}{2nu(\xb_j)}+E_j \right| \left( \frac{|j-m|^{p-1}}{n^{p-1}} \right) \\ \leq
 C^{p-1} \left( n^{-\frac{p}{2} +1}|y_{m+1}-y_m-x_{m+1}+x_m|+\ux n^{-\frac{p+1}{2}} +n^{-\frac{p}{2} +1}\|E\|_\infty \right),
\end{align}
where in the second step the constant $C$ comes from Lemma \ref{lowerbound} and may only depend on $u_0.$
Therefore,
\begin{align}
\left| I_{1,m}-I_{1,m+1} \right| = \left| \sum_{j \in \Sm} \sum_{k=1}^\infty \frac{(-1)^k 2 B_{2k}}{(2k)!} \left((y_m-x_j)^{2k-1}-(y_{m+1}-x_{j+1})^{2k-1}\right) \right| \\
\leq 2\sum_{k=1}^\infty \frac{ C^{2k-2} |B_{2k}|}{(2k)!} \left( n^{-\frac{2k-3}{2}}|y_{m+1}-y_m-x_{m+1}+x_m|+\ux n^{-k} +n^{-\frac{2k-3}{2}}\|E\|_\infty \right) \\
\leq C_1 \left( n^{1/2} |y_{m+1}-y_m-x_{m+1}+x_m| + \ux n^{-1} + n^{1/2} \|E\|_\infty \right)
\end{align}
for all sufficiently large $n \geq n_0(u_0)$, and with a constant $C_1$ that may only depend on $u_0.$
\end{proof}
Next, for $I_{2,m}-I_{2,m+1}$, we denote $\{\tilde{x}_j\}$ and
$\{\hat{x}_{j}\}$ equally distributed points centered at $x_m$ and
$x_{m+1}$, respectively. Recall that $I_{2,m}$ is defined by \eqref{I2}, and that
\[\tilde{x}_j=x_m+\frac{j-m}{2n u(\xb_m)},\quad
  \hat{x}_j=x_{m+1}+\frac{j-(m+1)}{2n u(\xb_{m+1})}.\]
Then, we can express
\begin{align}
  I_{2,m}-I_{2,m+1}=&\,\sum_{j\in \mathbb{Z}\backslash\Sm}\frac{2}{y_m-\tilde{x}_j}
  -\sum_{j\in\mathbb{Z}\backslash\Smp}\frac{2}{y_{m+1}-\hat{x}_j}\nonumber\\
  =&\,\sum_{j\in \mathbb{Z}\backslash\Sm}
     \left(\frac{2}{y_m-\tilde{x}_j}-\frac{2}{y_{m+1}-\hat{x}_{j+1}}\right)\nonumber\\
  =&\,\sum_{j\in \mathbb{Z}\backslash\Sm}
     \frac{2(y_{m+1}-y_m-x_{m+1}+x_m)+\frac{j-m}{n}\left(\frac{1}{u(\xb_m)}-\frac{1}{u(\xb_{m+1})}\right)}{(y_m-\tilde{x}_j)(y_{m+1}-\hat{x}_{j+1})}\label{I2diffsum}
\end{align}

\begin{lemma}\label{lem:I2diff}
  The following estimate holds:
  \begin{equation}\label{I2diff}
  I_{2,m}-I_{2,m+1}=O\big(n^{3/2}|y_{m+1}-y_m-x_{m+1}+x_m|+\ux
  n^{-1/2} +\ux n \|E\|_{\infty}\big).
\end{equation}
\end{lemma}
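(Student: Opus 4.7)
\textbf{Plan for proof of Lemma \ref{lem:I2diff}.} The starting point is the representation \eqref{I2diffsum}. I would split the summand into two pieces based on the numerator: one piece with the constant factor $2(y_{m+1}-y_m-x_{m+1}+x_m)$, call it $S_1$, and a second piece $S_2$ where the numerator is $\frac{j-m}{n}\left(\frac{1}{u(\xb_m)}-\frac{1}{u(\xb_{m+1})}\right)$. Using the derivative bound $\left|\frac{1}{u(\xb_m)}-\frac{1}{u(\xb_{m+1})}\right|=O(\ux n^{-1})$, and the lower bounds $|y_m-\tilde x_j|\gtrsim|j-m|/n$ and $|y_{m+1}-\hat x_{j+1}|\gtrsim|j-m|/n$ valid for $j\notin\Sm$ (so $|j-m|\gtrsim n^{1/2}$), a direct absolute-value estimate on $S_1$ yields
\[
|S_1|\lesssim|y_{m+1}-y_m-x_{m+1}+x_m|\sum_{|k|\gtrsim n^{1/2}}\frac{n^2}{k^2}=O\!\left(n^{3/2}|y_{m+1}-y_m-x_{m+1}+x_m|\right),
\]
which is the first term in the desired bound.

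For $S_2$, a brute force estimate only yields the divergent $\ux\log n$ contribution, so cancellation between paired indices is essential. Writing $k=j-m$, $\alpha=y_m-x_m$, $\beta=y_{m+1}-x_{m+1}$, $a=\frac{1}{2nu(\xb_m)}$, $b=\frac{1}{2nu(\xb_{m+1})}$, the $k$ and $-k$ terms combine into
\[
\frac{k}{(\alpha-ka)(\beta-kb)}+\frac{-k}{(\alpha+ka)(\beta+kb)}=\frac{2k^2(\alpha b+\beta a)}{(\alpha^2-k^2a^2)(\beta^2-k^2b^2)}=O\!\left(\frac{n^2}{k^2}\right),
\]
exploiting that the leading $O(k)$ terms cancel. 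Multiplying by the prefactor of order $\ux/n^2$ (which absorbs both the $1/n$ in the numerator of \eqref{I2diffsum} and the $O(\ux/n)$ from the difference of $1/u$) and summing over the paired range $|k|\gtrsim n^{1/2}$ gives a contribution $O(\ux n^{-1/2})$.

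The remaining obstacle is the mismatched boundary: indices near $|k|\sim n^{1/2}$ which lie in $\mathbb{Z}\setminus\Sm$ on one side but not the other and hence cannot be paired. By \eqref{mismatch}, the number of such unpaired indices is $O(1+n^{3/2}\|E\|_\infty)$. Each unpaired term contributes $\left|\frac{k/n\cdot O(\ux/n)}{(\alpha-ka)(\beta-kb)}\right|\lesssim\ux n^{-1/2}$ for $|k|\sim n^{1/2}$, and the total unpaired contribution is therefore bounded by $O(\ux n^{-1/2}+\ux n\|E\|_\infty)$. Combining the three pieces yields exactly \eqref{I2diff}. The only genuinely subtle step is the pairing identity for $S_2$; the rest is careful but routine bookkeeping, with the mismatch count borrowed from the earlier calculation leading to \eqref{mismatch}.
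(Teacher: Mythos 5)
Your proposal is correct and follows essentially the same route as the paper: the same split of \eqref{I2diffsum} into the two numerator pieces, a direct absolute-value bound for the first piece, the identical $k\leftrightarrow -k$ pairing cancellation (your closed-form identity is exactly the paper's computation giving $O(n^2k^{-2})$), and the same $O(1+n^{3/2}\|E\|_\infty)$ mismatch count from \eqref{mismatch} for the boundary terms. No gaps to report.
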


\begin{proof}
  We estimate the two parts in \eqref{I2diffsum} one by one.
  The first part can be estimated by
  \begin{align*}
    \sum_{j\in \mathbb{Z}\backslash\Sm}
    \frac{2(y_{m+1}-y_m-x_{m+1}+x_m)}{(y_m-\tilde{x}_j)(y_{m+1}-\tilde{x}_{j+1})}=&\,
    (y_{m+1}-y_m-x_{m+1}+x_m)\sum_{j\in
      \mathbb{Z}\backslash\Sm}O(n^2|m-j|^{-2})\\=&\,O\left(n^{3/2}|y_{m+1}-y_m-x_{m+1}+x_m|\right).
      \end{align*}
For the second part in \eqref{I2diffsum}, we match the $j=m\pm k$
terms over the summation range, similarly to \eqref{I2match}:
\begin{align*}
  &\frac{k}{(y_m-\tilde{x}_{m+k})(y_{m+1}-\hat{x}_{m+k+1})}+
  \frac{-k}{(y_m-\tilde{x}_{m-k})(y_{m+1}-\hat{x}_{m-k+1})}\\
  &=k\cdot\frac{2(y_m-x_m)\frac{k}{2nu(\xb_{m+1})}+2(y_{m+1}-x_{m+1})\frac{k}{2nu(\xb_m)}}{(y_m-\tilde{x}_{m+k})(y_{m+1}-\hat{x}_{m+k+1}) (y_m-\tilde{x}_{m-k})(y_{m+1}-\hat{x}_{m-k+1})}\\
  &=O(k^2\cdot n^{-2}\cdot n^4k^{-4})=O(n^2k^{-2}).
\end{align*}
Sum over $k$ and get
\[\frac{1}{n}\left(\frac{1}{u(\xb_m)}-\frac{1}{u(\xb_{m+1})}\right)\sum_{k\gtrsim
  n^{1/2}}O(n^2k^{-2})=O(\ux n^{-2})\cdot O(n^{3/2})=O(\ux n^{-1/2}).\]
As before (see \eqref{mismatch}), there could be at most $O(1+n^{3/2}\|E\|_\infty)$ mismatched terms, leading to the error
\[O(1+n^{3/2}\|E\|_\infty))\cdot O(\ux n^{-2})\cdot O\left(n^{1/2}\cdot(n^{1/2})^2\right)=O(\ux n^{-1/2}+\ux n\|E\|_\infty)).\]

Collecting all the estimates, we conclude the proof of \eqref{I2diff}.
\end{proof}
Finally, let us estimate $I_{3,m}-I_{3,m+1}$. Recall that \eqref{I3} defined
\[  I_{3,m}=\sum_{j\in\Sm}\left(\frac{2}{y_m-\tilde{x}_j}-\frac{2}{y_m-x_j}\right). \]
\begin{lemma}\label{lem:I3diff}
  Let $I_{3,m}$ be as defined in \eqref{I3}. Then,
  \begin{align}
  I_{3,m}-I_{3,m+1}=&\,D_{1,m}+B_{1,m}+B_{2,m}
  +O\big((n^3\|E\|_\infty+\ux
  n\log n) |y_{m+1}-y_m-x_{m+1}+x_m|\big)\nonumber\\
  &\,+O(\ux n^{-1/2}+\ux n\|E\|_\infty+\ux n^2\log n\|E\|_\infty^2), \label{I3diff}
\end{align}
where $D_{1,m}$, $B_{1,m}$, $B_{2,m}$ are defined below in \eqref{D1},
\eqref{B1} and \eqref{B2} respectively.
\end{lemma}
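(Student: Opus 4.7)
The starting point is to re-index $j\mapsto j+1$ in $I_{3,m+1}=\sum_{j'\in\Smp\backslash\{m+1\}}\frac{-2(x_{j'}-\hat x_{j'})}{(y_{m+1}-\hat x_{j'})(y_{m+1}-x_{j'})}$, so that both $I_{3,m}$ and $I_{3,m+1}$ become sums over the common index set $\Sm\backslash\{m\}$, modulo at most two boundary indices where $\Sm$ and $\Smp$ differ. By \eqref{mismatch} together with Lemmas \ref{lowerbound} and \ref{lowerbound01}, those boundary terms are of the form $O(\ux n^{-1/2}+n^{1/2}\|E\|_\infty)$ and fit inside the claimed bound. For each common index $j$, write the aligned summand via
\[
\frac{A_j}{a_j}-\frac{B_j}{b_j}=\frac{A_j-B_j}{a_j}+\frac{B_j\,(b_j-a_j)}{a_j b_j},
\]
with $A_j=-2(x_j-\tilde x_j)$, $B_j=-2(x_{j+1}-\hat x_{j+1})$, $a_j=(y_m-\tilde x_j)(y_m-x_j)$, $b_j=(y_{m+1}-\hat x_{j+1})(y_{m+1}-x_{j+1})$.

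For the first piece, using $x_{j+1}-x_j=\tfrac{1}{2nu(\xb_j)}+E_j$ and $x_{m+1}-x_m=\tfrac{1}{2nu(\xb_m)}+E_m$ together with the definitions of $\tilde x,\hat x$, direct computation yields
\[
A_j-B_j=2(E_j-E_m)+O\!\left(\ux|j-m|/n^2\right).
\]
Since $|a_j|\sim|j-m|^2/n^2$ on $\Sm\backslash\{m\}$ by Lemmas \ref{lowerbound}, \ref{lowerbound01}, summing and using $\sum_{1\leq|j-m|\lesssim n^{1/2}}|j-m|^{-1}=O(\log n)$ produces the discrete fractional-Laplacian-type term
\[
D_{1,m}:=\sum_{j\in \Sm\backslash\{m\}}\frac{2(E_j-E_m)}{(y_m-\tilde x_j)(y_m-x_j)}
\]
together with a remainder of size $O(\ux\log n)$, well inside the budget of \eqref{I3diff}.

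For the second piece, set $\eta:=y_{m+1}-y_m-x_{m+1}+x_m$ and telescope the shifts of each factor of $b_j$ about the corresponding factor of $a_j$:
\[
y_{m+1}-\hat x_{j+1}=(y_m-\tilde x_j)+\eta+O\!\left(\ux|j-m|/n^2\right),\quad y_{m+1}-x_{j+1}=(y_m-x_j)+(E_m-E_j)+\eta+O\!\left(\ux|j-m|/n^2\right).
\]
Substituting, $b_j-a_j$ decomposes into a leading part
\[
(y_m-\tilde x_j)(E_m-E_j)+\big((y_m-\tilde x_j)+(y_m-x_j)\big)\eta
\]
plus smaller corrections. Combined with $B_j=-2\sum_{l=m+1}^{j}E_l+O(\ux(j-m)^2/n^2)$ and divided by $a_jb_j\sim(j-m)^4/n^4$, the two leading contributions define the bad terms $B_{1,m}$ (the $\eta$-driven part reflecting the joint translation of the root pair) and $B_{2,m}$ (the quadratic-in-$E$ part arising from $(E_m-E_j)\cdot\sum_l E_l$, which is too large to discard even after using $|y_m-\tilde x_j|\sim|j-m|/n$). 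The residual cross terms are bounded summand by summand using the kernel decay $|j-m|^{-k}$ and the elementary bounds $\sum_{k=1}^{n^{1/2}}k^{-p}=O(1)$ for $p\geq 2$ and $O(\log n)$ for $p=1$; these produce exactly the compound error on the right of \eqref{I3diff}, involving $(n^3\|E\|_\infty+\ux n\log n)|\eta|$, $\ux n^{-1/2}$, $\ux n\|E\|_\infty$, and $\ux n^2\log n\|E\|_\infty^2$.

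The principal obstacle is the correct identification of $B_{1,m}$ and $B_{2,m}$ as explicit main terms rather than errors. A naive absolute-value estimate on the second piece generates stray contributions of size $n^3\|E\|_\infty^2$ or $\ux n\log n\|E\|_\infty$, which exceed the budget; only after extracting these two specific partial sums (to be tracked separately and ultimately absorbed by the diffusion in the propagation-of-error equation \eqref{rougherrevol1122}) do the remaining contributions fit \eqref{I3diff}. Once the splitting is fixed the rest is systematic: Taylor expansion of $u$ and $Hu$ about $\xb_m$, repeated use of Lemmas \ref{lowerbound}, \ref{lowerbound01}, and the elementary geometric-type sums mentioned above.
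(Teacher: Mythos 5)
There is a genuine gap in your treatment of the ``first piece'' $\sum_j (A_j-B_j)/a_j$. You only use the crude estimate $A_j-B_j=2(E_j-E_m)+O(\ux|j-m|/n^2)$; dividing by $a_j\sim |j-m|^2/n^2$ and summing over $1\le|j-m|\lesssim n^{1/2}$ this leaves a remainder of size $O(\ux\log n)$, which you assert is ``well inside the budget of \eqref{I3diff}''. It is not: the $E$-free part of the allowed error in \eqref{I3diff} is $O(\ux n^{-1/2})$, and under \eqref{Esmall} neither $\ux n\|E\|_\infty$ nor $\ux n^2\log n\|E\|_\infty^2$ dominates $\ux\log n$. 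This is not cosmetic: after multiplication by the $O(n^{-2})$ prefactor in \eqref{yyxxdiff}, an error $\ux\log n$ becomes $\ux n^{-2}\log n$ per step, far above the $O(\ux n^{-5/2})$ admissible in \eqref{Emt}, and over $\sim n$ steps (with $\int \ux\,dt=O(1)$) it accumulates to $\sim n^{-1}\log n$, destroying the $n^{-3/2}$ part of Theorem~\ref{mainthm2}. The missing idea is precisely the cancellation hidden in the expression you discarded: using \eqref{xtildexj} and its analogue centered at $x_{m+1}$, one has exactly $A_j-B_j=2(E_j-E_m)-\frac1n\bigl[\frac{1}{u(\xb_m)}-\frac{1}{u(\xb_j)}-\frac{j-m}{u(\xb_m)}+\frac{j-m}{u(\xb_{m+1})}\bigr]$, and the bracket equals $-\frac{u'(\xb_m)}{u(\xb_m)^2}\bigl[\xb_j-\xb_m-(j-m)(\xb_{m+1}-\xb_m)\bigr]+O(\ux n^{-2}|m-j|^2)$; since the midpoints are nearly equally spaced, $\xb_j-\xb_m-(j-m)(\xb_{m+1}-\xb_m)=O(\ux n^{-2}|m-j|^2)+\sum_{l}\frac{(E_l-E_m)+(E_{l+1}-E_{m+1})}{2}$. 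The smooth part then contributes only $O(\ux n^{-1})$ per summand, hence $O(\ux n^{-1/2})$ total, while the $E$-part is exactly what must be kept as the supercritical term $B_{1,m}$ of \eqref{B1}, with its $\pa_x u$ prefactor and the structure $\sum_j b_1(j,m)(E_j-E_m)$, $|b_1|\lesssim n^{-1/2}d_1$, needed later for absorption into the dissipation.

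A related structural problem: you identify $B_{1,m}$ with the ``$\eta$-driven part'' and $B_{2,m}$ with the quadratic-in-$E$ part of the second piece. Neither has the form $\sum_j b_i(j,m)(E_j-E_m)$ with coefficients dominated by $n^{-1/2}d_1(j,m)$ that the subsequent absorption argument requires (the $\eta$-part is not of this form at all, since $B_j$ is a sum of $E_l$'s rather than of differences); in the paper the $\eta$-contribution is simply estimated as the error $O((n^3\|E\|_\infty+\ux n\log n)|\eta|)$, and both $B_{1,m}$ and $B_{2,m}$ arise, with $\pa_x u$ factors, from the smooth-part cancellations described above (the second from the term $\frac{j-m}{n}(\frac{1}{u(\xb_{m+1})}-\frac{1}{u(\xb_m)})$ multiplied by the difference of reciprocal denominators, after replacing $E_l$ by $E_l-E_m$ and controlling the leftover $E_m$-sums via the pairing cancellation as in \eqref{sumcancel}). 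Two smaller points: your $D_{1,m}$ uses the denominator $(y_m-\tilde x_j)(y_m-x_j)$ instead of $(y_{m+1}-x_{j+1})(y_m-x_j)$ from \eqref{D1}; both are positive kernels $\sim n^2/|m-j|^2$, but the discrepancy is itself only absorbable into dissipation, not an admissible error, so it must be tracked. And with the paper's convention $\Smp=\{j_-+1,\dots,j_+\}$ the index sets align exactly after the shift $j\mapsto j+1$, so no boundary-mismatch terms occur; your claimed $O(\ux n^{-1/2}+n^{1/2}\|E\|_\infty)$ bound for such terms is in any case unsubstantiated.
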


\begin{remark}
$D_{1,m}, B_{1,m}, B_{2,m}$ all have form
  \[D_{1,m} = \sum_{j \in \Sm} d_1(j,m) (E_j-E_m),\quad B_{i,m}=\sum_{j \in \Sm}
    b_i(j,m)(E_j-E_m).\]
In particular, one can see from \eqref{d1} that
$d_1(j,m)>0$. Therefore, $D_{1,m}$ is a dissipative term. We will show
later in Section \ref{properr} that $B_{1,m}$ and  $B_{2,m}$ are supercritical but can be absorbed into the dissipation.
\end{remark}

\begin{proof}
Let us consider the difference between the expressions appearing in the $(j+1)$st term in $I_{3,m+1}$
and the $j$th term in $I_{3,m}$. Note that $j=m$ summands vanish identically.  We have
\begin{equation}\label{cotapp1}
  \frac{2}{y_{m+1}-x_{j+1}}-\frac{2}{y_m-x_j}=
  \frac{-2(y_{m+1}-y_m-x_{m+1}+x_m)+\frac{1}{n}(\frac{1}{u(\xb_j)}-\frac{1}{u(\xb_m)})+2(E_j-E_m)}{(y_{m+1}-x_{j+1})(y_m-x_j)},
\end{equation}
and
\begin{equation}\label{cotapp2}
-\frac{2}{y_{m+1}-\hat{x}_{j+1}}+\frac{2}{y_m-\tilde{x}_j}=
\frac{2(y_{m+1}-y_m-x_{m+1}+x_m)-\frac{j-m}{n}(\frac{1}{u(\xb_{m+1})}-\frac{1}{u(\xb_m)})}{(y_{m+1}-\hat{x}_{j+1})(y_m-\tilde{x}_j)}.
\end{equation}

The last summand of \eqref{cotapp1} produces a dissipative term that is crucial for the entire argument. We
denote it as
\begin{equation}\label{D1}
  D_{1,m}=\sum_{j\in\Sm\backslash\{m\}}\frac{2(E_j-E_m)}{(y_{m+1}-x_{j+1})(y_m-x_j)}
  =:\sum_{j\in\Sm\backslash\{m\}}d_1(j,m)(E_j-E_m),
\end{equation}
where from the estimates \eqref{ymxj} and \eqref{ymxm}, we
know that the coefficients $d_1(j,m)$ satisfy
\begin{equation}\label{d1}
  d_1(j,m)=\frac{2}{(y_{m+1}-x_{j+1})(y_m-x_j)}\sim \frac{n^2}{|m-j|^2}>0,\quad\forall~j\in\Sm\backslash\{m\}.
\end{equation}
Here $\sim$ as usual means lower and upper bounds with constants that may only depend on $u_0.$

We combine the first parts on the right hand side in \eqref{cotapp1} and \eqref{cotapp2}. The corresponding term is
\[T_{1,m}^j=2(y_{m+1}-y_m-x_{m+1}+x_m)\left(\frac{1}{(y_{m+1}-\hat{x}_{j+1})(y_m-\tilde{x}_j)}-\frac{1}{(y_{m+1}-x_{j+1})(y_m-x_j)}\right).\]
Now, we estimate the difference (using the summation convention \eqref{convention117}):
\begin{align}
  &~\frac{1}{(y_{m+1}-\hat{x}_{j+1})(y_m-\tilde{x}_j)}-\frac{1}{(y_{m+1}-x_{j+1})(y_m-x_j)} \pm \frac{1}{(y_{m+1}-\hat{x}_{j+1})(y_m-x_j)} \nonumber\\
  =&~\frac{\tilde{x}_j-x_j}{(y_{m+1}-\hat{x}_{j+1})(y_m-\tilde{x}_j)(y_m-x_j)}
    +\frac{\hat{x}_{j+1}-x_{j+1}}{(y_{m+1}-\hat{x}_{j+1})(y_{m+1}-x_{j+1})(y_m-x_j)}\nonumber\\
  =&~-\frac{\sum_{l=m}^{j-1}E_l+O(\ux n^{-2}|m-j|^2)}{(y_{m+1}-\hat{x}_{j+1})(y_m-\tilde{x}_j)(y_m-x_j)}
    -\frac{\sum_{l=m+1}^jE_l+O(\ux n^{-2}|m-j|^2)}{(y_{m+1}-\hat{x}_{j+1})(y_{m+1}-x_{j+1})(y_m-x_j)}\nonumber\\
  \lesssim&~\frac{|m-j|\|E\|_\infty+\ux(|j-m|/n)^2}{(|m-j|/n)^3}
  =O\left(n^3|m-j|^{-2}\|E\|_\infty+\ux n |m-j|^{-1}\right).\label{diff1oy2}
\end{align}
Here, we use the lower bound estimates \eqref{ymxj}, \eqref{ymxm} for $|y_m-x_j|$ and
$|y_{m+1}-x_{j+1}|$, and a variant of \eqref{xtildexj} for $x_j-\tilde{x}_j$ and
$x_{j+1}-\hat{x}_{j+1}$ taking into account the presence of the $\ux$ factor. Now, summing up over $j\in\Sm\backslash\{m\}$
we get
\[\sum_{j\in\Sm\backslash\{m\}}T_{1,m}^j=O\big((n^3\|E\|_\infty+\ux n\log n) |y_{m+1}-y_m-x_{m+1}+x_m|\big).\]

Next, we combine the second parts on the right hand side of \eqref{cotapp1} and
\eqref{cotapp2}. The corresponding term is
{\small\begin{align}T_{2,m}^j=&\,\frac{\frac{1}{n}\left(\frac{1}{u(\xb_j)}-\frac{1}{u(\xb_m)}\right)}{(y_{m+1}-x_{j+1})(y_m-x_j)} -\frac{\frac{j-m}{n}\left(\frac{1}{u(\xb_{m+1})}-\frac{1}{u(\xb_m)}\right)}{(y_{m+1}-\hat{x}_{j+1})(y_m-\tilde{x}_j)}\\
  =&\,\frac{\frac{1}{n}\left(\frac{1}{u(\xb_j)}-\frac{1}{u(\xb_m)}\right)
     -\frac{j-m}{n}\left(\frac{1}{u(\xb_{m+1})}-\frac{1}{u(\xb_m)}\right)}{(y_{m+1}-x_{j+1})(y_m-x_j)}\\
   &+\frac{j-m}{n}\left(\frac{1}{u(\xb_{m+1})}-\frac{1}{u(\xb_m)}\right)
     \left(\frac{1}{(y_{m+1}-x_{j+1})(y_m-x_j)}-\frac{1}{(y_{m+1}-\hat{x}_{j+1})(y_m-\tilde{x}_j)}\right)\\
  =:&\,T_{2,m}^{j,(1)}+T_{2,m}^{j,(2)}. \label{Tj12}
\end{align}}
For the first term $T_{2,m}^{j,(1)}$, let us estimate the numerator
\begin{align*}
  &\,\frac{1}{n}\left(\frac{1}{u(\xb_j)}-\frac{1}{u(\xb_m)}-\frac{j-m}{u(\xb_{m+1})}+\frac{j-m}{u(\xb_m)}\right)\\
  =&\,\frac{1}{n}\left[-\frac{\partial_x u(\xb_m)}{u(\xb_m)^2}(\xb_j-\xb_m)+O(\ux
     |\xb_j-\xb_m|^2)\right.\\
  &\qquad\left.+\frac{\partial_x u(\xb_m)(j-m)}{u(\xb_m)^2}(\xb_{m+1}-\xb_m)+O(\ux
    (\xb_{m+1}-\xb_m)^2)\right]\\
  =&\,-\frac{\partial_x u(\xb_m)}{nu(\xb_m)^2}\left(\xb_j-\xb_m-(j-m)
     (\xb_{m+1}-\xb_m)\right)
     +O(\ux n^{-3}|m-j|^2).
\end{align*}
Observe that
\begin{align*}
  &\xb_j-\xb_m-(j-m)(\xb_{m+1}-\xb_m)\\
  =&\,\sum_{l=m}^{j-1}\left(\frac{1}{4nu(\xb_l)}+\frac{1}{4nu(\xb_{l+1})}+\frac{E_l+E_{l+1}}{2}\right)\\
  &\,-(j-m)
    \left(\frac{1}{4nu(\xb_m)}+\frac{1}{4nu(\xb_{m+1})}+\frac{E_m+E_{m+1}}{2}\right)\\
  =&\,\sum_{l=m}^{j-1}\left(O(\ux n^{-1}\cdot n^{-1}|l-m| )+
     \frac{(E_l-E_m)+(E_{l+1}-E_{m+1})}{2}\right)\\
  =&\,O(\ux n^{-2}|m-j|^2)+\sum_{l=m}^{j-1}\left(\frac{(E_l-E_m)+(E_{l+1}-E_{m+1})}{2}\right).
\end{align*}
Note that the summation can be replaced by $\sum_{l=m+1}^{j-1}$ as the
term $l=m$ is zero. When $j\leq m-1$, the sum represents $-\sum_{j}^{m-1}$, in accordance with the summation convention \eqref{convention117}.

Then for $T_{2,m}^{(1)}=\sum_{j\in\Sm\backslash\{m\}} T_{2,m}^{j,(1)}$  we obtain
\begin{align*}
  T_{2,m}^{(1)}=&\,\sum_{j\in\Sm\backslash\{m\}}
  \left[O\big(n^2|m-j|^{-2}\cdot \ux n^{-3}|m-j|^2\big)-\sum_{l=m+1}^{j-1}
              \frac{\partial_x u(\xb_m)(E_l-E_m+E_{l+1}-E_{m+1})}{2nu(\xb_m)^2(y_{m+1}-x_{j+1})(y_m-x_j)}\right].\\
  =&\, B_{1,m}+O(\ux n^{-1/2}),
\end{align*}
where the term $B_{1,m}$ can be expressed as
\begin{align}
  B_{1,m}=&\,\frac{-\pa_xu(\xb_m)}{2nu(\xb_m)^2}\left[
   \sum_{j\in\Sm\backslash\{m\}}\sum_{l=m+1}^{j-1}\frac{(E_l-E_m)+(E_{l+1}-E_m)}{(y_{m+1}-x_{j+1})(y_m-x_j)}\right.\nonumber\\
    &\left.-(E_{m+1}-E_m) \sum_{j\in\Sm\backslash\{m\}}\sum_{l=m+1}^{j-1}\frac{1}{(y_{m+1}-x_{j+1})(y_m-x_j)}\right]\label{aux1119}\\
  =:&\sum_{l\in\Sm\backslash\{m\}}b_1(l,m)(E_l-E_m).\label{B1}
\end{align}
Let us estimate the coefficients $\{b_1(l,m)\}$.
For $l\geq m+2$, we change the order of summation:
\begin{equation}\label{aux1118a} \sum_{j=m+1}^{j_+-1}\sum_{l=m+1}^{j-1}\frac{1}{(y_{m+1}-x_{j+1})(y_m-x_j)}
  =\sum_{l=m+1}^{j_+-2}\sum_{j=l+1}^{j_+-1}\frac{1}{(y_{m+1}-x_{j+1})(y_m-x_j)}.\end{equation}
Using the lower bound estimate \eqref{ymxj}, we can estimate the sum in \eqref{aux1118a} and then the coefficient $b_1(l,m)$ by
\[  O(\ux
  n^{-1})\sum_{j=l+1}^{j_+-1}\frac{1}{|m-j|^2n^{-2}}=O(\ux n
  |m-l|^{-1}).\]
A similar calculation can be done for $l\leq m-1$.
For $l=m+1$, there is a contribution similar to \eqref{aux1118a} (where we now also use \eqref{ymxm}), and an additional summand from \eqref{aux1119} that is equal to
\begin{align*}
  \frac{\partial_x u(\xb_m)}{2nu(\xb_m)^2}\left(\sum_{j=j_-}^{m-1}\frac{j-m}{(y_{m+1}-x_{j+1})(y_m-x_j)}+\sum_{j=m+1}^{j_+-1}\frac{j-m-1}{(y_{m+1}-x_{j+1})(y_m-x_j)}\right)\\
  =\,O(\ux n^{-1})\sum_{1\leq|j-m|\lesssim
     n^{1/2}}O\left(\frac{|m-j|}{|m-j|^2n^{-2}}\right)=O(\ux n\log n).
\end{align*}
To summarize, the coefficients
\begin{equation}\label{b1}
  b_1(l,m)=\begin{cases} O(\ux n |m-l|^{-1}),& l\in\Sm\backslash{\{m,m+1\}}\\
    O(\ux n\log n),& l=m+1.
  \end{cases}
\end{equation}
Comparing with \eqref{d1}, one sees that
\begin{equation}\label{b1d1}
  |b_1(j,m)| \lesssim n^{-1/2}d_1(j,m),\quad\forall~j\in\Sm\backslash\{m\}.
\end{equation}
Hence, for all sufficiently large $n,$ $B_{1,m}$ can be absorbed into $D_{1,m}$ with bounds \eqref{d1} remaining valid.

For the second term $T_{2,m}^{j,(2)}$ in \eqref{Tj12}, we apply a calculation as in \eqref{diff1oy2} and obtain
\begin{align}\label{T22}
 T_{2,m}^{j,(2)}=  &\sum_{j\in\Sm\backslash\{m\}}\frac{j-m}{n}\left(\frac{1}{u(\xb_{m+1})}-\frac{1}{u(\xb_m)}\right)\times\\
 &\times\left(\frac{\sum_{l=m}^{j-1}E_l+O(\ux n^{-2}|j-m|^2)}{(y_{m+1}-\hat{x}_{j+1})(y_m-\tilde{x}_j)(y_m-x_j)}
   +\frac{\sum_{l=m+1}^jE_l+O(\ux n^{-2}|j-m|^2)}{(y_{m+1}-\hat{x}_{j+1})(y_{m+1}-x_{j+1})(y_m-x_j)}\right).\nonumber
\end{align}
The parts of this expression corresponding to $O(\ux n^{-2}|j-m|^2)$ in the numerator lead after summation in $j$
to the error of the order
\begin{equation}\label{easyerr119}
\lesssim \delta \sum_{j\in\Sm\backslash\{m\}} \frac{|j-m|}{n^2} \frac{n^3}{|j-m|^3} n^{-2}|j-m|^2 = O(\delta n^{-1/2}).
\end{equation}
To further simplify the remaining part, we first replace
$y_{m+1}-\hat{x}_{j+1}$ by
$x_{m+1}-\hat{x}_{j+1}=\frac{m-j}{2nu(\xb_{m+1})}$.
Note that by \eqref{ymxj}, \eqref{ymxm} we have
\begin{align}\label{aux11128} \frac{1}{y_{m+1}-\hat{x}_{j+1}}-\frac{1}{\frac{m-j}{2nu(\xb_{m+1})}}
  =\frac{-(y_{m+1}-x_{m+1})}{(y_{m+1}-\hat{x}_{j+1})\cdot\frac{m-j}{2nu(\xb_{m+1})}}
  =O\left(\frac{n}{|m-j|^2}\right).\end{align}
The difference in the remaining part of \eqref{T22} introduced by making such change does not exceed
\begin{align}\label{aux21128} \sum_{1\leq|j-m|\lesssim n^{1/2}}O\left(\frac{|j-m|}{n}\cdot\ux
    n^{-1}\cdot |m-j|\|E\|_\infty\cdot \frac{n}{|m-j|^2}\cdot
   \frac{n^2}{ |m-j|^2}\right)=O(\ux n\|E\|_\infty).\end{align}
   We can similarly replace $y_m - \tilde{x}_j$ with $\frac{m-j}{2n u(\bar x_m)}$ incurring
   the error of the same order.
Therefore, $T_{2,m}^{(2)}=\sum_{j\in\Sm\backslash\{m\}} T_{2,m}^{j,(2)}$ after a straightforward calculation
can be represented as 
\begin{align}
T_{2,m}^{(2)}=  &\frac{\pa_xu(\xb_m)}{nu(\xb_m)^2}\sum_{j\in\Sm\backslash\{m\}}
\left(\frac{\sum_{l=m}^{j-1}E_l}{\frac{m-j}{2nu(\xb_m)}\cdot (y_m-x_j)}
+\frac{\sum_{l=m+1}^jE_l}{(y_{m+1}-x_{j+1})(y_m-x_j)}\right)\nonumber\\
 &+O(\ux n^{-1/2}+ \ux n\|E\|_\infty+\ux n^2\log n\|E\|_\infty^2).\label{T22s}
\end{align}
To absorb the remaining supercritical term into $D_{1,m}$, we replace $E_l$ by $E_l-E_m$, and define
\begin{align}
  B_{2,m}=&\,\frac{\pa_xu(\xb_m)}{nu(\xb_m)^2}\sum_{j\in\Sm\backslash\{m\}}
\left(\frac{\sum_{l=m}^{j-1}(E_l-E_m)}{\frac{m-j}{2nu(\xb_m)}\cdot(y_m-x_j)}
    +\frac{\sum_{l=m+1}^j(E_l-E_m)}{(y_{m+1}-x_{j+1})(y_m-x_j)}\right)\nonumber\\
 =:&\sum_{j\in\Sm\backslash\{m\}}b_2(j,m)(E_j-E_m),\label{B2}
\end{align}
where the coefficients $b_2(j,m)$ can be estimated similarly to
$b_1(j,m)$, yielding
\begin{equation}\label{b2}
  b_2(j,m)=O(\ux n|m-j|^{-1})\lesssim n^{-1/2}d_1(j,m),\quad \forall~j\in\Sm\backslash\{m\}.
\end{equation}
Therefore, $B_{2,m}$ can be absorbed by $D_{1,m}$ as well. Notice that there is a partial cancellation between
$B_{2,m}$ and $B_{1,m}$ but we do not need to pursue it here, as it does not eliminate the supercritical term.

We are left with the difference between the sum in \eqref{T22s} and
$B_{2,m}$, which is given by
\begin{equation}\label{B2diff}
  E_m\cdot\frac{\pa_xu(\xb_m)}{nu(\xb_m)^2}\sum_{j\in\Sm\backslash\{m\}}
    \left(-\frac{2nu(\xb_m)}{y_m-x_j}+\frac{j-m}{(y_{m+1}-x_{j+1})(y_m-x_j)}\right).
\end{equation}
For the first term of the sum in \eqref{B2diff}, we make use of the cancellation
between the $j= m\pm k$ terms, as described in \eqref{sumcancel} and \eqref{3t117}.
This leads
to the error
\[O\left(\|E\|_\infty\cdot \ux n^{-1}\cdot n\cdot
  (n+n^2 \log n \|E\|_\infty)\right)=O(\ux n\|E\|_\infty + \ux n^2 \log n \|E\|_\infty^2).\]
The second summand in \eqref{B2diff} becomes the same as the first one if we
replace $y_{m+1}-x_{j+1}$ by $y_{m+1}-\hat{x}_{j+1},$ and then by $x_{m+1}-\hat{x}_{j+1}.$
The latter replacement produces the same error as in \eqref{aux11128}, \eqref{aux21128}. We calculate an
error generated by the former replacement as follows. Using an estimate parallel to \eqref{xtildexj} we get
\begin{align*}
  &\frac{1}{y_{m+1}-\hat{x}_{j+1}}-\frac{1}{y_{m+1}+x_{j-1}}
  =\frac{\hat{x}_{j+1}-x_{j+1}}{(y_{m+1}-\hat{x}_{j+1})(y_{m+1}-x_{j+1})}\\
  &=O\left(\frac{|m-j|\|E\|_\infty+\ux
  n^{-2}|m-j|^2}{(|m-j|/n)^2}\right)=O(|m-j|^{-1}n^2\|E\|_\infty+\ux).
\end{align*}
This leads to an error
\[O\left(\|E\|_\infty\cdot \ux n^{-1} \cdot
 \sum_{1\leq |j-m|\lesssim
      n^{1/2}}\left(\frac{n^2\|E\|_\infty}{|m-j|}+\ux\right)\cdot \frac{n|j-m|}{|j-m|}\right)=O\left(\ux
  n^{1/2}\|E\|_\infty+\ux n^2\log n\|E\|_\infty^2\right).\]

Collecting all the estimates, we arrive at \eqref{I3diff}.
\end{proof}

 In summary, collecting the bounds \eqref{I1diff}, \eqref{I2diff}
 and \eqref{I3diff},
the contribution into $G_m-G_{m+1}$ from the near
field is
 \begin{align}
   &(I_{1,m}-I_{1,m+1})+(I_{2,m}-I_{2,m+1})+(I_{3,m}-I_{3,m+1})\nonumber\\
  &=D_{1,m}+B_{1,m}+B_{2,m} +O\left((n^{3/2}+n^3\|E\|_\infty)|y_{m+1}-y_m-x_{m+1}+x_m|\right)\nonumber\\
   & \quad+O(\ux n^{-1/2}+\ux n\|E\|_\infty+\ux n^2\log n\|E\|_\infty^2).\label{Idiff}
 \end{align}

\section{Estimates for pairs of roots: far field errors}\label{farfieldtworoots}

We start with $II_{1,m}-II_{1,m+1}$. Recall \eqref{IIm}:
\[ II_{1,m} =2n P.V. \int_{|y-y_m| \leq n^{-1/2}} u(y) \cot \frac{y_m-y}{2} \,dy. \]
\begin{lemma}\label{lem:II1mdiff}
  Let $II_{1,m}$ be defined as in \eqref{IIm}. Then,
  \begin{equation}\label{II1mdiff}
    II_{1,m}-II_{1,m+1}=O(\ux n^{-1/2}).
  \end{equation}
\end{lemma}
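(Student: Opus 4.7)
The plan is to exploit the oddness of $\cot$ around the singularity so that the near-field principal value integral is controlled by an odd difference of $u$, and then to use the $C^2$-regularity encoded in $\ux(t)$ via the bound $\|u''\|_{L^\infty} \leq \ux$ to gain a factor of $s$ in the integrand, while picking up a factor $|y_{m+1}-y_m| = O(n^{-1})$ from taking the difference of the two integrals.

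Concretely, first I would change variables $s = y - y_m$ (resp.\ $s = y - y_{m+1}$) and use $\cot(y_m - y)/2 = -\cot(s/2)$ together with the fact that the $u(y_m)$ constant contribution vanishes by the principal value, to write
\begin{equation}
II_{1,m} = -2n \int_0^{n^{-1/2}} \bigl[u(y_m+s) - u(y_m-s)\bigr]\cot\tfrac{s}{2}\,ds,
\end{equation}
and analogously for $II_{1,m+1}$. Then I would set $h(y,s) := u(y+s) - u(y-s)$ so that
\begin{equation}
II_{1,m} - II_{1,m+1} = -2n \int_0^{n^{-1/2}} \bigl[h(y_m,s) - h(y_{m+1},s)\bigr]\cot\tfrac{s}{2}\,ds.
\end{equation}

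Next I would estimate the integrand. Since $\partial_y h(y,s) = u'(y+s) - u'(y-s) = \int_{-s}^s u''(y+t)\,dt$, we have $|\partial_y h(y,s)| \leq 2s\, \|u''\|_{L^\infty} \leq 2\ux\, s$. Integrating over $[y_m, y_{m+1}]$ and using $|y_{m+1}-y_m| \leq x_{m+2}-x_m = O(n^{-1})$ (which follows from \eqref{Esmall} and the spacing of the $x_j$'s), I obtain
\begin{equation}
|h(y_m,s) - h(y_{m+1},s)| \leq 2\ux s \cdot |y_{m+1}-y_m| = O(\ux s\, n^{-1}).
\end{equation}
Combined with the elementary bound $|\cot(s/2)| \leq 2/s$ for $0 < s \leq n^{-1/2}$, this yields
\begin{equation}
|II_{1,m} - II_{1,m+1}| \leq 2n \int_0^{n^{-1/2}} O(\ux s\, n^{-1}) \cdot \tfrac{2}{s}\,ds = O(\ux n^{-1/2}),
\end{equation}
which is exactly \eqref{II1mdiff}.

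There is no serious obstacle here: the $O(\ux)$ decay factor comes for free from the assumed $C^2$ regularity of $u$, the cancellation from subtracting two integrals at nearby centers supplies the $O(n^{-1})$ factor, and the symmetric combination kills the dangerous $1/s$ singularity of the cotangent. The only small care needed is to confirm $|y_{m+1}-y_m| = O(n^{-1})$, but this is immediate from Lemma~\ref{lowerbound} and the assumption \eqref{Esmall} on the errors (alternatively, one may invoke the rough bound of Lemma~\ref{lem:yyxxrough} together with $x_{m+1}-x_m = O(n^{-1})$).
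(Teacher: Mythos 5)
Your proof is correct and follows essentially the same route as the paper: symmetrize the principal value around the center to cancel the $1/s$ singularity of the cotangent, then gain the factor $|y_{m+1}-y_m|=O(n^{-1})$ from the difference of centers via a derivative bound controlled by $\ux$. The only (inessential) difference is that you bound the $y$-increment of the symmetric difference directly by $\|u''\|_{L^\infty}$, whereas the paper Taylor-expands to third order and compares $u'(y_{m+1})$ with $u'(y_m)$; both are covered by the definition of $\ux$.
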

\begin{proof}
  We have
  \begin{align*}
  &II_{1,m}-II_{1,m+1}=2n\,P.V.\int_{-n^{-1/2}}^{n^{-1/2}}\Big(u(y_{m+1}+z)-u(y_m+z)\Big)\cot\frac{z}{2}\,dz\\
  &=2n \int_0^{n^{-1/2}}\Big(u(y_{m+1}+z)-u(y_{m+1}-z)-u(y_m+z)+u(y_m-z)\Big)\cot\frac{z}{2}\,dz\\
  &=2n\int_0^{n^{-1/2}}\Big(2u'(y_{m+1})z-2u'(y_m)z+O(\ux z^3)\Big)\cot\frac{z}{2}\,dz\\
  &=2n\cdot \left((y_{m+1}-y_m)\cdot O(\ux n^{-1/2})+O(\ux n^{-3/2})\right)
    =O(\ux n^{-1/2}).
\end{align*}
\end{proof}

Next, we focus on $II_{2,m}-II_{2,m+1}$. The estimate of this term is the most involved one,
and will take us a while to complete.
Let us decompose $II_{2,m}$ as in \eqref{II2m}. We will discuss the matching
summands $II_{2,m}^j$ and the mismatched part one by one. The definition of $II_{m,2}^j$ is recalled in \eqref{II2mjr} right after the statement of the lemma.
\begin{lemma}\label{lem:II2mjdiff}
  Let $II_{2,m}^j$ be defined as in \eqref{II2mj}. Then,
\begin{align}
  &\sum_{j\in\Smc}\left(II_{2,m}^j-II_{2,m+1}^{j+1}\right)=
  D_{2,m}+H_{1,m}+H_{2,m}+H_{3,m}\nonumber\\
&+nu(\xb_{j_+-1})\cot\frac{y_m-x_{j_+}}{2}(E_{j_+-1}-E_m)+nu(\xb_{j_+})\cot\frac{y_m-x_{j_++1}}{2}(E_{j_+}-E_m) \nonumber\\
&-nu(\xb_{j_--})\cot\frac{y_m-x_{j_--1}}{2}(E_{j_-}-E_m)-nu(\xb_{j_-1})\cot\frac{y_m-x_{j_--2}}{2}(E_{j_--1}-E_m) \nonumber\\
  &+O\big((n^{5/2}\|E\|_\infty+n^{1/2})|y_{m+1}-y_m-x_{m+1}+x_m|\big)\nonumber\\
  &+O(\ux n^{-1/2}+\ux n\|E\|_\infty+\ux n^2\log n\|E\|_\infty^2)
    +O(n^{1/2}\|E\|_\infty+n^2\|E\|_\infty^2),\label{II2mjdiff}
\end{align}
where $D_{2,m}, H_{1,m}, H_{2,m}$ and $H_{3,m}$ are defined below in \eqref{D2},
  \eqref{H1}, \eqref{H2} and  \eqref{H3} respectively.
\end{lemma}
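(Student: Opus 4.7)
The plan is to follow the overall strategy of Lemma~\ref{lem:I3diff}: expand each $II_{2,m}^j$ to second order in the small parameters, form the shifted difference $II_{2,m}^j - II_{2,m+1}^{j+1}$, and then exploit telescoping cancellations in the sum over $j \in S_m^c$ so that a strictly positive dissipative kernel appears, with all remaining pieces either absorbed as supercritical-but-controllable terms $H_{i,m}$, boundary residues at $j_\pm$, or admissible errors of the sizes listed in \eqref{II2mjdiff}.

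First, I would rewrite \eqref{II2mj} using \eqref{intu} in the sharp form
\[
2n\int_{\xb_{j-1}}^{\xb_j}u(y)\,dy - 1
= n\bigl(u(\xb_{j-1})E_{j-1}+u(\xb_j)E_j\bigr) + O(n^{-1}+\ux n^{-2}|j-m|^2),
\]
and for the second integral in \eqref{II2mj} I would Taylor expand both $u$ around $\xb_{j-1/2}:=\tfrac12(\xb_{j-1}+\xb_j)$ and $\cot\tfrac{y_m-\cdot}{2}$ around $x_j$, so that
\[
II_{2,m}^j = n\cot\frac{y_m-x_j}{2}\bigl(u(\xb_{j-1})E_{j-1}+u(\xb_j)E_j\bigr) + R_{m}^j,
\]
where $R_m^j$ is explicit and of order $O(n^{-1}|j-m|^{-1}+\ux n^{-1}|j-m|^{-1})$ after using \eqref{cotfar} and \eqref{cotdifffar}. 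The same representation is valid for $II_{2,m+1}^{j+1}$ with $x_j\mapsto x_{j+1}$, $y_m\mapsto y_{m+1}$, and the midpoint indices shifted by one.

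Second, I would subtract these expressions index-by-index and telescope the sum $\sum_{j\in\Smc}$. The key identity is
\begin{align}
&n\cot\tfrac{y_m-x_j}{2}\bigl(u(\xb_{j-1})E_{j-1}+u(\xb_j)E_j\bigr)-n\cot\tfrac{y_{m+1}-x_{j+1}}{2}\bigl(u(\xb_j)E_j+u(\xb_{j+1})E_{j+1}\bigr) \nonumber \\
&= n\bigl[\cot\tfrac{y_m-x_j}{2}-\cot\tfrac{y_{m+1}-x_{j+1}}{2}\bigr]\bigl(u(\xb_{j-1})E_{j-1}+u(\xb_j)E_j\bigr) \nonumber \\
&\quad+ n\cot\tfrac{y_{m+1}-x_{j+1}}{2}\bigl(u(\xb_{j-1})E_{j-1}-u(\xb_{j+1})E_{j+1}\bigr). \nonumber
\end{align}
Rearranging the second line as a discrete second difference and inserting $\pm E_m$ to convert $E_{j\pm 1}$ into $E_{j\pm 1}-E_m$ produces the dissipative kernel
\begin{equation}
D_{2,m}=\sum_{j\in\Smc\setminus\{j_\pm\text{ boundary}\}}d_2(j,m)(E_j-E_m), \label{D2plan}
\end{equation}
with $d_2(j,m)\sim nu(\xb_m)|j-m|^{-2}>0$, coming from the discrete Laplacian applied to the slowly-varying density factor $u(\xb_{j-1})E_{j-1}$ against the slowly-varying kernel $\cot\frac{y_{m+1}-x_{j+1}}{2}$. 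The difference of cotangents in the first line, expanded via the mean value theorem, yields terms proportional to $y_{m+1}-y_m-x_{m+1}+x_m$ (absorbed into the stated $O\bigl((n^{5/2}\|E\|_\infty+n^{1/2})|y_{m+1}-y_m-x_{m+1}+x_m|\bigr)$ error, estimated as in Lemma~\ref{lem:I3diff}) plus terms in $E_{j-1}+E_j$ weighted by $n^2/|j-m|^2$, which after writing $E_{j-1}+E_j=2E_m+((E_{j-1}-E_m)+(E_j-E_m))$ split into the supercritical kernels $H_{1,m}$, $H_{2,m}$ (containing the differences $E_j-E_m$, to be absorbed into $D_{2,m}$ using a bound analogous to \eqref{b1d1}) and a single $E_m$-weighted sum that, by the same cancellation mechanism used in \eqref{sumcancel} and \eqref{3t117}, contributes only to the stated error along with the boundary obstruction. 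The ``$u$-variation'' residue coming from $u(\xb_{j-1})-u(\xb_j)$ and $\ux(\xb_{j+1}-\xb_{j-1})$ supplies the third supercritical kernel $H_{3,m}$ carrying the $\ux$ prefactor, controlled the same way.

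Third, the telescoping leaves explicit boundary contributions at the four endpoints $j_--1,j_-,j_+-1,j_+$, which are exactly the four cotangent-weighted terms displayed in the statement: they are precisely what is left over when the interior discrete-Laplacian cancellation stops at the edges of $\Smc$. Finally, I would handle the mismatch region $\Dmis_m\triangle\Dmis_{m+1}$ as in \eqref{II2m}, noting that on an $O(n^{-1})$-length set with denominator $\sim n^{-1/2}$ the contribution is $O(n^{1/2})\cdot\|E\|_\infty$ plus $O(\ux n^{-1/2})$, and the remainder $R_m^j-R_{m+1}^{j+1}$ after summation contributes only to the admissible errors $O(\ux n^{-1/2})$ and $O(n^{1/2}\|E\|_\infty+n^2\|E\|_\infty^2)$ after invoking Lemma~\ref{lem:yyxxrough} to bound $y_{m+1}-y_m-x_{m+1}+x_m$ in the nonlinear pieces.

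The main obstacle is bookkeeping: each of the three kernels $H_{1,m},H_{2,m},H_{3,m}$ must be isolated with the correct sign structure so that the bound $|h_i(j,m)|\lesssim n^{-1/2}d_2(j,m)$ (analogue of \eqref{b1d1}) holds for $n$ large, which is what ultimately guarantees that the supercritical contributions can be absorbed by $D_{2,m}$ in Section~\ref{properr}. The secondary difficulty is that the cotangent differences contain both $y_{m+1}-y_m-x_{m+1}+x_m$ and $E_j-E_m$ pieces entangled with the slowly varying factor $u$; disentangling them cleanly requires carefully separating terms of order $n^{-2}|j-m|^{-2}\cdot|E_j-E_m|$ from those of order $n^{-1}|j-m|^{-2}\cdot|y_{m+1}-y_m-x_{m+1}+x_m|$, which is precisely why two distinct kinds of error bounds appear in the statement.
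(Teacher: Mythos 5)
Your overall skeleton for the first half of \eqref{II2mjr} (telescoping to produce $D_{2,m}$, applying the mean value theorem to the cotangent differences to split off the $|y_{m+1}-y_m-x_{m+1}+x_m|$ error and an $H_{1}$-type kernel after inserting $\pm E_m$, and collecting four leftover boundary terms) is in line with the paper. The genuine gap is your treatment of the second term of \eqref{II2mjr}, the integral $2n\int_{\xb_{j-1}}^{\xb_j}u(y)\bigl(\cot\frac{y_m-y}{2}-\cot\frac{y_m-x_j}{2}\bigr)\,dy$, which you fold into a remainder $R_m^j$ of claimed size $O(n^{-1}|j-m|^{-1}+\ux n^{-1}|j-m|^{-1})$. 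By \eqref{cotdifffar} this term is in fact of size $O(n|j-m|^{-2})$ (this is precisely how the $O(n|m-j|^{-2})$ error arises in the single-root estimate), so $\sum_{j\in\Smc}|R_m^j|=O(n^{1/2})$, and a crude bound on $R_m^j-R_{m+1}^{j+1}$ yields an $O(n^{1/2})$ error with no smallness factor, which is not admissible in \eqref{II2mjdiff}. This piece cannot be discarded: Taylor-expanding the cotangent and exploiting the near-cancellations in $(x_{j+1}-x_j)^{k+1}\pm(x_j-x_{j-1})^{k+1}$ is exactly what produces the kernels $H_{2,m}$ and $H_{3,m}$ of \eqref{H2} and \eqref{H3} (the $k=1$ term gives $\sum_j (E_j-E_{j-1})/(8\sin^2\frac{y_m-x_j}{2})$, whose $m$ versus $m{+}1$ difference is $H_{2,m}$, and the $k=2$ term gives $H_{3,m}$), together with further errors proportional to $|y_{m+1}-y_m-x_{m+1}+x_m|$ and $n^{2}\|E\|_\infty^{2}$. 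In particular the boundary coefficients $h_2(j,m)=\pm\frac{n}{2}+O(n^{1/2})$ in \eqref{h2boundary} originate here and are indispensable later when the interface dissipation is assembled; your proposal instead attributes $H_{2,m}$ and $H_{3,m}$ to pieces of the first term of \eqref{II2mjr}, which does not match their definitions and leaves the dangerous contribution of the second term unaccounted for.

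Two further inaccuracies are worth flagging. The dissipative coefficient is $d_2(j,m)=nu(\xb_j)\bigl(\cot\frac{y_m-x_{j+1}}{2}-\cot\frac{y_m-x_{j-1}}{2}\bigr)\sim n^2|m-j|^{-2}$, not $\sim n|m-j|^{-2}$ as you state. More importantly, the absorption mechanism you invoke, $|h_i(j,m)|\lesssim n^{-1/2}d_2(j,m)$ in analogy with \eqref{b1d1}, is false in general: the $\ux n|m-j|^{-1}$ piece of $h_1$ in \eqref{h1} is comparable to $d_2$ when $|m-j|\sim n$ (the paper needs the separate splitting argument in Theorem~\ref{lem:dpositive} to handle it), and the boundary values of $h_2$ are of the same size $n$ as the interface dissipation and are only neutralized by combining with $d_1$ and $d_3$ later. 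Finally, the mismatch integrals over $\Dmis_m$ are not part of this lemma (they are treated in Lemma~\ref{lem:mismatch}), and their $m$ versus $m{+}1$ difference is not merely $O(n^{1/2}\|E\|_\infty)+O(\ux n^{-1/2})$: it contains boundary dissipative terms of size $n|E_{j_\pm}-E_m|$ and an $O(n^{3/2}|y_{m+1}-y_m-x_{m+1}+x_m|)$ error that must be retained.
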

\begin{remark}
$D_{2,m}, H_{1,m}, H_{2,m}$ and $H_{3,m}$ will all have form
  \[D_{2,m} = \sum_j d_2(j,m) (E_j-E_m),\quad H_{i,m}=\sum_j
    h_i(j,m)(E_j-E_m).\]
In particular, we will discover \eqref{d2} that
$d_2(j,m)>0$. Therefore, $D_{2,m}$ is a dissipative term. We will show
later that the supercritical terms $H_{1,m}, H_{2,m}, H_{3,m}$ can be absorbed into the dissipation.
\end{remark}

\begin{proof}
We first recall the definition of $II_{2,m}^j$ in \eqref{II2mj}:
\begin{equation}\label{II2mjr}
  \cot\frac{y_m-x_j}{2}\left(2n\int_{\xb_{j-1}}^{\xb_j}u(y)dy-1\right)
     +2n\int_{\xb_{j-1}}^{\xb_j}u(y)\left(\cot\frac{y_m-y}{2}-\cot\frac{y_m-x_j}{2}\right)dy.
\end{equation}
We will obtain more precise estimates on $II_{2,m}^j$, and consider the
difference $II_{2,m}^j-II_{2,m+1}^{j+1}$ only when we need to take advantage of the additional
cancellation between the two terms. In simpler contributions where we do not need to consider the difference,
the $m$th and $(m+1)$st estimates are identical and we only provide the arguments for $II_{2,m}^j.$

We start with the first term in \eqref{II2mjr}.
A refined compared with \eqref{intu} calculation yields
\begin{align*}
  \int_{\xb_{j-1}}^{\xb_j}u(y)dy=&\,
  \int_{\xb_{j-1}}^{x_j}(u(\xb_{j-1})+u'(\xb_{j-1})(z-\xb_{j-1})+O\big(\ux(z-\xb_{j-1})^2)\big)\,dz\\&
  +\int_{x_j}^{\xb_j}
  \big(u(\xb_j)+u'(\xb_j)(z-\xb_j)+O(\ux(z-\xb_j)^2)\big)\,dz\\
  =&\,\frac{1}{2n}+\frac{u(\xb_{j-1})}{2}E_{j-1}+\frac{u(\xb_j)}{2}E_j\\
  &\,   +u'(\xb_{j-1})\left(\frac{x_j-x_{j-1}}{2}\right)^2
     -u'(\xb_j)\left(\frac{x_{j+1}-x_j}{2}\right)^2+O(\ux n^{-3}).
\end{align*}
Apply the estimate to the first term of \eqref{II2mjr}, and sum over
$j \in \Smc$. We obtain
\begin{align}
  &\sum_{j\in\Smc}\cot\frac{y_m-x_j}{2}\left(2n\int_{\xb_{j-1}}^{\xb_j}u(y)dy-1\right)
    =n\sum_{j\in\Smc} \cot\frac{y_m-x_j}{2} \big(u(\xb_{j-1})E_{j-1}+u(\xb_j)E_j\big)\nonumber\\
  &+ \frac{n}{2}\sum_{j\in\Smc}\cot\frac{y_m-x_j}{2}
  \Big(u'(\xb_{j-1})(x_j-x_{j-1})^2
    -u'(\xb_j)(x_{j+1}-x_j)^2\Big)+O(\ux n^{-1}\log n).\label{II2term1}
\end{align}
For the  second sum on the right hand side of \eqref{II2term1}, we have the following
cancellation:
\begin{align}
  &u'(\xb_{j-1})(x_j-x_{j-1})^2-u'(\xb_j)(x_{j+1}-x_j)^2\nonumber\\
  &=u'(\xb_j)\left[\left(\frac{1}{2nu(\xb_{j-1})}+E_{j-1}\right)^2
    -\left(\frac{1}{2nu(\xb_j)}+E_j\right)^2\right]+O(\ux n^{-3})\nonumber\\
  &=u'(\xb_j)\left[\frac{1}{4n^2}
    \left(\frac{1}{u(\xb_{j-1})^2}-\frac{1}{u(\xb_j)^2}\right)
    +(E_{j-1}^2-E_j^2)+\left(\frac{E_{j-1}}{nu(\xb_{j-1})}-\frac{E_j}{nu(\xb_j)}\right)
    \right]+O(\ux n^{-3})\nonumber\\
  &=\frac{u'(\xb_j)}{nu(\xb_j)}(E_{j-1}-E_j)+O(\ux n^{-3}+\ux \|E\|_\infty^2+\ux
    n^{-2}\|E\|_\infty).\label{intuimprove}
\end{align}
This leads to the estimate
\begin{align}\label{aux31128}
  &\frac{n}{2}\sum_{j\in\Smc}\cot\frac{y_m-x_j}{2}
  \big(u'(\xb_{j-1})(x_j-x_{j-1})^2-u'(\xb_j)(x_{j+1}-x_j)^2\big)\\
  &=\sum_{j\in\Smc}\frac{u'(\xb_j)}{2u(\xb_j)}\cot\frac{y_m-x_j}{2}(E_{j-1}-E_j)
    +  O(\ux n^{-1}\log n +\ux
    \log n \|E\|_\infty+\ux n^2 \log n \|E\|_\infty^2).
\end{align}
For the remaining sum on the right side of \eqref{aux31128}, we can telescope and get
\begin{align*}
  &\sum_{j\in\Smc}\frac{u'(\xb_j)}{2u(\xb_j)}\cot\frac{y_m-x_j}{2}(E_{j-1}-E_j)\\
   &=\sum_{j\in\Smc\backslash\{j_+\}}\left(\frac{u'(\xb_{j+1})}{2u(\xb_{j+1})}\cot\frac{y_m-x_{j+1}}{2}
  -\frac{u'(\xb_j)}{2u(\xb_j)}\cot\frac{y_m-x_j}{2}\right)E_j
  +O(\ux n^{1/2}\|E\|_\infty)\\
   &=\sum_{|m-j|\gtrsim n^{1/2}}\left(O(\ux n^{-1}\cdot
     n|m-j|^{-1})+O(\ux n|m-j|^{-2})\right)E_j
     +O(\ux n^{1/2}\|E\|_\infty)\\
  &=O(\ux n^{1/2}\|E\|_\infty),
\end{align*}
where we have used \eqref{cotfar} and a variant of \eqref{cotdifffar} in the second
equality.

Now, we focus on the first term on the right in \eqref{II2term1},
\[n\sum_{j\in\Smc} \cot\frac{y_m-x_j}{2} \big(u(\xb_{j-1})E_{j-1}+u(\xb_j)E_j\big).\]
This is a major term. We need to work on the difference with the
corresponding term in $II_{m+1}^{j+1}:$
\begin{align}
  &n\sum_{j\in\Smc} \left[\cot\frac{y_m-x_j}{2}
  \big(u(\xb_{j-1})E_{j-1}+u(\xb_j)E_j\big)
  -\cot\frac{y_{m+1}-x_{j+1}}{2}
    \big(u(\xb_j)E_j+u(\xb_{j+1})E_{j+1}\big)\right]\nonumber\\
  &=n\sum_{j\in\Smc}\left(\cot\frac{y_m-x_j}{2}-\cot\frac{y_{m+1}-x_{j+1}}{2}\right)
     \big(u(\xb_j)E_j+u(\xb_{j+1})E_{j+1}\big)\nonumber\\
  &\quad+n\sum_{j\in\Smc} \cot\frac{y_m-x_j}{2}
    \big(u(\xb_{j-1})E_{j-1}-u(\xb_{j+1})E_{j+1}\big).\label{IImaindiff}
\end{align}
For the first term on the right hand side of \eqref{IImaindiff}, an estimate using mean value theorem yields
\begin{align}&\cot\frac{y_m-x_j}{2}-\cot\frac{y_{m+1}-x_{j+1}}{2}=
\frac{1}{2\sin^2\frac{z_j}{2}}\cdot(y_{m+1}-y_m-x_{j+1}+x_j)\\\label{3dec1110}
&=\frac{1}{2\sin^2\frac{z_j}{2}}\cdot\left[
(y_{m+1}-y_m-x_{m+1}+x_m)+\frac{1}{2n}\left(\frac{1}{u(\xb_m)}-\frac{1}{u(\xb_j)}\right)+(E_m-E_j)\right]
\end{align}
where $z_j\in(y_m-x_j, y_{m+1}-x_{j+1})$.
This further decomposes the first term on the right in \eqref{IImaindiff} into three parts.
The first part is of the order
\begin{equation}\label{E2err1110}
O\Big(n^{5/2}\|E\|_\infty |y_{m+1}-y_m-x_{m+1}+x_m|\Big).
\end{equation}
Direct estimates of contributions corresponding to the second and third summands in \eqref{3dec1110}
yield a supercritical $O(\ux n \log n \|E\|_\infty)$ and a dangerous quadratic $O(n^{5/2}\|E\|_\infty^2)$ errors
respectively. These estimates are not sufficient and need further treatment.

The contribution corresponding to the second summand in \eqref{3dec1110} can be represented as follows:
\begin{align}
&   \sum_{j\in\Smc}\frac{1}{4\sin^2\frac{z_j}{2}}\left(\frac{1}{u(\xb_m)}-\frac{1}{u(\xb_j)}\right) \Big(u(\xb_j)(E_j-E_m)+u(\xb_{j+1})(E_{j+1}-E_m)\Big)\nonumber\\
&+E_m \sum_{j\in\Smc}\frac{1}{4\sin^2\frac{z_j}{2}}\left(\frac{1}{u(\xb_m)}-\frac{1}{u(\xb_j)}\right) \Big(u(\xb_j)+u(\xb_{j+1})\Big)\label{IImainsecond}
\end{align}
We combine the first sum in \eqref{IImainsecond} and the contribution corresponding to the third summand
 in \eqref{3dec1110} that is given by
\begin{equation}\label{IImainthird}
  \sum_{j\in\Smc}\frac{-n(u(\xb_j)E_j+u(\xb_{j+1})E_{j+1})}{2\sin^2\frac{z_j}{2}}(E_j-E_m),
\end{equation}
and denote the resulting expression $H_{1,m}$. It takes form
\begin{equation}\label{H1}
  H_{1,m}=\sum_{j\in\Smc\cup\{j_-\}}h_1(j,m)(E_j-E_m),
\end{equation}
with the coefficients
\begin{align}
  h_1(j,m)=&\begin{cases}
    \frac{u(\xb_j)-u(\xb_m)}{4u(\xb_m)\sin^2\frac{z_j}{2}}
  +\frac{(u(\xb_{j-1})-u(\xb_m))u(\xb_j)}{4u(\xb_m)u(\xb_{j-1})\sin^2\frac{z_{j-1}}{2}}
  -\frac{n(u(\xb_j)E_j+u(\xb_{j+1})E_{j+1})}{2\sin^2\frac{z_j}{2}},& j\in\Smc\backslash\{j_+\}\\
    \frac{u(\xb_{j_+})-u(\xb_m)}{4u(\xb_m)\sin^2\frac{z_{j_+}}{2}}
  -\frac{n(u(\xb_{j_+})E_{j_+}+u(\xb_{j_++1})E_{j_++1})}{2\sin^2\frac{z_{j_+}}{2}},& j=j_+\\
  \frac{(u(\xb_{j_--1})-u(\xb_m))u(\xb_{j_-})}{4u(\xb_m)u(\xb_{j_--1})\sin^2\frac{z_{j_--1}}{2}},& j=j_-
  \end{cases}\nonumber\\
  =&\,O(n^3 |m-j|^{-2}\|E\|_\infty+\ux n|m-j|^{-1}). \label{h1}
\end{align}
We now consider the second term in \eqref{IImainsecond}.
We argue that this term is harmless.
To see this, we shall make several simplifying replacements. First, replace
$u(\xb_j)+u(\xb_{j+1})$ by $2u(\xb_m)$. The error introduced by such change is
\begin{align}\label{err21110} E_m\sum_{j\in\Smc}O\left(\frac{n^2}{|m-j|^2}\cdot
    \frac{\ux |m-j|}{n}\cdot \frac{\ux |m-j|}{n}\right)=O(\ux n\|E\|_\infty).\end{align}
Next, replace $\frac{1}{u(\xb_m)}-\frac{1}{u(\xb_j)}$ by $\frac{u'(\xb_m)}{u^2(\xb_m)}(\xb_j-\xb_m),$ using that
\[\frac{1}{u(\xb_m)}-\frac{1}{u(\xb_j)}=\frac{u'(\xb_m)}{u^2(\xb_m)}(\xb_j-\xb_m)
  +O(\ux n^{-2}|m-j|^2)\]
The contribution from such change to the error is
\[E_m\sum_{j\in\Smc}O\left(\frac{n^2}{|m-j|^2}\cdot
    \frac{\ux |m-j|^2}{n^2}\right)=O(\ux n\|E\|_\infty).\]
We also replace $\frac{1}{4\sin^2\frac{z_j}{2}}$ by $\frac{1}{(\xb_j-\xb_m)^2}$.
Note that $z_j\in (y_m-x_j, y_{m+1}-x_{j+1})$, and
$|(y_{m+1}-x_{j+1})-(y_m-x_j)| = O(n^{-1})$. Then,
$|z_j-(y_m-x_j)| = O(n^{-1})$.
Since $|(y_m-x_j)-(\xb_j-\xb_m)|=O(n^{-1})$, we get
$|z_j-(\xb_j-\xb_m)|=O(n^{-1})$ as well. Therefore, we have
\begin{align*}
  \frac{1}{4\sin^2\frac{z_j}{2}}
=&\,\frac{1}{4\sin^2\left(\frac{\xb_j-\xb_m}{2}+O(n^{-1})\right)}
=\frac{1}{4\sin^2\frac{\xb_j-\xb_m}{2}}+O(n^2|m-j|^{-3})\\
=&\,\frac{1}{(\xb_j-\xb_m)^2}+O(1)+O(n^2|m-j|^{-3}).
\end{align*}
Thus difference created by such substitution leads to an error
\[E_m\sum_{j\in\Smc}O\left(\left(1+\frac{n^2}{|m-j|^3}\right)\cdot
    \frac{\ux |m-j|}{n}\right)=O(\ux n\|E\|_\infty).\]
Hence, we brought the second term in \eqref{IImainsecond} to the form
\[\frac{u'(\xb_m)E_m}{u(\xb_m)}\sum_{j\in\Smc}\frac{1}{\xb_j-\xb_m}+
O(\ux n\|E\|_\infty).\]
For the remaining sum, we can pair the $j= m\pm k$ summands and make
use of the cancellation similar to \eqref{xsym}
\begin{equation}\label{Empair}
  \frac{1}{\xb_{m+k}-\xb_m}+\frac{1}{\xb_{m-k}-\xb_m}
    =\frac{(\xb_{m+k}-\xb_m)+(\xb_{m-k}-\xb_m)}{(\xb_{m+k}-\xb_m )(\xb_{m-k}-\xb_m)}
    =O\left(\frac{k^2n^{-2}+k\|E\|_\infty}{k^2n^{-2}}\right),
\end{equation}
which after summation in $k$ yields $O(n+n^2 \log n \|E\|_{\infty}).$
There is at most $O(1+n^{3/2}\|E\|_{\infty})$ mismatched terms at the boundary, leading to
$O(n^{1/2}+n^2 \|E\|_{\infty})$ contribution to the sum. Finally, we arrive at a desired
bound on the second term in \eqref{IImainsecond}:
\begin{equation}\label{Emcancel}
  O\left(\ux\|E\|_\infty\cdot (n+n^2\log n\|E\|+n^{1/2})\right)+O(\ux
  n\|E\|_\infty)
  =O(\ux n\|E\|_\infty+\ux n^2\log n\|E\|_\infty^2).
\end{equation}
To sum up, the bound we obtained for the first term
 in \eqref{IImaindiff} is given by
\begin{align}\label{1term1110}
H_{1,m} + O(n^{5/2}\|E\|_\infty |y_{m+1}-y_m-x_{m+1}+x_m|+\ux n\|E\|_\infty+\ux n^2\log n\|E\|_\infty^2).
\end{align}

Now, let us turn to the second term in \eqref{IImaindiff}.
Replace $E_{j\pm1}$ in this term by $E_{j\pm1}-E_m$. The difference leads to an
error
\begin{equation}\label{errshift}
  nE_m\sum_{j\in\Smc}\cot\frac{y_m-x_j}{2}\big(u(\xb_{j-1})-u(\xb_{j+1})\big).
\end{equation}
Again, we shall argue that the error \eqref{errshift} is harmless. Compute
\begin{align*}
  u(\xb_{j-1})-u(\xb_{j+1})=&\,
 -u'(\xb_j)(\xb_{j+1}-\xb_{j-1})+O(\ux n^{-2})\\
  =&\,-u'(\xb_m)(\xb_{j+1}-\xb_{j-1})+O(\ux n^{-2}|m-j| )\\
  =&\,-u'(\xb_m)\left(\frac{1}{nu(\xb_m)}+O(\ux n^{-2}|m-j|+\|E\|_\infty)\right)+O(\ux n^{-2} |m-j|)\\
  =&\, -\frac{u'(\xb_m)}{nu(\xb_m)}+O(\ux n^{-2}|m-j|+\ux \|E\|_\infty).
\end{align*}
Then the error term \eqref{errshift} becomes
\begin{align}\label{err31110}  -\frac{u'(\xb_m)
    E_m}{u(\xb_m)}\sum_{j\in\Smc}\cot\frac{y_m-x_j}{2}+
O\big(\ux n\|E\|_\infty+\ux n^2\log n\|E\|_\infty^2\big).\end{align}
Using \eqref{cotexp1110} and then pairing up the $j= m \pm k$ leading terms similarly
to \eqref{Empair}, estimating the rest of the series and any mismatched terms, we have
\begin{equation}\label{cotsum}
  \sum_{j\in\Smc}\cot\frac{y_m-x_j}{2}=O(n+n^2\log n\|E\|_\infty).
\end{equation}
It follows that the error term \eqref{errshift} has the same order as in \eqref{Emcancel}.

The remainder of the second term in \eqref{IImaindiff} can be telescoped
\begin{align}\label{D2bddy}
  &\,n\sum_{j\in\Smc} \cot\frac{y_m-x_j}{2}
    \big(u(\xb_{j-1})(E_{j-1}-E_m)-u(\xb_{j+1})(E_{j+1}-E_m)\big)\\
=&\,\sum_{j\in\Smc\backslash\{j_+, j_--1\}}nu(\xb_j)\left(\cot\frac{y_m-x_{j+1}}{2}-\cot\frac{y_m-x_{j-1}}{2}\right)(E_j-E_m) \nonumber\\
&\,+nu(\xb_{j_+-1})\cot\frac{y_m-x_{j_+}}{2}(E_{j_+-1}-E_m)+nu(\xb_{j_+})\cot\frac{y_m-x_{j_++1}}{2}(E_{j_+}-E_m) \nonumber\\
&\,-nu(\xb_{j_-})\cot\frac{y_m-x_{j_--1}}{2}(E_{j_-}-E_m)-nu(\xb_{j_--1})\cot\frac{y_m-x_{j_--2}}{2}(E_{j_--1}-E_m). \nonumber
\end{align}
Denote the first term on the right hand side of \eqref{D2bddy} as
\begin{equation}\label{D2}
  D_{2,m}=\sum_{j\in\Smc\backslash\{j_+, j_--1\}}d_2(j,m)(E_j-E_m),
\end{equation}
with positive coefficients
\begin{align}
  d_2(j,m)=&\,nu(\xb_j)\left(\cot\frac{y_m-x_{j+1}}{2}-\cot\frac{y_m-x_{j-1}}{2}\right)=\frac{nu(\xb_j)}{2\sin^2\frac{\tilde{z}_j}{2}}(x_{j+1}-x_{j-1})\nonumber\\
  =&\,\frac{1}{2\sin^2 \frac{y_m-x_j}{2}}\left(1+O(n^{-1/2}+n\|E\|_\infty)\right)\sim
\frac{n^2}{|m-j|^2}, \label{d2}
\end{align}
where $\tilde{z}_j \in (y_m-x_{j+1},y_m-x_{j-1})$ and we performed several elementary simplifications in the third step.
The expression $D_{2,m}$ is our main dissipative term in the far field.
The remaining four boundary terms in \eqref{D2bddy} are kept in
\eqref{II2mjdiff}, and will be handled later. This completes the treatment of the first term in
\eqref{II2mjr}, and summing up all the bounds we see that it has been represented as the sum of the last two lines in
\eqref{D2bddy} and
\begin{align}\label{err41110}
D_{2,m}+H_{1,m}+O(n^{5/2}\|E\|_\infty |y_{m+1}-y_m-x_{m+1}+x_m|+\ux n\|E\|_\infty+\ux n^2\log n\|E\|_\infty^2).
\end{align}

Next, we consider the second term in \eqref{II2mjr},
\begin{align}\label{aux11130} 2n\int_{\xb_{j-1}}^{\xb_j}u(y)\left(\cot\frac{y_m-y}{2}-\cot\frac{y_m-x_j}{2}\right)dy.\end{align}
First, we can replace $u(y)$ by $u(x_j)$, or any $u(z)$ where $z$ is
$O(n^{-1})$ distance from $y$, for instance, $u(\xb_{j-1})$ or
$u(\xb_j)$. The resulting difference sums up to
\begin{equation}\label{changeu}
  \sum_{|j-m|\gtrsim n^{1/2}}O(n\cdot n^{-1}\cdot \ux n^{-1}\cdot
  n|m-j|^{-2})=O(\ux n^{-1/2}).
\end{equation}
Then, we expand $\cot\frac{y_m-y}{2}$ into Taylor series around $\frac{y_m-x_j}{2}$:
\begin{align}\label{taylor1130}
\cot\frac{y_m-y}{2} = \sum_{k=0}^\infty \frac{(-1)^k \cot^{(k)}\frac{y_m-x_j}{2}}{2^k k!}(y-x_j)^k.
\end{align}
We have
\begin{align}
\int_{\xb_{j-1}}^{\xb_j} (y-x_j)^k \, dy & = \frac{1}{k+1}\left(  \left(\frac{x_{j+1}-x_{j}}{2} \right)^{k+1} -\left(\frac{x_{j-1}-x_{j}}{2} \right)^{k+1} \right) \\
& = \frac{1}{(k+1)2^{k+1}} \left( (x_{j+1}-x_j)^{k+1}+(-1)^k (x_j-x_{j-1})^{k+1}    \right). \label{inttaylor1130}
\end{align}
When $k$ is odd, the expression in brackets appearing in \eqref{inttaylor1130} for all sufficiently large $n$ can be estimated by
\begin{align}
(k+1) (x_{j+1}+x_{j-1}-2x_j) O(n^{-k}) & = (k+1) \left( \frac{1}{2n u(\xb_{j})} - \frac{1}{2n u(\xb_{j-1})}+E_{j}-E_{j-1} \right) O(n^{-k})\\
& = (k+1) O(\ux n^{-k-2} +\|E\|_\infty n^{-k}). \label{oddk1130}
\end{align}
As usual, the constants involved in $O$ may only depend on $u_0,$ but not on $n$ or $k.$
Using the Laurent series for $\cot$ \eqref{cotexp1110} and \eqref{ymxj}, it is not difficult to show that
\begin{align}\label{dercotest1130}
\left| \cot^{(k)}\frac{y_m-x_j}{2} \right| \leq \frac{C^k k! n^{k+1}}{|m-j|^{k+1}}
\end{align}
with some constant $C>0$ that may only depend on $u_0.$ 
Taking into account the factor $n$ in front of the integral in \eqref{aux11130},
we obtain that a contribution to \eqref{aux11130} from any odd $k$ in \eqref{taylor1130}
can be estimated as
\begin{align}\label{aux21130}
O\left( \frac{(k+1) C^k n^{k+2}}{|m-j|^{k+1}} \cdot (\ux n^{-k-2} +\|E\|_\infty n^{-k}) \right)= O ( C^k \ux  |m-j|^{-k-1} + C^k n^2 |m-j|^{-k-1} \|E\|_\infty ),
\end{align}
where we can absorb $k+1$ into $C^k$ by slightly adjusting the constant.
Summing up over $j \in \Smc$ we get the bound of the order
\begin{align}\label{aux31130}
O(C^k \ux n^{-\frac{k}{2}} + C^k  n^{-\frac{k}{2}+2} \|E\|_\infty).
\end{align}
For all $n$ sufficiently large, the first expression in \eqref{aux31130} can be summed up over $k \geq 1$, yielding $O(\ux n^{-1/2}).$ The second expression in \eqref{aux31130} can be summed up over $k \geq 3$
leading to $O(n^{1/2}\|E\|_\infty)$ contribution. The case $k=1$ for the second error will now be considered in more detail.

Going back to \eqref{taylor1130}, \eqref{inttaylor1130} and \eqref{oddk1130}, we see that the part of the linear term $k=1$ leading to the large error is given by
\begin{align}
\frac{n u(x_j)}{8\sin^2 \frac{y_m-x_j}{2}} (E_j-E_{j-1}) (x_{j+1}-x_{j-1}) & = \frac{1}{8\sin^2 \frac{y_m-x_j}{2}} \left( (E_j-E_{j-1})+ n u(x_j)(E_j^2-E_{j-1}^2) \right)\\
& +O(n |j-m|^{-2} \|E\|_\infty).\label{aux41130}
\end{align}
For the second summand in the brackets on the right side of \eqref{aux41130}, we telescope when summing over $j$ and get
\begin{align*}
  &\sum_{j\in\Smc}\frac{nu(x_j)}{8\sin^2\frac{y_m-x_j}{2}}(E_j^2-E_{j-1}^2)
  =\sum_{j\in\Smc\backslash\{j_+\}}
  \left(\frac{nu(x_j)}{8\sin^2\frac{y_m-x_j}{2}}
  -\frac{nu(x_{j+1})}{8\sin^2\frac{y_m-x_{j+1}}{2}}\right)E_j^2+O(n^2\|E\|_\infty^2)\\
  &=\sum_{j\in\Smc\backslash\{j_+\}}O\left(\frac{n^3}{|m-j|^2}\cdot\ux
    n^{-1}+\frac{n^4}{|m-j|^3}\cdot n^{-1} \right)E_j^2+O(n^2\|E\|_\infty^2)
    =O(n^2\|E\|_\infty^2).
\end{align*}

Summing up the rest of \eqref{aux41130} in $j,$ and adding the earlier bounds, we obtain that summation over all odd $k$ in \eqref{taylor1130} results in the
following contribution
to \eqref{aux11130}:
\begin{align}\label{err51110}
\sum_{j\in\Smc}\frac{E_j-E_{j-1}}{8\sin^2\frac{y_m-x_j}{2}}+O(\ux  n^{-1/2}+n^{1/2}\|E\|_\infty+n^2\|E\|_\infty^2).\end{align}
Telescoping in the sum that remains in \eqref{err51110} would yield a critical error of the order
$O(n\|E\|_\infty)$ without $\ux$ factor, which should be avoided if we want to
control the evolution of errors for all times. Let us consider the
difference of the sum in \eqref{err51110} with the corresponding sum in $II_{2,m+1}^{j+1}$:
\begin{equation}\label{H2}
  H_{2,m}:=\sum_{j\in\Smc}\frac{E_j-E_{j-1}}{8\sin^2\frac{y_m-x_j}{2}}
  -\sum_{j\in\Smcp}\frac{E_j-E_{j-1}}{8\sin^2\frac{y_{m+1}-x_j}{2}}
    =\sum_{j\in\Smc\cup\{j_+-1, j_-\}}h_2(j,m) (E_j-E_m).
\end{equation}
Here we write $E_j-E_{j-1}=(E_j-E_m)-(E_{j-1}-E_m)$ and reorganize
the sum. The coefficients $h_2(j,m)$ are given by
\begin{align}\label{h2}
  &h_2(j,m)=\\ &\begin{cases}
    \frac{1}{8\sin^2\frac{y_m-x_j}{2}}-\frac{1}{8\sin^2\frac{y_m-x_{j+1}}{2}}
    -\frac{1}{8\sin^2\frac{y_{m+1}-x_j}{2}}+\frac{1}{8\sin^2\frac{y_{m+1}-x_{j+1}}{2}},&
    j\in\Smc\backslash\{j_--1, j_+\}\\
    \frac{1}{8\sin^2\frac{y_m-x_{j_--1}}{2}}
    -\frac{1}{8\sin^2\frac{y_{m+1}-x_{j_--1}}{2}}+\frac{1}{8\sin^2\frac{y_{m+1}-x_{j_-}}{2}},&
    j=j_--1\\
    -\frac{1}{8\sin^2\frac{y_{m+1}-x_{j_-}}{2}},& j=j_-\\
    -\frac{1}{8\sin^2\frac{y_m-x_{j_+}}{2}},& j=j_+-1\\
    \frac{1}{8\sin^2\frac{y_m-x_{j_+}}{2}}-\frac{1}{8\sin^2\frac{y_m-x_{j_++1}}{2}}
   +\frac{1}{8\sin^2\frac{y_{m+1}-x_{j_++1}}{2}},&
    j=j_+
  \end{cases}\nonumber
\end{align}
In the bulk when $ j\in\Smc\backslash\{j_--1, j_+\}$, we have
\[\frac{1}{8\sin^2\frac{y_m-x_j}{2}}-\frac{1}{8\sin^2\frac{y_m-x_{j+1}}{2}}=O(n^3|m-j|^{-3}\cdot n^{-1})=O(n^2|m-j|^{-3}),\]
and similarly for the summands containing $y_{m+1}.$
Therefore, the coefficients satisfy
\begin{align}\label{h2out1116} h_2(j,m)=O(n^2|m-j|^{-3}),\quad \text{ for }  j\in\Smc\backslash\{j_--1, j_+\}.\end{align}
At the boundary, there is no such cancellation.
Since $x_{j_+}-y_m=n^{-1/2}+O(n^{-1})$, we have
\begin{equation}\label{sinest}
  \sin^2\frac{x_{j_+}-y_m}{2}=\frac{1}{4n} + O(n^{-3/2}).
\end{equation}
Similar estimates hold for all terms in \eqref{h2} at the boundary.
Therefore, we have
\begin{equation}\label{h2boundary}
  h_2(j,m)=\begin{cases}
    -\frac{n}{2}+O(n^{1/2}),& j=j_-, j_+-1\\
    \frac{n}{2}+O(n^{1/2}),& j=j_--1, j_+
  \end{cases}
\end{equation}

We now consider the case of even $k$ in \eqref{taylor1130}.
In this case, we will be matching the $m$th and $(m+1)$st contributions. From \eqref{taylor1130} and \eqref{inttaylor1130},
we see that the key expression to estimate is
\begin{align}\label{aux51130}
\cot^{(k)} \frac{y_m-x_j}{2}  \left( (x_{j+1}-x_j)^{k+1}+ (x_j-x_{j-1})^{k+1}\right) - \\
\cot^{(k)} \frac{y_{m+1}-x_{j+1}}{2}  \left( (x_{j+2}-x_{j+1})^{k+1}+ (x_{j+1}-x_{j})^{k+1}\right).
\end{align}
Let us telescope, and first observe that
\begin{align}\label{aux61130}
\left| \cot^{(k)} \frac{y_m-x_j}{2} - \cot^{(k)} \frac{y_{m+1}-x_{j+1}}{2} \right| \leq \frac12 |\cot^{(k+1)} z_j|
\cdot |y_m - x_j - y_{m+1} +x_{j+1}|,
\end{align}
where $z_j \in (\frac{y_m-x_j}{2}, \frac{y_{m+1}-x_{j+1}}{2}).$ But
\[ y_{m+1} - x_{j+1} - y_{m} +x_{j} = y_{m+1}-x_{m+1}-y_m +x_m +\frac{1}{2n u(\xb_m)}-\frac{1}{2n u(\xb_j)} +E_m -E_j, \]
and taking into account \eqref{dercotest1130} and \eqref{ymxj}, we obtain that the contribution to \eqref{aux11130} coming from the
difference of cotangents in \eqref{aux51130} has the order of
\begin{align}\label{aux71130}
O \left( n \cdot \frac{C^{k+1} (k+1)! n^{k+2}}{(k+1)! |m-j|^{k+2}} \cdot n^{-k-1} \cdot \left(|y_{m+1}-x_{m+1}-y_m +x_m| + \ux |m-j| n^{-2} + \|E\|_\infty \right)\right).
\end{align}
Simplifying this expression and summing over $j \in \Smc,$ we obtain
\begin{align}
\sum_{j \in \Smc} O \left( C^{k+1} n^2 |m-j|^{-k-2} (|y_{m+1}-x_{m+1}-y_m +x_m| + \ux |m-j| n^{-2} + \|E\|_\infty)\right) \\ = O\left(C^{k+1}\left(n^{-\frac{k-3}{2}} |y_{m+1}-x_{m+1}-y_m +x_m| +
\ux n^{-\frac{k}{2}} + n^{-\frac{k-3}{2}} \|E\|_\infty \right)\right).  \label{aux81130}
\end{align}
Given that the zeroth term in \eqref{taylor1130} is cancelled identically, we sum \eqref{aux81130} over even $k \geq 2,$ obtaining the contribution of the order
\begin{align}\label{cotdiff1130}
 O( n^{1/2} |y_{m+1}-x_{m+1}-y_m +x_m| + \ux n^{-1} + n^{1/2} \|E\|_\infty).
\end{align}
Next, let us estimate the second telescoped difference
\begin{equation}\label{aux91130}
\cot^{(k)} \frac{y_m-x_j}{2} \cdot \left( (x_{j+1}-x_j)^{k+1}+ (x_j-x_{j-1})^{k+1} -(x_{j+2}-x_{j+1})^{k+1}- (x_{j+1}-x_{j})^{k+1}\right).
\end{equation}
Note that the factor in brackets in \eqref{aux91130} equals
\begin{align}
& (x_j-x_{j-1})^{k+1} -(x_{j+2}-x_{j+1})^{k+1}= (x_j-x_{j-1}-x_{j+2}+x_{j+1}) \cdot  O(k n^{-k})  \nonumber\\
&= O \left(k n^{-k} \cdot \left( \frac{1}{2n u(\xb_{j-1})} - \frac{1}{2n u(\xb_{j+1})} +E_{j-1}-E_{j+1} \right)\right)\nonumber\\
&= O(k n^{-k} (\ux n^{-2} + \|E\|_\infty )). \label{aux101130}
\end{align}
Combining \eqref{aux101130} with \eqref{dercotest1130} and \eqref{taylor1130}, we find that the contribution of the difference of $m$th and $(m+1)$st
$k$th summand from \eqref{taylor1130} for even $k$ has the order
\[  O\left( n\frac{ C^k k! n^{k+1}}{k! |m-j|^{k+1}} \cdot k n^{-k} (\ux n^{-2} + \|E\|_\infty) \right) = O(C^k |m-j|^{-k-1}(\ux + n^2 \|E\|_\infty)). \]
Summing up over $j \in \Smc,$ we obtain
\begin{align}\label{aux121130}
\sum_{j \in \Smc} O\left(C^k |m-j|^{-k-1}(\ux + n^2 \|E\|_\infty)\right) = O\left(C^k (\ux n^{-\frac{k}{2}} + n^{-\frac{k}{2}+2}\|E\|_\infty)\right).
\end{align}
We can sum the first expression on the right hand side of \eqref{aux121130} over even $k \geq 2,$ and the second one over $k \geq 4$, yielding the
contribution of the order $O( \ux n^{-1} + \|E\|_\infty).$
For the $k=2$ case of the second expression in \eqref{aux121130}, we get the contribution of the order $O(n \|E\|_\infty)$. This is a critical error without $\ux$ factor and so we will consider it
in more detail. This error arises from the difference $E_{j-1}-E_{j+1}$ in \eqref{aux101130}, and equals 
\begin{align}\label{aux141130}
 \sum_{j \in \Smc} \frac{nu(x_j)\cos\frac{y_m-x_j}{2}}{\sin^3\frac{y_m-x_j}{2}} \cdot \left((E_{j-1}-E_m)-(E_{j+1}-E_m)\right) \cdot O(n^{-2}), 
\end{align}
where $O(n^{-2})$ contains a constant factor and a factor coming from \eqref{aux101130}.
We represent \eqref{aux141130} as
\begin{align}
H_{3,m}:= \,\sum_{j\in\Smc\cup\{j_-, j_+-1\}} h_3(j,m)(E_j-E_m), \label{H3}
\end{align}
where the coefficients $h_3(j,m)$ can be estimated by
\begin{equation}\label{h3}
  h_3(j,m)=O\left(\frac{n^4}{|m-j|^3}\right)\cdot
  O(n^{-2})=O\left(\frac{n^2}{|m-j|^3}\right).
\end{equation}
A better estimate is possible in the bulk, taking into account the
cancellation - but we do not need it.

Collecting all the estimates, we obtain \eqref{II2mjdiff}.
\end{proof}

Next, we work on the mismatched part in \eqref{II2m}
\begin{equation}\label{IImis}
  II_{\text{Mis}, m}:=2n\int_{\Dmis_m} u(y)\cot\frac{y_m-y}{2}\,dy=
 2n \int_{\Dmis_m^+}u(y)\cot\frac{y_m-y}{2}\,dy
  +2n \int_{\Dmis_m^-}u(y)\cot\frac{y_m-y}{2}\,dy.
\end{equation}
Recall that
\begin{align}\label{Dmis1110} \Dmis_m =\Dmis_{m}^-\cup \Dmis_{m}^+,\quad\text{where}\quad \Dmis_{m}^-=[
 \xb_{j_--1}, y_m-n^{-1/2}], \,\, \Dmis_{m}^+=[y_m+n^{-1/2}, \xb_{j_+-1}].\end{align}
  The endpoints in the definition of the intervals $\Omega_m^\pm$ record the accurate
 limits of integration in \eqref{IImis} (which influences the sign of the contributions).
 The correct order of the limits of integration can be inferred from \eqref{IIm}, \eqref{II2mj} and \eqref{II2m}.

\begin{lemma}\label{lem:mismatch}
  Let $II_{\text{Mis}, m}$ be as defined in \eqref{IImis}. Then,
  \begin{align}
    &II_{\text{Mis}, m}-II_{\text{Mis}, m+1}=
    nu(\xb_{j_+})\cot\frac{z_+}{2}\big((E_{j_+-1}-E_m)+(E_{j_+}-E_m)\big)\nonumber\\
    &\qquad+nu(\xb_{j_-})\cot\frac{z_-}{2}\big((E_{j_--1}-E_m)+(E_{j_-}-E_m)\big)\nonumber\\
    &\qquad+O\big(n^{3/2}(y_{m+1}-y_m-x_{m+1}+x_m)+\ux n^{-1/2}+\ux
    n\|E\|_\infty\big), \label{IImismatch}
  \end{align}
  where $z_+, z_-$ are such that $z_\pm=n^{-1/2}+O(n^{-1})$.
\end{lemma}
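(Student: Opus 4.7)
The plan is to decompose $II_{\text{Mis},m}-II_{\text{Mis},m+1}$ into contributions from $\Dmis_m^+$ and $\Dmis_m^-$, analyze each side by the same template, and combine at the end, exploiting a leading-order cancellation between the two sides to achieve the stated $O(\ux n^{-1/2})$ error. On the positive side, adding and subtracting $2n\int_{y_m+n^{-1/2}}^{\xb_{j_+-1}}u(y)\cot\tfrac{y_{m+1}-y}{2}\,dy$ splits the positive-side difference into three pieces: (a) the integral of the kernel difference $\cot\tfrac{y_m-y}{2}-\cot\tfrac{y_{m+1}-y}{2}$ over $[y_m+n^{-1/2},\xb_{j_+-1}]$; (b) the integral of $u(y)\cot\tfrac{y_{m+1}-y}{2}$ over $[y_m+n^{-1/2},\,y_{m+1}+n^{-1/2}]$; and (c) the same integrand over $[\xb_{j_+-1},\,\xb_{j_+}]$ with opposite sign.

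The heart of the calculation is (b)+(c): on both intervals $y-y_{m+1}$ stays within $O(n^{-1})$ of the common reference $z_+:=x_{j_+}-y_{m+1}=n^{-1/2}+O(n^{-1})$. Taylor-expanding $u(y)$ around $x_{j_+}$ and $\cot\tfrac{y_{m+1}-y}{2}$ around $-z_+/2$---and using that $x_{j_+}$ is the midpoint of $[\xb_{j_+-1},\xb_{j_+}]$ to kill the linear Taylor term when integrating (c)---reduces (b)+(c) to
\[
2n\,u(x_{j_+})\cot\tfrac{z_+}{2}\bigl[(\xb_{j_+}-\xb_{j_+-1})-(y_{m+1}-y_m)\bigr]+\text{lower-order corrections}.
\]
Now expand $\xb_{j_+}-\xb_{j_+-1}=(2nu(\xb_{j_+-1}))^{-1}+\tfrac{1}{2}(E_{j_+-1}+E_{j_+})+O(\ux n^{-2})$ and $y_{m+1}-y_m=(2nu(\xb_m))^{-1}+E_m+(y_{m+1}-y_m-x_{m+1}+x_m)$. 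The $E$-contributions produce the claimed main term $nu(\xb_{j_+})\cot\tfrac{z_+}{2}\bigl[(E_{j_+-1}-E_m)+(E_{j_+}-E_m)\bigr]$; the $y$-difference contributes $O(n^{3/2}|y_{m+1}-y_m-x_{m+1}+x_m|)$; cross terms with the $E$-factors produce $O(\ux n\|E\|_\infty)$; and the mismatch $u(\xb_{j_+-1})^{-1}-u(\xb_m)^{-1}=O(\ux n^{-1/2})$ leaves an $O(\ux)$ residual that by itself exceeds the target error.

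Term (a) is estimated by the mean value theorem: $|\cot\tfrac{y_m-y}{2}-\cot\tfrac{y_{m+1}-y}{2}|\lesssim n|y_{m+1}-y_m|$ on the $O(n^{-1})$-length interval, so $|\text{(a)}|\lesssim n|y_{m+1}-y_m|=O(1)+O(n|y_{m+1}-y_m-x_{m+1}+x_m|)$, again with an $O(1)$ residual. The main obstacle, and the reason for combining the two sides, is showing that these $O(\ux)$ and $O(1)$ leading residuals cancel with their counterparts from $\Dmis_m^-$. For (b)+(c), use the sign asymmetry $\xb_{j_+-1}-\xb_m\approx+n^{-1/2}$ versus $\xb_{j_--1}-\xb_m\approx-n^{-1/2}$: the first-order Taylor expansions $u(\xb_{j_{\pm}-1})^{-1}-u(\xb_m)^{-1}\approx\mp u'(\xb_m)u(\xb_m)^{-2}n^{-1/2}$ carry opposite signs, while $\cot(z_\pm/2)\approx 2n^{1/2}$ carry the same sign, so the two $O(\ux)$ residuals cancel and leave only $O(\ux n^{-1/2})$. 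An analogous even/odd cancellation among the length mismatches $\xb_{j_{\pm}-1}-(y_m\pm n^{-1/2})$ eliminates the $O(1)$ residual from (a). The technical work is then the bookkeeping---propagating $E$-dependence cleanly through Taylor remainders, controlling $u$-variation on scales $n^{-1/2}$ by $\ux$, and carefully matching signs between $\Dmis_m^+$ and $\Dmis_m^-$---but no estimate beyond those already developed in Sections \ref{rigderoneroot}--\ref{nearfieldtworoots} is needed.
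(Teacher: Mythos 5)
Your overall architecture is close to the paper's: both split off the $\Dmis_m^\pm$ contributions, pair each with its $(m+1)$ counterpart, extract the boundary dissipation from the length difference $(\xb_{j_+}-\xb_{j_+-1})-(y_{m+1}-y_m)$ exactly as in \eqref{mismatchlength}, and the cross-side pairing you use to kill the $O(\ux)$ residual coming from $\tfrac{1}{2u(\xb_{j_+-1})}+\tfrac{1}{2u(\xb_{j_+})}-\tfrac1{u(\xb_m)}$ is precisely the paper's step \eqref{err41114}--\eqref{err51114}, so that part is sound. The gap is in your treatment of the $O(1)$ residuals that your particular splitting creates. Your term (a), $2n\int_{\Dmis_m^+}u(y)\big(\cot\tfrac{y_m-y}{2}-\cot\tfrac{y_{m+1}-y}{2}\big)dy$, is genuinely of size $\sim 4n^2u\,(y_{m+1}-y_m)\cdot\big(\xb_{j_+-1}-(y_m+n^{-1/2})\big)=O(1)$ with no $\ux$ or $\|E\|_\infty$ factor, and your proposed cure --- an ``even/odd cancellation among the length mismatches $\xb_{j_\pm-1}-(y_m\pm n^{-1/2})$'' between the two sides --- does not exist: each of these offsets is just the distance from $y_m\pm n^{-1/2}$ to the nearest midpoint, an essentially arbitrary quantity of size up to half a spacing, and there is no sign or parity relation between the offsets at the two unrelated locations $y_m+n^{-1/2}$ and $y_m-n^{-1/2}$. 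So $(a_+)+(a_-)$ remains $O(1)$ in general, which is fatal: an $O(1)$ error in $G_m-G_{m+1}$ feeds an $O(n^{-2})$ per-step error into \eqref{Eevo2}, i.e.\ $O(n^{-1})$ per unit time, destroying the error bound.

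The cancellation you need is actually \emph{within each side}, not across sides, and it sits exactly in the piece you discarded as ``lower-order corrections'' in (b)+(c): when you expand $\cot\tfrac{y_{m+1}-y}{2}$ around the fixed reference $-z_+/2$ on the interval (b), the linear correction is proportional to $y-x_{j_+}$, whose constant part is the same grid offset $O(n^{-1})$, so this correction is also $O(1)$ and it is the term that offsets (a). The clean way to see this (and the paper's route, \eqref{misdecomp}--\eqref{err81110}) is to pull $u$ out first (cost $O(\ux n^{-1/2})$) and substitute $z=y-y_m$ and $z=y-y_{m+1}$ in the $m$ and $m+1$ integrals respectively; then both lower limits become exactly $n^{-1/2}$ and cancel identically, leaving a single integral $2nu(\xb_{j_+})\int_{\xb_{j_+-1}-y_m}^{\xb_{j_+}-y_{m+1}}\cot\tfrac z2\,dz$ over an interval whose length is controlled by \eqref{mismatchlength}, so no $O(1)$ residual ever appears. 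As written, your proof leaves an uncancelled $O(1)$ term (and, relatedly, untracked $O(1)$ corrections in (b)), so it does not establish the lemma; repairing it requires either tracking the (a)--(b) cancellation explicitly or adopting the change-of-variables alignment above.
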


\begin{proof}
We focus our discussion on the first integral (over $\Dmis^+_m$) in
\eqref{IImis}. The second integral can be treated using the analogous argument.
Decompose the integral as follows:
\begin{equation}\label{misdecomp}
  2nu(\xb_{j_+-1})\int_{\Dmis_m^+}\cot\frac{y_m-y}{2}\,dy
+2n\int_{\Dmis_m^+}\left(u(y)-u(\xb_{j_+-1})\right)
\cot\frac{y_m-y}{2}\,dy.
\end{equation}
The second term in \eqref{misdecomp} can be estimated by
\[O\big(n\cdot n^{-1}\cdot \ux n^{-1}\cdot n^{1/2}\big)=O(\ux n^{-1/2}).\]
Here, we have used the fact that $|\Dmis_m^+|=O(n^{-1})$, and
$y_m-y=n^{-1/2}+O(n^{-1})$ for any $y\in\Dmis_m^+$.
Let us offset the first term in \eqref{misdecomp} by its counterpart from $II_{2,m+1}:$
\begin{align}\nonumber
& 2nu(\xb_{j_+-1})\int_{y_{m}+n^{-1/2}}^{\xb_{j_+-1}}\cot\frac{y_m-y}{2}\,dy-2nu(\xb_{j_+})\int_{y_{m+1}+n^{-1/2}}^{\xb_{j_+}}\cot\frac{y_{m+1}-y}{2}\,dy\\
& =-2nu(\xb_{j_+-1})\int_{n^{-1/2}}^{\xb_{j_+-1}-y_m}\cot\frac{z}{2}\,dz+2nu(\xb_{j_+})\int_{n^{-1/2}}^{\xb_{j_+}-y_{m+1}}\cot\frac{z}{2}\,dz. \label{err71110}
\end{align}
Let us replace $u(\xb_{j_+-1})$ in the first summand in \eqref{err71110} with $u(\xb_{j_+}).$ This generates an error of the order
\[ O\big( \ux n^{-1}\cdot n\cdot n^{-1}\cdot n^{1/2}\big) = O(\ux n^{-1/2}).  \]
Then the difference in \eqref{err71110} can be written as
\begin{align}\label{err81110} 2nu(\xb_{j_+}) \int_{\xb_{j_+-1}-y_m}^{\xb_{j_+}-y_{m+1}} \cot\frac{z}{2}\,dz = 2nu(\xb_{j_+})\cot \frac{z_+}{2} \cdot (\xb_{j_+}-y_{m+1}-\xb_{j_+-1}+y_m), \end{align}
where $z_+\in(\xb_{j_+-1}-y_m, \xb_{j_+}-y_{m+1})$, and clearly
$z_+=n^{-1/2}+O(n^{-1})$.
Further decomposition yields
\begin{align}
  \xb_{j_+}-y_{m+1}-\xb_{j_+-1}+y_m=&\,
    -(y_{m+1}-y_m-x_{m+1}+x_m)+\left(\frac{E_{j_+-1}+E_{j_+}}{2}-E_m\right)\nonumber\\
 &+\frac{1}{2n}\left(\frac{1}{2u(\xb_{j_+-1})}+\frac{1}{2u(\xb_{j_+})}-\frac{1}{u(\xb_m)}\right).\label{mismatchlength}
\end{align}
The first term in \eqref{mismatchlength} leads to an error
\begin{align}\label{err11114} O\big(n^{3/2}|y_{m+1}-y_m-x_{m+1}+x_m|\big).\end{align}
The second term  in \eqref{mismatchlength} produces dissipation at the boundary
\begin{align}\label{bnddiss1114} nu(\xb_{j_+})\cot\frac{z_+}{2}(E_{j_+-1}-E_m)
  +nu(\xb_{j_+})\cot\frac{z_+}{2}(E_{j_+}-E_m).\end{align}
We keep this term in \eqref{IImismatch}.
The last term in \eqref{mismatchlength} leads to a contribution
\begin{align}\label{err21114} u(\xb_{j_+})\cot\frac{z_+}{2}\cdot\left(\frac{1}{2u(\xb_{j_+-1})}+\frac{1}{2u(\xb_{j_+})}-\frac{1}{u(\xb_m)}\right).\end{align}
Let us make the following simplifications here. Replace $z_+$ by
$n^{-1/2}$. As $z_+-n^{-1/2}=O(n^{-1})$, the error generated by such substitution is
\[O\big(n\cdot n^{-1}\cdot \ux n^{-1/2}\big)=O(\ux n^{-1/2}).\]
Replace $u(\xb_{j_+-1})$ by $u(\xb_{j_+})$, the error is
\[O\big( n^{1/2} \ux n^{-1}\big)=O(\ux n^{-1/2}).\]
Then \eqref{err21114} becomes
\begin{align}\label{err31114} \cot\frac{n^{-1/2}}{2}\cdot \frac{u(\xb_m)-u(\xb_{j_+})}{u(\xb_m)}.\end{align}
Such contribution produces a supercritical error of order $O(\ux)$.
We will group \eqref{err31114} with the analogous term coming from
the integral in $\Dmis^-_m,$ leading to
\begin{align}\label{err41114} \cot\frac{n^{-1/2}}{2}\cdot \frac{u(\xb_m)-u(\xb_{j_+})}{u(\xb_m)}
  +\cot\frac{n^{-1/2}}{2}\cdot \frac{u(\xb_m)-u(\xb_{j_-})}{u(\xb_m)}.
\end{align}
To explain the contribution from integration over $\Dmis^-_m$ to \eqref{err41114}, let us quickly trace through
the corresponding computations. The analogs of the estimates in \eqref{misdecomp}, \eqref{err71110}, \eqref{err81110}
lead to the principal contribution
\begin{align}\label{omneg1114}
2nu(\xb_{j_-}) \int_{y_{m+1}-\xb_{j_-}}^{y_{m}-\xb_{j_--1}} \cot\frac{z}{2}\,dz = 2nu(\xb_{j_-})\cot \frac{z_-}{2} \cdot (y_{m}-\xb_{j_--1}+\xb_{j_-}-y_{m+1})
\end{align}
where $z_-\in(y_{m+1}-\xb_{j_-}, y_{m}-\xb_{j_--1})$. In the analog of \eqref{mismatchlength},
the second term leads to the dissipative contribution
\[  nu(\xb_{j_-})\cot\frac{z_-}{2}(E_{j_--1}-E_m)
  +nu(\xb_{j_-})\cot\frac{z_-}{2}(E_{j_-}-E_m), \]
that appears in \eqref{IImismatch}.  The analog of the last term in \eqref{mismatchlength} leads to a contribution
\begin{align}\label{aux1213}  u(\xb_{j_-})\cot\frac{z_-}{2}\cdot\left(\frac{1}{2u(\xb_{j_--1})}+\frac{1}{2u(\xb_{j_-})}-\frac{1}{u(\xb_m)}\right) \end{align}
that after simplifications yields the second term in \eqref{err41114}.

Coming back to \eqref{err41114}, we aim to exploit additional cancellations in the numerator:
\begin{equation}\label{err51114}
  \big(u(\xb_m)-u(\xb_{j_+})\big)+\big(u(\xb_m)-u(\xb_{j_-})\big)
  =\partial_x u(\xb_m)\big((\xb_m-\xb_{j_+})+(\xb_m-\xb_{j_-})\big)+O(\ux n^{-1}).
\end{equation}
The higher order expression $O(\ux n^{-1})$ in \eqref{err51114} leads to an error $O(\ux n^{-1/2})$. For the
remaining first summand on the right side of \eqref{err51114}, write
\begin{equation}\label{err71114}
  \xb_m-\xb_{j_+}=-\frac{1}{4nu(\xb_m)}-\sum_{l=1}^{j_+-m-1}\frac{1}{2nu(\xb_{m+l})}-\frac{1}{4nu(\xb_{j_+})}-\frac{E_m}{2}-\sum_{l=1}^{j_+-m-1}E_{m+l}-\frac{E_{j_+}}{2},
\end{equation}
\begin{equation}\label{err81114}
\xb_m-\xb_{j_-}=\frac{1}{4nu(\xb_m)}+\sum_{l=1}^{m-j_--1}\frac{1}{2nu(\xb_{m-l})}+\frac{1}{4nu(\xb_{j_-})}+\frac{E_m}{2}+\sum_{l=1}^{m-j_--1}E_{m-l}+\frac{E_{j_-}}{2}.
\end{equation}
We can estimate the sum
\begin{align}\label{err61114} \sum_{l=1}^{\sim n^{1/2}}
  \left(-\frac{1}{2nu(\xb_{m+l})}+\frac{1}{2nu(\xb_{m-l})}\right)
=\sum_{l=1}^{\sim n^{1/2}}O\big(n^{-1}\cdot\ux ln^{-1}\big)=O(\ux
n^{-1}).\end{align}
Compared with \eqref{err61114}, in the corresponding sums in \eqref{err71114}, \eqref{err81114} there could be at most
$O(1+n^{3/2}\|E\|_{\infty}))$ mismatched summands. Each of these mismatched summands is of the order $O(n^{-1})$, leading to the error $O(\ux n^{-1/2}+\ux n \|E\|_\infty)$ due to the $\partial_x u$ factor in \eqref{err51114}
and $\cot$ factor in \eqref{err41114}.
The rest of the summands in \eqref{err71114}, \eqref{err81114} involving $\{E_j\}$ add up to
$O(n^{1/2}\|E\|_\infty)$, leading to $O(\ux n\|E\|_\infty)$ error. Putting everything together, we obtain
\[\cot\frac{n^{-1/2}}{2}\cdot \frac{u(\xb_m)-u(\xb_{j_+})}{u(\xb_m)}
  +\cot\frac{n^{-1/2}}{2}\cdot \frac{u(\xb_m)-u(\xb_{j_-})}{u(\xb_m)}
  =O\big(\ux n^{-1/2}+\ux n\|E\|_\infty\big).
\]
Collecting all bounds, we obtain \eqref{IImismatch}.
\end{proof}

Combining \eqref{II2mjdiff} and \eqref{IImismatch}, we conclude that
\begin{align}
  &II_{2,m}-II_{2,m+1}=
  D_{2,m}+D_{3,m}+H_{1,m}+H_{2,m}+H_{3,m}\nonumber\\
  &+O\big((n^{5/2}\|E\|_\infty+n^{3/2})|y_{m+1}-y_m-x_{m+1}+x_m|\big)\nonumber\\
  &+O(\ux n^{-1/2}+\ux n\|E\|_\infty+\ux n^2\log n\|E\|_\infty^2)
    +O(n^{1/2}\|E\|_\infty+n^2\|E\|_\infty^2),\label{II2mdiff}
\end{align}
where we collect the boundary terms and define $D_{3,m}$ as
\begin{align}
  D_{3,m}:=&\,  nu(\xb_{j_+})\cot\frac{z_+}{2}\big((E_{j_+-1}-E_m)+(E_{j_+}-E_m)\big)\nonumber\\
    &+nu(\xb_{j_-})\cot\frac{z_-}{2}\big((E_{j_--1}-E_m)+(E_{j_-}-E_m)\big)\nonumber\\
    &-nu(\xb_{j_+})\cot\frac{x_{j_+}-y_m}{2}(E_{j_+-1}-E_m)-nu(\xb_{j_+})\cot\frac{x_{j_++1}-y_m}{2}(E_{j_+}-E_m) \nonumber\\
          &-nu(\xb_{j_-})\cot\frac{y_m-x_{j_--1}}{2}(E_{j_-}-E_m)-nu(\xb_{j_-})\cot\frac{y_m-x_{j_--2}}{2}(E_{j_--1}-E_m)
            \nonumber\\
   =&\,\sum_{j\in\{j_--1,~ j_-,~ j_+-1,~ j_+\}}d_3(j,m)(E_j-E_m).\label{D3}
\end{align}
We adjusted some arguments of $u$ for simplicity; these changes generate a subcritical error of the order $O(\ux n^{1/2} \|E\|_\infty).$

The following lemma shows that the coefficients $d_3(j,m)$ are
positive, and hence $D_{3,m}$ produces dissipation at the interface. This will turn out to be only a part
of dissipation on the interface; we will bring all parts together later in Section \ref{properr}.

\begin{lemma}\label{lem:d3}
  The coefficients $d_3(j,m)$ in \eqref{D3} are positive, and they
  satisfy
  \begin{equation}\label{d3}
    d_3(j,m)=
    \begin{cases}
      \frac{n}{2}+O(n^{1/2}+n^2\log n\|E\|_\infty+n^2(\log n)^2\|E\|_\infty^2),& j=j_-, j_+-1,\\
      \frac{3n}{2}+O(n^{1/2}+n^2\log n\|E\|_\infty+n^2(\log n)^2\|E\|_\infty^2),& j=j_--1, j_+.
    \end{cases}
  \end{equation}
\end{lemma}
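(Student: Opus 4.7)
The plan is to exploit that each coefficient $d_3(j,m)$ in \eqref{D3} is a difference of two cotangents of the form $nu(\xb_{j_\pm})\bigl[\cot(z_\pm/2)-\cot(\theta/2)\bigr]$, where both angles lie in $n^{-1/2}+O(n^{-1})$ and $\theta$ is one of $x_{j_+}-y_m$, $x_{j_++1}-y_m$, $y_m-x_{j_--1}$, or $y_m-x_{j_--2}$. Taylor expanding $\cot(\alpha/2)=\tfrac{2}{\alpha}+O(\alpha)$ and using $z_\pm\cdot\theta=n^{-1}+O(n^{-3/2})$ will reduce each coefficient to
\[d_3(j,m)=2n^2 u(\xb_{j_\pm})\,(\theta-z_\pm)\,\bigl(1+O(n^{-1/2})\bigr)+O(1).\]
The entire lemma then reduces to an accurate estimate of the four numbers $\theta-z_\pm$, with error at most $O(n^{-3/2}+\log n\|E\|_\infty+(\log n)^2\|E\|_\infty^2)$.

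For the right-hand endpoint, $z_+$ lies in $[\xb_{j_+-1}-y_m,\,\xb_{j_+}-y_{m+1}]$ by the mean value theorem identity \eqref{err81110}. I would compute its length $(\xb_{j_+}-\xb_{j_+-1})-(y_{m+1}-y_m)$ using the midpoint spacing formula $\xb_{j_+}-\xb_{j_+-1}=\tfrac{1}{2nu(\xb_{j_+})}+O(\ux n^{-2}+\|E\|_\infty)$, together with Lemma~\ref{lem:yyxxrough} and the definition of $E_m$ to obtain $y_{m+1}-y_m=\tfrac{1}{2nu(\xb_m)}+E_m+O(n^{-3/2}+\log n\|E\|_\infty+(\log n)^2\|E\|_\infty^2)$. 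The principal terms cancel since $\tfrac{1}{2nu(\xb_{j_+})}-\tfrac{1}{2nu(\xb_m)}=O(\ux n^{-3/2})$ (using $|j_+-m|\sim n^{1/2}$ and the uniform $C^1$ bound on $u$ from global regularity), so the interval length is itself $O(n^{-3/2}+\log n\|E\|_\infty+(\log n)^2\|E\|_\infty^2)$, and hence $z_+=\xb_{j_+-1}-y_m+O(n^{-3/2}+\log n\|E\|_\infty+(\log n)^2\|E\|_\infty^2)$.

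Combining this with the explicit formulas $x_{j_+}=\xb_{j_+-1}+\tfrac{1}{4nu(\xb_{j_+-1})}+\tfrac{E_{j_+-1}}{2}$ and $x_{j_++1}-x_{j_+}=\tfrac{1}{2nu(\xb_{j_+})}+E_{j_+}$ will yield
\[x_{j_+}-y_m-z_+=\frac{1}{4nu(\xb_{j_+-1})}+O\bigl(n^{-3/2}+\log n\|E\|_\infty+(\log n)^2\|E\|_\infty^2\bigr),\]
and analogously $x_{j_++1}-y_m-z_+=\tfrac{3}{4nu(\xb_{j_+})}+O(\cdots)$ with the same error, the factor $3$ arising from the extra spacing $\tfrac{1}{2nu(\xb_{j_+})}$ between $x_{j_+}$ and $x_{j_++1}$. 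Multiplying by $2n^2 u(\xb_{j_+})$ and using $u(\xb_{j_+})/u(\xb_{j_+-1})=1+O(\ux n^{-1})$ delivers $d_3(j_+-1,m)=n/2+O(n^{1/2}+n^2\log n\|E\|_\infty+n^2(\log n)^2\|E\|_\infty^2)$ and $d_3(j_+,m)=3n/2+O(\cdots)$, matching \eqref{d3}. The computation for $d_3(j_-,m)$ and $d_3(j_--1,m)$ is completely symmetric, using \eqref{omneg1114} for $z_-$ and the corresponding relations for $\xb_{j_--1}$, $x_{j_--1}$, $x_{j_--2}$. Positivity is then automatic, since the leading values $n/2$ and $3n/2$ dominate the subleading errors whenever $n\geq n_0(u_0,\M_{\max})$ is large enough, which is ensured by \eqref{Esmall}.

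The main obstacle is the precise accounting of the interval in which the MVT point $z_\pm$ lives: its length $O(n^{-3/2}+\log n\|E\|_\infty+(\log n)^2\|E\|_\infty^2)$ inherited from Lemma~\ref{lem:yyxxrough} sits exactly at the threshold where, after multiplication by $2n^2$, it reproduces precisely the error terms $O(n^{1/2}+n^2\log n\|E\|_\infty+n^2(\log n)^2\|E\|_\infty^2)$ that must appear in \eqref{d3}. Any weaker control on $y_{m+1}-y_m$ would destroy the desired estimate; by contrast, the Taylor expansion of cotangent near $n^{-1/2}$, the midpoint spacing formula, and the quotient $u(\xb_{j_+})/u(\xb_{j_+-1})$ each contribute only strictly subcritical corrections.
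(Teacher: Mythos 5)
Your proposal is correct and follows essentially the same route as the paper: expand the difference of the two cotangents (Taylor/mean value at angles of size $n^{-1/2}$), reduce everything to the four gaps $\theta-z_\pm$, and pin down $z_\pm$ inside its defining interval by showing the interval has length $O(n^{-3/2}+\log n\|E\|_\infty+(\log n)^2\|E\|_\infty^2)$ via Lemma~\ref{lem:yyxxrough} together with the root/midpoint spacing identities — exactly the content of \eqref{endpointest}, \eqref{endpoint2est}, \eqref{jpm1diff}, \eqref{jpdiff} and \eqref{sinest} in the paper. The only cosmetic discrepancy is the "$+O(1)$" in your first display, which from the separate expansions is only $O(n^{1/2})$; this is harmless since \eqref{d3} allows an $O(n^{1/2})$ error anyway.
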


\begin{proof}
  Let us first check $d_3(j_+-1,m)$ (a similar argument applies to $d_3(j_-,m)$). From \eqref{D3}, we have
  \begin{align}
    d_3(j_+-1,m)=&\,nu(\xb_{j_+})\cot\frac{z_+}{2}-nu(\xb_{j_+})\cot\frac{x_{j_+}-y_m}{2}\label{d3jm1}\\
  =& 
   = \frac{nu(\xb_{j_+})}{2\sin^2\frac{x_{j_+}-y_m}{2}}(x_{j_+}-y_m-z_+)+O(n^{1/2}).
  \end{align}
  Now, we show $x_{j_+}-y_m-z_+>0$. Recall that
  $z_+\in(\xb_{j_+-1}-y_m, \xb_{j_+}-y_{m+1})$. Let us check the two
  endpoints. Clearly,
  \begin{equation}\label{endpointest}
    (x_{j_+}-y_m)-(\xb_{j_+-1}-y_m)=\frac{1}{4nu(\xb_{j_+-1})}+
    \frac{E_{j_+-1}}{2} =
    \frac{1}{4nu(\xb_{j_+})}+O(\|E\|_\infty+n^{-2}) >0
  \end{equation}
  for all $n$ large enough due to \eqref{Esmall}.
  We adjusted the argument of $u$ from $\xb_{j_+-1}$ to $\xb_{j_+}$ incurring $O(n^{-2})$ error.

    For the other endpoint, we apply the rough estimate \eqref{yyxxrough}
  on $y_{m+1}-y_m-x_{m+1}+x_m$ in \eqref{mismatchlength} and get
\[|\xb_{j_+}-y_{m+1}-\xb_{j_+-1}+y_m| = O(n^{-3/2}+\log
  n\|E\|_\infty+(\log n)^2\|E\|_\infty^2)+O(\|E\|_\infty)+O(\ux n^{-3/2}).\]
Then, we deduce
\begin{align}
  (x_{j_+}-y_m)-(\xb_{j_+}-y_{m+1})=&\,\big((x_{j_+}-y_m)-(\xb_{j_+-1}-y_m)\big)
  -(\xb_{j_+}-y_{m+1}-\xb_{j_+-1}+y_m)\nonumber\\
  =&\,\frac{1}{4nu(\xb_{j_+})}+O(n^{-3/2}+ \log n \|E\|_\infty+(\log n)^2\|E\|_\infty^2).\label{endpoint2est}
\end{align}

Combining \eqref{endpointest} and \eqref{endpoint2est}, we conclude
\begin{equation}\label{jpm1diff}
  x_{j_+}-y_m-z_+=\frac{1}{4nu(\xb_{j_+})}+O(n^{-3/2}+\log n\|E\|_\infty+(\log n)^2\|E\|_\infty^2).
\end{equation}
Moreover, recall \eqref{sinest}:
\[\sin^2\frac{x_{j_+}-y_m}{2}=\frac{1}{4n} + O(n^{-3/2}).\]
Substituting these estimates into \eqref{d3jm1}, we get
\begin{align}\label{aux1214}  d_3(j_+-1,m) & =\frac{n^2u(\xb_{j_+})}{2nu(\xb_{j_+})}+
  O(n^{1/2}+n^2\log n\|E\|_\infty+(\log n)^2\|E\|_\infty^2) \\ & = \frac{n}{2}+O(n^{1/2}+n^2\log n\|E\|_\infty+n^2(\log n)^2\|E\|_\infty^2),\end{align}
exactly as claimed in \eqref{d3}.

Next, we check $d_3(j_+,m)$ (a similar argument applies to $d_3(j_--1,m)$). The analog of
the calculation in \eqref{d3jm1} is
\begin{align*}
  d_3(j_+,m)=&\,nu(\xb_{j_+})\cot\frac{z_+}{2}-nu(\xb_{j_+})\cot\frac{x_{j_++1}-y_m}{2}\nonumber\\
  =&\,\frac{nu(\xb_{j_+})}{2\sin^2\frac{x_{j_++1}-y_m}{2}}(x_{j_++1}-y_m-z_+)
  +O(n^{1/2}).
\end{align*}
From \eqref{jpm1diff}, it follows that
\begin{equation}\label{jpdiff}
  x_{j_++1}-y_m-z_+=\frac{3}{4nu(\xb_{j_+})}+O(n^{-3/2}+\log
  n\|E\|_\infty+(\log n)^2\|E\|_\infty^2).
\end{equation}
Hence,
\begin{align}\label{aux11214} d_3(j_+,m) & =2n^2u(\xb_{j_+})\cdot\frac{3}{4nu(\xb_{j_+})}+
  O(n^{1/2}+n^2\log n\|E\|_\infty+(\log n)^2\|E\|_\infty^2) \\ & = \frac{3n}{2}+ O(n^{1/2}+n^2\log n\|E\|_\infty+n^2(\log n)^2\|E\|_\infty^2).\end{align}
\end{proof}

\section{Propagation of errors equation}\label{properr}

Now we are going to put together our estimates and bring the propagation of errors equation to the
 form that is most convenient for further analysis.
Let us recall the estimates in \eqref{Idiff}, 
\eqref{II1mdiff} and \eqref{II2mdiff}.
These bounds lead to the characterization
\begin{align*}
  G_m-G_{m+1}=&\,
  D_{1,m}+D_{2,m}+D_{3,m}+B_{1,m}+B_{2,m}+H_{1,m}+H_{2,m}+H_{3,m}\\
  +&\,O\big((n^3\|E\|_\infty+n^{3/2})|y_{m+1}-y_m-x_{m+1}+x_m|\big)\\
   +&\,O\big(\ux(n^{-1/2}+n\|E\|_\infty+n^2\log n\|E\|_\infty^2)\big)
    +O(n^{1/2}\|E\|_\infty+n^2\|E\|_\infty^2).
\end{align*}
Substituting this into \eqref{yyxxdiff}, we have
\begin{align*}
  &y_{m+1}-y_m-x_{m+1}+x_m\\
 &=-\frac{\pa_xu(\xb_m)}{2nu(\xb_m)^2}(y_{m+1}-x_{m+1})
    +\frac{1}{4\pi
     n^2(u(\xb_m)^2+Hu(\xb_m)^2)}\pa_x\left(\frac{Hu}{u}\right)(\xb_m)\\
  &\,+\frac{u(\xb_m)Hu(\xb_m)}{2\pi
    n^2 (u(\xb_m)^2+H u(\xb_m)^2)^2}(A_{1,m}+A_{2,m})
    \pa_x\left(\frac{Hu}{u}\right)(\xb_m)\\
  &\,+\frac{\sin^2z}{8\pi^2n^2u(\xb_m)^2}(D_{1,m}+D_{2,m}+D_{3,m}+B_{1,m}+B_{2,m}+H_{1,m}+H_{2,m}+H_{3,m})\\
  &\, + O\big((n\|E\|_\infty+n^{-1/2})|y_{m+1}-y_m-x_{m+1}+x_m|\big)\\
  &\, +O\big(\ux n^{-5/2}+\ux n^{-1}\|E\|_\infty\big)
    +O(n^{-3/2}\|E\|_\infty+ (\log n)^2 \|E\|_\infty^2).
\end{align*}
The term in the penultimate line can be absorbed into the left hand
side. Provided that $n$ is sufficiently large and \eqref{Esmall} holds, we obtain
\begin{align}
  &y_{m+1}-y_m-x_{m+1}+x_m=\big(1+O(n\|E\|_\infty+n^{-1/2})\big)\times\nonumber\\
 &\times\Bigg[-\frac{\pa_xu(\xb_m)}{2nu(\xb_m)^2}(y_{m+1}-x_{m+1})
    +\frac{1}{4\pi
     n^2(u(\xb_m)^2+Hu(\xb_m)^2)}\pa_x\left(\frac{Hu}{u}\right)(\xb_m)\Bigg.\nonumber\\
  &\,+\frac{u(\xb_m)Hu(\xb_m)}{2\pi
    n^2 (u(\xb_m)^2+H u(\xb_m)^2)^2}(A_{1,m}+A_{2,m})
    \pa_x\left(\frac{Hu}{u}\right)(\xb_m)\nonumber\\
  &\,+\frac{\sin^2z}{8\pi^2n^2u(\xb_m)^2}(D_{1,m}+D_{2,m}+D_{3,m}+B_{1,m}+B_{2,m}+H_{1,m}+H_{2,m}+H_{3,m})\nonumber\\
  &\, \Bigg.+O\big(\ux n^{-5/2}+\ux n^{-1}\|E\|_\infty\big)
    +O\left(n^{-3/2}\|E\|_\infty+(\log n)^2 \|E\|_\infty^2\right)\Bigg].\label{yyxx}
\end{align}

Now, we link the estimate above to the propagation of
the error $\{E_m^t\}$.
Let $\Delta t=\frac{1}{2n}$.
To obtain sufficiently strong estimates, we need the following lemma that will let us control decay in time of the time derivatives of $u$.
Let us recall that $u(x,t)$ solves
\begin{align}\label{maineq2}
\partial_t u = -\frac{u^2}{\pi(u^2+Hu^2)}~\pa_x\left(\frac{Hu}{u}\right)= - \frac{1}{\pi}\frac{u \Lambda u -  Hu \partial_x u}{u^2+ Hu^2}.
\end{align}
\begin{lemma}\label{utlemma}
Let $u(x,t)$ be the solution of \eqref{maineq2} corresponding to $u_0 \in H^s(\S),$ $s>7/2.$ Then for every time $t,$
\begin{align}\label{uteq}
\|\partial_t u(\cdot,t)\|_{L^\infty} \leq C\|\partial_x^2 u(\cdot,t)\|_{L^\infty},\quad
  \max\big\{\|\partial^2_{tx} u(\cdot, t)\|_{L^\infty}, \|\partial_t^2 u(\cdot, t)\|_{L^\infty}\big\} \leq C\|\partial_x^3 u(\cdot, t)\|_{L^\infty},
\end{align}
with the constant $C$ that may only depend on $u_0.$ In particular,
\[ \|\partial_t u(\cdot, t)\|_{L^\infty} + \|\partial^2_t u(\cdot, t)\|_{L^\infty} +\|\partial^2_{tx} u(\cdot, t)\|_{L^\infty} \leq C \ux(t), \]
where $\ux$ is defined in \eqref{uxdelta}.
\end{lemma}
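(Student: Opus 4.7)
The proof reduces to bounding each quantity by direct estimates from \eqref{maineq2}, using the uniform lower bound $u \geq u_{0,\min} > 0$ from Proposition~\ref{prop:mp} and the uniform-in-time bounds on $\|Hu\|_\infty$ and $\|\partial_x^k u\|_\infty$ for all $k$ from Theorem~\ref{mainthm1}. The central tool is the following workhorse interpolation: for any smooth periodic $f$ with $\int_\S f\,dx = 0$, the inequality $\|f\|_\infty \leq 2\pi \|\partial_x f\|_\infty$ (since such $f$ has a zero on $\S$), combined with Kato's inequality (Lemma~\ref{Katolem}), yields $\|Hf\|_\infty \leq C\|\partial_x f\|_\infty$ via the elementary observation that $r(1+\log_+(1/r))$ is bounded on $(0,2\pi]$. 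Applied to $f = \partial_x u$ and $f = \partial_x^2 u$ (both mean-zero by periodicity), this gives $\|\Lambda u\|_\infty \leq C\|\partial_x^2 u\|_\infty$ and $\|\Lambda\partial_x u\|_\infty \leq C\|\partial_x^3 u\|_\infty$, together with $\|\partial_x u\|_\infty \leq 2\pi\|\partial_x^2 u\|_\infty$ and $\|\partial_x^2 u\|_\infty \leq 2\pi\|\partial_x^3 u\|_\infty$.

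The bound $\|\partial_t u\|_\infty \leq C\|\partial_x^2 u\|_\infty$ follows immediately from \eqref{maineq2} by distributing the absolute value and invoking the interpolation bounds. For $\|\partial^2_{tx} u\|_\infty$, I differentiate \eqref{maineq2} in $x$; since $\partial_x Hu = \Lambda u$, the cross terms $\partial_x u \cdot \Lambda u$ cancel identically in $\partial_x N$, so
\[ \pi\partial^2_{tx}u = -\frac{u\Lambda\partial_x u - Hu\,\partial_x^2 u}{A} + \frac{N(2u\,\partial_x u + 2Hu\,\Lambda u)}{A^2}, \]
with $A = u^2 + (Hu)^2$, $N = u\Lambda u - Hu\,\partial_x u$. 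Each summand is $\leq C\|\partial_x^3 u\|_\infty$ via the interpolation bounds; quadratic combinations like $\|\partial_x^2 u\|_\infty^2$ are handled by writing $\|\partial_x^2 u\|_\infty^2 \leq \|\partial_x^2 u\|_\infty \cdot 2\pi\|\partial_x^3 u\|_\infty$ and absorbing the first factor into $C$ through its uniform-in-time bound from Theorem~\ref{mainthm1}.

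The delicate step is controlling $\|\partial_t^2 u\|_\infty$. Differentiating \eqref{maineq2} in $t$ yields an expression involving $\partial_t u$, $H\partial_t u$, $\partial^2_{tx} u$, and $\Lambda\partial_t u$; the first three are already bounded by $C\|\partial_x^3 u\|_\infty$ from the previous steps (using $\|H\partial_t u\|_\infty \leq C\|\partial^2_{tx} u\|_\infty$ from the interpolation, since $\int\partial_t u\,dx=0$). The main obstacle is $\Lambda \partial_t u = \Lambda F(u)$, where $F(u) = -(u\Lambda u - Hu\,\partial_x u)/(\pi A)$: this is a nonlocal operator applied to a rational function rather than a simple Hilbert transform of a derivative, so the workhorse interpolation above does not directly apply. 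I handle it by invoking the pointwise fractional Leibniz bound
\[ |\Lambda(fg)|(x) \leq |f(x)||\Lambda g(x)| + |g(x)||\Lambda f(x)| + C\bigl(\|\partial_x f\|_\infty\|\partial_x g\|_\infty + \|f\|_\infty\|g\|_\infty\bigr), \]
which follows from the singular-integral representation of $\Lambda$ together with the mean-value theorem, applied term by term to $F$. The identities $\Lambda^2 u = -\partial_x^2 u$ and $\Lambda Hu = -\partial_x u$ on smooth periodic functions (both consequences of $H^2 = -I$ on the mean-zero part) convert each occurrence of $\Lambda$ applied to $\Lambda u$ or $Hu$ into a spatial derivative, and combined with $\|\Lambda\partial_x u\|_\infty \leq C\|\partial_x^3 u\|_\infty$, every resulting term is $\leq C\|\partial_x^3 u\|_\infty$ after absorbing uniformly bounded factors into $C$. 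This closes the estimate.
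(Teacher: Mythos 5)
Your proposal is correct, and on the easy parts (the bound for $\partial_t u$ and for $\partial^2_{tx}u$, including the nice observation that the cross terms $\partial_x u\,\Lambda u$ cancel in $\partial_x(u\Lambda u - Hu\,\partial_x u)$, and the absorption of quadratic factors via the uniform-in-time bounds) it coincides with the paper's argument. Where you genuinely diverge is the key term $\Lambda\partial_t u=\frac{1}{\pi}\Lambda\left(\frac{u\Lambda u-Hu\,\partial_x u}{u^2+Hu^2}\right)$: the paper stays in H\"older spaces, writing $\|\Lambda f\|_{L^\infty}=\|H\partial_x f\|_{L^\infty}\leq C(\alpha)\|\partial_x f\|_{C^\alpha}$ (Kato) together with $\|\Lambda f\|_{C^\alpha}\leq C(\alpha)\|\partial_x f\|_{C^\alpha}$ (boundedness of $H$ on $C^\alpha$) for $f$ equal to the quotient, and then quotient-rule calculations; you instead stay entirely at the level of $L^\infty$ norms, using a pointwise bilinear Leibniz estimate for $\Lambda$ plus the identities $\Lambda^2 u=-\partial_x^2 u$ and $\Lambda Hu=-\partial_x u$ to convert every nonlocal operator hitting $\Lambda u$ or $Hu$ into an ordinary derivative. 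Your route is more self-contained (no Privalov-type H\"older boundedness of $H$ is invoked), at the cost of slightly longer bookkeeping; both rest on the same inputs (the lower bound $u\geq u_{0,\min}$, conservation of the mean so that $\partial_t u$, $\partial_x^k u$ are mean-zero, and uniform-in-time control of $\|\partial_x^k u\|_{L^\infty}$, $k\leq 3$, which for $s>7/2$ indeed holds for all $t\geq0$).

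One detail you should spell out: your Leibniz bound is stated for products $\Lambda(fg)$, while the expression is a quotient, so writing it as $N\cdot A^{-1}$ with $A=u^2+Hu^2$ still leaves $\Lambda(A^{-1})$ uncovered by the stated inequality. The same mean-value/singular-integral splitting closes this: writing $\frac{1}{A(x)}-\frac{1}{A(y)}=-\frac{A(x)-A(y)}{A(x)^2}+\frac{(A(x)-A(y))^2}{A(x)^2A(y)}$ and using $A\geq u_{0,\min}^2$ gives $|\Lambda(A^{-1})(x)|\leq A(x)^{-2}|\Lambda A(x)|+C\|\partial_x A\|_{L^\infty}^2$, after which $\Lambda A$ is handled by your product rule and the identities. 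This is routine and does not affect the validity of the argument, but as written the reciprocal step is a gap in the statement of your toolkit rather than in the idea.
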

\begin{proof}
The bound for $\partial_t u$ follows directly from \eqref{maineq2}, global regularity, and the bound \eqref{Ht1124} that yields
$\|Hu\|_{L^\infty} \leq C\|\partial_x u\|_{L^\infty},$ as well as
\begin{equation}\label{aux1212} \|\Lambda u \|_{L^\infty} = \|H \partial_x u\|_{L^\infty} \leq C(\alpha)\|\pa_x u\|_{C^{\alpha}} \leq C\|\partial^2_x u\|_{L^\infty}, \end{equation}
for every $\alpha>0.$
Note that for a sufficiently regular function $u$ on $\S$ we have $\|\partial_x u\|_\infty \leq C\|\partial_x^2 u \|_\infty.$

The bound on $\| \partial_{tx}^2 u\|_{L^\infty}$ is proved by differentiating
\eqref{maineq2} and carrying out estimates very similar to the above one.
Additional term that we need to estimate for $\| \partial_{t}^2 u\|_{L^\infty}$ takes form
\begin{equation}\label{aux11212}
  \|\pa_t\Lambda u\|_{L^\infty}=\frac{1}{\pi}\left\|\Lambda \left(\frac{ u \Lambda u- Hu \partial_x u }{u^2 + (Hu)^2} \right) \right\|_{L^\infty}. \end{equation}
This term can be controlled by using \eqref{aux1212} as well as
 \[  \|\Lambda f\|_{C^\alpha}=\|H\pa_xf\|_{C^{\alpha}} \leq C(\alpha)\|\pa_xf\|_{C^\alpha}, \]
where we can set $f$ equal to the expression in the brackets in \eqref{aux11212}, and elementary calculations.
\end{proof}

By the definition of the error \eqref{Ej1122}, we have
\begin{equation}\label{error1}
  E_m^{t+\Delta t}-E_m^t=(y_{m+1}-y_m-x_{m+1}+x_m)-\frac{1}{2nu(\bar{y}_m,
    t+\Delta t)}+\frac{1}{2nu(\xb_m,t)}.
\end{equation}
Using Lemma \ref{utlemma}, we find that 
\begin{align}
  &\frac{1}{u(\bar{y}_m,t+\Delta t)}-\frac{1}{u(\xb_m,t)}\\
  &=\pa_t\left(\frac{1}{u}\right)(\xb_m,t)\cdot\Delta t+O(\ux \Delta t^2)
  +\pa_x\left(\frac{1}{u}\right)(\xb_m,t+\Delta t)\cdot(\bar{y}_m-\xb_m)
    +O(\ux |\bar{y}_m-\xb_m|^2)\\
  &=-\frac{\pa_tu(\xb_m,t)}{u(\xb_m,t)^2}\Delta t
    -\frac{\pa_xu(\xb_m,t)}{u(\xb_m,t)^2}(\bar{y}_m-\xb_m)
    +O(\ux n^{-2})\\
  &=\frac{1}{\pi(u(\xb_m,t)^2+Hu(\xb_m,t)^2)}~\pa_x\left(\frac{Hu}{u}\right)(\xb_m,t)\cdot\frac{1}{2n}\\
    &\quad-\frac{\pa_xu(\xb_m,t)}{u(\xb_m,t)^2}(y_{m+1}-x_{m+1})
      +\frac{\pa_xu(\xb_m,t)}{2u(\xb_m,t)^2}(y_{m+1}-y_m-x_{m+1}+x_m)
      +O(\ux n^{-2}), \label{ueq1114}
\end{align}
where we have used \eqref{maineq2}
in the last equality.
Let us substitute the estimate \eqref{ueq1114} into \eqref{error1} and get
\begin{align}
 & E_m^{t+\Delta
  t}-E_m^t=\,(y_{m+1}-y_m-x_{m+1}+x_m)\left(1-\frac{\pa_xu}{4nu^2}\right)\label{Eevo}\\
  &-\frac{1}{4\pi n^2(u^2+Hu^2)}~\pa_x\left(\frac{Hu}{u}\right)+\frac{\pa_xu}{2nu^2}\cdot(y_{m+1}-x_{m+1})+O(\ux n^{-3}).\nonumber
\end{align}
Here, for simplicity we no longer indicate the spatial and time dependence,
as all quantities are all evaluated at $(\xb_m, t)$.

Deploying the estimate \eqref{yyxx} in \eqref{Eevo}, we observe that the
leading terms of the order $O(n^{-2})$ cancel.
We obtain
\begin{align}
 & E_m^{t+\Delta t}-E_m^t\label{Eevo2}\\
&=\left[-\frac{\pa_xu}{2nu^2}(y_{m+1}-x_{m+1})
    +\frac{1}{4\pi
     n^2(u^2+Hu^2)}\pa_x\left(\frac{Hu}{u}\right)\right]\cdot
  O\big(n\|E\|_\infty+n^{-1/2}\big)\nonumber\\
  &+\frac{u Hu}{2\pi
    n^2 (u^2+Hu^2)^2}
    \pa_x\left(\frac{Hu}{u}\right) (A_{1,m}+A_{2,m})\cdot \big(1+O(n\|E\|_\infty+n^{-1/2})\big)\nonumber\\
  &+\big(1+O(n\|E\|_\infty+n^{-1/2})\big)\cdot\frac{\sin^2z}{8\pi^2n^2u^2}\times\nonumber\\
  &\qquad\times(D_{1,m}+D_{2,m}+D_{3,m}+B_{1,m}+B_{2,m}+H_{1,m}+H_{2,m}+H_{3,m})\nonumber\\
  &+O\big(\ux n^{-5/2}+\ux n^{-1}\|E\|_\infty\big)
    +O(\ux n^{-3}+n^{-3/2}\|E\|_\infty+(\log n)^2\|E\|_\infty^2+n(\log n)^2 \|E\|_\infty^3).\nonumber
\end{align}
Let us examine \eqref{Eevo2}. 
The first term on the right side of \eqref{Eevo2} can be estimated by
\[O\left(\ux n^{-2}\cdot (n\|E\|_\infty+n^{-1/2})\right)=O(\ux
  n^{-5/2}+\ux n^{-1}\|E\|_\infty).\]
The second term on the right side of \eqref{Eevo2} looks like
\begin{align}
&  O\left(\ux n^{-2}\cdot n\log n\|E\|_\infty\cdot
    (1+n\|E\|_\infty+n^{-1/2})\right)\\
&=O(\ux n^{-1}\log n\|E\|_\infty+\ux\log n\|E\|_\infty^2+\ux n^{-3/2}\log
  n\|E\|_\infty). \label{err11115}
\end{align}
The first error in \eqref{err11115} is supercritical and requires additional
treatment. We write the corresponding term explicitly:
\begin{equation}\label{Aterms}
  \frac{u Hu}{2\pi n^2 (u^2+Hu^2)^2}
  \pa_x\left(\frac{Hu}{u}\right) (A_{1,m}+A_{2,m}).
\end{equation}
Below, we will prove estimates that will allow to absorb this part into the dissipation, namely into the $D_{i,m}$ expressions in
\eqref{Eevo2}.

For $A_{1,m}$, decompose the sum in \eqref{A1simp}
into two parts and use  the estimate similar to \eqref{cotsum}:
\begin{align}
  A_{1,m}=&\,\frac{1}{4\pi}\sum_{j\in\Smc}\cot\frac{y_m-x_j}{2}\big((E_{j-1}-E_m)+(E_j-E_m)\big)\\
  &+\frac{E_m}{2\pi}\sum_{j\in\Smc}\cot\frac{y_m-x_j}{2}+O(\ux
    n\|E\|_\infty)\\
  =&\, H_{4,m}+O( n\|E\|_\infty+ n^2\log n\|E\|_\infty^2),\label{err41115}
\end{align}
where $H_{4,m}$ has the form
\begin{equation}\label{H4}
  H_{4,m}=\sum_{j\in\Smc\cup\{j_+-1\}}h_4(j,m)(E_j-E_m),
\end{equation}
with coefficients
\begin{equation}\label{h4}
  h_4(j,m)=\begin{cases}
    \frac{1}{4\pi}\left(\cot\frac{y_m-x_j}{2}+\cot\frac{y_m-x_{j+1}}{2}\right),
    & j\in\Smc\backslash\{j_--1\}\\
    \frac{1}{4\pi}\cot\frac{y_m-x_{j_--1}}{2},    & j=j_--1 \\
    \frac{1}{4\pi}\cot\frac{y_m-x_{j_+}}{2},    & j=j_+-1
    \end{cases}=O(n|m-j|^{-1}).
\end{equation}

Now, for $A_{2,m}$, decompose the sum in  \eqref{A2simp} into
two parts and use the estimate \eqref{sumcancel} and a straightforward bound
on at most $O(1+n^{3/2}\|E\|_\infty)$ mismatched summands to arrive at
\begin{align}
  A_{2,m}=&\,\frac{1}{\pi}\left(\sum_{j=j_-}^{m-1}\sum_{l=j}^{m-1}\frac{E_l-E_m}{(m-j)(y_m-x_j)}
  +\sum_{j=m+1}^{j_+-1}\sum_{l=m}^{j-1}\frac{E_l-E_m}{(j-m)(y_m-x_j)}\right)\\
  &+\frac{E_m}{\pi}\left(\sum_{j=j_-}^{m-1}\frac{1}{y_m-x_j}
  +\sum_{j=m+1}^{j_+-1}\frac{1}{y_m-x_j}\right)+O(n\|E\|_\infty)\\
=&\, B_{3,m}+O( n\|E\|_\infty+ n^2\log n\|E\|_\infty^2), \label{err51115}
\end{align}
where $B_{3,m}$ has the form
\begin{equation}\label{B3}
  B_{3,m}=\sum_{l\in\Sm\backslash\{m\}}b_3(l,m)(E_l-E_m),
\end{equation}
with coefficients
\begin{equation}\label{b3}
  b_3(l,m)=\begin{cases}
    \sum_{j=j_-}^{l}\frac{1}{(m-j)(y_m-x_j)},&j_-\leq l \leq m-1\\
    \sum_{j=l+1}^{j_+-1}\frac{1}{(j-m)(y_m-x_j)},&m+1\leq l \leq j_+-2
  \end{cases}=O\big(n|m-l|^{-1}\big).
\end{equation}
Observe that after multiplication by a factor in \eqref{Aterms}, the error terms
in \eqref{err41115} and \eqref{err51115} become of the order $O(\ux n^{-1}\|E\|_\infty+\ux \log n \|E\|_\infty^2).$

We combine \eqref{Aterms} with the term on the fourth and fifth lines in \eqref{Eevo2}. Let us introduce the
final dissipation term, covering all scales. It equals
\begin{equation}\label{D}
  \sum_{j\neq m}\kappa(j,m)(E_j-E_m),
\end{equation}
with the coefficients
\begin{align}
  \kappa(j,m)=&\,\big(1+O(n\|E\|_\infty+n^{-1/2})\big)\cdot\frac{\sin^2z}{8\pi^2n^2u^2}\left[\sum_{i=1}^3d_i(j,m)+\sum_{i=1}^2b_i(j,m)+\sum_{i=1}^3h_i(j,m)\right]\nonumber\\
&+\frac{u Hu}{2\pi n^2 (u^2+Hu^2)^2}\cdot
  \pa_x\left(\frac{Hu}{u}\right)\big(b_3(j,m)+h_4(j,m)\big).\label{d}
\end{align}
Then, according to our estimates, the evolution of $E_m^t$ in \eqref{Eevo2} can be summarized as
\begin{align}
  E_m^{t+\Delta t}-E_m^t
  =&\,\sum_{j\neq m}\kappa^t(j,m)(E_j^t-E_m^t)
  +O\big(\ux n^{-5/2}+\ux n^{-1}\|E^t\|_\infty\big)\nonumber\\
  & +O(n^{-3/2}\|E^t\|_\infty+(\log n)^2 \|E^t\|_\infty^2+n(\log
  n)^2\|E^t\|_\infty^3).\label{Emt}
\end{align}

Finally,
let us examine more carefully the coefficients $\kappa^t(j,m).$
\begin{theorem}\label{lem:dpositive}
The error $E_m^t$ propagates according to the evolution equation
\begin{align}\label{E122}
  &E^{t+\Delta t}-E^t = {\mathcal L}^t E^t \\
  & \quad+ O\big(\ux n^{-5/2}+\ux n^{-1}\|E^t\|_\infty\big)
  +O(n^{-3/2}\|E^t\|_\infty+(\log n)^2 \|E^t\|_\infty^2+n(\log
  n)^2\|E^t\|_\infty^3),
\end{align}
where the operator ${\mathcal L}^t$ is a diffusive operator given by
\begin{align}\label{L122}
({\mathcal L}^t E^t)_m = \sum_{j,j\neq m}\kappa^t(j,m)(E_j^t-E_m^t).
\end{align}
The diffusion coefficients $\kappa^t(j,m)$ satisfy
\begin{align}\label{kappa122}
& \kappa^t(j,m) = \frac{1}{16 \pi^2 n^2 (u^2 + Hu^2) \sin^2 \frac{x_m^{t+\Delta t}-x_j^t}{2}} \times \\
& \left(1+ O(n^{-1/2} +\ux |j-m|n^{-1}+n \log n \|E\|_\infty+ (\log n)^2 (n^2\|E\|_\infty^2+n^3\|E\|_\infty^3 +n^4\|E\|_\infty^4))\right),
\end{align}
where $u$ and $Hu$ can be evaluated at $x_m^t$, and $x_j^t,$ $x_m^{t+\Delta t}$ are the polynomial roots at time $t$ and $t+\Delta t,$ respectively. Note that $x_m^{t+\Delta t}=y_m$ in our usual notation.
In particular, assuming \eqref{Esmall}, for all $n$ exceeding a threshold $n_0(\tilde u_0)$ that may only depend on the initial data we have
\begin{equation}\label{propkappa}
\kappa^t(j,m) \sim |m-j|^{-2};
\end{equation}
the constants involved in $\sim$ can be chosen uniformly for all $t,$ $m$ and $j$ and depend only on $\tilde u_0.$
\end{theorem}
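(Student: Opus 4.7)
The plan is to identify \eqref{Emt} with \eqref{E122}--\eqref{L122}, reading off $\kappa^t(j,m)$ directly from \eqref{d}; the substantive work is then to verify the asymptotic form \eqref{kappa122} and to deduce the two-sided bound \eqref{propkappa}. I would approach this by analyzing $\kappa^t(j,m)$ region by region and matching it against the target
\[ \kappa^t_*(j,m) := \frac{1}{16\pi^2 n^2(u^2+Hu^2)\sin^2\tfrac{y_m-x_j}{2}}, \]
with all remaining contributions recast as multiplicative errors of the size permitted by \eqref{kappa122}.

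First I would treat the bulk of each region. In the bulk of the near field, $j\in \Sm\setminus\{m, j_-, j_+-1\}$, the dominant contribution is $\tfrac{\sin^2 z}{8\pi^2 n^2 u^2}\,d_1(j,m)$. Writing $y_{m+1}-x_{j+1} = (y_m-x_j) + (y_{m+1}-y_m)-(x_{j+1}-x_j)$ and invoking Lemma \ref{lem:yyxxrough} together with $x_{j+1}-x_j = \tfrac{1}{2nu(\xb_j)}+E_j$, I would show that $\tfrac{y_{m+1}-x_{j+1}}{y_m-x_j}=1$ up to errors absorbed by the factor in \eqref{kappa122}; combining with the elementary bound $\bigl|(y_m-x_j)^2/(4\sin^2\tfrac{y_m-x_j}{2}) -1\bigr| = O(|j-m|^2/n^2)$ valid on the near field yields $d_1(j,m) = \tfrac{1}{2\sin^2\frac{y_m-x_j}{2}}(1+\text{error})$. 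In the bulk far field, \eqref{d2} already supplies this identity for $d_2(j,m)$. Using \eqref{sinz} to reduce $\sin^2 z = \tfrac{u^2}{u^2+Hu^2}$ up to the acceptable error, the prefactor $\tfrac{\sin^2 z}{8\pi^2 n^2 u^2}$ multiplied by $\tfrac{1}{2\sin^2\frac{y_m-x_j}{2}}$ collapses exactly to $\kappa^t_*(j,m)$.

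Next I would show the remaining coefficients contribute only to the error factor. The supercritical $b_1, b_2$ satisfy $|b_i(j,m)|\lesssim n^{-1/2}d_1(j,m)$ by \eqref{b1d1} and \eqref{b2}; the supercritical $h_1, h_2, h_3$ are bounded in the bulk via \eqref{h1}, \eqref{h2out1116}, \eqref{h3}, each dominated by $d_2(j,m)\sim n^2/|m-j|^2$ times a factor that fits in the stated error. The $b_3, h_4$ contributions enter through the distinct prefactor $\tfrac{uHu}{2\pi n^2(u^2+Hu^2)^2}\partial_x(Hu/u) = O(\ux/n^2)$; combined with $|b_3|, |h_4|\lesssim n/|m-j|$ from \eqref{b3} and \eqref{h4}, they contribute $O(\ux n^{-1}|m-j|^{-1})$ to $\kappa^t(j,m)$, i.e., a factor $\ux|m-j|/n$ relative to $|m-j|^{-2}$, which matches the $\ux|j-m|n^{-1}$ term in \eqref{kappa122}.

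The hard part will be the four interface indices $j\in\{j_--1, j_-, j_+-1, j_+\}$, where $|y_m-x_j|\sim n^{-1/2}$ so $\sin^2\tfrac{y_m-x_j}{2}=\tfrac{1}{4n}+O(n^{-3/2})$, and the target requires the net diffusion-coefficient sum to equal $\approx 2n$. At $j=j_+$: \eqref{d3} and \eqref{h2boundary} give $d_3+h_2 = \tfrac{3n}{2} + \tfrac{n}{2} = 2n$. At $j=j_+-1$: the additional near-field $d_1(j_+-1,m) = \tfrac{2}{(y_{m+1}-x_{j_+})(y_m-x_{j_+-1})}\approx 2n$ combines with $d_3+h_2 = \tfrac{n}{2}-\tfrac{n}{2}=0$ to give $2n$. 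Symmetric cancellations handle $j=j_-$ and $j=j_--1$. Positivity of the combined coefficient is not manifest from the individual terms --- $h_2$ contributes $\pm n/2$ with either sign --- and emerges only after this precise combination; this is the main obstacle. Once the four interface values are verified, \eqref{propkappa} follows: for $n \geq n_0(\tilde u_0)$ under \eqref{Esmall}, each term in the error factor of \eqref{kappa122} is $o(1)$, so $\kappa^t(j,m)$ has definite sign and size, while $\sin^2\tfrac{y_m-x_j}{2}\sim|m-j|^2/n^2$ (from Lemmas \ref{lowerbound} and \ref{lowerbound01} and periodicity) gives $\kappa^t(j,m)\sim|m-j|^{-2}$ with constants depending only on $u_0$.
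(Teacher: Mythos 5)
Your route for \eqref{kappa122} is essentially the paper's: the same region-by-region matching (near-field bulk via $d_1$ with the two replacements $y_{m+1}-x_{j+1}\to y_m-x_j$ and $(y_m-x_j)^{-2}\to \frac14\sin^{-2}\frac{y_m-x_j}{2}$, prefactor via \eqref{sinz}, absorption of $b_1,b_2$ through \eqref{b1d1} and \eqref{b2}, far-field bulk via \eqref{d2}, and the interface bookkeeping $d_3+h_2=2n$ at $j_+,j_--1$ versus $d_1+d_3+h_2=2n$ at $j_-,j_+-1$), and using the rough Lemma \ref{lem:yyxxrough} instead of the sharper consequence of \eqref{yyxx} is harmless, since the resulting relative error $O(n^{-1/2}+n\log n\|E\|_\infty+n(\log n)^2\|E\|_\infty^2)$ still fits inside the error factor of \eqref{kappa122}. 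The omission of the interface contributions of $h_1,h_3,h_4$ (and of $b_1,b_2,b_3$ at $j_-,j_+-1$) is minor: by \eqref{h1}, \eqref{h3}, \eqref{h4} these are $O(n^{1/2}+n^2\|E\|_\infty+\ux n^{1/2})$, subordinate to the main term $2n$.

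The genuine gap is in your last step, the deduction of \eqref{propkappa}. You assert that under \eqref{Esmall} ``each term in the error factor of \eqref{kappa122} is $o(1)$,'' but this is false for the term $\ux|j-m|n^{-1}$: in the far field with $|m-j|\sim n$ this term is $O(\ux)$, i.e.\ of order one (with an implied constant depending on $u_0$ that need not be less than $1$) at times before the derivatives of $u$ have decayed. With $\kappa$ read directly off \eqref{d}, positivity and the two-sided bound $\kappa^t(j,m)\sim|m-j|^{-2}$ are therefore not established uniformly in $t$ for such $j$. The paper resolves this by a rearrangement you do not have: it fixes $\beta(u_0)>0$ so that the $O(\ux n^{-1}|m-j|)$ correction is at most $\tfrac13$ for $|m-j|\le\beta(u_0)n$, and for $|m-j|>\beta(u_0)n$ it removes this correction from the coefficient altogether, observing (as in \eqref{err91116}) that its total contribution to $({\mathcal L}^tE^t)_m$ is
\[
\lesssim \ux\, n^{-1}\|E^t\|_\infty\sum_{n\ge|m-j|>\beta(u_0)n}|m-j|^{-1}=O(\ux n^{-1}\|E^t\|_\infty),
\]
which is then absorbed into the error budget of \eqref{E122}; only after this modification do \eqref{kappaout81116} and \eqref{delerr125} give \eqref{propkappa} uniformly in $t,m,j$. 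Without this device (or an equivalent one), your argument proves \eqref{kappa122} but not \eqref{propkappa}, and in particular does not rule out sign changes of $\kappa^t(j,m)$ for $|m-j|\sim n$ at small times, which is exactly what the later maximum-principle iteration requires.
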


\begin{remark}
Observe that given the formula \eqref{fraclap} for the fractional Laplacian, $\Delta t = \frac{1}{2n}$ relationship, \eqref{L122} and \eqref{kappa122}, the equation \eqref{E122} looks like a discretization of the continuous in time and space PDE
that to the main order is just a modulated fractional heat equation:
\[ \partial_t E(x,t) = \frac{u(x,t)}{\pi(u^2(x,t)+Hu^2(x,t))} \Lambda E (x,t) + errors.  \]
The leading term of this equation coincides with the dissipative part of \eqref{maineq1}.
The appearance of such a simple PDE controlling the error between the evolution of roots and the PDE \eqref{maineq} is quite surprising.
In our argument, the origin of this form of the main term is not hard to see in the near field; in the far field and on the interface it is somewhat less transparent;
but to guess beforehand that all other ingredients will turn out to be either subcritical, or good critical, or supercritical but directly absorbable by dissipation
would be quite hard (at least to us).
It is interesting to explore if a parallel structure is present and can be more intuitively understood in a related continuous setting of free fractional convolution of measures.
\end{remark}
\begin{proof}
For simplicity, we will omit time dependence in notation for $\kappa(j,m)$.
Let us start with the near field $j\in\Sm\backslash\{m,j_-,j_+-1\}$, where
  \begin{align*}
    \kappa(j,m)=&\,\big(1+O(n\|E\|_\infty+n^{-1/2})\big)\cdot\frac{\sin^2z}{8\pi^2n^2u^2}\cdot\big(d_1(j,m)+b_1(j,m)+b_2(j,m)\big)\\
    &+\frac{u Hu}{2\pi n^2 (u^2+Hu^2)^2}\cdot
      \pa_x\left(\frac{Hu}{u}\right) \cdot b_3(j,m).
  \end{align*}
  Note that there are additional contributions from $D_{3,m}$ and other terms at
  the boundary $j=j_-, j_+-1$ that will be discussed later.

  First, observe that  \eqref{b3} implies
\begin{align} {\tilde b}_{3,m}(j): & =  \frac{u Hu}{2\pi n^2 (u^2+Hu^2)^2}\cdot
      \pa_x\left(\frac{Hu}{u}\right) \cdot b_3(j,m) \\ & =O(\ux n^{-2}\cdot n|m-j|^{-1}) \lesssim |m-j|^{-2}n^{-1/2} \label{b3est1115}
\end{align}
for $j \in \Sm.$
Next, we have already shown in \eqref{d1}, \eqref{b1d1} and \eqref{b2} that
  \begin{equation}\label{d11115} d_1(j,m)= \frac{2}{(y_m-x_j)(y_{m+1}-x_{j+1})} \sim \frac{n^2}{|m-j|^2};\,\, |b_1(j,m)|, |b_2(j,m)|\lesssim
    n^{-1/2}d_1(j,m).\end{equation}
To incorporate the contribution \eqref{b3est1115} from ${\tilde b}_{3,m}(j)$, let us analyze the pre-factor in the fourth line of \eqref{Eevo2}.
 Applying \eqref{sinz}, we get
\begin{align}
 & \big(1+O(n\|E\|_\infty+n^{-1/2})\big)\cdot\frac{\sin^2z}{8\pi^2n^2u^2} \label{coefest}
  =\frac{1}{8\pi^2n^2(u^2+Hu^2)} \\
& +O\left(n^{-5/2}+n^{-1}\log n\|E\|_\infty+(\log n)^2 \|E\|_\infty^2+n(\log n)^2 \|E\|_\infty^3\right).
\end{align}
Then, taking into account \eqref{b3est1115} and \eqref{d11115}, as well as \eqref{ymxj} and \eqref{ymxm}, a short computation leads to
\begin{align}
& \kappa(j,m)  = \big(1+O(n\|E\|_\infty+n^{-1/2})\big)\cdot\frac{\sin^2z}{8\pi^2n^2u^2}\cdot\big(d_1(j,m)+b_1(j,m)+b_2(j,m)\big)+ {\tilde b}_{3,m}(j) \nonumber \\
& =\frac{d_1(j,m)}{8\pi^2n^2 (u^2 +Hu^2)} 
\left( 1+ O(n^{-1/2}+n \log n \|E\|_\infty+ n^2 (\log n)^2 \|E\|^2_\infty+ n^3 (\log n)^2 \|E\|^3_\infty)\right) \nonumber\\
& \sim |m-j|^{-2}(1+O(n^{-1/2} +n \log n \|E\|_\infty+ n^2 (\log n)^2 \|E\|^2_\infty+ n^3 (\log n)^2 \|E\|^3_\infty)) \label{dissI1115}
\end{align}
for $j\in\Sm\backslash\{m, j_-, j_+-1\}.$

To pass from \eqref{dissI1115} to the form \eqref{kappa122}, note first that
\[ \frac{1}{y_m-x_j}-\frac{1}{y_{m+1}-x_{j+1}} = \frac{1}{y_m-x_j} \cdot O(n\|E\|_\infty +n^{-1}), \]
where we used \eqref{cotapp1} and an estimate $|y_{m+1}-y_m-x_{m+1}+x_m|=O(\|E\|_\infty+n^{-2})$ that follows from \eqref{yyxx}.
Here we split the numerator in the usual manner and used that from the more precise estimate \eqref{yyxx}
it follows that $|y_{m+1}-y_m-x_{m+1}+x_m| = O(\|E\|_\infty + n^{-2}).$ Thus we can replace $y_{m+1}-x_{j+1}$ in \eqref{d11115}
by $y_m-x_j$ and absorb the difference into the error.
Then observe that for $|y_m -x_j| \lesssim n^{-1/2},$
\[ \frac{2}{(y_m-x_j)^2} - \frac{1}{2 \sin^2 \frac{y_m-x_j}{2}} = \frac{1}{2 \sin^2 \frac{y_m-x_j}{2}} \cdot O (|y_m-x_j|^2) = \frac{1}{2 \sin^2 \frac{y_m-x_j}{2}} \cdot O ( n^{-1}),  \]
proving \eqref{kappa122} in near field.

Next, we consider the interface $j=j_--1, j_-, j_+-1, j_+$. The analysis for $j=j_-$ and $j_+-1$ is similar,
so let us focus on the former. According to \eqref{d3},
\begin{equation}\label{d3j-1116}
d_3(j_-,m) = \frac{n}{2}\left(1+ O(n^{-1/2}+n \log n \|E\|_\infty+n (\log n)^2 \|E\|^2_\infty) \right).
\end{equation}
We also get from \eqref{h1} that
\begin{equation}\label{err11116} h_1(j,m)=O(n^2\|E\|_\infty+\ux n^{1/2}),\end{equation}
and from \eqref{h3}, \eqref{h4} that
\begin{equation}\label{err21116} |h_3(j,m)|+|h_4(i,m)|=O(n^{1/2})\end{equation}
 for all $j = j_--1, j_-,j_+-1,j_+$.

The term $H_{2,m}$, on the other hand, yields an essential contribution: the estimate \eqref{h2boundary} shows that 
\begin{equation}\label{err31116} h_2(j_-,m) = -\frac{n}{2}\left(1+O(n^{-1/2})\right).\end{equation}
Finally, since $j_-\in\Sm$, there is also a contribution from $D_{1,m}$, with
\begin{equation}\label{d11116} d_1(j_-,m)=\frac{2}{(y_{m+1}-x_{j_-+1})(y_m-x_{j_-})} = \frac{2}{n^{-1}+O(n^{-3/2})} = 2n \left(1+O(n^{-1/2})\right). \end{equation}
Then, due to \eqref{d3j-1116}, \eqref{err31116}, and \eqref{d11116} we have
\begin{equation}\label{difcomb1116} d_1(j_-,m)+ d_3(j_-,m)+h_2(j_-,m) = 2n \left(1+O(n^{-1/2}+n \log n \|E\|_\infty+n (\log n)^2 \|E\|^2_\infty)\right) >0.\end{equation}
Combining \eqref{sinz}, \eqref{difcomb1116}, \eqref{err11116}, \eqref{err21116}, \eqref{b3est1115} and \eqref{d}, we obtain
\begin{align}\label{kappa31116} \kappa(j_-,m) = &\, \frac{1}{4\pi^2(u^2+Hu^2)n} \times \\
&\times \left(1+O\left(n^{-1/2}+n \log n \|E\|_\infty + n^2 (\log n)^2 \|E\|^2_\infty +n^3 (\log n)^2 \|E\|_\infty^3 \right) \right).\nonumber\end{align}

For $j=j_+$ (and similarly for $j=j_--1$), from \eqref{d3} and
\eqref{h2boundary} we find
\[d_3(j_+,m)+h_2(j_+,m) = 2n \left(1+O(n^{-1/2}+n \log n \|E\|_\infty+n (\log n)^2 \|E\|^2_\infty) \right).\]
Taking into account \eqref{sinz}, \eqref{err11116}, \eqref{err21116}, \eqref{h4} and \eqref{d},
we arrive at the estimate for $\kappa(j_+,m)$ identical to \eqref{kappa31116}.
Note also that the main term in \eqref{kappa31116} coincides with \eqref{kappa122}, since
$|y_m-x_j| = n^{-1/2}+ O(n^{-1})$ for $j = j_--1, j_-,j_+-1,$ and $j_+.$
It follows that $\kappa(j,m)$ at the interface satisfy \eqref{kappa122}, and in particular
\begin{equation}\label{kappainter1116} \kappa(j,m) \sim |m-j|^{-2} \end{equation}
for all these $j$ for all sufficiently large $n.$

Finally, for the far field $j\in\Smc\backslash\{j_--1, j_+\}$, we have
  \begin{align}\nonumber
    \kappa(j,m)=&\,\big(1+O(n\|E\|_\infty+n^{-1/2})\big)\cdot\frac{\sin^2z}{8\pi^2n^2u^2}\cdot\big(d_2(j,m)+h_1(j,m)+h_2(j,m)+h_3(j,m)\big)\\
    &+\frac{u Hu}{2\pi n^2 (u^2+Hu^2)^2}\cdot
      \pa_x\left(\frac{Hu}{u}\right) \cdot h_4(j,m). \label{d21116}
  \end{align}
  We have shown in \eqref{d2}, \eqref{h1}, \eqref{h2out1116} and \eqref{h3}
  that
  \begin{align}\label{d2125} d_2(j,m)=&\, \frac{1}{2\sin^2 \frac{y_m-x_j}{2}}\left(1+O(n^{-1/2}+n\|E\|_\infty)\right), \\
  \sum_{i=1}^3h_i(j,m)=&\,O\left(\frac{\ux
      n}{|m-j|}+\frac{n^3\|E\|_\infty}{|m-j|^2}+\frac{n^2}{|m-j|^3}\right).\label{d2out1116}
      \end{align}
The last two terms in the expression for $\sum_{i=1}^3h_i(j,m)$ in \eqref{d2out1116} can be absorbed into $d_2(j,m)$ since
due to \eqref{ymxj}, we have 
\begin{equation}\label{d2lower1116}  \frac{C(u_0) n^2}{|m-j|^2} \geq d_2(j,m) \geq \frac{2u_{\min}^2 n^2}{|m-j|^2}, \end{equation}
while
\begin{equation}\label{err71116} O\left(\frac{n^3\|E\|_\infty}{|m-j|^2}+\frac{n^2}{|m-j|^3}\right) = \frac{n^2}{|m-j|^2} \cdot O(n\|E\|_\infty+n^{-1/2}),\quad\forall~j\in\Smc.\end{equation}
The first term $O(\ux n|m-j|^{-1})$ is trickier, since when $|m-j|\sim n$, this term is
generally of the order $O(1)$, similarly to $d_2(j,m)$.
Due to \eqref{Aterms}, \eqref{h4}, a term of the order $O(\ux n|m-j|^{-1})$ also arises from $H_{4,m}$ in \eqref{d21116}.
We will handle these perturbations as follows. Using \eqref{d2lower1116}, let us write
\begin{equation}\label{err81116} O(\ux n|m-j|^{-1})= n^2|m-j|^{-2}O(\ux n^{-1}|m-j|) = d_2(j,m)O(\ux n^{-1}|m-j|). \end{equation}
Applying \eqref{coefest}, \eqref{d21116}, \eqref{d2125}, \eqref{err71116}, \eqref{err81116} and \eqref{d2out1116} we get that for $j\in\Smc\backslash\{j_--1, j_+\},$
\begin{align}\label{kappaout41116}
&\kappa(j,m)  = \frac{1}{16\pi^2n^2(u^2+Hu^2)\sin^2\frac{y_m-x_j}{2}}\times\\
&\quad\times\left(1+O(\ux n^{-1}|m-j| + n^{-1/2}+n \log n \|E\|_\infty   + n^2 (\log n)^2 \|E\|_\infty^2 + n^3 (\log n)^2 \|E\|_\infty^3)\right).  \nonumber
\end{align}
This already proves \eqref{kappa122}, but it remains to show \eqref{propkappa}.
Observe that there exists $\beta(u_0)>0$ such that if $|m-j| \leq \beta(u_0)n,$ then the dangerous error term in \eqref{kappaout41116} satisfies
\begin{equation}\label{kappaout51116}
\left| O(\ux n^{-1}|m-j|) \right| \leq \frac13,
\end{equation}
for all times, as the constants depending on $u_0$ that are involved in $O$ are uniformly bounded due to Theorem~\ref{mainthm1}.
On the other hand, observe that
\begin{align}\label{err91116}
 &\frac{1}{8\pi^2n^2(u^2+Hu^2)} \sum_{n \geq |m-j| > \beta(u_0)n} d_2(j,m) O(\ux n^{-1}|m-j|)(E_j-E_m) \\
 &\lesssim \ux n^{-1} \|E\|_\infty \sum_{n \geq |m-j| > \beta(u_0)n} |m-j|^{-1}
 =O(\ux n^{-1}\|E\|_\infty).
\end{align}
It follows that for $j$ such that $|m-j| > \beta(u_0)n,$ the $O(\ux n^{-1}|m-j|)$ term in \eqref{kappaout41116} can be removed and absorbed into
the favorable critical error $O(\ux n^{-1}\|E\|_\infty)$ in \eqref{Emt}. Using \eqref{ymxj}, this implies that we can arrange
so that for all sufficiently large $n,$
\begin{equation}\label{kappaout81116} \kappa(j,m) \geq  \frac{u_{\min}^2}{16\pi^2 (u^2+Hu^2)|m-j|^2} \sim |m-j|^{-2}. \end{equation}
For the rest of the paper, we will assume that this arrangement is made, and that in \eqref{Emt}
\begin{equation}\label{delerr125}
O(\ux |j-m| n^{-1}) \leq \frac13
\end{equation}
for all $j,m,t.$


\end{proof}

Next, we show an $l^1$ bound on $\{\kappa^t(\cdot,m)\}$. Since our evolution is discrete in time, we need control on the
$l^1$ norm of our kernel to ensure stability that allows an argument similar to maximum principle that we will use to prove Theorem~\ref{mainthm2}.
\begin{lemma}\label{lem:l1bound}
 Fix any time $t,$ and assume \eqref{Esmall} holds.  Then, for
  all $n \geq n_0(\tilde u_0)$ large enough, for all $m,$ we have
  \begin{equation}\label{l1bound}
    \sum_{\{j: j\ne m\}}\kappa^t(j,m)=S(t)<1-\rho.
  \end{equation}
  The constant $\rho>0$ depends only on $u_0$ and may be chosen uniformly for all times, all sufficiently large $n \geq n_0(\tilde u_0),$ and $m.$
\end{lemma}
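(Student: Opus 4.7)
The plan is to substitute the asymptotic expression \eqref{kappa122} into the sum and reduce the bound to a computation of
\[
\Sigma_m := \sum_{j\ne m}\frac{1}{\sin^2\frac{y_m - x_j^t}{2}},\qquad y_m := x_m^{t+\Delta t}.
\]
Under \eqref{Esmall} every error in the multiplicative factor of \eqref{kappa122} is $o(1)$ uniformly in $m$ as $n\to\infty$; in particular the sum of the most subtle contribution $O(\ux|j-m|n^{-1})$ weighted against $\kappa\sim|j-m|^{-2}$ is only $O(\ux \log n / n)$. It therefore suffices to estimate the main part $\Sigma_m/\big(16\pi^2 n^2(u^2+Hu^2)\big),$ with $u=u(\bar x_m^t,t)$ and $Hu=Hu(\bar x_m^t,t).$

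For $\Sigma_m$ I would split into a near field $0<|j-m|\le n^{1/2}$ and a far field $|j-m|>n^{1/2}.$ In the near field I replace $\csc^2(\theta/2)$ by $4/\theta^2$ (per-term correction $O(1)$, aggregate $O(n^{1/2})$), and the actual $x_j$ by the ideal spacings $\tilde x_j=x_m+(j-m)/(2nu)$ of \eqref{xtilde}; the estimates $|x_j-\tilde x_j|\lesssim |j-m|\|E\|_\infty+\ux|j-m|^2 n^{-2}$ from \eqref{xtildexj} produce an aggregate error $o(n^2)$ under \eqref{Esmall}. The idealized sum is then evaluated by the classical cosecant partial-fraction identity
\[
\sum_{k\in\Z}\frac{4}{(y_m-\tilde x_k)^2}=4\pi^2(2nu)^2\csc^2\!\bigl(2\pi nu(y_m-x_m)\bigr),
\]
and Lemma \ref{xmymlem117}, in the form $2\pi nu(y_m-x_m)=\arccot(-Hu/u)+o(1)$, together with $\sin^2(\arccot\phi)=1/(1+\phi^2)$, evaluates the right-hand side as $16\pi^2 n^2(u^2+Hu^2)(1+o(1)).$ Subtracting the tail $|k|>n^{1/2},$ which is $O(n^{3/2}),$ and the $k=0$ term $4/(y_m-x_m)^2 = 16\pi^2 n^2 u^2/\arccot^2(-Hu/u)+o(n^2)$ (again via Lemma \ref{xmymlem117}), the near-field part of $\Sigma_m$ reduces to
\[
16\pi^2 n^2(u^2+Hu^2)\left[1-\frac{1}{(1+\phi^2)\arccot^2\phi}\right]+o(n^2),\qquad\phi:=-Hu/u.
\]
The far field contributes only $O(n^{3/2})=o(n^2)$: each term is $O(n)$ by \eqref{ymxj}, and a Riemann-sum comparison with $2n\int_{|y-y_m|>cn^{-1/2}}u(y)\csc^2\frac{y_m-y}{2}\,dy$ confirms the bound. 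Assembling gives
\[
\sum_{j\ne m}\kappa^t(j,m)=1-\frac{1}{(1+\phi^2)\arccot^2\phi}+o(1).
\]

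The main obstacle is to check that the subtracted quantity is bounded below by a uniform positive constant. Setting $g(\phi):=(1+\phi^2)\arccot^2\phi,$ a direct calculation yields $g'(\phi)=2\arccot\phi\,(\phi\arccot\phi-1).$ Since $\arccot\phi>0$ on $\R,$ and since $\phi\arccot\phi<1$ everywhere (trivial for $\phi\le 0$; for $\phi>0$ it follows from $h(\phi):=\phi\arccot\phi$ being strictly increasing from $0$ to $1$, which in turn is verified via $h''(\phi)=-2/(1+\phi^2)^2<0$ together with $h'(0)=\pi/2$ and $h'(\phi)\to 0^+$), $g$ is strictly decreasing on $\R$ with $\lim_{\phi\to+\infty}g(\phi)=1$ and $g(\phi)>1$ pointwise. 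By Theorem \ref{mainthm1}, $u(\cdot,t)\ge m_0>0$ and $\|Hu(\cdot,t)\|_\infty\le Q$ uniformly in $t,$ so $|\phi|\le Q/m_0=:B(u_0),$ and hence $g(\phi)\le g(-B(u_0))=:C(u_0)<\infty$ uniformly in $m$ and $t.$ Taking $n\ge n_0(\tilde u_0)$ large enough to absorb the $o(1)$ corrections into an additional margin will then give $\sum_{j\ne m}\kappa^t(j,m)<1-\rho$ with $\rho:=1/(2C(u_0))>0,$ as required.
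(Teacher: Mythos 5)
Your argument is correct, but it takes a genuinely different route from the paper's. The paper does not evaluate the sum: it keeps the product form $d_1(j,m)=\tfrac{2}{(y_{m+1}-x_{j+1})(y_m-x_j)}$, splits the near field into $j\ge m+2$, $j\le m-1$ and the single term $j=m+1$, bounds the two side sums by Euler's identity $\sum_k k^{-2}=\pi^2/6$ (giving $\tfrac{1}{3}\cdot\tfrac{u^2}{u^2+Hu^2}$) and the adjacent term by Lemma~\ref{xmymlem117}, arriving at the upper bound $S(t)\le \sin^2 a\,(\tfrac13+\tfrac1{a^2})+o(1)$ with $a=\arccot(Hu/u)$; it then must prove the functional inequality $F(a)<1$ on $(0,\pi)$, which costs a monotonicity argument resting on the Taylor expansion of $\tan$. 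You instead compute the sum asymptotically exactly: after reducing to $\Sigma_m$ via \eqref{kappa122}, replacing $x_j$ by the ideal lattice $\tilde x_j$ and invoking the cosecant partial-fraction identity together with Lemma~\ref{xmymlem117} at the single gap $y_m-x_m$, you get the identity $S(t)=1-\sin^2 a/(\pi-a)^2+o(1)$, and the uniform gap then follows from nothing more than the uniform bound $|Hu/u|\le Q/m_0$ (indeed $g(\phi)=(1+\phi^2)\arccot^2\phi\le(1+Q^2/m_0^2)\pi^2$ suffices, so your careful monotonicity analysis of $g$, while correct, is not even needed). What your route buys is a sharper conclusion (an asymptotic formula for $S(t)$, consistent with the paper's bound via $\pi^2/\sin^2 a\ge a^{-2}+(\pi-a)^{-2}$) and the elimination of the $\tan$-series step; what it costs is the extra lattice-approximation bookkeeping ($x_j\to\tilde x_j$, $\csc^2\to 4/\theta^2$, mismatched boundary terms), which the paper avoids by bounding the two one-sided sums directly with the true spacings. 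Two small caveats: your opening sentence that ``every error in the multiplicative factor of \eqref{kappa122} is $o(1)$ uniformly'' is not literally true for the $O(\ux|j-m|n^{-1})$ term when $|j-m|\sim n$ -- but your subsequent aggregated estimate $O(\ux\log n/n)$ is exactly the right way to dispose of it, since only the sum matters; and in the far field the pointwise bound ``each term is $O(n)$'' alone would only give $O(n^2)$, so you should lean on $\csc^2\tfrac{y_m-x_j}{2}\lesssim n^2|m-j|^{-2}$ (which \eqref{ymxj} gives) to justify the stated $O(n^{3/2})$.
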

\begin{proof}
From \eqref{propkappa}  it follows that the contribution of the sum over $j \in \Smc$ is of the order $O(n^{-1/2})$.
Then \eqref{d11115}, \eqref{dissI1115} and \eqref{kappa31116} imply
\begin{align} \sum_{j\neq m}\kappa(j,m) &=\frac{1}{8\pi^2n^2(u^2+Hu^2)}\sum_{j\in\Sm\backslash\{m\}}\frac{2}{(y_{m+1}-x_{j+1})(y_m-x_j)} \\
& + O(n^{-1/2}+n \log n \|E\|_\infty+ n^2 (\log n)^2 \|E\|^2_\infty+n^3 (\log n)^2 \|E\|^3_\infty).\label{l1a1117}\end{align}
  Let us break the sum in \eqref{l1a1117} into three regions: (i) $m+2\leq j\leq j_+-1$,
  (ii) $j_-\leq j\leq m-1$, and (iii) $j=m+1$.
  For the first region ($j\geq m+2$), we have
  \begin{align*}
    x_j-y_m=&\,x_{m+1}-y_m+\sum_{l=m+1}^{j-1}\left(\frac{1}{2nu(\xb_l)}+E_l\right)\\
    \geq&\,\frac{j-m-1}{2nu(\xb_m)}+O\left(\frac{|m-j|^2}{n^2}+|m-j|~\|E\|_\infty\right);\\
    x_{j+1}-y_{m+1}\geq&\,\frac{j-m-1}{2nu(\xb_m)}+O\left(\frac{|m-j|^2}{n^2}+|m-j|~\|E\|_\infty\right).
  \end{align*}
  Then,
  \begin{align*}
    &\frac{1}{8\pi^2n^2(u^2+Hu^2)}\sum_{j=m+2}^{j_+-1}\frac{2}{(y_{m+1}-x_{j+1})(y_m-x_j)}\\
    &\leq \frac{1}{8\pi^2n^2(u^2+Hu^2)}\sum_{j=m+2}^{j_+-1}\left[\frac{2}{\left(\frac{j-m-1}{2nu}\right)^2}+O\left(\frac{n}{|m-j|}+\frac{n^3\|E\|_\infty}{|m-j|^2}+\frac{n^4\|E\|^2_\infty}{|m-j|^2}\right)\right]\\
    &=\frac{u^2}{\pi^2(u^2+Hu^2)}\sum_{j=m+2}^{j_+-1}\frac{1}{(j-m-1)^2}
      +O\big( n^{-1}\log n+n\|E\|_\infty+n^2\|E\|_\infty^2\big)\\
    &\leq \frac{u^2}{6(u^2+Hu^2)}+O\big( n^{-1}\log n+n\|E\|_\infty+n^2\|E\|_\infty^2\big).
  \end{align*}
  Here, we have used the Euler identity $\sum_{k=1}^\infty
  k^{-2}=\frac{\pi^2}{6}$.
  A similar argument can be done for the second region $j\leq m-1$, yielding an identical bound.

  Finally, the summand corresponding to $j=m+1$ can be large if $y_m$ is close to $x_{m+1}$.
  However, we established a fairly precise control over this distance in Lemma~\ref{xmymlem117}.
  Let us apply the lower bound estimate \eqref{lower} and obtain
  \begin{align*}
      &\frac{1}{8\pi^2n^2(u^2+Hu^2)}\cdot\frac{2}{(y_{m+1}-x_{m+2})(y_m-x_{m+1})}\\
    &=\frac{1}{8\pi^2n^2(u^2+Hu^2)}\cdot\frac{2}{\left(\frac{1}{2\pi
      nu}\arccot\left(\frac{Hu}{u}\right)\right)^2}+O\big(n^{-1/2}+n\log
      n\|E\|_\infty+n^2 (\log n)^2 \|E\|_\infty^2 \big)\\
    &=\frac{u^2}{(u^2+Hu^2)\left(\arccot\left(\frac{Hu}{u}\right)\right)^2} +O\big(n^{-1/2}+n\log
      n\|E\|_\infty+n^2 (\log n)^2 \|E\|_\infty^2 \big).
  \end{align*}
  Here $u$ and $Hu$ are evaluated at $\xb_m,$ and error coming from the main term for $y_{m+1}-x_{m+2}$
  in \eqref{lower} is absorbed into the lager $O(n^{-3/2})$ error.
  Putting the three regions together, we get the following bound
  \begin{equation}\label{aux61214} \sum_{j,j\neq m}\kappa(j,m)\leq
    \frac{1}{1+\left(\frac{Hu}{u}\right)^2}\cdot\left(\frac13+\frac{1}{
        \left(\arccot \frac{Hu}{u} \right)^2}\right)+O\big(n^{-1/2}+n\log
      n\|E\|_\infty+n^2 (\log n)^2 \|E\|_\infty^2 \big).\end{equation}
  Set $a=\arccot \left(\frac{Hu}{u}\right)\in(0,\pi)$. The value of
  $a$ depends on $u$. Since $\frac{Hu}{u}$ is uniformly bounded in
  time, $a\geq a(u_0)>0$ is bounded away from zero uniformly for all times and $x_m$.
  We rewrite the bound above in terms of $a$ as follows:
  \[ \sum_{j,j\neq m}\kappa(j,m)\leq
    \sin^2a\cdot\left(\frac13+\frac{1}{a^2}\right)+o(1)=: F(a)+o(1).\]
  The $o(1)$ includes all errors from \eqref{aux61214} and is based on \eqref{Esmall}.
  Observe that $F(a)$ is a decreasing function in $(0,\pi)$, and
  clearly $\lim_{a\to0}F(a)=1$. Indeed, one can compute
  \[F'(a)=\frac{2\sin a}{3a^3}\big(a(a^2+3)\cos a-3\sin a\big).\]
  It is immediate that $F'(a)<0$ if $a\in[\frac{\pi}{2},\pi)$.
  For $a\in(0,\frac{\pi}{2})$, we claim that $a(a^2+3)\cos a-3\sin
  a<0$, or equivalently $\tan a>a+\frac{a^3}{3}$.
  But using Taylor series for $\tan a,$  we see that
  \begin{equation}\label{tancalc} \tan a - a -
    \frac{a^3}{3}= \sum_{i=2}^\infty \frac{(\tan)^{(2i+1)}(0) a^{2i+1}}{(2i+1)!} =\frac{2a^5}{15}+\frac{17a^7}{315}+\cdots
    >0,\quad\forall~ a\in\left(0,\frac\pi2\right).\end{equation}
  Indeed, the last inequality in \eqref{tancalc} follows from $\tan^{(2i)}(0)=0,$ $\tan^{(2i+1)}(0) \geq 0$ for all $i$ (this can be derived by induction using $f' = 1+ f^2$ for $f(x) =\tan x$).
  Therefore, we end up with
  \[\sum_{j\neq m}\kappa(j,m)\leq F(a(u_0))+o(1)<1\]
  for all $n \geq n_0(\tilde u_0).$
\end{proof}

\section{Proof of Theorem~\ref{mainthm2}}\label{proofT2}

It will be convenient for us to define
\begin{equation}\label{kappamm}
  \kappa^t(m,m) := 1-\sum_{\{j: j\ne m\}}\kappa^t(j,m) = 1-S(t).
\end{equation}
Then, we can express
\begin{align*}
  E_m^t + ({\mathcal L}^t E^t)_m=&\,  E_m^t+ \sum_{\{j: j\ne m\}} \kappa^t(j,m)(E^t_j-E^t_m)\\ =&\, (1-S(t)) E_m^t+\sum_{\{j: j\ne m\}}
  \kappa^t(j,m)E_j^t=\sum_{j=1}^{2n}\kappa^t(j,m)E_j^t.
\end{align*}
Therefore, the dynamics \eqref{E122} becomes
\begin{align}
  E_m^{t+\Delta t}=&\,\sum_{j=1}^{2n}\kappa^t(j,m)E_j^t
+O\big(\ux n^{-5/2}+\ux n^{-1}\|E^t\|_\infty\big)\nonumber\\
  &  +O(n^{-3/2}\|E^t\|_\infty+(\log n)^2 \|E^t\|_\infty^2+n(\log
  n)^2\|E^t\|_\infty^3).\label{Emt2}
\end{align}
The diffusion coefficients $\kappa^t(j,m)$ satisfy
\begin{equation}\label{kappasum}
  \sum_{j=1}^{2n}\kappa^t(j,m)=1
\end{equation}
due to the definition \eqref{kappamm}.
Also, from \eqref{propkappa} and Lemma \ref{lem:l1bound},
we know that $\kappa^t(j,m) \sim |m-j|^{-2}> 0$ if $j \ne m$, and $\kappa^t(m,m) \geq \rho>0.$

We continue with the following lemma, which establishes a bound that will play
for us a role similar to the mean zero condition for the Poincar\'e inequality.
\begin{lemma}\label{lem:sumzero}
  Suppose $\int_0^{2\pi}u_0(x)\,dx=1$. Then for all times $t$, we have
  \begin{equation}\label{sumzero}
    \sum_{j=1}^{2n} E_j^t u(\xb_j^t,t)=O(\ux n^{-2}).
  \end{equation}
\end{lemma}
\begin{proof}
  Observe that $\int_0^{2\pi}u(x,t)\,dx=1$ for all times. On the other
  hand, by midpoint rule
  \begin{align*}
    \int_0^{2\pi}u(x,t)\,dx=&\,
    \sum_j \left(u(\xb_j^t,t)(x_{j+1}^t-x_j^t)+O(\ux n^{-3})\right)\\
    =&\,\sum_j \left(\frac{1}{2n}+E_j^tu(\xb_j^t,t)+O(\ux
       n^{-3})\right)
    =1+\sum_j E_j^t u(\xb_j^t,t)+O(\ux n^{-2}).
  \end{align*}
  This leads to \eqref{sumzero} for all $t$.
\end{proof}

Next we obtain an improved estimate on the dissipative term $\sum_j\kappa^t(j,m)E_j^t$.
\begin{proposition}\label{diss125}
 Suppose $\int_0^{2\pi}u_0(x)\,dx=1$.
  Given any $t\geq0$ and $m\in\{1,\cdots,2n\}$, and provided that $n \geq n_0(\tilde u_0)$ is sufficiently large,
  there exists a function $h(t)$ given by \eqref{hdef125} below, such that
  \begin{equation}\label{kappaimprove}
    \sum_{j=1}^{2n}\kappa^t(j,m)E_j^t\leq \left(1-\frac{h(t)}{2n}\right)\|E^t\|_\infty+ O(\ux n^{-4}).
  \end{equation}
  The function $h(t)$ has the following properties: there exist constants $h_1(u_0),$ $h_0(u_0) > 0$ such that
  \[ h_1 \geq h(t) \geq h_0>0 \] for all $t,$ and, moreover,
  $h(t)$ converges to $1$ as $n,t \rightarrow \infty$ provided that \eqref{Esmall} remains true.
\end{proposition}
\begin{proof}
Let us define
\begin{align}\label{hdef125}
h(t) = \min_{j,m} \frac{2n \kappa^t(j,m)}{u(\xb_j,t)} \sum_{l=1}^{2n} u(\xb_l,t).
\end{align}
Here of course the roots $\xb_j$ also depend on $t,$ but we suppress this in notation so that not to make it too cumbersome.
Consider that
 \begin{align*}
    \sum_{j=1}^{2n}\kappa^t(j,m)E_j^t
    =&\,\sum_{j=1}^{2n}\left(\kappa^t(j,m)-\frac{h(t)}{2n \sum_{l=1}^{2n}u(\xb_l,t)}u(\xb_j^t,t)\right)E_j^t
       +\frac{h(t)}{2n \sum_{l=1}^{2n} u(\xb_l,t) }\sum_{j=1}^{2n} E_j^t u(\xb_j^t,t)\\
    =&\, \sum_{j=1}^{2n}\tilde\kappa^t(j,m)E_j^t + O(\ux n^{-4}),
  \end{align*}
  where we used Lemma \ref{lem:sumzero} and a straightforward upper bound $h = O(1)$ in the last step that follows from $\kappa^t(j,m) \sim |j-m|^{-2}.$
  We also defined
  \[\tilde\kappa^t(j,m):=\kappa^t(j,m) -\frac{h(t)}{2n \sum_{l=1}^{2n} u(\xb_l,t) }u(\xb_j^t,t).\]
Observe that $\tilde\kappa^t(j,m) \geq 0$   due to \eqref{hdef125}.
Then we have
  \[\sum_{j=1}^{2n}\tilde\kappa^t(j,m)E_j^t\leq
  \sum_{j=1}^{2n}\tilde\kappa^t(j,m) \|E^t\|_\infty  
    \leq\left(1-\frac{h(t)}{2n}\right)\|E^t\|_\infty,\]
    proving \eqref{kappaimprove}.

Note that due to \eqref{kappa122}, \eqref{ymxj}, \eqref{Bbound}, \eqref{delerr125} and \eqref{Esmall}, for all sufficiently large $n$ we have for all times
\begin{align}
& h(t) \geq \frac{4n^2 u_{\min}(t)}{u_{\max}(t) \cdot 16\pi^2 n^2(u_{\max}(t)^2 +Q^2)}\cdot \frac12 = \frac{u_{\min}(t)}{8 \pi^2 u_{\max}(t)(u_{\max}(t)^2 +Q^2)}, \\
& h(t) \leq \frac{4n^2 u_{\max}(t)}{u_{\min}(t) \cdot 16\pi^2 n^2 u_{\min}(t)^2} \cdot 2 = \frac{u_{\max}(t)}{2 \pi^2 u_{\min}(t)^3}. \label{hb125}
\end{align}
These estimates imply uniform in time bounds on $h(t)$ from above and below that depend only on $u_0.$
Moreover, due to \eqref{meanconv1117}, \eqref{higherder1117}, \eqref{kappa122}, and \eqref{Bbound}, we have that for large $n$ and $t,$ 
\begin{align}\label{hlim125}
 h(t) = 1 +O\left(e^{-\sigma t}+ n^{-1/2}+n \log n \|E^t\|_\infty+ (\log n)^2 (n^2 \|E^t\|^2_\infty+n^3 \|E^t\|^3_\infty+n^4 \|E\|_\infty^4)\right),
\end{align}
where used $\bar u = \frac{1}{2\pi}.$
\end{proof}

Now we are ready to finish the proof of Theorem~\ref{mainthm2}.
\begin{proof}[Proof of Theorem \ref{mainthm2}]
Observe that due to positivity of the coefficients $\kappa^t,$ for all times $t,$ we have
\begin{align}
    E_m^{t+\Delta t}\leq&\, \sum_{j=1}^{2n}\kappa^t(j,m)\|E^t\|_\infty
   +C_1\big(\ux n^{-5/2}+ \ux n^{-1}\|E^t\|_\infty\big)\nonumber\\
  & +C_2\left(n^{-3/2}\|E^t\|_\infty+(\log n)^2 \|E^t\|_\infty^2+n(\log
  n)^2\|E^t\|_\infty^3\right),\label{aux1124}
\end{align}
where $C_1$ and $C_2$ depend only on $u_0.$
Let us apply \eqref{kappaimprove} to the estimate \eqref{aux1124} and get
  \begin{align}
    \|E^{t+\Delta t}\|_\infty\leq&\, \left(1-\frac{h(t)}{2n}+C_1\ux(t)n^{-1}\right)\|E^t\|_\infty
   +C_1\ux(t) n^{-5/2}\nonumber\\
  & +C_2\left(n^{-3/2}\|E^t\|_\infty+(\log n)^2 (\|E^t\|_\infty^2+n\|E^t\|_\infty^3+n^2\|E\|_\infty^4)\right).\label{EmtDtimprove}
  \end{align}
  Let $C_3=C_3(u_0)$ be a constant such that for all $n \geq n_0(\tilde u_0)$ and all $t,$
\begin{align}
& C_1\ux(t) \leq \frac{C_3}{4} e^{-\sigma t}, \\
& |h(t)-1| \leq \frac{C_3}{4}\left(e^{-\sigma t}+ n^{-1/2}+n \log n \|E^t\|_\infty+ (\log n)^2 (n^2 \|E^t\|^2_\infty+n^3 \|E^t\|^3_\infty+n^4 \|E\|_\infty^4)\right),
\end{align}
and, given also \eqref{Esmall},
\begin{align} & C_2\left|n^{-3/2}+(\log n)^2 (\|E^t\|_\infty+n\|E^t\|_\infty^2+n^2\|E\|_\infty^3)\right| \leq \frac{C_3}{12} n^{-1-\epsilon/2}, \\
& n^{-1/2}+n \log n \|E^t\|_\infty+ (\log n)^2 (n^2 \|E^t\|^2_\infty+n^3 \|E^t\|^3_\infty+n^4 \|E\|_\infty^4) \leq n^{-\epsilon/2}.
\end{align}
  Then we have that for all sufficiently large $n \geq n_0(\tilde u_0)$
 \begin{align}\label{Esimpleev}
   \|E^{t+\Delta t}\|_\infty\leq&\, \left(1-\frac{1}{2n}+\frac{C_3e^{-\sigma t}}{2n}+\frac{C_3}{3n^{1+\epsilon/2}}\right)\|E^t\|_\infty + \frac{C_3e^{-\sigma t}}{n^{5/2}}
  \end{align}
  for all $t.$
  By iterating \eqref{Esimpleev}, we find that
  \begin{align}\label{final1214}
  \|E^t\|_\infty & \leq  \M_0 n^{-1-\epsilon} \prod_{i=0}^{2nt-1} \left( 1-\frac{1}{2n}+\frac{C_3 e^{- \frac{\sigma i}{2n}}}{2n}+\frac{C_3}{3n^{1+\epsilon/2}} \right) \\
 & + \frac{C_3}{n^{5/2}} \sum_{i=0}^{2nt-1} e^{-\frac{\sigma  i}{2n}} \prod_{s=i+1}^{2nt-1} \left(1-\frac{1}{2n}+\frac{C_3 e^{- \frac{\sigma  s}{2n}}}{2n}+\frac{C_3}{3n^{1+\epsilon/2}} \right) \\
 & \leq \M_0 n^{-1-\epsilon} e^{-t+\frac{4C_3}{\sigma}+\frac{C_3t}{n^{\epsilon/2}}} + \frac{C_3}{n^{5/2}}\sum_{i=0}^{2nt-1} e^{-\frac{\sigma  i}{2n}} e^{-t+\frac{i+1}{2n}+\frac{4C_3}{\sigma}+\frac{C_3}{n^{\epsilon/2}}(t-\frac{i+1}{2n})}.
  \end{align}
  Using that $\sigma >1$ to sum the geometric progression, and performing elementary simplifications, we obtain
  \begin{align}\label{finalest1214}
   \|E^t\|_\infty \leq C \left( \M_0 n^{-1-\epsilon}+ n^{-3/2} (1-e^{-(\sigma-1)}) \right) e^{-t(1+O(n^{-\epsilon/2}))},
  \end{align}
  where $C$ is a constant that only depends on $u_0,$ and \eqref{finalest1214} holds for all $n \geq n_0(\tilde u_0)$ and all $t$ while \eqref{Esmall} remains in force.
  In fact, we can compute explicitly that $C = C_3 e^{\frac{2C_3}{\sigma}}$ would work (assuming without loss of generality that $C_3 \geq 1$).
  This is exactly \eqref{mainerrest1122}.

Now we can go back 
and note that given $u_0$ and the initial fit controlled by $\M_0 n^{-1-\epsilon},$ we can define $\M_{\max}$ in \eqref{Esmall} to be equal to
\[ \max(C(u_0)\M_0,C(u_0)), \] where $C(u_0)$ is from \eqref{finalest1214}.
With this definition of $\M_{\max}$ at hand, we can determine the threshold value $n_0(\tilde u_0)$ such that for all $n \geq n_0(\tilde u_0),$ \eqref{finalest1214} holds for all $t.$
\end{proof}

\bigskip
\noindent {\bf Acknowledgement.} \rm AK acknowledges partial support
of the NSF-DMS grant 2006372 and of the Simons Fellowship. CT acknowledges partial support
of the NSF-DMS grant 1853001. We are grateful to Stefan Steinerberger for letting us know
about the problem and educating us on the history of the question and its connections to
free probability and random matrices.

\end{document}